\theoremstyle{plain}
\newtheorem{lem}{Lemma}[section]
\newtheorem{cor}[lem]{Corollary}
\newtheorem{thm}[lem]{Theorem}
\theoremstyle{definition}
\newtheorem{defn}[lem]{Definition}
\newtheorem{ex}[lem]{Example}
\newtheorem{question}[lem]{Question}
\newtheorem{disc}[lem]{Remark}
\newtheorem{anal}[lem]{Discussion}
\newtheorem{notn}[lem]{Notation}
\newtheorem{fact}[lem]{Fact}
\newtheorem{para}[lem]{}
\newtheorem{exer}[lem]{Exercise}
\newtheorem{notation}[lem]{Notation}
\newtheorem*{Convention}{Convention}
\newtheorem*{assumption}{Assumption}
\newcommand{\pd}{\operatorname{pd}}
\newcommand{\id}{\operatorname{id}}
\newcommand{\depth}{\operatorname{depth}}	
\newcommand{\rank}{\operatorname{rank}}
\newcommand{\len}{\operatorname{len}}
\newcommand{\grade}{\operatorname{grade}}
\newcommand{\HH}{\operatorname{H}}
\newcommand{\Hom}{\operatorname{Hom}}	
\newcommand{\coker}{\operatorname{Coker}}
\newcommand{\spec}{\operatorname{Spec}}
\newcommand{\s}{\mathfrak{S}}
\newcommand{\im}{\operatorname{Im}}
\newcommand{\shift}{\mathsf{\Sigma}}
\newcommand{\Ker}{\operatorname{Ker}}
\newcommand{\ideal}[1]{\mathfrak{#1}}
\newcommand{\m}{\ideal{m}}
\newcommand{\n}{\ideal{n}}
\newcommand{\fm}{\ideal{m}}
\newcommand{\fa}{\ideal{a}}
\newcommand{\comp}[1]{\widehat{#1}}
\newcommand{\ol}{\overline}
\newcommand{\wti}{\widetilde}
\newcommand{\ass}{\operatorname{Ass}}
\newcommand{\supp}{\operatorname{Supp}}
\newcommand{\bbz}{\mathbb{Z}}
\newcommand{\bbn}{\mathbb{N}}
\newcommand{\from}{\leftarrow}
\newcommand{\xra}{\xrightarrow}
\newcommand{\xla}{\xleftarrow}
\newcommand{\into}{\hookrightarrow}
\newcommand{\vf}{\varphi}
\newcommand{\y}{\mathbf{y}}
\newcommand{\x}{\mathbf{x}}
\newcommand{\Mod}{\operatorname{Mod}}
\renewcommand{\geq}{\geqslant}
\renewcommand{\leq}{\leqslant}
\renewcommand{\ker}{\Ker}
\newcommand{\Ext}[4][R]{\operatorname{Ext}_{#1}^{#2}(#3,#4)}	
\newcommand{\Lotimes}[3][R]{#2\otimes^{\mathbf{L}}_{#1}#3}
\newcommand{\Otimes}[3][R]{#2\otimes_{#1}#3}
\renewcommand{\Hom}[3][R]{\operatorname{Hom}_{#1}(#2,#3)}	
\newcommand{\Tor}[4][R]{\operatorname{Tor}^{#1}_{#2}(#3,#4)}
\newcommand{\ssm}{\smallsetminus}
\newcommand{\und}[1]{#1^{\natural}}
\newcommand{\mult}[2]{\mu^{#1,#2}}
\newcommand{\HomA}[2]{\operatorname{Hom}_{A}(#1,#2)}
\newcommand{\od}{\operatorname{Mod}}
\newcommand{\gl}{\operatorname{\underline{GL}}}
\newcommand{\ggl}{\operatorname{GL}}
\newcommand{\ta}{\operatorname{\textsf{T}}}
\newcommand{\en}{\operatorname{End}}
\newcommand{\au}{\operatorname{Aut}}
\newcommand{\yext}[4][R]{\operatorname{YExt}_{#1}^{#2}(#3,#4)}	
\newcommand{\tamod}{\ta^{\od^U(W)}}
\newcommand{\taglm}{\ta^{\ggl(W)_0\cdot M}}
\newcommand{\bba}{\mathbb{A}}
\newcommand{\signum}[1]{\operatorname{sgn}(#1)}
\numberwithin{equation}{lem}
\begin{document}

\bibliographystyle{amsplain}

\author{Kristen A. Beck}

\address{Kristen A. Beck, Department of Mathematics,
The University of Arizona,
617 N. Santa Rita Ave.,
P.O. Box 210089,
Tucson, AZ 85721,
USA}

\email{kbeck@math.arizona.edu}

\urladdr{http://math.arizona.edu/\~{}kbeck/}

\author{Sean Sather-Wagstaff}

\address{Sean Sather-Wagstaff, Department of Mathematics,
NDSU Dept \# 2750,
PO Box 6050,
Fargo, ND 58108-6050
USA}

\email{sean.sather-wagstaff@ndsu.edu}

\urladdr{http://www.ndsu.edu/pubweb/\~{}ssatherw/}

\title[DG commutative algebra]
{A somewhat gentle introduction to  differential graded commutative algebra}

\date{\today}

\thanks{
Sean Sather-Wagstaff was supported in part by a grant from the NSA}

\dedicatory{Dedicated with much respect to Tony Geramita}

\keywords{Deformation theory, 
differential graded algebras, differential graded modules, semidualizing complexes,
semidualizing modules, Yoneda Ext}

\subjclass[2010]{Primary: 13D02, 13D09; Secondary: 13D10, 13E10}

\begin{abstract}
Differential graded (DG) commutative algebra provides powerful techniques for
proving theorems about modules over commutative rings. 
These notes are a somewhat colloquial introduction to these techniques.
In order to provide some motivation
for commutative algebraists who are wondering about the benefits of
learning and using these techniques,
we present them in the context of a recent result of Nasseh and Sather-Wagstaff.
These  notes were used for the course ``Differential Graded Commutative Algebra''
that was part of the
Workshop on Connections Between Algebra and Geometry
held at the University of Regina, May 29--June 1, 2012.
\end{abstract}

\maketitle

\setcounter{tocdepth}{1}
\tableofcontents

\section{Introduction} \label{sec0}

\begin{Convention}
The term ``ring'' is short for ``commutative noetherian ring with identity'',
and ``module'' is short for ``unital module''.
Let $R$ be a ring.
\end{Convention}

These are notes for the course ``Differential Graded Commutative Algebra''
that was part of the
Workshop on Connections Between Algebra and Geometry
held at the University of Regina, May 29--June 1, 2012.
They represent our attempt to provide a small amount of (1) motivation
for commutative algebraists who are wondering about the benefits of
learning and using Differential Graded (DG) techniques, and (2)
actual DG techniques. 

\subsection*{DG Algebra} 
DG commutative algebra provides a useful framework for proving
theorems about rings and modules, the statements of which have no reference to
the DG universe. 
For instance, a standard theorem 
says the following: 

\begin{thm}[\protect{\cite[Corollary 1]{foxby:mirufbc}}]
\label{thm120521a}
Let $(R,\m)\to (S,\n)$ be a flat local ring homomorphism, that is, a ring homomorphism
making $S$ into a flat $R$-module such that $\m S\subseteq \n$.
Then $S$ is Gorenstein if and only if $R$ and $S/\m S$ are Gorenstein.
Moreover, there is an equality of Bass series
$I_S(t)=I_R(t)I_{S/\m S}(t)$.
\end{thm}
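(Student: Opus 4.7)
The plan is to prove the Bass series identity $I_S(t) = I_R(t)\,I_{S/\m S}(t)$ directly and then deduce the Gorenstein equivalence as a formal consequence. The deduction is easy: a local ring $(R,\m,k)$ is Gorenstein if and only if $I_R(t)$ is a monomial $t^{\dim R}$, and since Bass series lie in $\bbz_{\geq 0}[[t]]$, a product of such series is a monomial exactly when each factor is a monomial. So the crux is the series identity, which by comparing coefficients reduces to the numerical assertion $\bass[S]{n}{S} = \sum_{i+j=n} \bass[R]{i}{R}\cdot\bass[S/\m S]{j}{S/\m S}$ for every $n \geq 0$.

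Writing $\ol S = S/\m S$ and $\ell = S/\n$, I would compute $\Rhom[S]{\ell}{S}$ in two steps. First, Hom-tensor adjunction applied to the factorization $S \to \ol S \to \ell$ gives
\[
\Rhom[S]{\ell}{S} \simeq \Rhom[\ol S]{\ell}{\Rhom[S]{\ol S}{S}}.
\]
Second, since $R \to S$ is flat and $k$ is finitely generated over the noetherian ring $R$, a finitely generated free $R$-resolution of $k$ base-changes to a free $S$-resolution of $\ol S = S \otimes_R k$, and the standard $\otimes$-$\operatorname{Hom}$ interchange for finitely generated projectives yields
\[
\Rhom[S]{\ol S}{S} \simeq \Rhom[R]{k}{R} \otimes_R S.
\]
Taking cohomology and using flatness gives $\Ext[S]{q}{\ol S}{S} \cong \Ext[R]{q}{k}{R} \otimes_R S \cong \ol S^{\,\bass[R]{q}{R}}$ as an $\ol S$-module, because $\Ext[R]{q}{k}{R}$ is a $k$-vector space.

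The main obstacle is extracting the numerical Bass identity from these quasi-isomorphisms. The hyper-Ext spectral sequence
\[
E_2^{p,q} = \Ext[\ol S]{p}{\ell}{\Ext[S]{q}{\ol S}{S}} \Longrightarrow \Ext[S]{p+q}{\ell}{S}
\]
has $E_2^{p,q}$ of $\ell$-dimension $\bass[R]{q}{R}\cdot\bass[\ol S]{p}{\ol S}$ by the computation above, so the identity follows once degeneration at $E_2$ is established. This is where DG methods give, I think, the slickest route: build an acyclic closure $K \xrightarrow{\simeq} k$ over $R$ (a minimal semi-free DG $R$-algebra resolution) and a minimal semi-free DG resolution $L \xrightarrow{\simeq} \ell$ over $\ol S$; by flatness $K \otimes_R S \xrightarrow{\simeq} \ol S$ is a minimal semi-free $S$-resolution, and a standard DG lifting assembles $(K \otimes_R S) \otimes_{\ol S} L$ into a minimal semi-free $S$-resolution of $\ell$. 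Applying $\operatorname{Hom}_S(-,S)$ and reducing mod $\n$ (minimality forces all differentials to vanish) then yields the Bass number identity by a direct $\ell$-rank count, bypassing the spectral sequence entirely. Establishing the minimality of this assembled resolution is the technical heart; everything else — the adjunction, the base change, and the Gorenstein corollary — is then routine.
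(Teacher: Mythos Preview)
The paper does not prove this theorem; it is quoted from Foxby--Thorup as background, so your proposal must stand on its own. Your derived-category setup (adjunction, flat base change, the hyper-Ext spectral sequence) is correct, as is the deduction of the Gorenstein equivalence from the series identity.

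The gap is in your DG shortcut. Assembling a minimal semi-free $S$-resolution $P$ of $\ell$ from $K\otimes_R S$ and a lift of $L$ is legitimate, but applying $\operatorname{Hom}_S(-,S)$ and reducing modulo $\n$ computes \emph{Betti} numbers, not Bass numbers: by minimality the matrices of $\partial^P$ and of its dual have entries in $\n$, so $\operatorname{Hom}_S(P,S)\otimes_S\ell$ has zero differential and its $n$th term has $\ell$-rank $\operatorname{rank}_S P_n=\beta_n^S(\ell)$. (Check $S=k[\![x]\!]$: your recipe gives $1,1$ in degrees $0,1$, whereas $\mu^0_S=0$ and $\mu^1_S=1$.) You have thus re-proved the Poincar\'e-series identity $P^S_\ell(t)=P^R_k(t)\,P^{\ol S}_\ell(t)$, not the Bass-series identity. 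The correct finish is already implicit in your setup and needs no DG detour: compute $\Rhom[R]{k}{R}$ via a minimal injective resolution $R\to I$, so that it is represented by the complex of $k$-vector spaces $\Hom[R]{k}{I}$, which is formal; hence $\Rhom[R]{k}{R}\simeq\bigoplus_q k^{\mu^q_R}[-q]$ in $D(R)$. Base-changing and substituting into your adjunction gives $\Rhom[S]{\ell}{S}\simeq\bigoplus_q\Rhom[\ol S]{\ell}{\ol S}^{\mu^q_R}[-q]$, and taking cohomology yields $\mu^n_S=\sum_{p+q=n}\mu^q_R\,\mu^p_{\ol S}$. Equivalently, your spectral sequence degenerates at $E_2$ for exactly this formality reason.
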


(See Definition~\ref{defn120521b} for the term ``Bass series''.)
Of course, the flat hypothesis is very important here. 
On the other hand, the use of DG algebras allows for a slight 
(or vast, depending on your perspective) improvement of this:

\begin{thm}[\protect{\cite[Theorem A]{avramov:bsolrhoffd}}]
\label{thm120521b}
Let $(R,\m)\to (S,\n)$ be a  local ring homomorphism of finite flat dimension, that is, a 
local ring homomorphism
such that $S$ has a bounded resolution by flat $R$-module.
Then there is a formal Laurent series $I_{\vf}(t)$ with non-negative integer
coefficients such that
$I_S(t)=I_R(t)I_{\vf}(t)$.
In particular, if $S$ is Gorenstein, then so is $R$.
\end{thm}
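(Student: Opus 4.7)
The plan is to mimic the classical proof of Theorem~\ref{thm120521a} by replacing the (possibly nonexistent) honest fiber $S/\m S$ with a derived fiber built from a DG algebra resolution of $\vf$. The classical argument exploits that $S/\m S$ is a common algebra over $R/\m$ and $S$ whose Bass series multiplies those of $R$ and $S$; in the absence of flatness there is no such algebra, and DG algebras are introduced precisely to provide a substitute.

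First I would construct a factorization $R \xra{\iota} \tilde R \xra{\pi} S$ of $\vf$ in which $\tilde R$ is a semifree DG $R$-algebra (obtained from $R$ by adjoining polynomial, exterior, and divided-power variables, in the style of Tate and Avramov) and $\pi$ is a surjective quasi-isomorphism. The finite flat dimension hypothesis guarantees that the derived fiber
\[
F = k \otimes_R \tilde R \simeq k \lotimes_R S, \qquad k = R/\m,
\]
has bounded homology. I would then define $I_\vf(t)$ to be the Bass series of the DG $k$-algebra $F$; its coefficients are $k$-dimensions of the DG Ext groups $\ext^*_F(k,F)$, so non-negativity and integrality are built into the definition.

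The heart of the argument is the identity $I_S(t)=I_R(t)\,I_\vf(t)$. I would derive it from a DG change-of-rings isomorphism such as
\[
\rhom_S(k,S) \simeq \rhom_F\bigl(k,\, F \lotimes_R \rhom_R(k,R)\bigr),
\]
combined with a K\"unneth-style spectral sequence whose $E_2$-page involves $\ext$ over $R$ tensored with $\ext$ over $F$. Because $k$ is a field the spectral sequence degenerates, and taking Poincar\'e series of the abutment yields the product formula. The main obstacle, and the point at which DG machinery is indispensable, is verifying convergence of this spectral sequence and showing that the divided-power variables in $\tilde R$ interact correctly with the mod-$\m$ reduction; this cannot be done with ordinary complexes alone.

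Finally, if $S$ is Gorenstein then $I_S(t)$ is a monomial $t^{\depth S}$ with coefficient $1$. Since $I_R(t)$ and $I_\vf(t)$ have non-negative integer coefficients and their product is such a monomial, each factor must itself be a monomial with coefficient $1$. Hence $I_R(t)=t^{\depth R}$, which characterizes $R$ as Gorenstein.
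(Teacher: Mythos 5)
The paper does not give a proof of Theorem~\ref{thm120521b} (it is quoted from Avramov, Foxby, and Lescot, with $I_\vf(t)$ identified as the Bass series of the homotopy fibre $\Lotimes{S}{R/\m}$), so I can only assess your sketch on its own terms. The plan is right --- factor $\vf$ through a semifree DG algebra, take the derived fibre $F$, and declare $I_\vf$ to be its Bass series --- and your final deduction (a product of nonnegative-integer Laurent series equal to a monomial forces each factor to be a monomial) is correct. But the central step has two real defects. First, the Bass numbers of $S$ and of the fibre $F$ are $l$-ranks for $l=S/\n$, not $k$-ranks; so every $\rhom$ you write must have $l$, not $k$, as its source. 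Second, the change-of-rings isomorphism should be
\[
\rhom_S(l,S)\;\simeq\;\rhom_F\bigl(l,\,S\lotimes_R\rhom_R(k,R)\bigr),
\]
with $S\lotimes_R$ rather than $F\lotimes_R$ in the inner slot. This follows from adjointness twice --- $\rhom_R(k,S)\simeq\rhom_S(F,S)$ since $F\simeq S\lotimes_R k$, and then $\rhom_F(l,\rhom_S(F,S))\simeq\rhom_S(l\lotimes_F F,S)\simeq\rhom_S(l,S)$ --- together with the identification $\rhom_R(k,S)\simeq S\lotimes_R\rhom_R(k,R)$, which uses the finite flat dimension of $S$ over $R$ and a degree-wise finite free resolution of $k$. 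Your version inserts an extra derived copy of $k$, since $F\lotimes_R(-)=k\lotimes_R S\lotimes_R(-)$, and does not compute $\rhom_S(l,S)$.

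The appeal to a K\"unneth spectral sequence that ``degenerates because $k$ is a field'' is also a gap if taken literally: a vector-space $E_2$-page does not by itself kill higher differentials, and you would owe a separate argument. Fortunately no spectral sequence is needed. The homology of $\rhom_R(k,R)$ consists of $k$-vector spaces, so the complex splits, $\rhom_R(k,R)\simeq\bigoplus_i\shift^{-i}k^{\mu^i_R}$, and because $F$ has bounded homology (the finite flat dimension hypothesis again) this sum is degree-wise finite. Hence $S\lotimes_R\rhom_R(k,R)\simeq\bigoplus_i\shift^{-i}F^{\mu^i_R}$, applying $\rhom_F(l,-)$ gives $\rhom_S(l,S)\simeq\bigoplus_i\shift^{-i}\rhom_F(l,F)^{\mu^i_R}$, and taking $l$-ranks degree by degree yields $I_S(t)=I_R(t)I_\vf(t)$ directly. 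This splitting device is what your sketch needs in place of a degeneration argument.
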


In this result, the series $I_{\vf}(t)$ is the \emph{Bass series} of $\vf$.
It is the Bass series of the ``homotopy closed fibre'' of $\vf$
(instead of the usual closed fibre $S/\m S$ of $\vf$ that is used in 
Theorem~\ref{thm120521a}) which is the commutative DG algebra
$\Lotimes S{R/\m}$. In particular, when $S$ is flat over $R$,
this is the usual closed fibre $S/\m S\cong \Otimes S{R/\m}$,
so one recovers Theorem~\ref{thm120521a} as a corollary of
Theorem~\ref{thm120521b}.

Furthermore, DG algebra comes equipped with constructions 
that can be used to replace your given ring with one that is nicer in some sense.
To see how this works,
consider the following strategy for using  completions.

To prove a theorem about a given local ring 
$R$, first show that the assumptions ascend
to the completion $\comp R$,  prove the result for the complete ring $\comp R$,
and  show how the  conclusion for $\comp R$ implies the
desired conclusion for $R$. This technique is useful since 
frequently $\comp R$ is nicer then $R$. For instance, $\comp R$
is a homomorphic image of a power series ring over a field or a complete
discrete valuation ring, so
it is universally catenary (whatever that means)
and it has a dualizing complex (whatever that is),
while the original ring $R$ may not have either of these properties.

When $R$ is Cohen-Macaulay and local, a similar strategy sometimes allows one to mod out by a 
maximal $R$-regular sequence $\x$ to  assume that $R$ is artinian. The regular sequence
assumption is like the flat condition for $\comp R$ in that it (sometimes) allows for the
transfer of hypotheses and conclusions between $R$ and the quotient
$\ol R:=R/(\x)$. The artinian
hypothesis is particularly nice, for instance, when $R$ contains a field because then 
$\ol R$ is a finite dimensional algebra over a field.

The DG universe contains a  construction $\wti R$ that is
similar $\ol R$, with an advantage and a disadvantage.
The advantage is that it is more flexible than $\ol R$ because it does
not require the ring to be Cohen-Macaulay, and it produces a finite dimensional
algebra over a field regardless of whether or not $R$ contains a field.
The disadvantage is that $\wti R$ is a  DG algebra
instead of just an algebra, so it is graded commutative (almost, but not quite, commutative)
and there is a bit more data
to  track  when working with $\wti R$. 
However, the advantages outweigh the disadvantages in that $\wti R$
allows us to prove results for arbitrary local rings that can only be proved
(as we understand things today) in special cases using $\ol R$.
One such  result is the following:

\begin{thm}[\protect{\cite[Theorem A]{nasseh:lrfsdc}}]
\label{intthm120521a}
A local ring has only finitely many semidualizing
modules up to isomorphism.
\end{thm}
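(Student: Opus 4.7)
The plan is to reduce, via completion, to a complete local ring, then exchange that ring for the DG algebra $\wti R$ described in the introduction --- a finite-dimensional graded commutative DG algebra over a field --- and finally run an algebraic-geometry argument on the moduli of DG module structures over $\wti R$ to produce the finiteness.

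First I would show that the $\m$-adic completion $C \mapsto C \otimes_R \comp R$ induces an injection from isomorphism classes of semidualizing $R$-modules into those of semidualizing $\comp R$-modules. The defining conditions of a semidualizing module --- the homothety isomorphism $R \to \Hom{C}{C}$ and the vanishing $\Ext{i}{C}{C} = 0$ for $i \geq 1$ --- ascend and descend along faithfully flat base change, so this step is routine. Hence we may assume $R$ is complete. Using Cohen's structure theorem, write $R$ as a quotient of a complete regular local ring $(Q,\ideal{n})$, and build the DG algebra $\wti R$ described in the introduction: a bounded, graded commutative DG algebra over the residue field $k = Q/\ideal{n}$, of finite total $k$-dimension. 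The key technical claim I would need to establish is that isomorphism classes of semidualizing $R$-modules inject into quasi-isomorphism classes of \emph{semidualizing DG $\wti R$-modules}, i.e., DG $\wti R$-modules $C$ whose homothety $\wti R \to \Rhom[\wti R]{C}{C}$ is a quasi-isomorphism.

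Now fix a finite-dimensional graded $k$-vector space $V$. The DG $\wti R$-module structures on $V$ form an affine variety $\catm(V)$ over $k$, cut out by the associativity, Leibniz, and differential-compatibility conditions, and the algebraic group $G = \ggl(V)$ of graded $k$-linear automorphisms acts on $\catm(V)$ with orbits equal to the DG-isomorphism classes of structures on $V$. For a semidualizing DG module $C$, the tangent space to the $G$-orbit at $C$ is governed by $\Ext[\wti R]{1}{C}{C}$, which vanishes because the homothety is a quasi-isomorphism. Hence the orbit of $C$ is open in its irreducible component of $\catm(V)$, and a variety has only finitely many open orbits under an algebraic group action. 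The remaining task is to bound the graded dimensions of $V$ that can support a semidualizing DG $\wti R$-module; this is forced by the finite $k$-dimensionality of $\wti R$ together with the self-duality built into the semidualizing condition, leaving only finitely many $V$ to consider. Summing the finitely many orbits over finitely many $V$ yields the theorem.

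The main obstacle is the second step --- constructing $\wti R$ and establishing the correspondence between semidualizing $R$-modules and semidualizing DG $\wti R$-modules injectively on isomorphism classes. This is precisely where the DG homotopy theory developed in the paper does the work, and it is what sets the result apart from the classical theory, which can only treat special cases such as Cohen--Macaulay $R$ via a quotient by a maximal regular sequence. Once the DG setting is in place, the algebraic-group/open-orbit argument is elegant but conceptually standard, and the dimension bound is a direct consequence of $\wti R$ being finite dimensional over $k$.
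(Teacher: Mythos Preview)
Your proposal is essentially correct and follows the same architecture as the paper's proof: reduce to a complete local ring, replace $R$ by a finite-dimensional DG $k$-algebra $U$ (your $\wti R$), inject $\s_0(R)$ into $\s(U)$, and then run the Voigt--Gabriel open-orbit argument on the variety of DG module structures on a fixed graded vector space, bounding the possible underlying vector spaces at the end.

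Two points where the paper is more careful than your sketch. First, the paper reduces not just to $R$ complete but to $R$ complete with \emph{algebraically closed} residue field (via a flat local extension $R\to R'$ with $R'/\m R'$ algebraically closed), because the algebraic-geometry step---identifying orbits with isomorphism classes and invoking openness from tangent-space equality---wants $k=\bar k$. You omit this. Second, your ``inject semidualizing $R$-modules into semidualizing DG $\wti R$-modules'' is not done directly: the paper builds a specific zig-zag of DG $Q$-algebra morphisms
\[
R\to K^R(\x)\cong K^Q(\y)\otimes_Q R\xla{\simeq}K^Q(\y)\otimes_Q A\xra{\simeq}k\otimes_Q A=:U,
\]
where $A\xra{\simeq}R$ is a DG algebra resolution over a regular local $Q$ surjecting onto $R$, and then invokes base-change results to get $\s_0(R)\hookrightarrow\s(R)\xra{\equiv}\s(K^R(\x))\xla{\equiv}\s(K^Q(\y)\otimes_Q A)\xra{\equiv}\s(U)$. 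The intermediate Koszul step is what makes the passage bijective rather than merely functorial. Finally, a technical nuance: the tangent-space quotient at a DG module $M$ is $\yext[U]{1}{M}{M}$ (Yoneda extensions), not $\Ext[U]{1}{M}{M}$; these differ in general, and the paper needs a separate fact to pass from the vanishing of $\Ext^1$ for a semi-free semidualizing $C$ to the vanishing of $\yext^1$ for its truncation.
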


Even if you don't know what a semidualizing module is,
you can see the point. Without DG techniques, we only know how to
prove this result for Cohen-Macaulay rings that contain a field;
see Theorem~\ref{intthm120522b}. With DG techniques, you get the
unqualified result, which answers a question of Vasconcelos~\cite{vasconcelos:dtmc}.

\subsection*{What These Notes Are} 
Essentially, these notes contain a sketch of the 
proof of Theorem~\ref{intthm120521a};
see~\ref{proof120523b}, \ref{proof120523a}, and~\ref{proof120523c} below.
Along the way, we provide a big-picture view of some of the
tools and techniques in DG algebra (and other areas) needed to 
get a basic understanding of this proof.
Also, since our motivation comes from the study of semidualizing modules,
we  provide a bit of motivation for the study of those gadgets in Appendix~\ref{sec120522c}.
In particular, we do not assume that the reader is familiar
with the semidualizing world.

Since these notes are based on a course, they contain many exercises;
sketches of solutions are contained in Appendix~\ref{sec130212a}.
They also contain a number of examples and facts that are presented without proof.
A diligent reader may also wish to consider many of these as exercises.

\subsection*{What These Notes Are Not} 
These notes do not contain a great number of details about the 
tools and techniques in DG algebra. There are already excellent sources
available for this, particularly, the 
seminal works~\cite{avramov:ifr, avramov:dgha, avramov:tlg}. 
The interested reader is encouraged to dig into these sources
for proofs and constructions not given here. 
Our goal is to give some idea of what the tools look like
and how they get used to solve problems.
(To help  readers in their digging, we provide many
references for properties that we use.)

\subsection*{Notation}
When it is convenient, we use notation from~\cite{bruns:cmr,matsumura:crt}.
Here we specify our conventions for some notions that have several notations:

$\pd_R(M)$: projective dimension of an $R$-module $M$

$\id_R(M)$: injective dimension of an $R$-module $M$

$\len_R(M)$: length of an $R$-module $M$

$S_n$: the symmetric group on $\{1,\ldots,n\}$.

$\operatorname{sgn}(\iota)$: the signum of an element $\iota\in S_n$.

\subsection*{Acknowledgements}
We are grateful to the Department of Mathematics and Statistics at the University of Regina
for its hospitality during the Workshop on Connections Between Algebra and Geometry.
We are also grateful to the organizers and participants for providing such a stimulating environment,
and to Saeed Nasseh for carefully reading parts of this manuscript.

\section{Semidualizing Modules}
\label{sec120521b}

This section contains background material on semidualizing modules.
It also contains a special case of Theorem~\ref{intthm120521a};
see Theorem~\ref{intthm120522b}.
Further survey material can be found in~\cite{sather:sdm, sather:bnsc} and Appendix~\ref{sec120522c}.

\begin{defn}\label{defn120521a}
A finitely generated $R$-module $C$ is \emph{semidualizing}
if the natural homothety map $\chi^R_C\colon R\to\Hom CC$ given by $r\mapsto[c\mapsto rc]$ is an isomorphism
and $\Ext iCC=0$ for all $i\geq 1$. 
A \emph{dualizing} $R$-module is a semidualizing $R$-module
such that $\id_R(C)<\infty$.
The set of isomorphism classes of semidualizing $R$-modules
is denoted $\s_0(R)$.
\end{defn}

\begin{disc}\label{disc120522c}
The symbol $\s$ is an S, as in \verb+\mathfrak{S}+.
\end{disc}

\begin{ex}\label{ex130331}
The free $R$-module $R^1$ is semidualizing.
\end{ex}

\begin{fact}\label{disc120521a}
The ring $R$ has a dualizing module if and only if it is Cohen-Macaulay
and a homomorphic image of a Gorenstein ring; when these conditions
are satisfied, a dualizing $R$-module is the same as a ``canonical'' $R$-module.
See Foxby~\cite[Theorem 4.1]{foxby:gmarm}, Reiten~\cite[(3) Theorem]{reiten:ctsgm}, and Sharp~\cite[2.1 Theorem (i)]{sharp:fgmfidccmr}.
\end{fact}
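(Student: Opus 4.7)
I would prove both implications and then identify the dualizing module with the canonical module. Since both hypotheses and conclusions are local in nature, it suffices to work in the local setting.

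For the ``if'' direction, assume $R$ is Cohen--Macaulay and a homomorphic image $R \cong Q/I$ of a Gorenstein ring $Q$. Set $c = \dim Q - \dim R$; the Cohen--Macaulay hypothesis forces $c = \operatorname{grade}_Q(I)$, so $\operatorname{Ext}^j_Q(R,Q) = 0$ for $j \neq c$ (Gorensteinness of $Q$ guarantees the vanishing above $c$, while grade accounts for the vanishing below). Define $\omega_R := \operatorname{Ext}^c_Q(R,Q)$. I would verify that $\omega_R$ is dualizing as follows: the semidualizing identities $\operatorname{Hom}_R(\omega_R,\omega_R) \cong R$ and $\operatorname{Ext}^{\geq 1}_R(\omega_R,\omega_R) = 0$ fall out of a standard change-of-rings spectral sequence computing $\mathbf{R}\!\operatorname{Hom}_R(\omega_R,\omega_R)$ from $\mathbf{R}\!\operatorname{Hom}_Q(R,Q)$, using Gorensteinness of $Q$; and finite injective dimension of $\omega_R$ over $R$ follows from $\injdim_Q(Q) < \infty$ together with a similar change-of-rings calculation.

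For the ``only if'' direction, suppose $R$ has a dualizing module $D$. First I would show $R$ is Cohen--Macaulay: the Bass formula gives $\injdim_R D = \depth R$, and the duality functor $\operatorname{Hom}_R(-,D)$ (with derived vanishing $\operatorname{Ext}^{\geq 1}_R(D,D) = 0$ and homothety $R \cong \operatorname{Hom}_R(D,D)$) exchanges depth and dimension on finitely generated modules, yielding $\depth R = \dim R$. Next, I would realize $R$ as a quotient of a Gorenstein ring by forming the \emph{Reiten trivial extension} $S := R \ltimes D$, whose underlying module is $R \oplus D$ with multiplication $(r,d)(r',d') = (rr', rd' + r'd)$. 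The summand $0 \oplus D$ is a square-zero ideal of $S$ with $S/(0 \oplus D) \cong R$, so what remains is to show $S$ is Gorenstein. For this I would exhibit $S$ itself as a dualizing $S$-module, reducing the verification to a computation of $\operatorname{Hom}_R(-,D)$ applied to the pieces of an $R$-free resolution and an injective resolution of $D$.

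The main obstacle is the proof that $R \ltimes D$ is Gorenstein; this is where essentially all of the semidualizing and finite-injective-dimension data of $D$ must be used, and it requires a careful translation between $S$-modules and pairs consisting of an $R$-module plus a compatible action of $D$. Establishing Cohen--Macaulayness in the converse direction is subtler than it first looks, since the argument uses not only $\injdim_R D < \infty$ but also the full homothety isomorphism $R \cong \operatorname{Hom}_R(D,D)$.

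Finally, to match $D$ with the canonical module, I would invoke uniqueness: over a local ring, any semidualizing module of finite injective dimension is isomorphic to any other such (a consequence of the rigidity of dualizing complexes), which is precisely the characterization of the canonical module in the Cohen--Macaulay case.
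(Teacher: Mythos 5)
The paper does not prove this statement; it labels it a Fact and cites Foxby, Reiten, and Sharp, and your sketch faithfully reproduces the strategy of those references --- $\omega_R := \operatorname{Ext}^c_Q(R,Q)$ for the forward direction, Reiten's trivial extension $R \ltimes D$ for the converse, and uniqueness of dualizing modules to conclude.

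The one place your sketch is too loose is the Cohen--Macaulayness claim. ``The duality functor exchanges depth and dimension'' is a slogan, not an argument, and the analogous statement for an arbitrary nonzero finitely generated module of finite injective dimension is Bass's conjecture --- a deep theorem relying on intersection theory, not something one should casually invoke. The semidualizing hypothesis makes this step genuinely elementary, but through faithfulness rather than any abstract duality principle: $\operatorname{Hom}_R(D,D)\cong R$ forces $\operatorname{Supp}(D)=\operatorname{Spec}(R)$, so some minimal prime $\mathfrak p$ of maximal coheight lies in $\operatorname{Supp}(D)$. Since Bass numbers do not decrease along saturated chains of primes, $\operatorname{id}_R(D)\geq \operatorname{id}_{R_{\mathfrak p}}(D_{\mathfrak p}) + \dim R/\mathfrak p = \dim R$ (the first summand vanishes because $R_{\mathfrak p}$ is Artinian), while the Bass formula gives $\operatorname{id}_R(D)=\operatorname{depth} R$; together these yield $\operatorname{depth} R \geq \dim R$. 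You correctly flagged the homothety isomorphism as the crucial ingredient, but the mechanism it supplies is full support, not a depth/dimension interchange. One further caveat: ``homomorphic image of a Gorenstein ring'' is a global condition, so reducing to the local case is not automatic; for the converse direction your trivial extension is already global, but for the forward direction one must check that the Ext construction can be arranged consistently over $\operatorname{Spec}(R)$.
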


\begin{disc}\label{disc120521b}
To the best of our knowledge, semidualizing modules were first introduced
by Foxby~\cite{foxby:gmarm}. They have been rediscovered
independently by several authors who seem to all use different terminology
for them. A few examples of this, presented chronologically, are:
\begin{center}
\begin{tabular}{lll}
Author(s)& Terminology & Context\\ \hline
Foxby~\cite{foxby:gmarm}& ``PG-module of rank 1''
& commutative algebra\\
Vasconcelos~\cite{vasconcelos:dtmc}& ``spherical module''
& commutative algebra\\
Golod~\cite{golod:gdagpi}& ``suitable module'' \footnote{Apparently, 
another translation of the Russian term
Golod used is ``comfortable'' module.}
& commutative algebra\\
Wakamatsu~\cite{wakamatsu:mtse}& ``generalized tilting module''
& representation theory\\
Christensen~\cite{christensen:scatac}& ``semidualizing module''
& commutative algebra\\
Mantese and Reiten~\cite{mantese:wtm}& ``Wakamatsu tilting module''
& representation theory
\end{tabular}
\end{center}
\end{disc}

The following facts are quite useful in practice.

\begin{fact}\label{fact120521a}
Assume that $(R,\m)$ is local, and let $C$ be a semidualizing $R$-module.
If $R$ is Gorenstein, then $C\cong R$.
The converse holds if $C$ is a dualizing $R$-module.
See~\cite[(8.6) Corollary]{christensen:scatac}.

If $\pd_R(C)<\infty$,
then $C\cong R$ by~\cite[Fact 1.14]{sather:bnsc}.
Here is a sketch of the proof. The isomorphism 
$\Hom CC\cong R$ implies that $\supp_R(C)=\spec(R)$ and
$\ass_R(C)=\ass_R(R)$. In particular, an element $x\in \m$ is
$C$-regular if and only if it is $R$-regular. 
If $x$ is $R$-regular, it follows that $C/xC$ is semidualizing over $R/xR$.
By induction on
$\depth(R)$, we conclude that $\depth_R(C)=\depth(R)$.
The Auslander-Buchsbaum formula implies that $C$ is projective,
so it is free since $R$ is local. Finally, the isomorphism $\Hom CC\cong R$ implies that
$C$ is free of rank 1, that is, $C\cong R$.
\end{fact}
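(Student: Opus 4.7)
The plan is to handle the three assertions in turn. The converse in the first part is nearly immediate: if $C$ is dualizing and $C \cong R$, then $\id_R(R) = \id_R(C) < \infty$, so $R$ is Gorenstein by Bass's characterization of Gorenstein local rings.

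For the forward direction of the first assertion (Gorenstein implies $C \cong R$), I would use the Auslander/Bass class machinery associated to $C$. Since $R$ is Gorenstein and local, it has finite self-injective dimension, which places $R$ itself in the Bass class $\cat{B}_C(R)$. Membership in the Bass class means the natural evaluation map $\Otimes{\Hom{C}{R}}{C} \to R$ is an isomorphism (together with vanishing of higher $\tor$). Writing $\da{C} = \Hom{C}{R}$, this gives $\Otimes{C}{\da{C}} \cong R$, so $C$ is an invertible $R$-module; since $R$ is local, invertible modules are free of rank one, yielding $C \cong R$.

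For the third assertion, the plan follows the sketch indicated in the excerpt. The isomorphism $\Hom{C}{C} \cong R$ forces $C$ to be a faithful module; applying $\ass_R(-)$ to this isomorphism (via the formula $\ass_R(\Hom{A}{B}) = \supp_R(A) \cap \ass_R(B)$) yields $\ass_R(C) = \ass_R(R)$, so an element $x \in \m$ is $C$-regular if and only if it is $R$-regular. Quotienting by an $R$-regular $x$ preserves both the semidualizing property and the finiteness of projective dimension over $R/xR$, so induction on $\depth(R)$ reduces to the case $\depth(R) = 0$. There the Auslander--Buchsbaum formula forces $\pd_R(C) = 0$, hence $C$ is free, and $\Hom{C}{C} \cong R$ pins the rank to one.

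The main obstacle is the forward direction of the first claim, since the Auslander/Bass class machinery has not been developed in the excerpt; it rests on the non-trivial fact that modules of finite injective dimension automatically lie in $\cat{B}_C(R)$. The other two parts reduce to standard homological techniques together with the Auslander--Buchsbaum formula and elementary facts about associated primes.
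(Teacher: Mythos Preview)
Your argument for the third assertion (finite projective dimension forces $C\cong R$) is essentially identical to the sketch embedded in the Fact: the associated-prime formula, the passage to $R/xR$, and the Auslander--Buchsbaum endgame all match. The only cosmetic difference is that you reduce to $\depth(R)=0$ and apply Auslander--Buchsbaum there, whereas the paper phrases the induction as establishing $\depth_R(C)=\depth(R)$ first and then applying Auslander--Buchsbaum once; these are interchangeable.

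For the first two assertions the paper gives no argument at all, only the citation to Christensen, so your proposal is supplying more than the paper does. Your treatment of the converse is correct and immediate. Your Bass-class approach to the forward direction is valid and is in fact close in spirit to what underlies the cited reference: the key input, as you correctly flag, is the theorem that any module of finite injective dimension lies in $\catbc(R)$, which is not proved in these notes but is standard (Avramov--Foxby, Christensen). Once $R\in\catbc(R)$, the evaluation isomorphism $\Otimes{C}{\Hom{C}{R}}\cong R$ and locality of $R$ finish the job exactly as you describe. An alternative route, avoiding Foxby classes, is to observe that over a Gorenstein local ring every finitely generated module has finite G-dimension, and then invoke the fact that a semidualizing module of finite G-dimension must be free of rank one; but this trades one piece of external machinery for another of comparable weight.
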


\begin{fact}\label{fact120522a}
Let $\vf\colon R\to S$ be a ring homomorphism of finite flat dimension.
(For example, this is so if $\vf$  is flat or surjective with kernel generated by
an $R$-regular sequence.)
If $C$ is a semidualizing $R$-module, then $\Otimes{S}{C}$ is 
a semidualizing $S$-module. The converse holds when
$\vf$ is faithfully flat or local. 
The functor $\Otimes S-$ induces a well-defined function
$\s_0(R)\to\s_0(S)$ which is injective when $\vf$ is local.
See~\cite[Theorems 4.5 and 4.9]{frankild:rrhffd}.
\end{fact}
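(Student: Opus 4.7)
The plan is to work in the derived category, where the semidualizing condition on $C$ collapses into the single quasi-isomorphism $R \simeq \rhom_R(C,C)$ induced by the homothety.

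For the main implication, I would verify that $\Lotimes{S}{C}$ is a semidualizing complex over $S$ by means of the chain
\[
\Rhom[S]{\Lotimes{S}{C}}{\Lotimes{S}{C}} \simeq \Rhom{C}{\Lotimes{S}{C}} \simeq \Lotimes{S}{\Rhom{C}{C}} \simeq \Lotimes{S}{R} \simeq S,
\]
whose steps are, in order, $\lotimes$-$\rhom$ adjunction, tensor-evaluation (where the hypothesis $\fd_R S < \infty$ together with the finite generation of $C$ supply the finiteness needed), the semidualizing hypothesis on $C$, and the unit isomorphism. A routine diagram chase then confirms that the composite is induced by the $S$-homothety of $\Lotimes{S}{C}$. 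To promote this derived statement to the module-level semidualizing conditions for $\Otimes{S}{C}$, I must show $\Tor{i}{S}{C} = 0$ for all $i \geq 1$: this is automatic when $\vf$ is flat; when $\vf$ is surjective with kernel generated by an $R$-regular sequence $\mathbf{x}$, the sequence is also $C$-regular by the argument recorded in Fact~\ref{fact120521a}, so the Koszul resolution of $S = R/(\mathbf{x})$ remains exact upon $\Otimes{-}{C}$; the general finite-flat-dimension case is then handled via a Cohen-factorization-style reduction to these two building blocks.

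For the converse under faithful flatness, the semidualizing conditions on $C$ are both detected and reflected by the exact faithful functor $\Otimes{S}{-}$, giving the conclusion at once. For the local case, passing to completions $\comp R \to \comp S$ reduces matters to the faithfully flat situation. Injectivity of $\s_0(R) \to \s_0(S)$ when $\vf$ is local then follows by lifting a hypothetical $S$-linear isomorphism $\Otimes{S}{C} \cong \Otimes{S}{C'}$ through the natural isomorphism $\Otimes{S}{\Hom{C}{C'}} \cong \Hom[S]{\Otimes{S}{C}}{\Otimes{S}{C'}}$ (another consequence of the tensor-evaluation machinery, since $C$ is finitely presented) and then applying Nakayama's lemma, using the coincidence of supports and depths of $C$, $C'$, and $R$ (again from Fact~\ref{fact120521a}) to upgrade the lift to an honest $R$-isomorphism $C \cong C'$.

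The main obstacle I expect is the Tor-vanishing $\Tor{i}{S}{C} = 0$ for $i \geq 1$ in the case of genuinely positive finite flat dimension, where neither the flat argument nor the Koszul argument applies directly and one must either patch them together through a Cohen-type factorization of $\vf$ or read the vanishing off the derived isomorphism above. Justifying tensor-evaluation at the level of derived-category complexes (rather than for modules of finite projective dimension) is the other delicate ingredient, but it is the standard workhorse in this setting.
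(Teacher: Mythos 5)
The paper does not prove this Fact: it is stated as a Fact with the citation \cite[Theorems 4.5 and 4.9]{frankild:rrhffd}, and the only portion the paper verifies is the flat case, which is Exercise~\ref{exer130218a} with a solution in~\ref{para130218a} using flat base change of $\Ext$ and faithful flatness for the converse. Your treatment of the flat case agrees with that exercise, and your derived-category chain of isomorphisms $\Rhom[S]{\Lotimes{S}{C}}{\Lotimes{S}{C}} \simeq S$ (adjunction, tensor-evaluation over a noetherian ring with $C$ finitely generated and $\fd_R S<\infty$, then $\Rhom CC\simeq R$) is the standard and correct way to see that $\Lotimes SC$ is a semidualizing \emph{complex} over $S$.

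However, there are two genuine gaps in the rest. First, the Tor-vanishing $\Tor iSC=0$ for $i\geq 1$ in the general finite-flat-dimension case: your proposed Cohen-factorization reduction does not deliver. A Cohen factorization $R\to R'\to\comp S$ produces a flat map followed by a surjection, but the kernel of $R'\to\comp S$ is generated by a regular sequence only when $\vf$ is a complete intersection map, which is strictly stronger than finite flat dimension; so you cannot reduce to your two building blocks. Your other suggestion, to read the vanishing off the derived isomorphism, \emph{can} be made to work but requires an argument you have not supplied: one uses that $S$ lies in the Auslander class $\cat{A}_C(R)$, so $S\simeq\Rhom C{\Lotimes SC}$, and then $\sup(\Lotimes SC)=\sup\Rhom C{\Lotimes SC}=0$, where the equality $\sup\Rhom CX=\sup X$ is extracted from $\supp_R(C)=\spec(R)$ (which holds because $C$ is semidualizing). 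Without some such step the passage from complex to module is not justified.

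Second, your treatment of the converse when $\vf$ is local. Completing does not make $\vf$ faithfully flat: $\comp R\to\comp S$ is still only a local homomorphism of finite flat dimension, not a flat map, so your proposed reduction to the faithfully flat case collapses. The local converse in the cited source uses a different mechanism (ascent and descent of Auslander/Bass class membership and minimality arguments), not a reduction to flatness. The injectivity step has a related soft spot: the isomorphism $\Otimes{S}{\Hom C{C'}}\cong\Hom[S]{\Otimes SC}{\Otimes S{C'}}$ that you invoke needs its own Tor-vanishing justification in the finite-flat-dimension setting; it is not free from finite presentation alone.
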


\begin{exer}\label{exer130218a}
Verify the conclusions of Fact~\ref{fact120522a} when $\vf$ is flat.
That is, let $\vf\colon R\to S$ be a flat ring homomorphism.
Prove that if $C$ is a semidualizing $R$-module, then $\Otimes{S}{C}$ is 
a semidualizing $S$-module. Prove that the converse holds when
$\vf$ is faithfully flat, e.g., when $\vf$ is local. 
\end{exer}

The next lemma is  for use in Theorem~\ref{intthm120522b}, which is a special case of
Theorem~\ref{intthm120521a}. See Remark~\ref{disc130331a} and Question~\ref{q130331a}
for further perspective.

\begin{lem}\label{lem120522a}
Assume that $R$ is local and artinian. Then there is an integer
$\rho$ depending only on $R$ such that 
$\len_R(C)\leq \rho$ for every semidualizing $R$-module $C$. 
\end{lem}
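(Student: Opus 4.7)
The plan is to bound the minimal number of generators $b := \dim_k(C/\m C)$ of a semidualizing module $C$ (with $k = R/\m$) purely in terms of invariants of $R$, and then deduce the length bound from the resulting surjection $R^b \twoheadrightarrow C$.

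First, I would apply $\Hom k{-}$ to the homothety isomorphism $R \cong \Hom CC$ and use the standard tensor-hom adjunction to obtain
$$\Hom kR \;\cong\; \Hom k{\Hom CC} \;\cong\; \Hom{k \otimes_R C}{C} \;\cong\; \Hom{C/\m C}{C} \;\cong\; (\Hom kC)^b,$$
where the last isomorphism uses $C/\m C \cong k^b$. Taking $k$-dimensions yields
$$\mu_R^0(R) \;=\; b \cdot \mu_R^0(C),$$
where $\mu_R^0(M) := \dim_k \Hom kM$ denotes the zeroth Bass number. Since $R$ is artinian local and $R \cong \Hom CC$ forces $C \neq 0$, the socle $\Hom kC$ is nonzero, so $\mu_R^0(C) \geq 1$. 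Consequently $b \leq \mu_R^0(R)$.

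Finally, choosing a minimal generating set for $C$ produces a surjection $R^b \twoheadrightarrow C$, and therefore $\len_R(C) \leq b \cdot \len_R(R) \leq \mu_R^0(R) \cdot \len_R(R)$. Setting $\rho := \mu_R^0(R) \cdot \len_R(R)$, which depends only on $R$, completes the argument. The proof is short; the only non-routine ingredient is the tensor-hom adjunction step that converts the semidualizing identity $R \cong \Hom CC$ into the multiplicativity relation $\mu_R^0(R) = b \cdot \mu_R^0(C)$, after which the length bound is automatic.
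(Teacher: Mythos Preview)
Your proof is correct and is essentially identical to the paper's own argument: your $b$ is the paper's $\beta=\rank_k(k\otimes_R C)$, your $\mu_R^0(C)$ is the paper's $\mu$, and both obtain the key identity $\mu_R^0=\beta\mu$ via the same tensor--hom adjunction applied to $R\cong\Hom CC$, then conclude $\beta\leq\mu_R^0$ from $\mu\geq 1$ and bound $\len_R(C)\leq\beta\len_R(R)$. The paper's chain of isomorphisms is a bit longer (it passes through $\Hom[k]{-}{-}$ explicitly), but the content and the final bound $\rho=\mu_R^0\cdot\len_R(R)$ are the same.
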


\begin{proof}
Let $k$ denote the residue field of $R$.
We show that the integer $\rho=\len_R(R)\mu^0_R$
satisfies the conclusion where $\mu^0_R=\rank_k(\Hom kR)$.
(This is the 0th Bass number
of $R$; see Definition~\ref{defn120521b}.)
Let $C$ be a semidualizing $R$-module.
Set $\beta=\rank_k(\Otimes kC)$
and $\mu=\rank_k(\Hom kC)$.
Since $R$ is artinian and $C$ is finitely generated, it follows that $\mu\geq 1$.
Also, the fact that $R$ is local implies that
there is an $R$-module epimorphism $R^\beta\to C$,
so we have $\len_R(C)\leq\len_R(R)\beta$.
Thus, it remains to show that $\beta\leq\mu^0_R$.

The following sequence of isomorphisms uses  adjointness
and tensor cancellation:
\begin{align*}
k^{\mu^0_R}
&\cong\Hom{k}{R}\\
&\cong\Hom{k}{\Hom{C}{C}}\\
&\cong\Hom{\Otimes Ck}{C}\\
&\cong\Hom{\Otimes[k]{k}{(\Otimes Ck)}}{C}\\
&\cong\Hom[k]{\Otimes Ck}{\Hom kC}\\
&\cong\Hom[k]{k^\beta}{k^\mu}\\
&\cong k^{\beta\mu}.
\end{align*}
Since $\mu\geq 1$, it follows that
$\beta\leq\beta\mu=\mu^0_R$,
as desired.
\end{proof}

\begin{disc}
\label{disc130331a}
Assume that $R$ is local and Cohen-Macaulay.
If $D$ is dualizing for $R$, then there is an equality $e_R(D)=e(R)$
of Hilbert-Samuel multiplicites with respect to the maximal ideal of $R$.
See, e.g., \cite[Proposition 3.2.12.e.i and Corollary 4.7.8]{bruns:cmr}. 
It is unknown whether the same equality holds
for an arbitrary \emph{semi}dualizing $R$-module. Using the
``additivity formula'' for multiplicities, this boils down to the following.
Some progress is contained in~\cite{cooper:mesm}.
\end{disc}

\begin{question}
\label{q130331a}
Assume that $R$ is local and artinian. 
For every semidualizing $R$-module $C$, must one have
$\len_R(C)=\len(R)$? 
\end{question}

While we are in the mood for questions, here is a big one.
In every explicit calculation of $\s_0(R)$, the answer is ``yes'';
see~\cite{sather:divisor,sather:lbnsc}.

\begin{question}
Assume that $R$ is local. Must $|\s_0(R)|$ be $2^n$ for some $n\in\bbn$?
\end{question}

Next, we sketch the proof of 
Theorem~\ref{intthm120521a} when $R$ is Cohen-Macaulay and contains a field. 
This sketch serves to guide the proof of the result in general.

\begin{thm}[\protect{\cite[Theorem 1]{christensen:cmafsdm}}]
\label{intthm120522b}
Assume that $(R,\m,k)$ is Cohen-Macaulay local and contains a field.
Then $|\s_0(R)|<\infty$.
\end{thm}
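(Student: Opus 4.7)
The plan is to reduce to a complete local artinian $k$-algebra with $k$ algebraically closed, apply Lemma~\ref{lem120522a} to bound lengths, and then finish with a deformation-theoretic finiteness argument in a module variety.

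For the reduction, I would begin by applying Fact~\ref{fact120522a} to the faithfully flat local map $R\to\comp R$, obtaining an injection $\s_0(R)\hookrightarrow\s_0(\comp R)$, so one may assume $R$ is complete. Since $R$ is Cohen-Macaulay, a maximal $R$-regular sequence $\x=x_1,\ldots,x_d\in\m$ exists; the surjection $R\to R/(\x)$ has finite flat dimension (its kernel is generated by an $R$-regular sequence), so Fact~\ref{fact120522a} yields a further injection $\s_0(R)\hookrightarrow\s_0(R/(\x))$. Composing, it suffices to prove the result when $R$ is complete local artinian and contains a field. By Cohen's structure theorem, $R$ is then a finite-dimensional algebra over a coefficient field $k\cong R/\m$, and one final application of Fact~\ref{fact120522a} to the faithfully flat local extension $R\to R\otimes_k\ol k$ lets us further assume $k$ is algebraically closed.

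Now Lemma~\ref{lem120522a} supplies an integer $\rho$ so that every semidualizing $R$-module has length at most $\rho$; since $R$ is a finite-dimensional $k$-algebra with residue field $k$, length equals $k$-dimension. For each $n\leq\rho$, let $\operatorname{Mod}^R_n$ be the affine $k$-variety of $k$-algebra maps $R\to\operatorname{End}_k(k^n)$, parameterizing $R$-module structures on $k^n$; conjugation by $\ggl_n(k)$ gives an action whose orbits correspond bijectively to isomorphism classes of $R$-modules of dimension $n$. Let $\Sigma_n\subseteq\operatorname{Mod}^R_n$ be the locus where the homothety $R\to\Hom MM$ is bijective and $\Ext iMM=0$ for $1\leq i\leq N$, with $N$ chosen (depending only on $R$ and $n$) large enough to force vanishing of $\Ext iMM$ in all positive degrees. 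Upper semicontinuity of Ext and of map ranks shows that $\Sigma_n$ is locally closed in $\operatorname{Mod}^R_n$, hence noetherian, and the isomorphism classes of semidualizing $R$-modules of length $n$ correspond bijectively to $\ggl_n(k)$-orbits on $\Sigma_n$.

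The main obstacle, and the key input, is the classical tangent-space identification of Voigt and Gabriel, which realizes the normal space $T_M\operatorname{Mod}^R_n/T_M(\ggl_n(k)\cdot M)$ as $\Ext 1MM$. Since every $M\in\Sigma_n$ satisfies $\Ext 1MM=0$ by construction, each $\ggl_n(k)$-orbit is open in $\Sigma_n$; a noetherian space admits only finitely many open orbits, so $|\Sigma_n/\ggl_n(k)|<\infty$. Summing over $n=1,\ldots,\rho$ gives $|\s_0(R)|<\infty$. The delicate points are choosing $N$ so that the Ext-vanishing conditions genuinely cut out a locally closed (rather than merely constructible) subset, and transferring the Voigt--Gabriel tangent computation from its usual quiver-representation setting into the present context of modules over a finite-dimensional commutative $k$-algebra.
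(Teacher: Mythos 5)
Your proposal is correct and reaches the same conclusion, but it differs from the paper's proof in two substantive respects. First, the reduction to the artinian case with algebraically closed residue field: the paper invokes Grothendieck's gonflement theorem (a flat local extension with algebraically closed residue field, applied before killing a regular sequence), whereas you first kill a regular sequence and then, once $R$ is a finite-dimensional $k$-algebra, base change along $k\hookrightarrow\ol k$, observing that $R\otimes_k\ol k$ is local because $\m\otimes_k\ol k$ is a nilpotent maximal ideal. Your route is genuinely more elementary in this step. Second, and more importantly, the paper treats the finiteness of $T_n=\{[N]\mid\Ext 1NN=0,\ \len_R(N)=n\}$ as a black box by citing Happel, while you reconstruct Happel's argument via the Voigt--Gabriel tangent-space identification; this is essentially a preview of the machinery the paper only develops in Section~\ref{sec120522f} for the DG setting. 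On the one "delicate point" you flag — choosing $N$ to make $\Sigma_n$ locally closed — you can sidestep it entirely: the Voigt--Gabriel argument needs only $\Ext 1MM=0$, so every semidualizing module's orbit is already open in the full noetherian variety $\operatorname{Mod}^R_n$ (no need to cut out the homothety or higher-$\operatorname{Ext}$ conditions), and a noetherian space has only finitely many open orbits. This is exactly what Happel's set $T_n$ in the paper encodes: membership requires only $\Ext 1NN=0$, not semidualizingness, and $\s_0(R)\subseteq\bigcup_{n\leq\rho}T_n$ then finishes the count. Your second concern — transferring Voigt--Gabriel from quivers to commutative finite-dimensional algebras — is not an issue; the argument works for any finite-dimensional $k$-algebra (this is the setting of Gabriel's ``Finite representation type is open''), and indeed Section~\ref{sec120522f} of the paper pushes it further to finite-dimensional DG algebras.
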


\begin{proof}
Case 1: $R$ is artinian, and $k$ is algebraically closed. 
In this case, Cohen's structure theorem implies that $R$ is a 
finite dimensional $k$-algebra.
Since $k$ is algebraically closed,
a result of Happel~\cite[proof of first proposition in section 3]{happel:sm} 
says that for each $n\in\bbn$ the following set is finite.
$$
T_n=\{\text{isomorphism classes of $R$-modules $N$}\mid
\text{$\Ext 1NN=0$ and $\len_R(N)=n$}\}$$
Lemma~\ref{lem120522a} implies that there is a $\rho\in\bbn$ such that
$\s_0(R)$ is contained in the finite set $\bigcup_{n=1}^\rho T_n$, so $\s_0(R)$ is finite.

Case 2: $k$ is algebraically closed.
Let $\x=x_1,\ldots,x_n\in\m$ be a maximal $R$-regular sequence.
Since $R$ is Cohen-Macaulay, the quotient $R'=R/(\x)$ is artinian.
Also, $R'$ has the same residue field as $R$, so Case 1 implies that
$\s_0(R')$ is finite. 
Since $R$ is local, Fact~\ref{fact120522a} provides an injection
$\s_0(R)\into\s_0(R')$, so $\s_0(R)$ is finite as well.

Case 3: the general case.
A result of Grothendieck~\cite[Th\'{e}or\`{e}m 19.8.2(ii)]{grothendieck:ega4-1}
provides a flat local ring homomorphism
$R\to \ol R$ such that $\ol R/\m \ol R$ is algebraically closed.
In particular, since $R$ and $\ol R/\m \ol R$ are Cohen-Macaulay,
it follows that $\ol R$ is Cohen-Macaulay.
The fact that $R$ contains a field implies that $\ol R$ also
contains a field. Hence, Case 2 shows that $\s_0(\ol R)$ is finite.
Since $R$ is local, Fact~\ref{fact120522a} provides an injective function
$\s_0(R)\into\s_0(R')$, so $\s_0(R)$ is finite as well.
\end{proof}

\begin{disc}\label{disc120522b}
Happel's result uses some  deep ideas from algebraic geometry
and representation theory. 
The essential point comes from a theorem of Voigt~\cite{voigt:idteag}
(see also Gabriel~\cite[1.2 Corollary]{gabriel:frto}). We'll need a 
souped-up version of this result for the full proof of Theorem~\ref{intthm120521a}.
This is the point of Section~\ref{sec120522f}.
\end{disc}

\begin{disc}\label{disc120522d}
The proof of Theorem~\ref{intthm120522b} uses the extra assumptions (extra compared to Theorem~\ref{intthm120521a})
in  crucial places. The Cohen-Macaulay assumption is used in the
reduction to the artinian case.
And the fact that $R$ contains a field is used in order to invoke Happel's result.
In order to remove these assumptions for the proof of 
Theorem~\ref{intthm120521a}, we find an 
algebra $U$ that is finite dimensional over an algebraically closed  field
such that there is an injective function $\s_0(R)\into\s(U)$. The trick is that $U$ is a DG algebra,
and $\s(U)$ is a set of equivalence classes of semidualizing
DG $U$-modules.
So, we need to understand the following:
\begin{enumerate}[(a)]
\item \label{disc120522d1}
What are DG algebras, and how is $U$ constructed?
\item \label{disc120522d2}
What are semidualizing DG modules, and how is the map $\s_0(R)\into\s(U)$ constructed?
\item \label{disc120522d4}
Why is $\s(U)$ finite?
\end{enumerate}
This is the point of the rest of the notes. See Sections~\ref{sec120522d},
\ref{sec120522d'}, and~\ref{sec120522f}.
\end{disc}

\section{Hom Complexes}
\label{sec120522h}

This section has several purposes. 
First, we set some notation and terminology.
Second, we make sure that the reader is familiar with some notions that we 
need later in the notes. 
One of the main points of this section is Fact~\ref{exer120522e}.

\subsection*{Complexes}
The following gadgets form the foundation for homological algebra, and we shall use them extensively.

\begin{defn}\label{defn120522b}
An \emph{$R$-complex}\footnote{Readers more comfortable with notations like
$X_\bullet$ or $X_*$ for complexes should feel free to decorate their complexes
as they see fit.} is a sequence of $R$-module homomorphisms
$$X=\cdots\xra{\partial^X_{i+1}}X_i\xra{\partial^X_{i}}X_{i-1}\xra{\partial^X_{i-1}}
\cdots$$
such that $\partial^X_{i}\partial^X_{i+1}=0$ for all $i$.
For each $x\in X_i$, the \emph{degree} of $x$ is $|x|:=i$.
The \emph{$i$th homology module} of $X$ is 
$\HH_i(X):=\ker(\partial^X_{i})/\im(\partial^X_{i+1})$.
A \emph{cycle} in $X_i$ is an element of $\ker(\partial^X_{i})$.
\end{defn}

We use the following notation for augmented resolutions in several places below.

\begin{ex}
\label{ex120522a}
Let $M$ be an $R$-module.
We consider $M$ as an $R$-complex ``concentrated in degree 0'':
$$M=\quad 0\to M\to 0.$$
Given an augmented projective resolution 
$$P^+=\cdots\xra{\partial^P_2}P_1\xra{\partial^P_1}P_0\xra\tau M\to 0$$
the truncated resolution 
$$P=\cdots\xra{\partial^P_2}P_1\xra{\partial^P_1}P_0\to 0$$
is an $R$-complex such that
$\HH_0(P)\cong M$ and $\HH_i(P)=0$ for all $i\neq 0$.
Similarly, given an augmented injective resolution 
$$^+I=\quad 0\to M\xra{\epsilon}I_0\xra{\partial^I_0}I_{-1}\xra{\partial^I_{-1}}
\cdots $$
the truncated resolution 
$$I=\quad 0\to I_0\xra{\partial^I_0}I_{-1}\xra{\partial^I_{-1}}
\cdots $$
is an $R$-complex such that
$\HH_0(I)\cong M$ and $\HH_i(I)=0$ for all $i\neq 0$.
\end{ex}

\subsection*{The Hom Complex}
The next constructions are used extensively in these notes.
For instance, the chain maps are the morphisms in the category of $R$-complexes.

\begin{defn}\label{defn120522c}
Let $X$ and $Y$ be $R$-complexes.
The \emph{Hom complex} $\Hom XY$ is defined as follows.
For each integer $n$, set
$\Hom XY_n:=\prod_{p\in\bbz}\Hom{X_p}{Y_{p+n}}$
and
$\partial^{\Hom XY}_n(\{f_p\}):=\{\partial^Y_{p+n}f_p-(-1)^nf_{p-1}\partial^X_p\}$.
A \emph{chain map} $X\to Y$ is a cycle in $\Hom XY_0$, i.e., an element of $\Ker(\partial^{\Hom XY}_0)$.
An element in $\Hom XY_0$ is
\emph{null-homotopic} if it is in $\im(\partial^{\Hom XY}_1)$.
An \emph{isomorphism} $X\xra\cong Y$ is a chain map $X\to Y$
with a two-sided inverse.
We sometimes write $f$ in place of $\{f_p\}$.
\end{defn}

\begin{exer}
\label{exer120522b}
Let $X$ and $Y$ be $R$-complexes.
\begin{enumerate}[(a)]
\item \label{exer120522b1}
Prove that $\Hom XY$ is an $R$-complex. 
\item \label{exer120522b2}
Prove that a chain map 
$X\to Y$ is a sequence of $R$-module homomorphisms
$\{f_p\colon X_p\to Y_p\}$ making the following diagram commute:
$$\xymatrix{
\cdots\ar[r]^-{\partial^X_{i+1}}
&X_i\ar[r]^-{\partial^X_{i}}\ar[d]_-{f_i}
&X_{i-1}\ar[r]^-{\partial^X_{i-1}}\ar[d]_-{f_{i-1}}
&\cdots
\\
\cdots\ar[r]^-{\partial^Y_{i+1}}
&Y_i\ar[r]^-{\partial^Y_{i}}
&Y_{i-1}\ar[r]^-{\partial^Y_{i-1}}
&\cdots.
}$$
\item \label{exer120522b3}
Prove that if $\{f_p\}\in\Hom XY_0$ is null-homotopic, then it is a chain map.
\item \label{exer120522b4}
Prove that a sequence $\{f_p\}\in\Hom XY_0$ is null-homotopic
if and only if there is a sequence 
$\{s_p\colon X_p\to Y_{p+1}\}$ of $R$-module homomorphisms
such that
$f_p=\partial^Y_{p+1}s_p+s_{p-1}\partial^X_p$ for all $p\in\bbz$.
\end{enumerate}
\end{exer}

The following exercises contain  useful properties of these constructions.

\begin{exer}[``Hom cancellation'']
\label{exer120522i}
Let $X$ be an $R$-complex.
Prove that the map $\tau\colon\Hom RX\to X$ given by
$\tau_n(\{f_p\})=f_n(1)$ is an isomorphism of $R$-complexes.
\end{exer}

\begin{exer}
\label{exer130314b}
Let $X$ be an $R$-complex, and let $M$ be an $R$-module.
\begin{enumerate}[(a)]
\item
\label{exer130314b1}
Prove that $\Hom MX$ is isomorphic to the following complex:
$$\cdots\xra{(\partial^X_{n+1})_*}(X_{n})_*\xra{(\partial^X_{n})_*}(X_{n-1})_*\xra{(\partial^X_{n-1})_*}\cdots$$
where $(-)_*=\Hom M-$ and $(\partial^X_{n})_*(f)=\partial^X_{n}f$.
\item
\label{exer130314b2}
Prove that $\Hom XM$ is isomorphic to  the following complex:
\begin{gather*}
\cdots\xra{(\partial^X_{n})^*}X_{n}^*\xra{(\partial^X_{n+1})^*}X_{n+1}^*\xra{(\partial^X_{n+2})^*}\cdots.
\end{gather*}
where $(-)^*=\Hom -M$ and $(\partial^X_{n})^*(f)=f\partial^X_{n}$.
[Hint: Mind the signs.]
\end{enumerate}
\end{exer}

\begin{exer}
\label{exer120522f}
Let $f\colon X\to Y$ be a chain map.
\begin{enumerate}[(a)]
\item \label{exer120522f1}
Prove that for each $i\in\bbz$, the chain map $f$ induces
a well-defined
$R$-module homomorphism $\HH_i(f)\colon\HH_i(X)\to\HH_i(Y)$
given by $\HH_i(f)(\ol x):=\ol{f_i(x)}$.
\item \label{exer120522f2}
Prove that if $f$ is null-homotopic, then $\HH_i(f)=0$ for all $i\in\bbz$.
\end{enumerate}
\end{exer}

The following concept is central for homological algebra;
see, e.g., Remark~\ref{disc120522e}.

\begin{defn}\label{defn120522d}
A chain map $f\colon X\to Y$ is a \emph{quasiisomorphism}
if for all $i\in\bbz$ the induced map $\HH_i(f)\colon\HH_i(X)\to\HH_i(Y)$
is an isomorphism. We use the symbol $\simeq$  to identify
quasiisomorphisms.
\end{defn}

\begin{exer}
\label{exer120522h}
Prove that each isomorphism  of $R$-complexes is a quasiisomorphism.
\end{exer}

\begin{exer}
\label{exer120522g}
Let $M$ be an $R$-module with augmented projective resolution $P^+$
and augmented injective resolution $^+I$; see the notation from
Example~\ref{ex120522a}. Prove that $\tau$ and $\epsilon$
induce quasiisomorphisms $P\xra\simeq M\xra\simeq I$.
\end{exer}

\begin{disc}\label{disc120522e}
Let $M$ and $N$ be $R$-modules.
The fact that $\Ext iMN$ can be computed using a projective resolution $P$
of $M$ or an injective resolution $I$ of $N$ is called the ``balance'' property
for Ext. It can be proved by showing that there are quasiisomorphisms
$\Hom PN\xra\simeq\Hom PI\xla\simeq\Hom MI$. See Fact~\ref{fact130312a}.
\end{disc}

\subsection*{Hom and Chain Maps (Functoriality)}
Given that the chain maps are the morphisms in the category of $R$-complexes, 
the next construction and the subsequent exercise  indicate that $\Hom Z-$ and $\Hom -Z$ are functors.

\begin{defn}\label{defn130312a}
Given a chain map $f\colon  X\to Y$ and an $R$-complex $Z$,
we define $\Hom Zf\colon\Hom ZX\to\Hom ZY$
as follows: each sequence $\{g_p\}\in\Hom ZX_n$ is mapped to $\{f_{p+n}g_p\}\in\Hom ZY_n$.
Similarly, define the map 
$\Hom fZ\colon\Hom YZ\to\Hom XZ$
by the formula $\{g_p\}\mapsto \{g_pf_p\}$.
\end{defn}

\begin{disc}
We do not use a sign-change in this definition
because $|f|=0$.
\end{disc}

\begin{exer}\label{exer130312a}
Given a chain map $f\colon  X\to Y$ and an $R$-complex $Z$,
Prove that  $\Hom Zf$ 
and  $\Hom fZ$ are  chain maps.
\end{exer}

\begin{fact}\label{fact130312a}
Let $f\colon  X\xra\simeq Y$ be a quasiisomorphism, and let $Z$ be an $R$-complex. 
In general, the chain map $\Hom Zf\colon\Hom ZX\to\Hom ZY$ is not a 
quasiisomorphism. However, if $Z$ is a complex of projective
$R$-modules such that $Z_i=0$ for $i\ll 0$, then $\Otimes Zf$ is  a 
quasiisomorphism.
Similarly, the chain map $\Hom fZ\colon\Hom YZ\to\Hom XZ$ is not a 
quasiisomorphism. However, if $Z$ is a complex of injective
$R$-modules such that $Z_i=0$ for $i\gg 0$, then $\Hom fZ$ is  a 
quasiisomorphism.
\end{fact}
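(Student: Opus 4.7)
The plan is to reduce both claims to acyclicity statements using the mapping cone characterization of quasi-isomorphisms: a chain map $g$ is a quasi-isomorphism if and only if $\cone(g)$ is acyclic. Applied to the given $f$, this yields that $C:=\cone(f)$ is acyclic. The elementary identifications $\cone(\Otimes Zf)\cong \Otimes ZC$ and $\cone(\Hom fZ)\cong \shift\Hom CZ$ (the latter up to the usual shift from applying a contravariant functor to a cone) then reduce the two claims to showing that $\Otimes ZC$, respectively $\Hom CZ$, is acyclic under the stated hypotheses on $Z$.

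For the tensor claim, I would filter $Z$ by its stupid truncations $\sigma_{\leq n}Z$. The hypothesis $Z_i=0$ for $i\ll 0$ means the filtration starts at some $\sigma_{\leq n_0-1}Z=0$, and the short exact sequences
\begin{equation*}
0\to \sigma_{\leq n-1}Z\to \sigma_{\leq n}Z\to \shift^n Z_n\to 0
\end{equation*}
are degreewise split, so tensoring with $C$ preserves them as short exact sequences of complexes. Projectivity (hence flatness) of each $Z_n$ makes $\Otimes{Z_n}C$ acyclic, and a straightforward induction on $n$ via the long exact sequence of homology gives acyclicity of each $\Otimes{\sigma_{\leq n}Z}C$. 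Finally, $\Otimes ZC=\colim_n \Otimes{\sigma_{\leq n}Z}C$ stabilizes in each fixed degree (again by $Z_i=0$ for $i\ll 0$), so exactness of filtered colimits transports acyclicity to $\Otimes ZC$. The Hom claim is completely dual: filter $Z$ from above by $\sigma_{\geq n}Z$, use injectivity of each $Z_n$ (so that $\Hom{-}{Z_n}$ is exact) to conclude $\Hom C{\shift^n Z_n}$ is acyclic, induct downward starting from the topmost nonzero $Z_n$ afforded by $Z_i=0$ for $i\gg 0$, and appeal to per-degree stabilization to promote acyclicity of the truncations to acyclicity of $\Hom CZ$.

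The principal obstacle in both parts is this last assembly step: termwise acyclicity of the underlying double complex $\{Z_p\otimes C_q\}$ or $\{\Hom{C_p}{Z_q}\}$ does not by itself imply acyclicity of the total complex, and it is precisely the one-sided boundedness of $Z$ that gives the induction a finite starting point and forces the degreewise colimit/limit to stabilize, so the argument closes. The ``not a quasi-isomorphism in general'' remarks are motivational and can be settled with standard examples, e.g., by taking $Z$ to be an $R$-module with nontrivial higher Tor or Ext into $X$ and $Y$ so that the failure reflects the nonexactness of the relevant functor on modules.
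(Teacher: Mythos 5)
The paper states this as a bare Fact and deliberately omits the proof (it is one of the items the introduction flags as ``presented without proof,'' with the reader directed to \cite{avramov:ifr, avramov:dgha}), so there is no argument of the authors' to compare yours against; I assess your proposal on its own. Your reduction to acyclicity of $\cone(f)$ followed by the filtration of $Z$ by stupid truncations is the standard route, and the tensor half is essentially complete: the degreewise split short exact sequences remain exact after tensoring, flatness of each $Z_n$ gives acyclicity of $\Otimes{Z_n}{C}$, and the induction has a starting point because the filtration begins at $0$. Your parenthetical that the colimit ``stabilizes in each fixed degree'' is false in general --- in total degree $m$ the sum $\bigoplus_p\Otimes{Z_p}{C_{m-p}}$ keeps acquiring new summands as $p$ grows when $C=\cone(f)$ is unbounded --- but this is harmless, since homology commutes with filtered colimits and that alone closes the argument. (A side remark: the first ``However'' clause of the Fact as printed concludes about $\Otimes Zf$, evidently a typo for $\Hom Zf$; your tensor argument proves the literal statement, which is restated as Fact~\ref{fact120523a}, while the $\Hom Zf$ version needs the same limit care discussed next.)

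The Hom half has a genuine soft spot at the assembly step. There $\Hom CZ$ is the \emph{inverse} limit of the complexes $\Hom{C}{\sigma_{\ge n}Z}$, and inverse limits of complexes are not exact; moreover ``per-degree stabilization'' fails for the same reason as before, since $\Hom CZ_m=\prod_p\Hom{C_p}{Z_{p+m}}$ can have infinitely many nonzero factors when $C$ is unbounded. To close the argument you must either invoke the Milnor/$\llim^1$ exact sequence --- available here because the transition maps are degreewise surjective projections, so $\llim^1$ of the chain modules vanishes and acyclicity of the truncated pieces passes to the limit --- or, more elementarily, drop the filtration and argue directly: interpret $\HH_{-j}(\Hom CZ)$ as homotopy classes of chain maps $C\to\shift^{-j}Z$ and construct a null-homotopy by descending induction on degree, starting above the top nonzero degree of $Z$ and using injectivity of each $Z_i$ at each step. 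Either repair is standard, but as written the limit step is not justified.
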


\subsection*{Homotheties and Semidualizing Modules}
We next explain how the Hom complex relates to the semidualizing modules from Section~\ref{sec120521b}.

\begin{exer}
\label{exer120522c}
Let $X$ be an $R$-complex, and let $r\in R$.
For each $p\in\bbz$, let $\mu^{X,r}_p\colon X_p\to X_p$
be given by $x\mapsto rx$.
(Such a map is a ``homothety''.
When it is convenient, we denote this map as $X\xra rX$.)

Prove that $\mu^{X,r}:=\{\mu^{X,r}_p\}\in\Hom XX_0$ is a chain map.
Prove that for all $i\in\bbz$ the induced map 
$\HH_i(\mu^{X,r})\colon\HH_i(X)\to\HH_i(Y)$
is multiplication by $r$.
\end{exer}

\begin{exer}
\label{exer120522d}
Let $X$ be an $R$-complex. 
We use the notation from Exercise~\ref{exer120522c}.
Define $\chi^X_0\colon R\to\Hom XX$ by the formula
$\chi^X_0(r):=\mu^{X,r}\in\Hom XX_0$.
Prove that this determines a chain map $\chi^X\colon R\to\Hom XX$.
The chain map $\chi^X$ is the ``homothety morphism'' for $X$.
\end{exer}

\begin{fact}
\label{exer120522e}
Let $M$ be a finitely generated $R$-module.
We use the notation from Exercise~\ref{exer120522d}.
The following conditions are equivalent:
\begin{enumerate}[(i)]
\item \label{exer120522e1}
$M$ is a semidualizing $R$-module.
\item \label{exer120522e2}
For each projective resolution $P$ of $M$, the chain map
$\chi^P\colon R\to\Hom PP$ is a quasiisomorphism.
\item \label{exer120522e3}
For some projective resolution $P$ of $M$, the  chain map
$\chi^P\colon R\to\Hom PP$ is a quasiisomorphism.
\item \label{exer120522e4}
For each injective resolution $I$ of $M$, the  chain map
$\chi^I\colon R\to\Hom II$ is a quasiisomorphism.
\item \label{exer120522e5}
For some injective resolution $I$ of $M$, the  chain map
$\chi^I\colon R\to\Hom II$ is a quasiisomorphism.
\end{enumerate}
In some sense, the point is that the homologies of the complexes
$\Hom PP$ and $\Hom II$ are exactly the modules $\Ext iMM$ by Fact~\ref{fact130312a}.
\end{fact}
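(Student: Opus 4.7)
The plan is to reduce each of the chain-map quasiisomorphism conditions to the two-part module-level definition of semidualizing, namely that $\chi^R_M\colon R\to\Hom MM$ is an isomorphism and $\Ext iMM=0$ for all $i\geq 1$. The key ingredients are the computation of $\HH_i(\Hom PP)$ (respectively $\HH_i(\Hom II)$) as $\Ext iMM$, and the identification of $\HH_0(\chi^P)$ (respectively $\HH_0(\chi^I)$) with the module-level homothety map $\chi^R_M$.

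For the projective case, fix a projective resolution $P$ of $M$ with augmentation $\tau\colon P\xra\simeq M$ from Exercise~\ref{exer120522g}. Since $P$ is a bounded-below complex of projectives, Fact~\ref{fact130312a} applied to $\Hom P-$ shows that $\Hom P\tau\colon\Hom PP\to\Hom PM$ is a quasiisomorphism. Combined with Exercise~\ref{exer130314b}\eqref{exer130314b2}, this yields $\HH_i(\Hom PP)\cong\HH_i(\Hom PM)=\Ext iMM$ for all $i$. I would then verify that the composite $\Hom P\tau\circ\chi^P\colon R\to\Hom PM$ sends $r$ to $\tau\circ\mu^{P,r}=\mu^{M,r}\circ\tau$; under the standard identification $\HH_0(\Hom PM)\cong\Hom MM$, this pins down $\HH_0(\chi^P)$ as $\chi^R_M$. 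Consequently $\chi^P$ is a quasiisomorphism precisely when $\Ext iMM=0$ for $i\geq 1$ and $\chi^R_M$ is an isomorphism, i.e., precisely when $M$ is semidualizing. This gives (i)~$\Leftrightarrow$~(ii); the implication (ii)~$\Rightarrow$~(iii) is trivial, and (iii)~$\Rightarrow$~(i) follows from the same calculation applied to any chosen $P$.

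The injective case is entirely analogous: given an injective resolution $I$ of $M$ with augmentation $\epsilon\colon M\xra\simeq I$, Fact~\ref{fact130312a} applied to $\Hom -I$ (using that $I$ is bounded above and consists of injectives) produces a quasiisomorphism $\Hom\epsilon I\colon\Hom II\to\Hom MI$, and the homologies satisfy $\HH_i(\Hom MI)=\Ext iMM$ by Exercise~\ref{exer130314b}\eqref{exer130314b1}. The parallel diagram chase identifies $\HH_0(\chi^I)$ with $\chi^R_M$, yielding (i)~$\Leftrightarrow$~(iv)~$\Leftrightarrow$~(v).

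The main obstacle I anticipate is the careful verification that $\HH_0(\chi^P)$ (respectively $\HH_0(\chi^I)$) corresponds to $\chi^R_M$ under the chain of isomorphisms. This amounts to unwinding the definition of $\chi^P$ from Exercise~\ref{exer120522d} and tracing how multiplication-by-$r$ at the complex level descends to multiplication-by-$r$ on $M=\HH_0(P)$. Once that naturality is in hand, everything else is a direct application of Fact~\ref{fact130312a} and the standard computation of $\Ext{}{}{}$ via projective or injective resolutions.
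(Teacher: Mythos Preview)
Your proposal is correct and follows precisely the approach the paper indicates: the paper states this as a Fact with only the one-line hint that the homologies of $\Hom PP$ and $\Hom II$ compute $\Ext iMM$ via Fact~\ref{fact130312a}, and you have correctly fleshed out that hint, including the extra bookkeeping step of identifying $\HH_0(\chi^P)$ (resp.\ $\HH_0(\chi^I)$) with the module-level homothety $\chi^R_M$.
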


\section{Tensor Products and the Koszul Complex}
\label{sec120522g}

Tensor products for complexes are as fundamental for complexes as they are for modules.
In this section, we use them to construct the Koszul complex; see Definition~\ref{defn120522h}.
In Section~\ref{sec120522d'}, we use them for base change; see, e.g., Exercise~\ref{disc111223a}.

\subsection*{Tensor Product of Complexes}
As with the Hom complex, the sign convention in the next construction guarantees that
it is an $R$-complex; see Exercise~\ref{exer120522k} and Remark~\ref{disc130311}.
Note that Remark~\ref{disc130331b} describes a notational simplification.

\begin{defn}\label{defn120522g}
Fix $R$-complexes $X$ and $Y$.
The \emph{tensor product complex} $\Otimes XY$ is defined as follows.
For each index $n$, set
$(\Otimes XY)_n:=\bigoplus_{p\in\bbz}\Otimes{X_p}{Y_{n-p}}$
and let $\partial^{\Otimes XY}_n$ be given on generators by the formula
$\partial^{\Otimes XY}_n(\ldots,0,x_p\otimes y_{n-p},0,\ldots):=
(\ldots,0,\partial^X_p(x_p)\otimes y_{n-p},(-1)^px_p\otimes \partial^Y_{n-p}(y_{n-p}),0,\ldots)$.
\end{defn}

\begin{exer}
\label{exer120522k}
Let $X$, $Y$, and $Z$ be $R$-complexes.
\begin{enumerate}[(a)]
\item
\label{exer120522k1}
Prove that $\Otimes XY$ is an $R$-complex. 
\item
\label{exer120522k2}
Prove that there is a ``tensor cancellation'' isomorphism
$\Otimes RX\cong X$.
\item
\label{exer120522k3}
Prove that there is a ``commutativity'' isomorphism
$\Otimes XY\cong\Otimes YX$.
(Hint: Mind the signs. This isomorphism
is given by $x\otimes y\mapsto(-1)^{|x||y|}y\otimes x$.)
\item
\label{exer120522k4}
Verify the ``associativity'' isomorphism
$\Otimes{X}{(\Otimes YZ)}\cong\Otimes{(\Otimes XY)}{Z}$.
\end{enumerate}
\end{exer}

\begin{disc}
\label{disc130311}
There is a rule of thumb for sign conventions like the one in the hint for Exercise~\ref{exer120522k}:
whenever two factors $u$ and $v$ are commuted in an expression, you multiply by $(-1)^{|u||v|}$.
This can already be seen in $\partial^{\Hom XY}$ and $\partial^{\Otimes XY}$. This graded commutativity is one of the 
keys to DG algebra. See Section~\ref{sec120522d}.
\end{disc}

\begin{disc}
\label{disc130331b}
After working with the tensor product of complexes for a few moments, one realizes that the sequence notation 
$(\ldots,0,x_p\otimes y_{n-p},0,\ldots)$ is unnecessarily cumbersome. 
We use the sequence notation in a few of the solutions in Appendix~\ref{sec130212a}, but not for many of them.
Similarly, from now on, instead of writing $(\ldots,0,x_p\otimes y_{n-p},0,\ldots)$, we write the simpler
$x_p\otimes y_{n-p}$. As we note in~\ref{para130311e}, one needs to be somewhat careful with this notation,
as elements $u\otimes v$ and $x\otimes y$ only live in the same summand when $|u|=|x|$ and $|v|=|y|$.
\end{disc}

\begin{fact}\label{disc120522f}
Given $R$-complexes $X^1,\ldots, X^n$ an induction argument
using the associativity isomorphism from Exercise~\ref{exer120522k}
shows that the $n$-fold tensor product
$X^1\otimes_R\cdots\otimes_R X^n$ is well-defined (up to isomorphism).
\end{fact}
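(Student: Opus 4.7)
The plan is to argue by induction on $n$, establishing that any two parenthesizations of $X^1 \otimes_R \cdots \otimes_R X^n$ yield canonically isomorphic $R$-complexes. The base cases $n=1$ and $n=2$ are trivial because there is only one way to parenthesize. To set up the inductive step, I would fix a canonical form, namely the left-associated complex
\[
L_n := ((\cdots((X^1 \otimes_R X^2) \otimes_R X^3) \otimes_R \cdots )\otimes_R X^n),
\]
and show that every legal parenthesization $P(X^1,\ldots,X^n)$ is isomorphic to $L_n$.

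Given an arbitrary parenthesization $P$, it decomposes as $P = A \otimes_R B$ for some split point $1 \leq k \leq n-1$, where $A$ is a parenthesization of $X^1,\ldots,X^k$ and $B$ is a parenthesization of $X^{k+1},\ldots,X^n$. By the inductive hypothesis applied to the shorter sequences, there are isomorphisms $A \cong L_k$ and $B \cong L'_{n-k}$, where $L'_{n-k}$ denotes the left-associated product of $X^{k+1},\ldots,X^n$. Since the tensor product construction of Definition~\ref{defn120522g} is functorial in each argument (one simply applies the chain maps termwise in each summand), these isomorphisms combine to give $P \cong L_k \otimes_R L'_{n-k}$. It remains to verify $L_k \otimes_R L'_{n-k} \cong L_n$.

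For this last identification I would run a short inner induction on $n-k$ using the associativity isomorphism of Exercise~\ref{exer120522k}\eqref{exer120522k4}. When $n-k=1$, we have $L'_{n-k} = X^n$ and $L_k \otimes_R X^n = L_n$ by definition. When $n-k>1$, write $L'_{n-k} = L'_{n-k-1} \otimes_R X^n$; associativity yields
\[
L_k \otimes_R (L'_{n-k-1} \otimes_R X^n) \;\cong\; (L_k \otimes_R L'_{n-k-1}) \otimes_R X^n,
\]
and the inner inductive hypothesis identifies the parenthesized factor with $L_{n-1}$, whence the whole complex is isomorphic to $L_{n-1} \otimes_R X^n = L_n$.

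I expect the main obstacle to be purely organizational rather than mathematical: one must keep track of which parenthesizations live on which side of each isomorphism and verify that the named isomorphisms really are chain maps (this is guaranteed by Exercise~\ref{exer120522k}). No coherence-type uniqueness of isomorphism is claimed here, only existence, so there is no need to invoke Mac Lane's pentagon axiom; a careful double induction suffices.
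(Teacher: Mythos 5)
Your proof is correct and follows exactly the route the paper indicates: an outer induction on $n$ to reduce an arbitrary parenthesization to a canonical (left-associated) form, combined with the associativity isomorphism of Exercise~\ref{exer120522k}\eqref{exer120522k4} to merge the two halves. The paper states this as a Fact with only the hint "induction $+$ associativity," and your argument is the standard fleshing-out of that hint.
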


\subsection*{Tensor Products and Chain Maps (Functoriality)}
As for Hom, 
the next construction and  exercise  indicate that $\Otimes Z-$ and $\Otimes -Z$ are functors.

\begin{defn}\label{defn120523a}
Given a chain map $f\colon  X\to Y$ and an $R$-complex $Z$,
we define $\Otimes Zf\colon\Otimes ZX\to\Otimes ZY$
by the formula $z\otimes y\mapsto z\otimes f(y)$.
Define the map 
$\Otimes fZ\colon\Otimes XZ\to\Otimes YZ$
by the formula $x\otimes z\mapsto f(x)\otimes z$.
\end{defn}

\begin{disc}
We do not use a sign-change in this definition
because $|f|=0$.
\end{disc}

\begin{exer}\label{exer120523a}
Given a chain map $f\colon  X\to Y$ and an $R$-complex $Z$,
the maps $\Otimes Zf\colon\Otimes ZX\to\Otimes ZY$
and $\Otimes fZ\colon\Otimes XZ\to\Otimes YZ$
are chain maps.
\end{exer}

\begin{fact}\label{fact120523a}
Let $f\colon  X\xra\simeq Y$ be a quasiisomorphism, and let $Z$ be an $R$-complex. 
In general, the chain map $\Otimes Zf\colon\Otimes ZX\to\Otimes ZY$ is not a 
quasiisomorphism. However, if $Z$ is a complex of projective
$R$-modules such that $Z_i=0$ for $i\ll 0$, then $\Otimes Zf$ is  a 
quasiisomorphism.
\end{fact}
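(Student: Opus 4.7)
The first assertion is shown by a counterexample over $R=\bbz$. Let $X$ be the complex $0\to\bbz\xra{2}\bbz\to 0$ concentrated in degrees $1$ and $0$, let $Y=\bbz/2$ be concentrated in degree $0$, and let $f\colon X\to Y$ be the natural projection, which is a quasiisomorphism. Taking $Z=\bbz/2$, one computes $\HH_1(\Otimes ZX)\cong\bbz/2$ while $\HH_1(\Otimes ZY)=0$, so $\Otimes Zf$ is not a quasiisomorphism.

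For the positive assertion, my plan is the standard mapping-cone reduction. Set $D=\cone(f)$; then $f$ is a quasiisomorphism if and only if $D$ is acyclic, and there is a natural isomorphism $\cone(\Otimes Zf)\cong \Otimes ZD$. Hence it suffices to prove: \emph{if $Z$ is a complex of projective $R$-modules with $Z_i=0$ for $i<n$ and $D$ is an acyclic $R$-complex, then $\Otimes ZD$ is acyclic.}

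For each integer $p$, the complex $\Otimes{Z_p}D$ is acyclic, because $Z_p$ is a direct summand of a free $R$-module $F$ and $\Otimes FD\cong D^{(I)}$ (a direct sum of copies of $D$) is acyclic, so the direct summand $\Otimes{Z_p}D$ is too. To bootstrap from this to $\Otimes ZD$, let $\sigma_{\leq s}Z$ denote the subcomplex of $Z$ with $(\sigma_{\leq s}Z)_i=Z_i$ for $i\leq s$ and $0$ otherwise; this is genuinely a subcomplex because $\partial^Z$ lowers degree. Set $F_s:=\Otimes{\sigma_{\leq s}Z}D\subseteq\Otimes ZD$. Because each inclusion $\sigma_{\leq s-1}Z\hookrightarrow\sigma_{\leq s}Z$ is termwise split, the $F_s$ form an exhaustive increasing filtration of $\Otimes ZD$ by subcomplexes with $F_{n-1}=0$, and each quotient $F_s/F_{s-1}$ is a degree shift of $\Otimes{Z_s}D$.

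The final step is to conclude $\HH(\Otimes ZD)=0$ from this setup. The spectral sequence of the filtration has $E^1_{s,t}=\HH_t(\Otimes{Z_s}D)=0$; convergence to $\HH(\Otimes ZD)$ follows because the induced filtration on homology is bounded below and exhaustive (homology commutes with the directed union $\Otimes ZD=\bigcup_s F_s$). An elementary alternative that avoids spectral sequences is to first prove acyclicity when $Z$ is bounded by induction on the length of $Z$, using the long exact homology sequence associated to the termwise-split short exact sequence $0\to\sigma_{\leq s-1}Z\to\sigma_{\leq s}Z\to Z_s[s]\to 0$, and then to pass to the general bounded-below case via the same directed-union observation. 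I expect this spectral-sequence convergence step (equivalently, ruling out ``phantom'' classes in $\HH(\Otimes ZD)$ when $D$ is unbounded) to be the only delicate point in the argument.
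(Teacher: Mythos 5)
Your proof is correct. The paper states this as a Fact and deliberately supplies no proof (the introduction notes that such Facts are left for the reader), so there is nothing in the text to compare against. Your counterexample over $\bbz$ is the standard one, and the positive direction via the mapping cone together with the brutal filtration $F_s=\Otimes{\sigma_{\leq s}Z}{D}$ is sound. The spectral-sequence convergence step you flag does hold: the filtration is bounded below (since $F_{n-1}=0$) and exhaustive, and because $\Otimes{Z}{D}=\colim_s F_s$ is a filtered colimit and homology commutes with filtered colimits, the induced filtration on each $\HH_m(\Otimes{Z}{D})$ is again exhaustive and bounded below; vanishing of the associated graded then forces $\HH_m(\Otimes{Z}{D})=0$. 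Your alternative---induction on the length of a bounded $Z$ via the long exact sequence of the termwise-split sequence $0\to\sigma_{\leq s-1}Z\to\sigma_{\leq s}Z\to\shift^s Z_s\to 0$ tensored with $D$, followed by passage to the colimit for the general bounded-below case---avoids spectral sequences entirely and is arguably the cleaner route in this setting; either finishes the argument.
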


\subsection*{The Koszul Complex}
Here begins our discussion of the prototypical DG algebra.

\begin{defn}\label{defn120522h}
Let $\x=x_1,\ldots,x_n\in R$.
For $i=1,\ldots,n$ set
$$K^R(x_i)=\quad 0\to R\xra{x_i} R\to 0.$$
Using Remark~\ref{disc120522f}, we set
$$K^R(\x)=K^R(x_1,\ldots,x_n)=K^R(x_1)\otimes_R\cdots\otimes_R K^R(x_n).$$
\end{defn}

\begin{exer}
\label{exer120522j}
Let $x,y,z\in R$.
Write out explicit formulas, using matrices for the differentials,
for $K^R(x,y)$ and $K^R(x,y,z)$.
\end{exer}

\begin{exer}
\label{exer120522l}
Let $\x=x_1,\ldots,x_n\in R$.
Prove that $K^R(\x)_i\cong R^{\binom{n}{i}}$ for all $i\in\bbz$.
(Here we use the convention $\binom{n}{i}=0$ for all $i<0$ and $i>n$.)
\end{exer}

\begin{exer}
\label{exer120522u}
Let $\x=x_1,\ldots,x_n\in R$. Let $\sigma\in S_n$,
and set $\x'=x_{\sigma(1)},\ldots,x_{\sigma(n)}$.
Prove that $K^R(\x)\cong K^R(\x')$.
\end{exer}

Given a generating sequence $\x$ for the maximal ideal of a local ring $R$, one concludes from the next lemma that each homology module $\HH_i(K^R(\x))$
has finite length.
This is crucial for the proof of Theorem~\ref{intthm120521a}.

\begin{lem}
\label{exer120522t}
Let $\x=x_1,\ldots,x_n\in R$, and consider the ideal $\fa=(\x)R$.
Then $\fa\HH_i(K^R(\x))=0$ for all $i\in\bbz$.
\end{lem}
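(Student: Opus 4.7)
The plan is to show that each generator $x_j$ of $\fa$ annihilates every $\HH_i(K^R(\x))$, and then invoke the fact that $\fa = (x_1,\ldots,x_n)R$. By Exercise~\ref{exer120522c}, the homothety chain map $\mu^{K^R(\x),x_j}\colon K^R(\x)\to K^R(\x)$ induces multiplication by $x_j$ on $\HH_i(K^R(\x))$. So it suffices to prove that this homothety is null-homotopic, because then Exercise~\ref{exer120522f}\eqref{exer120522f2} gives $\HH_i(\mu^{K^R(\x),x_j})=0$ for all $i$.

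First I would establish the base case on the single-factor complex $K^R(x_j)=(0\to R\xra{x_j}R\to 0)$, placed in degrees $1$ and $0$. Here the homothety $\mu^{K^R(x_j),x_j}$ is null-homotopic via the sequence $s=\{s_p\}$ with $s_0\colon R\to R$ the identity and $s_p=0$ otherwise: one checks directly $\partial^K_{p+1}s_p+s_{p-1}\partial^K_p=x_j$ in degrees $0$ and $1$, and $0$ elsewhere, matching the criterion of Exercise~\ref{exer120522b}\eqref{exer120522b4}.

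Next I would push this null-homotopy through the tensor product. Writing
\[
K^R(\x)=K^R(x_1)\otimes_R\cdots\otimes_R K^R(x_n),
\]
the homothety $\mu^{K^R(\x),x_j}$ equals the chain map
\[
\mathrm{id}_{K^R(x_1)}\otimes\cdots\otimes\mu^{K^R(x_j),x_j}\otimes\cdots\otimes\mathrm{id}_{K^R(x_n)}
\]
because each tensor factor is an $R$-complex and scalar multiplication by $x_j$ on the entire tensor product may be performed on any single factor. The key lemma I would verify (this is the main, but routine, step) is: if $f\colon X\to X$ is null-homotopic via $\{s_p\}$ and $Y$ is any $R$-complex, then $f\otimes\mathrm{id}_Y\colon X\otimes_R Y\to X\otimes_R Y$ is null-homotopic via the sequence $\{s_p\otimes\mathrm{id}_Y\}$ with appropriate signs dictated by the Koszul sign rule from Remark~\ref{disc130311}. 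Iterating, the null-homotopy on the $j$-th factor tensored with identities on the remaining factors yields a null-homotopy of $\mu^{K^R(\x),x_j}$.

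Combining, $\HH_i(\mu^{K^R(\x),x_j})=0$ by Exercise~\ref{exer120522f}\eqref{exer120522f2}, but this same induced map is multiplication by $x_j$ by Exercise~\ref{exer120522c}. Hence $x_j\HH_i(K^R(\x))=0$ for each $i$ and each $j=1,\ldots,n$, so $\fa\HH_i(K^R(\x))=0$, as desired. The main obstacle is the bookkeeping in the tensor-product null-homotopy step; everything else follows immediately from the exercises already in hand.
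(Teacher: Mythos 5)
Your proof is correct and takes essentially the same route as the paper's: show that the homothety $K^R(x_j)\xra{x_j}K^R(x_j)$ is null-homotopic (via the identity map in degree $0$) and then propagate that homotopy through the tensor product to conclude via Exercise~\ref{exer120522f}. The only cosmetic difference is that the paper first reduces to $j=1$ using the symmetry isomorphism of Exercise~\ref{exer120522u} and then declares the tensor-product propagation ``routine,'' whereas you keep general $j$ and spell out the key lemma (null-homotopies are preserved under $-\otimes_R Y$), which is where the sign bookkeeping you flag lives; otherwise the two arguments coincide.
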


\begin{proof}[Sketch of proof] 
It suffices to show that for $j=1,\ldots, n$ we have 
$x_j\HH_i(K^R(\x))=0$ for all $i\in\bbz$.
By symmetry (Exercise~\ref{exer120522u})
it suffices to show that $x_1\HH_i(K^R(\x))=0$ for all $i\in\bbz$ for $j=1,\ldots, n$.
The following diagram shows that the chain map $K^R(x_1)\xra{x_1}K^R(x_1)$ is null-homotopic.
$$\xymatrix{
0\ar[r]
&R\ar[r]^{x_1}\ar[d]^<<<{x_1}\ar[ld]
&R\ar[r]\ar[d]^<<<{x_1}\ar[ld]^1
&0\ar[ld]
\\
0\ar[r]
&R\ar[r]_{x_1}
&R\ar[r]
&0
}$$
It is routine to show that this implies that the induced map
$K^R(\x)\xra{x_1}K^R(\x)$ is null-homotopic.
The desired conclusion now follows from Exercise~\ref{exer120522f}.
\end{proof}

The next construction allows us to push our complexes around.

\begin{defn}\label{defn120522k}
Let $X$ be an $R$-complex, and let $n\in\bbz$.
The \emph{$n$th suspension} (or \emph{shift}) of $X$ 
is the complex $\shift^nX$ such that
$(\shift^nX)_i:=X_{i-n}$ and
$\partial^{\shift^nX}_i=(-1)^n\partial^X_{i-n}$.
We set $\shift X:=\shift^1X$.
\end{defn}

The next fact is in general quite useful, though we do not exploit it here.

\begin{fact}
\label{exer120522n}
Let $\x=x_1,\ldots,x_n\in R$.
The Koszul complex $K^R(\x)_i$ is ``self-dual'', that is,
that there is an isomorphism of $R$-complexes
$\Hom{K^R(\x)}{R}\cong\shift^nK^R(\x)$.
(This fact is not trivial.)
\end{fact}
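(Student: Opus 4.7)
The plan is to construct an explicit isomorphism using the exterior-algebra basis of the Koszul complex. Identify $K^R(\x)$ in the standard way: for each subset $I = \{i_1 < \cdots < i_k\} \subseteq \{1,\ldots,n\}$, let $e_I \in K^R(\x)_k$ denote the basis element arising from the iterated tensor product $K^R(x_1)\otimes_R \cdots \otimes_R K^R(x_n)$ (so $K^R(\x)_k$ is free of rank $\binom{n}{k}$, matching Exercise~\ref{exer120522l}), and let $e_I^* \in \Hom{K^R(\x)}{R}$ denote the corresponding dual basis element, i.e., the $R$-homomorphism $K^R(\x)_{|I|}\to R$ sending $e_I\mapsto 1$ and $e_J\mapsto 0$ for $J\neq I$. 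Unraveling Definition~\ref{defn120522g} inductively gives the standard Koszul differential $\partial(e_I)=\sum_{j=1}^k (-1)^{j+1} x_{i_j}\, e_{I\setminus\{i_j\}}$.

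I would then define the candidate map $\Phi\colon \shift^n K^R(\x) \to \Hom{K^R(\x)}{R}$ on basis elements by
\[
e_I \;\longmapsto\; \signum{I,I^c}\, e_{I^c}^*,
\]
where $I^c := \{1,\ldots,n\}\setminus I$ and $\signum{I,I^c}$ is the sign of the permutation that rearranges the concatenation of $I$ (in increasing order) and $I^c$ (in increasing order) into $(1,2,\ldots,n)$. Because $|I|+|I^c|=n$, this is compatible with degrees (up to reconciling the shift convention of Definition~\ref{defn120522k} with the homological grading on $\Hom{K^R(\x)}{R}$ computed via Exercise~\ref{exer130314b}), and because $\Phi$ sends the basis $\{e_I\}_I$ bijectively to signs times the dual basis $\{e_{I^c}^*\}_I$, it is automatically an $R$-module isomorphism in every degree.

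The substantive content is verifying that $\Phi$ is a chain map. Expanding both $\Phi\circ\partial^{\shift^n K^R(\x)}(e_I)$ and $\partial^{\Hom{K^R(\x)}{R}}\circ \Phi(e_I)$ in the basis $\{e_{I^c\cup\{i_j\}}^*\}_j$ produces two sums indexed by $j\in\{1,\ldots,|I|\}$, and one checks termwise that the signs agree. This reduces to the combinatorial identity
\[
\signum{I\setminus\{i_j\},\,(I\setminus\{i_j\})^c} \;=\; (-1)^{\text{something}}\,\signum{I,I^c},
\]
where the discrepancy on the right is exactly the sign produced by the $(-1)^{j+1}$ in the Koszul differential, the $(-1)^n$ built into $\partial^{\shift^n(-)}$, and the sign-change $(-1)^n$ in the Hom differential of Definition~\ref{defn120522c}. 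I expect this sign bookkeeping to be the main obstacle; the rest is essentially formal.

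As a more systematic alternative, one can induct on $n$ using $K^R(\x) = K^R(x_1,\ldots,x_{n-1})\otimes_R K^R(x_n)$. The base case $n=1$ is a direct two-term computation: $\Hom{K^R(x_1)}{R}$ is of the form $0\to R\xra{x_1} R\to 0$, which is the appropriate shift of $K^R(x_1)$ up to signs. For the inductive step, apply Hom-tensor adjunction $\Hom{A\otimes B}{R}\cong \Hom{A}{\Hom{B}{R}}$, then invoke the base case on $B=K^R(x_n)$ and the inductive hypothesis on $A=K^R(x_1,\ldots,x_{n-1})$, using that $\Hom{-}{-}$ interchanges with $\shift$ and with tensor products by bounded complexes of finitely generated free modules. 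The advantage is that the sign bookkeeping is isolated to one new variable at each stage; the disadvantage is that one must state and verify the auxiliary commutations of $\Hom$ with $\shift$ and $\otimes_R$ separately.
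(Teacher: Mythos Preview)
Your proposal is correct and in fact goes well beyond what the paper does. The paper states this as a \emph{Fact} without proof, explicitly flagging ``(This fact is not trivial),'' and the only argument it supplies is the solution to Exercise~\ref{exer130314a}, which writes down explicit matrices verifying the isomorphism for $n=1,2,3$ separately. There is no general argument in the paper.

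Both of your approaches are standard and sound. The explicit basis map $e_I\mapsto \signum{I,I^c}\,e_{I^c}^*$ is the usual ``Hodge star'' description of Koszul self-duality, and you are right that the entire content lies in the sign identity; once that is checked, bijectivity is automatic. Your inductive alternative via $K^R(\x)\cong K^R(x_1,\ldots,x_{n-1})\otimes_R K^R(x_n)$ together with Hom--tensor adjunction is equally legitimate and arguably cleaner, since it confines the sign calculation to a single new variable at each step; the auxiliary isomorphisms you need (compatibility of $\Hom{-}{R}$ with $\shift$ and with tensoring by a bounded complex of finitely generated free modules) are routine. Either route would constitute a genuine proof where the paper gives none.
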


\begin{exer}
\label{exer130314a}
Verify the isomorphism from Fact~\ref{exer120522n} for $n=1,2,3$.
\end{exer}

The following result gives the first indication of the utility of the Koszul complex.
We use it explicitly in the proof of Theorem~\ref{intthm120521a}.

\begin{lem}\label{lem120529a}
Let $\x=x_1,\ldots,x_n\in R$.
If $\x$ is $R$-regular, then $K^R(\x)$ is a free resolution of $R/(\x)$ over $R$.
\end{lem}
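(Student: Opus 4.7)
The plan is to prove this by induction on $n$, using the tensor product construction of the Koszul complex together with the long exact sequence in homology coming from a suitable short exact sequence of complexes. Freeness of each term in $K^R(\x)$ is immediate from Exercise~\ref{exer120522l}, so the real content is showing $\HH_0(K^R(\x))\cong R/(\x)$ and $\HH_i(K^R(\x))=0$ for all $i\geq 1$.

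For the base case $n=1$, the complex $K^R(x_1)$ is simply $0\to R\xra{x_1}R\to 0$. Since $x_1$ is $R$-regular, this map is injective, so $\HH_1(K^R(x_1))=0$ and $\HH_0(K^R(x_1))=R/(x_1)$.

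For the inductive step, set $K'=K^R(x_1,\dots,x_{n-1})$, so that by Definition~\ref{defn120522h} we have
\[
K^R(\x)\cong K'\otimes_R K^R(x_n).
\]
The key observation is that $K^R(x_n)$ fits into a short exact sequence $0\to R\to K^R(x_n)\to\shift R\to 0$ of $R$-complexes (where $R$ is viewed as a complex concentrated in degree $0$). Tensoring with $K'$ (which is termwise free, hence flat) yields a short exact sequence
\[
0\to K'\to K^R(\x)\to \shift K'\to 0.
\]
The associated long exact sequence in homology has the form
\[
\cdots\to\HH_i(K')\xra{\pm x_n}\HH_i(K')\to\HH_i(K^R(\x))\to\HH_{i-1}(K')\xra{\pm x_n}\HH_{i-1}(K')\to\cdots,
\]
where the connecting map is (up to sign) multiplication by $x_n$; this can be checked directly from the definition of the connecting homomorphism by lifting a cycle in $\shift K'$ through the middle complex.

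By the inductive hypothesis, $\HH_i(K')=0$ for $i\geq 1$ and $\HH_0(K')\cong R/(x_1,\dots,x_{n-1})$. For $i\geq 2$ the long exact sequence sandwiches $\HH_i(K^R(\x))$ between two zero modules, giving vanishing. For $i=1$ the vanishing reduces to showing that multiplication by $x_n$ on $R/(x_1,\dots,x_{n-1})$ is injective, which is exactly the condition that $x_1,\dots,x_n$ is a regular sequence. Finally, the tail of the long exact sequence identifies $\HH_0(K^R(\x))$ with $R/(x_1,\dots,x_{n-1})$ modulo multiplication by $x_n$, i.e., with $R/(\x)$, completing the induction. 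The main step requiring care is the identification of the connecting map with multiplication by $\pm x_n$; once that is in hand, the rest follows mechanically from the regularity hypothesis.
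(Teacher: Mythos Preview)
Your proof is correct and follows essentially the same route as the paper's: induction on $n$, the short exact sequence $0\to K'\to K^R(\x)\to\shift K'\to 0$ coming from the mapping-cone description of $K'\otimes_R K^R(x_n)$, and the resulting long exact sequence with connecting map $\pm x_n$. The only cosmetic difference is that you obtain the short exact sequence by tensoring $0\to R\to K^R(x_n)\to\shift R\to 0$ with $K'$, whereas the paper writes out the terms $K_i\cong K'_i\oplus K'_{i-1}$ explicitly; these are the same construction.
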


\begin{proof}
Argue by induction on $n$. 

Base case: $n=1$. Assume that $x_1$ is $R$-regular. Since $K^R(x_1)$
has the form $0\to R\xra{x_1}R\to 0$, the fact that $x_1$ is $R$-regular implies that
$\HH_i(K^R(x_1))=0$ for all $i\neq 0$. Since each module in $K^R(x_1)$ is free,
it follows that $K^R(x_1)$ is a free resolution of $R/(x_1)$.

Inductive step: Assume that $n\geq 2$ and that the result holds for regular sequences of
length $n-1$. Assume that $\x$ is $R$-regular (of length $n$). 
Thus, the sequence $\x'=x_1,\ldots,x_{n-1}$ is $R$-regular, and $x_n$ is $R/(\x')$-regular.
The first condition implies that $K':=K^R(\x')$ is a free resolution of $R/(\x')$ over $R$.
By definition, we have $K:=K^R(\x)=K'\otimes_RK^R(x_n)$. 
Further, by definition of $K'\otimes_RK^R(x_n)$, we have
$$K\cong\cdots 
\xra{\left(\begin{smallmatrix}\partial^{K'}_{i+1}&(-1)^{i} x_n\\ 0&\partial^{K'}_{i}\end{smallmatrix}\right)}
\begin{matrix}K'_i\\ \bigoplus \\ K'_{i-1}\end{matrix}
\xra{\left(\begin{smallmatrix}\partial^{K'}_{i}&(-1)^{i-1} x_n\\ 0&\partial^{K'}_{i-1}\end{smallmatrix}\right)}
\begin{matrix}K'_{i-1}\\ \bigoplus \\ K'_{i-2}\end{matrix}
\xra{\left(\begin{smallmatrix}\partial^{K'}_{i-1}&(-1)^{i-2} x_n\\ 0&\partial^{K'}_{i-2}\end{smallmatrix}\right)}
\cdots.$$
Using this, there is a short exact sequence of $R$-complexes and chain maps\footnote{Readers
who are familiar with the mapping cone description of $K$ should not be surprised by this argument.}
$$\xymatrix@R=3mm{
0\ar[r]
&K'\ar[r]
&K\ar[r]
&K''\ar[r]
&0
\\
&\vdots\ar[dd]_{\partial^{K'}_{i+1}}
&\vdots\ar[dd]_{\partial^{K}_{i+1}}
&\vdots\ar[dd]_{\partial^{K'}_{i}}
&
\\ \\
0\ar[r]
&K'_i\ar[r]\ar[dd]_{\partial^{K'}_{i}}
&K_i\ar[r]\ar[dd]_{\partial^{K}_{i}}
&K'_{i-1}\ar[r]\ar[dd]_{\partial^{K'}_{i-1}}
&0
\\ \\
0\ar[r]
&K'_{i-1}\ar[r]\ar[dd]_{\partial^{K'}_{i-1}}
&K_{i-1}\ar[r]\ar[dd]_{\partial^{K}_{i-1}}
&K'_{i-2}\ar[r]\ar[dd]_{\partial^{K'}_{i-2}}
&0
\\
\\
&\vdots
&\vdots
&\vdots
&
}$$
where
$K''$ is obtained by shifting $K'$.\footnote{Note that $K''$ is technically not equal to the complex $\shift K'$ from
Definition~\ref{defn120522k}, since there is no sign on the differential. On the other hand the complexes
$K''$ and $\shift K'$ are isomorphic.}
Furthermore, it can be shown that the long exact sequence in homology
has the form
$$\cdots\HH_i(K')\xra{(-1)^{i} x_n}\HH_i(K')\to\HH_i(K)\to\HH_i(K')\xra{(-1)^{i-1} x_n}\HH_i(K')\to\cdots.$$
Since $K'$ is a free resolution of $R/(\x')$ over $R$, we have $\HH_i(K')=0$ for all $i\neq 0$,
and $\HH_0(K')\cong R/(\x')$.
As $x_n$ is $R/(\x')$-regular, an analysis of the long exact sequence shows that
$\HH_i(K)=0$ for all $i\neq 0$,
and $\HH_0(K)\cong R/(\x)$.
It follows that $K$ is a free resolution of $R/(\x)$, as desired.
\end{proof}

\subsection*{Alternate Description of the Koszul Complex}

The following  description of $K^R(\x)$  says that $K^R(\x)$ is given by the ``exterior algebra'' on $R^n$;
see Fact~\ref{exer120522m}.

\begin{defn}\label{defn120522j}
Let $\x=x_1,\ldots,x_n\in R$.
Fix a basis $e_1,\ldots,e_n\in R^n$.
For $i>1$, set $\bigwedge^iR^n:=R^{\binom{n}{i}}$
with basis given by the set of formal symbols
$e_{j_1}\wedge\cdots\wedge e_{j_i}$ such that $1\leq j_1<\cdots<j_i\leq n$.
This extends to all $i\in\bbz$ as follows:
$\bigwedge^1R^n=R^{n}$ with basis $e_1,\ldots,e_n$
and $\bigwedge^0R^n=R^{1}$ with basis $1$;
for $i<0$, set $\bigwedge^iR^n=R^{\binom{n}{i}}=0$.

Define $\wti K^R(\x)$ as follows.
For all $i\in\bbz$ set
$\wti K^R(\x)_i=\bigwedge^iR^n$,
and let $\partial^{\wti K^R(\x)}_i$ be given on basis vectors by the formula
$$\partial^{\wti K^R(\x)}_i(e_{j_1}\wedge\cdots\wedge e_{j_i})=
\sum_{p=1}^i(-1)^{p+1}x_{j_p}e_{j_1}\wedge\cdots\wedge \widehat{e_{j_p}}\wedge\cdots\wedge e_{j_i}$$ 
where the notation $\widehat{e_{j_p}}$ indicates that
$e_{j_p}$ has been removed from the list.
In the case $i=1$, the  formula reads as
$\partial^{\wti K^R(\x)}_1(e_{j})=x_j$.
\end{defn}

\begin{disc}\label{disc120522k}
Our definition of $\bigwedge^iR^n$ is \emph{ad hoc}.
A better way to think about it (in some respects) is in terms of a universal mapping property
for alternating multilinear maps.
A basis-free construction can be given in terms of a certain quotient of
the $i$-fold tensor product $R^n\otimes_R\cdots\otimes_RR^n$.
\end{disc}

\begin{exer}
\label{exer120522o}
Let $\x=x_1,\ldots,x_n\in R$.
Write out explicit formulas, using matrices for the differentials,
for $\wti K^R(\x)$ in the cases $n=1,2,3$.
\end{exer}

\begin{fact}
\label{exer120522m}
Let $\x=x_1,\ldots,x_n\in R$.
There is an isomorphism of $R$-complexes
$K^R(\x)\cong \wti K^R(\x)$.
(This fact is not trivial. For perspective on this, compare the solutions to Exercises~\ref{exer120522j}
and~\ref{exer120522o} in~\ref{para130312b} and~\ref{para130312e}.)
In light of this fact,  we do not distinguish between $K^R(\x)$ and $\wti K^R(\x)$ for the remainder of these notes.
\end{fact}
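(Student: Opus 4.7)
The plan is to construct an explicit isomorphism $\Phi\colon K^R(\x)\to\wti K^R(\x)$ degree by degree, using the tensor product structure of $K^R(\x)$ to produce a natural basis indexed by subsets of $\{1,\ldots,n\}$.

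First I would fix, for each factor $K^R(x_j)=0\to R\xra{x_j} R\to 0$, a basis element $e_j$ for the copy of $R$ in degree $1$ and the element $1$ for the copy of $R$ in degree $0$. Then an iterated tensor of the form $a_1\otimes a_2\otimes\cdots\otimes a_n$ with $a_j\in\{1,e_j\}$ gives a basis for $K^R(\x)$, and it lies in degree $i$ exactly when the set $J=\{j:a_j=e_j\}$ has cardinality $i$. Writing $J=\{j_1<\cdots<j_i\}$, I would define $\Phi$ on this basis element by sending it to $e_{j_1}\wedge\cdots\wedge e_{j_i}\in\wti K^R(\x)_i$. Bijectivity in each degree is then immediate from Exercise~\ref{exer120522l} together with the definition of $\bigwedge^i R^n$, since both sides are free of rank $\binom{n}{i}$ and $\Phi$ identifies their chosen bases.

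The main work is verifying that $\Phi$ commutes with the differentials, and the main obstacle is bookkeeping of signs coming from the tensor product convention in Definition~\ref{defn120522g}. By an induction argument using the associativity isomorphism (Exercise~\ref{exer120522k}\eqref{exer120522k4}), the differential on an iterated tensor $a_1\otimes\cdots\otimes a_n$ is given by
\[
\partial(a_1\otimes\cdots\otimes a_n)=\sum_{k=1}^n(-1)^{|a_1|+\cdots+|a_{k-1}|}\,a_1\otimes\cdots\otimes\partial(a_k)\otimes\cdots\otimes a_n.
\]
In our situation, the only nonzero summands occur when $a_k=e_k\in K^R(x_k)_1$, in which case $\partial(e_k)=x_k$; and for $k=j_p\in J$ the accumulated sign $(-1)^{|a_1|+\cdots+|a_{j_p-1}|}$ counts the number of indices in $J$ strictly smaller than $j_p$, which is $p-1$. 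Hence the $p$-th summand contributes $(-1)^{p-1}x_{j_p}$ times the basis element indexed by $J\setminus\{j_p\}$, and applying $\Phi$ sends this to exactly $(-1)^{p+1}x_{j_p}\,e_{j_1}\wedge\cdots\wedge\widehat{e_{j_p}}\wedge\cdots\wedge e_{j_i}$, matching the formula for $\partial^{\wti K^R(\x)}$ in Definition~\ref{defn120522j}.

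As an alternative (perhaps cleaner) route, I would proceed by induction on $n$: the base case $n=1$ is trivial, and in the inductive step one identifies $\wti K^R(\x)\cong\wti K^R(x_1,\ldots,x_{n-1})\otimes_R K^R(x_n)$ via the wedge relation $e_{j_1}\wedge\cdots\wedge e_{j_i}\leftrightarrow(e_{j_1}\wedge\cdots\wedge e_{j_{i-1}})\otimes e_{j_i}$ when $j_i=n$, and $\leftrightarrow(e_{j_1}\wedge\cdots\wedge e_{j_i})\otimes 1$ otherwise. One then checks compatibility with the two differentials via the sign rule, reducing the sign bookkeeping to a single commutation at each step. Either way, the only real content is the sign computation, and once it is carried out carefully $\Phi$ is visibly an isomorphism of $R$-complexes.
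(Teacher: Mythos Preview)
Your proof is correct. The paper itself does not prove this statement --- it is presented as a \textbf{Fact} without proof, with only a parenthetical remark that it ``is not trivial'' and a pointer to the explicit $n=1,2,3$ computations in \ref{para130312b} and \ref{para130312e} for perspective. So there is nothing to compare against; you have supplied what the paper omits.

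Your direct approach (defining $\Phi$ on the natural tensor basis and checking the sign $(-1)^{|a_1|+\cdots+|a_{j_p-1}|}=(-1)^{p-1}$) is the standard argument and is cleanly executed. The inductive alternative you sketch is also fine, though the bookkeeping in identifying $\wti K^R(\x)\cong\wti K^R(x_1,\ldots,x_{n-1})\otimes_R K^R(x_n)$ as complexes requires essentially the same sign check, just packaged one variable at a time. Either route is acceptable; the first is more transparent.
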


\begin{disc}\label{disc120522z}
A third description of $K^R(\x)$ involves the mapping cone.
Even though it is extremely useful,
we do not discuss it  in detail here.
\end{disc}

\subsection*{Algebra Structure on the Koszul Complex}
In our estimation, the Koszul complex is one of the most important
constructions in commutative algebra. 
When the sequence $\x$ is $R$-regular, it is an $R$-free resolution
of $R/(\x)$, by Lemma~\ref{lem120529a}. In general, it detects depth and has all scads of other magical properties.
For us, one its most important features is its
algebra structure, which we describe next.

\begin{defn}\label{defn120522l}
Let $n\in\bbn$ and let $e_1,\ldots,e_n\in R^n$ be a basis.
In $\bigwedge^2R^n$, define 
$$
e_{j_2}\wedge e_{j_1}
:=
\begin{cases}
-e_{j_1}\wedge e_{j_2}&
\text{whenever $1\leq j_1<j_2\leq n$} \\
0 &\text{whenever $1\leq j_1=j_2\leq n$.} 
\end{cases}
$$
Extending this bilinearly, we define $\alpha\wedge\beta$ for all
$\alpha,\beta\in\bigwedge^1R^n=R^n$:
write $\alpha=\sum_p\alpha_pe_p$
and $\beta=\sum_q\beta_qe_q$, and define
$$\alpha\wedge\beta
=\left(\sum_p\alpha_pe_p\right)\wedge\left(\sum_q\beta_qe_q\right)
=\sum_{p,q}\alpha_p\beta_qe_p\wedge e_q
=\sum_{p<q}(\alpha_p\beta_q-\alpha_q\beta_p)e_p\wedge e_q.
$$
This extends  to a multiplication
$\bigwedge^1R^n\times\bigwedge^tR^n\to\bigwedge^{1+t}R^n$
using the following formula, assuming that $1\leq i\leq n$ and $1\leq j_1<\cdots<j_t\leq n$:
$$
e_{i}\wedge (e_{j_1}\wedge \cdots\wedge e_{j_t})
:=
\begin{cases}
0 &\text{if $i= j_p$ for some $p$} \\
e_{i}\wedge e_{j_1}\wedge \cdots\wedge e_{j_t}&
\text{if $i<j_1$} \\
(-1)^pe_{j_1}\wedge \cdots \wedge e_{j_p}\wedge e_{i}\wedge\cdots\wedge e_{j_t}&
\text{if $j_p<i<j_{p+1}$} \\
(-1)^te_{j_1}\wedge \cdots\wedge e_{j_t}\wedge e_{i}&
\text{if $j_t<i$.} 
\end{cases}
$$
This extends (by induction on $s$) to a multiplication
$\bigwedge^sR^n\times\bigwedge^tR^n\to\bigwedge^{s+t}R^n$
using the following formula when $i_1<\ldots<i_s$ and $1\leq j_1<\cdots<j_t\leq n$:
$$(e_{i_1}\wedge \cdots\wedge e_{i_s})\wedge (e_{j_1}\wedge \cdots\wedge e_{j_t}):=
e_{i_1}\wedge [(e_{i_2}\wedge \cdots\wedge e_{i_s})\wedge (e_{j_1}\wedge \cdots\wedge e_{j_t})].
$$
This multiplication is denoted as $(\alpha,\beta)\mapsto\alpha\wedge\beta$.
When $s=0$, since $\bigwedge^0R^n=R$,  the usual scalar multiplication 
$R\times\bigwedge^tR^n\to\bigwedge^{t}R^n$ describes multiplication
$\bigwedge^0R^n\times\bigwedge^tR^n\to\bigwedge^{t}R^n$,
and similarly when $t=0$.
This further extends to a well-defined multiplication on 
$\bigwedge R^n:=\bigoplus_i\bigwedge^iR^n$.
\end{defn}

\begin{disc}
According to Definition~\ref{defn120522l}, for $0\in\bigwedge^sR^n$ and $\beta\in\bigwedge^tR^n$,
we have $0\wedge\beta=0=\beta\wedge 0$.
\end{disc}

\begin{ex}
\label{ex130316a}
We compute a few products in $\bigwedge R^4$:
\begin{align*}
(e_1\wedge e_2)\wedge(e_3\wedge e_4)
&=e_1\wedge e_2\wedge e_3\wedge e_4\\
(e_1\wedge e_2)\wedge(e_2\wedge e_3)
&=0 \\
(e_1\wedge e_3)\wedge(e_2\wedge e_4)
&=e_1\wedge [e_3\wedge (e_2\wedge e_4)] \\
&=e_1\wedge [-e_2\wedge e_3\wedge e_4] \\
&=-e_1\wedge e_2\wedge e_3\wedge e_4.
\end{align*}
\end{ex}

\begin{exer}
\label{exer120522p}
Write out multiplication tables (for basis vectors only) for
$\bigwedge R^n$ with $n=1,2,3$.
\end{exer}

Definition~\ref{defn120522l} suggests the next notation, which facilitates many computations.
The subsequent exercises   examplify this, culminating in the important Exercise~\ref{exer120522s}

\begin{defn}
\label{disc130316a}
Let $n\in\bbn$, let $e_1,\ldots,e_n\in R^n$ be a basis,
and let $j_1,\ldots,j_t\in\{1,\ldots,n\}$.
Since multiplication of basis vectors in $\bigwedge R^n$ is defined inductively,
the following element (also defined inductively)
$$
e_{j_1}\wedge\cdots\wedge e_{j_t}:=
\begin{cases}
0&\text{if $j_p=j_q$ for some $p\neq q$} \\
e_{j_1}\wedge(e_{j_2}\wedge\cdots\wedge e_{j_t})
&\text{if $j_p\neq j_q$ for all $p\neq q$.}
\end{cases}
$$
is well-defined.
\end{defn}

\begin{exer}
\label{exer120522q}
Let $n\in\bbn$, and let $e_1,\ldots,e_n\in R^n$ be a basis.
Let $j_1,\ldots,j_t\in\{1,\ldots,n\}$ such that $j_p\neq j_q$ for all $p\neq q$, and let $\iota\in S_n$  such that $\iota$ fixes all elements of $\{1,\ldots,n\}\smallsetminus\{j_1,\ldots,j_t\}$.
Prove that
$$
e_{\iota(j_1)}\wedge\cdots\wedge e_{\iota(j_t)}
=\operatorname{sgn}(\iota)e_{j_1}\wedge\cdots\wedge e_{j_t}
$$
\end{exer}

\begin{exer}
\label{exer120522q'}
Let $n\in\bbn$, and let $e_1,\ldots,e_n\in R^n$ be a basis.
Prove that for all $i_1,\ldots,i_{s},j_1,\ldots,j_t\in\{1,\ldots,n\}$ we have
$$(e_{i_1}\wedge \cdots\wedge e_{i_s})\wedge (e_{j_1}\wedge \cdots\wedge e_{j_t})=
e_{i_1}\wedge \cdots\wedge e_{i_s}\wedge e_{j_1}\wedge \cdots\wedge e_{j_t}$$
and that this is $0$ if there is a repetition in the list $i_1,\ldots,i_{s},j_1,\ldots,j_t$.
(The points here are the order on the subscripts matter in Definition~\ref{defn120522l} and do not matter in Definition~\ref{disc130316a},
so one needs to make sure that the signs that occur from Definition~\ref{defn120522l} agree with those from Definition~\ref{disc130316a}.)
\end{exer}

\begin{exer}
\label{exer120522r}
Let $n\in\bbn$ and let $e_1,\ldots,e_n\in R^n$ be a basis.
Prove that the multiplication from Definition~\ref{defn120522l} makes 
$\bigwedge R^n$ into a graded commutative $R$-algebra.
That is:
\begin{enumerate}[(a)]
\item \label{exer120522r1}
multiplication in $\bigwedge R^n$
is associative, distributive, and unital; 
\item \label{exer120522r2}
for
elements $\alpha\in \bigwedge^s R^n$ and $\beta\in \bigwedge^t R^n$,
we have $\alpha\wedge\beta=(-1)^{st}\beta\wedge\alpha$;
\item \label{exer120522r3}
for $\alpha\in \bigwedge^s R^n$, if $s$ is odd, then $\alpha\wedge\alpha=0$;
and
\item \label{exer120522r4}
the composition $R\xra\cong\bigwedge^0 R^n\xra\subseteq\bigwedge R^n$
is a ring homomorphism, the image of which is contained in the center of $\bigwedge R^n$.
\end{enumerate}
Hint: 
The distributive law holds essentially by definition.
For the other properties in~\eqref{exer120522r1}
and~\eqref{exer120522r2}, prove the desired formula for basis vectors, then 
verify it for general elements using distributivity.
\end{exer}

\begin{exer}
\label{exer130318a}
Let $n\in\bbn$ and let $e_1,\ldots,e_n\in R^n$ be a basis.
Let $\x=x_1,\ldots,x_n\in R$,
and let $j_1,\ldots,j_t\in\{1,\ldots,n\}$.
Prove that the element
$e_{j_1}\wedge\cdots\wedge e_{j_t}$
from Definition~\ref{disc130316a}
satisfies the following formula:
$$\partial^{\wti K^R(\x)}_i(e_{j_1}\wedge\cdots\wedge e_{j_t})=
\sum_{s=1}^t(-1)^{s+1}x_{j_s}e_{j_1}\wedge\cdots\wedge \widehat{e_{j_s}}\wedge\cdots\wedge e_{j_t}.$$ 
(Note that, if $j_1<\cdots<j_t$, then this is the definition of $\partial^{\wti K^R(\x)}_i(e_{j_1}\wedge\cdots\wedge e_{j_t})$.
However, we are not assuming that $j_1<\cdots<j_t$.)
\end{exer}

\begin{exer}
\label{exer120522s}
Let $n\in\bbn$ and let $e_1,\ldots,e_n\in R^n$ be a basis.
Let $\x=x_1,\ldots,x_n\in R$.
Prove that the multiplication from Definition~\ref{defn120522l} satisfies the 
``Leibniz rule'':
for
elements $\alpha\in \bigwedge^s R^n$ and $\beta\in \bigwedge^t R^n$,
we have 
$$\partial^{\wti K^R(\x)}_{s+t}(\alpha\wedge\beta)=
\partial^{\wti K^R(\x)}_{s}(\alpha)\wedge\beta
+(-1)^s\alpha\wedge\partial^{\wti K^R(\x)}_{t}(\beta).$$
Hint: 
Prove the  formula for basis vectors and 
verify it for general elements using distributivity and linearity.
\end{exer}

\section{DG Algebras and  DG Modules I}
\label{sec120522d}

This section introduces the main tools for the proof of Theorem~\ref{intthm120521a}.
This proof is outlined in~\ref{proof120523b}.
From the point of view of this proof, the first important example of a DG algebra  is the Koszul complex; see Example~\ref{ex130331a}.
However, the proof showcases another important example, namely, the DG algebra resolution of Definition~\ref{defn130331}. 

\subsection*{DG Algebras}
The first change of perspective required for the proof of Theorem~\ref{intthm120521a}
is the change from rings to DG algebras.

\begin{defn}
\label{DGK}
A \emph{commutative differential graded algebra over $R$} (\emph{DG $R$-algebra} for short)
is an $R$-complex $A$ equipped with a binary
operation $A\times A\to A$, written as $(a,b)\mapsto ab$ and called
the \emph{product} on $A$, 
satisfying the following properties:\footnote{We
assume that readers at this level are familiar with associative laws and the
like. However, given that the DG universe is riddled with sign conventions, we
explicitly state these laws for the sake of clarity.}
\begin{description}
\item[associative] for all $a,b,c\in A$ we have $(ab)c=a(bc)$;
\item[distributive] for all $a,b,c\in A$ such that 
$|a|=|b|$ we have $(a+b)c=ac+bc$ and $c(a+b)=ca+cb$;
\item[unital] there is an element $1_A\in A_0$ such that for all $a\in A$ we have $1_Aa=a$;
\item[graded commutative] for all $a,b\in A$ we have 
$ba = (-1)^{|a||b|}ab\in A_{|a|+|b|}$, and $a^2=0$ when
$|a|$ is odd; 
\item[positively graded] $A_i=0$ for $i<0$; and
\item[Leibniz Rule] 
for all $a,b\in A$ we have 
$\partial^A_{|a|+|b|}(ab)=\partial^A_{|a|}(a)b+(-1)^{|a|}a\partial^A_{|b|}(b)$.
\end{description}
Given a DG $R$-algebra $A$, the \emph{underlying algebra} is the
graded commutative  $R$-algebra
$\und{A}=\bigoplus_{i=0}^\infty A_i$.
When $R$ is a field and $\rank_R(\bigoplus_{i\geq 0}A_i)<\infty$, we say that $A$ is \emph{finite-dimensional} over $R$.
\end{defn}

It should be helpful for the reader to keep the next two examples in mind for the remainder of these notes.

\begin{ex}\label{ex120523a}
The ring $R$, considered as a complex concentrated in degree 0, is a DG $R$-algebra
such that $\und R=R$.
\end{ex}

\begin{ex}\label{ex130331a}
Given a sequence $\x=x_1,\cdots,x_n\in R$,
the Koszul complex $K=K^R(\x)$ is a DG $R$-algebra
such that $\und{K}=\bigwedge R^n$;
see Exercises~\ref{exer120522r} and~\ref{exer120522s}.
In particular, if $n=1$, then $\und K\cong R[X]/(X^2)$.
\end{ex}

The following exercise is a routine interpretation of the Leibniz Rule.
It is also an important foreshadowing of the final part of the  proof of Theorem~\ref{intthm120521a}
which is given in Section~\ref{sec120522f}. See also Exercise~\ref{fact110216a'}.

\begin{exer}\label{fact110216a}
Let $A$ be a DG $R$-algebra.
Prove that there is a well-defined
chain map $\mu^A\colon \Otimes AA\to A$ given by $\mu^A(a\otimes b)=ab$,
and that $A_0$ is an $R$-algebra.
\end{exer}

The next notion will allow us to transfer information from one DG algebra to another
as in the arguments for $R\to\comp R$ and $R\to\overline R$ described in Section~\ref{sec0}.

\begin{defn}
\label{DGK'}
A \emph{morphism} of DG $R$-algebras is a chain map
$f\colon A\to B$ between DG $R$-algebras  respecting products and multiplicative identities:
$f(aa')=f(a)f(a')$ and $f(1_A)=1_B$.
A morphism of DG $R$-algebras that is also a quasiisomorphism is a
\emph{quasiisomorphism of DG $R$-algebras}.
\end{defn}

Part~\eqref{ex120523a'''1} of the next exercise contains the first morphism of DG algebras that we  use 
in the proof of Theorem~\ref{intthm120521a}.

\begin{exer}\label{ex120523a'''}
Let $A$ be a DG $R$-algebra.
\begin{enumerate}[(a)]
\item\label{ex120523a'''1}
Prove that the map $R\to A$ given by $r\mapsto r\cdot 1_A$ is a morphism of DG $R$-algebras.
As a special case, 
given a sequence $\x=x_1,\cdots,x_n\in R$,
the natural map $R\to K=K^R(\x)$ given by $r\mapsto r\cdot 1_K$ 
is a morphism of DG $R$-algebras.
\item\label{ex120523a'''2}
Prove that the natural inclusion map $A_0\to A$ is a morphism of DG $R$-algebras.
\item\label{ex120523a'''3}
Prove that the natural map $K\to R/(\x)$ is a morphism of DG $R$-algebras
that is a quasiisomorphism if $\x$ is $R$-regular; see Exercise~\ref{exer120522g} and Lemma~\ref{lem120529a}.
\end{enumerate}
\end{exer}

Part~\eqref{ex120523a''2} of the next exercise is needed for the subsequent definition.

\begin{exer}\label{ex120523a''}
Let $A$ be a DG $R$-algebra.
\begin{enumerate}[(a)]
\item
\label{ex120523a''1}
Prove that the condition $A_{-1}=0$ implies that $A_0$ surjects onto $\HH_0(A)$
and that $\HH_0(A)$ is an $A_0$-algebra.  
\item
\label{ex120523a''2}
Prove that 
the $R$-module $A_i$ is an $A_0$-module, and $\HH_i(A)$ is an $\HH_0(A)$-module
for each $i$.
\end{enumerate}
\end{exer}

\begin{defn}\label{defn110216a}
Let $A$ be a DG $R$-algebra. We say that $A$ is \emph{noetherian}
if $\HH_0(A)$ is noetherian and  $\HH_i(A)$ is  finitely generated over $\HH_0(A)$ for all $i\geq 0$.
\end{defn}

\begin{exer}
\label{exer130312b}
Given a sequence $\x=x_1,\cdots,x_n\in R$,
prove that the Koszul complex $K^R(\x)$ is a noetherian DG $R$-algebra.
Moreover, prove that any DG $R$-algebra $A$ such that each $A_i$ is finitely generated
over $R$ is noetherian.
\end{exer}

\subsection*{DG Modules}
In the passage from rings to DG algebras, modules and complexes change to DG modules,
which we describe next.

\begin{defn}
\label{defn130313a}
Let $A$ be a DG $R$-algebra, and let $i$ be an integer. A \emph{differential graded module over} $A$
(\emph{DG $A$-module} for short) is an $R$-complex $M$ equipped with a
binary operation $A\times M\to M$, written as $(a,m)\mapsto am$ and called
the \emph{scalar multiplication} of $A$ on $M$, 
satisfying the following properties:
\begin{description}
\item[associative] for all $a,b\in A$ and $m\in M$ we have $(ab)m=a(bm)$;
\item[distributive] for all $a,b\in A$ and $m,n\in M$ such that 
$|a|=|b|$ and $|m|=|n|$, we have $(a+b)m=am+bm$ and $a(m+n)=am+an$;
\item[unital] for all $m\in M$ we have $1_Am=m$;
\item[graded] for all $a\in A$ and $m\in M$ we have 
$am\in M_{|a|+|m|}$; 
\item[Leibniz Rule] 
for all $a\in A$ and $m\in M$ we have 
$\partial^A_{|a|+|m|}(am)=\partial^A_{|a|}(a)m+(-1)^{|a|}a\partial^M_{|m|}(m)$.
\end{description}
The \emph{underlying $\und{A}$-module} associated to $M$ is the
$\und{A}$-module
$\und{M}=\bigoplus_{j=-\infty}^\infty M_j$.

The $i$th \emph{suspension} of a DG $A$-module $M$
is the DG $A$-module $\shift^iM$ defined by $(\shift^iM)_n :=
M_{n-i}$ and $\partial^{\shift^iM}_n := (-1)^i\partial^M_{n-i}$. 
The scalar multiplication
on $\shift^iM$ is defined by the formula
$a\ast m:=(-1)^{i|a|}a m$.
The notation $\shift M$ is short for $\shift^1M$.
\end{defn}

The next  exercise contains examples that  should be helpful to keep in mind.

\begin{exer}\label{ex110218a'}
\begin{enumerate}[(a)]
\item
\label{ex110218a'1}
Prove that DG $R$-module is just an $R$-complex.
\item
\label{ex110218a'2}
Given a DG $R$-algebra $A$, prove that the complex $A$ is a DG $A$-module
where the scalar multiplication is just the internal multiplication on $A$.
\item
\label{ex110218a'3}
Given a morphism $A\to B$ of DG $R$-algebras, prove that every DG $B$-module 
is a DG $A$-module by restriction of scalars.
As a special case, 
given a sequence $\x=x_1,\cdots,x_n\in R$,
every $R/(\x)$-complex is a DG $K^R(\x)$-module; see Exercise~\ref{ex120523a'''}.
\end{enumerate}
\end{exer}

The operation $X\mapsto\Otimes AX$ described next is ``base change'', which is crucial for our
passage between DG algebras in the proof of Theorem~\ref{intthm120521a}.

\begin{exer}\label{ex120526b}
Let $\x=x_1,\cdots,x_n\in R$, and set $K=K^R(\x)$. Given an $R$-module 
$M$, prove that the complex $\Otimes{K}M$ is a 
DG $K$-module via the multiplication
$a(b\otimes m):=(ab)\otimes m$.
More generally, given an $R$-complex $X$ and a DG $R$-algebra $A$,
prove that
the complex $\Otimes{A}X$ is a 
DG $A$-module via the multiplication
$a(b\otimes x):=(ab)\otimes x$.
\end{exer}

\begin{exer}\label{exer130331a}
Let $A$ be a DG $R$-algebra,  let $M$ be a DG $A$-module, and let $i\in\bbz$.
Prove that $\shift^iM$ is a DG $A$-module.
\end{exer}

The next exercise further foreshadows important aspects of Section~\ref{sec120522f}.

\begin{exer}\label{fact110216a'}
Let $A$ be a DG $R$-algebra, and let $M$ be a DG $A$-module.
Prove that there is a well-defined
chain map $\mu^M\colon \Otimes AM\to M$ given by $\mu^M(a\otimes m)=am$.
\end{exer}

We consider the following example throughout these notes. 
It is simple but demonstrates our constructions.
And even it has some non-trivial surprises.

\begin{ex}\label{ex120526a}
We consider the trivial Koszul complex $U=K^R(0)$:
$$U=\quad 0\to Re\xra 0 R1\to 0.$$
The notation indicates that we are using the basis $e\in U_1$ and $1=1_U\in U_0$.

Exercise~\ref{ex120526b} shows that $R$ is a DG $U$-module.
Another example is the following, again with specified basis in each degree:
$$G=\cdots \xra 1 Re_3\xra 0 R1_2\xra 1Re_1\xra 0R1_0\to 0.$$
The notation for the bases is chosen to help remember the DG $U$-module
structure:
\begin{align*}
1\cdot 1_{2n}&=1_{2n}
&1\cdot e_{2n+1}&=e_{2n+1}
\\
e\cdot 1_{2n}&=e_{2n+1}
&e\cdot e_{2n+1}&=0.
\end{align*}
One checks directly that $G$ satisfies the axioms to be a DG $U$-module.
It is worth noting that $\HH_0(G)\cong R$ and $\HH_i(G)=0$ for all $i\neq 0$.\footnote{For perspective, $G$ is modeled on the  free resolution 
$\cdots \xra{e}R[e]/(e^2)\xra{e}R[e]/(e^2)\to 0$ of $R$ over $R[e]/(e^2)$.}
\end{ex}

We continue with Example~\ref{ex120526a}, but working over 
a field $F$ instead of $R$.

\begin{ex}\label{ex120524e}
We consider the trivial Koszul complex $U=K^F(0)$:
$$U=\quad 0\to Fe\xra 0 F1\to 0.$$
Consider the graded vector space $W=\bigoplus_{i\in\bbz}W_i$, where $W_0=F\eta_0\cong F$ with basis $\eta_0$
and $W_i=0$ for $i\neq 0$:
$$W=\quad 0\bigoplus F\nu_0\bigoplus 0.$$
We are interested in identifying all the possible DG $U$-module structures on $W$,
that is, all possible differentials 
$$0\to F\nu_0\to 0$$
and rules for scalar multiplication making this into a DG $U$-module.
See Section~\ref{sec120522f} for more about this.

The given vector space $W$ has exactly one DG $U$-module structure.
To see this, first note that we have no choice for the differential
since it maps $W_i\to W_{i-1}$ and at least one of these modules is 0; hence 
$\partial_i=0$ for all $i$.
Also, we have no choice for the scalar multiplication: 
multiplication by $1=1_U$ must be the identity,
and multiplication by $e$ maps $W_i\to W_{i+1}$
and at least one of these modules is 0.
(This example is trivial, but it will be helpful later.)

Similarly, we consider the graded vector space
$$W'=\quad 0\bigoplus F\eta_1\bigoplus F\eta_0\bigoplus 0.$$
This vector space allows for one possibly non-trivial differential
$$\partial'_1\in\Hom[F]{F\eta_1}{F\eta_0}\cong F.$$
So, in order to make $W'$ into an $R$-complex, we need to choose an element $x_1\in F$:
$$ (W',x_1)=\quad 0\to F\eta_1\xra{x_1} F\eta_0\to 0.$$
To be explicit, this means that $\partial'_1(\eta_1)=x_1\eta_0$, and hence $\partial'_1(r\eta_1)=x_1r\eta_0$ for all $r\in F$.
Since $W'$ is concentrated in degrees 0 and 1, this is  an $R$-complex.

For the scalar multiplication of $U$ on the complex $(W',x_1)$, again multiplication by 1 must be the identity,
but multiplication by $e$ has one nontrivial option
$$\mu'_0\in\Hom[F]{F\eta_0}{F\eta_1}\cong F$$
which we identify with an element $x_0\in F$.
To be explicit, this means that $e\eta_0=x_0\eta_1$, and hence $er\eta_0=x_0r\eta_1$ for all $r\in F$.

For the Leibniz Rule to be satisfied, we must have
\begin{align*}
\partial'_{i+1}(e\cdot \eta_i)
&=\partial^U_1(e)\cdot \eta_i+(-1)^{|e|}e\cdot \partial'_i(\eta_i)
\intertext{for $i=0,1$. We begin with $i=0$:}
\partial'_{1}(e\cdot \eta_0)
&=\partial^U_1(e)\cdot \eta_0+(-1)^{|e|}e\cdot \partial'_0(\eta_0)\\
\partial'_{1}(x_0\eta_1)&=0\cdot \eta_0-e\cdot 0\\
x_0\partial'_{1}(\eta_1)&=0\\
x_0x_1\eta_0&=0
\intertext{so we have $x_0x_1=0$, that is, either $x_0=0$ or $x_1=0$.
For $i=1$, we have}
\partial'_{2}(e\cdot \eta_1)&=\partial^U_1(e)\cdot \eta_1+(-1)^{|e|}e\cdot \partial'_1(\eta_1)
\\
0&=0\cdot \eta_1-e\cdot (x_1\eta_0)
\\
0&=-x_1e\cdot \eta_0
\\
0&=-x_1x_0 \eta_1
\end{align*}
so we again conclude that $x_0=0$ or $x_1=0$.
One can check the axioms from Definition~\ref{defn130313a} to see that either of these choices
gives rise to a DG $U$-module structure on $W'$.
In other words, the DG $U$-module structures on $W'$ are parameterized by the following algebraic subset of $F^2=\bba^2_F$
$$\{(x_0,x_1)\in\bba^2_F\mid x_0x_1=0\}=V(x_0)\cup V(x_1)$$
which is the union of the two coordinate axes in $\bba^2_F$.
This is one of the fundamental points of Section~\ref{sec120522f}, that DG module structures on a fixed finite-dimensional graded vector
space are parametrized by algebraic varieties.
\end{ex}

Homologically finite DG modules, defined next, take the place of finitely generated modules in our passage to the DG universe.

\begin{defn}
\label{defn110218b}
Let $A$ be a DG $R$-algebra. A DG $A$-module
$M$ is \emph{bounded below} if $M_n=0$ for all $n\ll 0$;
and it is \emph{homologically finite} if each $\HH_0(A)$-module $\HH_n(M)$ is finitely generated
and $\HH_n(M)=0$ for $|n|\gg 0$.
\end{defn}

\begin{ex}\label{ex120526c}
In Exercise~\ref{ex120526b}, the
DG $K$-module $R/(\x)$ is bounded below and homologically finite.
In Example~\ref{ex120526a}, the
DG $U$-modules $R$ and $G$ are bounded below and homologically finite.
In Example~\ref{ex120524e}, the
DG $U$-module structures on $W$ and $W'$ are bounded below and homologically finite.
\end{ex}

\subsection*{Morphisms of DG Modules}
In the passage from modules and complexes to DG modules, homomorphisms and chain maps are replaced with morphisms.

\begin{defn}\label{defn120523b}
A \emph{morphism} of DG $A$-modules is a chain map
$f\colon M\to N$ between DG $A$-modules that respects scalar multiplication:
$f(am)=af(m)$.
Isomorphisms in the category of DG $A$-modules are identified by the
symbol $\cong$. 
A \emph{quasiisomorphism} of DG $A$-modules is a morphism $M\to N$ such that each induced map
$\HH_i(M)\to\HH_i(N)$ is an isomorphism, i.e., a morphism of DG $A$-modules that is
a quasiisomorphism of $R$-complexes; these are identified by the symbol $\simeq$.
\end{defn}

\begin{disc}\label{ex110218a}
A morphism 
of DG $R$-modules is simply a chain map, and  a quasiisomorphism 
of DG $R$-modules is simply a quasiisomorphism in the sense of
Definition~\ref{defn120522d}.
Given a DG $R$-algebra $A$, a morphism of DG $A$-modules is an isomorphism if and only if 
it is injective and surjective.
\end{disc}

The next exercise indicates that base change is a functor; see Exercise~\ref{ex120526b}.

\begin{exer}
\label{exer130313a}
Let $\x=x_1,\cdots,x_n\in R$, and set $K=K^R(\x)$. 
\begin{enumerate}[(a)]
\item
\label{exer130313a1}
Given an $R$-module homomorphism
$f\colon M\to N$, prove that the chain map $\Otimes{K}f\colon \Otimes{K}M\to \Otimes{K}N$ 
is a morphism of
DG $K$-modules.
More generally, given a chain map of $R$-complexes 
$g\colon X\to Y$ and a DG $R$-algebra $A$, prove that
the chain map $\Otimes{A}g\colon \Otimes{A}X\to \Otimes{A}Y$ 
is a morphism of
DG $A$-modules. 
\item
\label{exer130313a2}
Give an example showing that if $g$ is a quasiisomorphism, then $\Otimes Ag$ need not be a 
quasiisomorphism. (Note that if $A_i$ is is $R$-projective for each $i$
(e.g., if $A=K$),
then $g$ being a quasiisomorphism implies that $\Otimes Ag$ is a 
quasiisomorphism by Fact~\ref{fact120523a}.)
\item
\label{exer130313a3}
Prove that the natural map $K\to R/(\x)$ is a morphism of DG $K$-modules.
More generally, prove that every morphism $A\to B$ of DG $R$-algebras is a morphism
of DG $A$-modules, where $B$ is a DG $A$-module via restriction of scalars.
\end{enumerate}
\end{exer}

Next, we use our running example to provide some morphisms of DG modules.

\begin{ex}\label{ex120526d}
We continue with the notation of Example~\ref{ex120526a}.

Let $f\colon G\to\shift R$ be a morphism of DG $U$-modules:
$$
\xymatrix{
G=\!\!\!\!\!\!\ar[d]_-f&\cdots \ar[r]^-1 &Re_3\ar[r]^-0 &R1_2\ar[r]^-1\ar[d]
&Re_1\ar[r]^-0\ar[d]_-{f_1}&R1_0\ar[r]\ar[d]&0
\\
\shift R&&&0\ar[r]&R\ar[r]&0.}
$$
Commutativity of the first square shows that $f=0$. 
One can also see this from the following computation:
$$f_1(e_1)=f_1(e\cdot 1_0)
=ef_0(1_0)=0.$$
That is, the only morphism of DG $U$-modules $G\to\shift R$ is the zero-morphism.
The same conclusion holds for any morphism $G\to\shift^{2n+1} R$ with $n\in\bbz$.

On the other hand, for each $n\in\bbn$, every element $r\in R$ determines a
morphism $g^{r,n}\colon G\to\shift^{2n} R$, via multiplication by $r$.
For instance in the case $n=1$:
$$
\xymatrix{
G=\!\!\!\!\!\!\ar[d]_-{g^{r,1}}&\cdots \ar[r]^-1 &Re_3\ar[r]^-0 \ar[d]&R1_2\ar[r]^-1\ar[d]_-{g_2^{r,1}}^{=r\cdot}
&Re_1\ar[r]^-0\ar[d]&R1_0\ar[r]&0
\\
\shift^2 R=&&0\ar[r]&R\ar[r]&0.}
$$
Each square commutes, and the linearity condition is from the next computations:
\begin{gather*}
g_2^{r,1}(1\cdot 1_2)=g_2^{r,1}(1_2)=r=1\cdot r=1\cdot g_2^{r,1}(1_2)
\\
g_2^{r,1}(e\cdot e_1)=g_2^{r,1}(0)=0=e\cdot 0=e\cdot g_1^{r,1}(e_1)\\
g_3^{r,1}(e\cdot 1_2)=g_3^{r,1}(e_3)=0=e\cdot r=e\cdot g_2^{r,1}(1_2).
\end{gather*}
Further, the isomorphism $\Hom RR\cong R$ shows that each morphism $G\to\shift^{2n} R$
is of the form $g^{r,n}$.
Also, one checks readily that the  map
$G\xra{g^{u,n}}\shift^{2n}R$ is a quasiisomorphism for each unit $u$ of $R$.
\end{ex}

\subsection*{Truncations of DG Modules}
The next operation allows us to replace a given DG module with a ``shorter'' one;
see Exercise~\ref{disc110302a}\eqref{disc110302a2}.

\begin{defn}\label{truncations}
Let $A$ be a DG $R$-algebra, and let $M$ be a DG $A$-module.
The \emph{supremum} of $M$ is
$$\sup(M):=\sup\{i\in\bbz\mid\HH_i(M)\neq 0\}.$$
Given an integer $n$, the \emph{$n$th soft left truncation of $M$} is the complex
$$
\tau(M)_{(\leq n)}:=\quad 0\to M_n/\im(\partial^M_{n+1})\to M_{n-1}\to M_{n-2}\to \cdots
$$
with differential induced by $\partial^M$.
\end{defn}

\begin{ex}\label{ex120526h}
We continue with the notation of Example~\ref{ex120526a}.
For each $n\geq 1$, set $i=\lfloor n/2\rfloor$ and prove that
$$\tau(G)_{(\leq n)}=\quad
0\to R1_{2i}\xra 1Re_{2i-1}
\xra 0 \cdots \xra 1 Re_3\xra 0 R1_2\xra 1Re_1\xra 0R1_0\to 0.$$
\end{ex}

\begin{disc}\label{disc120530a}
Let $P$ be a projective resolution of an $R$-module $M$,
with $P^+$ denoting the augmented resolution as in Example~\ref{ex120522a}.
Then one has
$\tau(P)_{(\leq 0)}\cong M$.
In particular, $P$ is a projective resolution of $\tau(P)_{(\leq 0)}$.
\end{disc}

\begin{exer}\label{disc110302a}
Let $A$ be a DG $R$-algebra,  let $M$ be a DG $A$-module,
and let $n\in\bbz$.
\begin{enumerate}[(a)]
\item
\label{disc110302a1}
Prove that the truncation $\tau(M)_{(\leq n)}$ is a DG $A$-module with the induced
scalar multiplication,
and the natural chain map
$M\to \tau(M)_{(\leq n)}$ is a morphism  of DG $A$-modules.
\item
\label{disc110302a2}
Prove that the morphism
from part~\eqref{disc110302a1} is a quasiisomorphism if and only if $n\geq \sup(M)$.
\end{enumerate}
\end{exer}

\subsection*{DG Algebra Resolutions}
The following fact provides the final construction needed to
give an initial sketch of the proof of Theorem~\ref{intthm120521a}.

\begin{fact}\label{fact120523b}
Let $Q\to R$ be a ring epimorphism. Then there is a
quasiisomorphism $A\xra\simeq R$ of DG $Q$-algebras
such that each $A_i$ is finitely generated and projective over $R$
and $A_i=0$ for $i>\pd_Q(R)$.
See, e.g., \cite[Proposition 2.2.8]{avramov:ifr}.
\end{fact}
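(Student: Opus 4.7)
The plan is to construct $A$ by the Tate/Avramov procedure of inductively adjoining free variables to kill homology, and then extract the length bound from the finiteness of $\pd_Q(R)$. Throughout, I will assume the stated assertion should read ``projective over $Q$,'' since the target category is DG $Q$-algebras and the reference~\cite{avramov:ifr} works over the base ring.

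First, set $A^{(0)}=Q$ and let $I=\ker(Q\to R)$. Choose a finite generating set $y_1,\dots,y_{n_1}\in I$ and form the ``exterior extension'' $A^{(1)}=A^{(0)}\langle T_1,\dots,T_{n_1}\mid\partial T_i=y_i,\ |T_i|=1\rangle$. Concretely, the underlying graded algebra is the exterior algebra on a finite free $Q$-module in degree $1$, so each $(A^{(1)})_i$ is finitely generated free over $Q$, and $\HH_0(A^{(1)})\cong R$ by construction. Next, assuming inductively that a DG $Q$-algebra $A^{(k)}$ has been built with $\HH_0(A^{(k)})\cong R$ and $\HH_i(A^{(k)})=0$ for $0<i<k$ and with each $(A^{(k)})_i$ finitely generated free over $Q$, pick representatives $\wti z_1,\dots,\wti z_{n_{k+1}}\in (A^{(k)})_k$ for a finite set of $R$-module generators of $\HH_k(A^{(k)})$ and form $A^{(k+1)}=A^{(k)}\langle T_1,\dots,T_{n_{k+1}}\mid \partial T_j=\wti z_j,\ |T_j|=k+1\rangle$, using exterior variables when $k+1$ is odd and divided-power variables when $k+1$ is even so that the graded-commutative and Leibniz axioms of Definition~\ref{DGK} remain valid over an arbitrary base. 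Each $(A^{(k+1)})_i$ is again finitely generated free over $Q$, and by design the classes $z_j\in\HH_k(A^{(k)})$ become boundaries in $A^{(k+1)}$, so $\HH_k(A^{(k+1)})=0$.

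Passing to the colimit $A=\colim_k A^{(k)}$ yields a DG $Q$-algebra whose homology is concentrated in degree $0$ with $\HH_0(A)\cong R$, and the canonical augmentation $A\to R$ is then a quasiisomorphism of DG $Q$-algebras by Exercise~\ref{ex120523a'''}\eqref{ex120523a'''3}.

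The main obstacle is the length bound: the raw Tate construction can proceed indefinitely, so the finiteness $p:=\pd_Q(R)<\infty$ must be used to force termination. I would handle this by a comparison argument with a fixed finite $Q$-projective resolution $P\xra\simeq R$ of length $p$. At each inductive stage, the underlying complex $\und{A^{(k)}}$ is a bounded-below complex of finitely generated projective $Q$-modules, so by the standard lifting lemma there is a chain map $\und{A^{(k)}}\to P$ over $R$, unique up to homotopy; this chain map is a quasiisomorphism in degrees $<k$ once $A^{(k)}$ has been arranged to kill all lower homology. A minimality argument (or, when $Q$ is local, passing to a minimal Tate construction as in Avramov's semifree model) then shows that the cycles one would need to kill in degrees $>p$ are already boundaries, so no further variables need to be adjoined above degree $p$. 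This supplies a model $A$ with $A_i=0$ for $i>p$ while preserving the property that each $A_i$ is finitely generated projective (in fact free) over $Q$, completing the proof.
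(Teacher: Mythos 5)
You are right that the paper's ``projective over $R$'' is almost certainly a typo for ``projective over $Q$'': the $A_i$ are $Q$-modules, and the reference works over $Q$. Your Tate-construction setup for building \emph{some} DG algebra resolution is also sound. The genuine gap is in the last paragraph, where you try to extract the length bound $A_i=0$ for $i>p:=\pd_Q(R)$ by arguing that ``no further variables need to be adjoined above degree $p$.'' This is not how the finiteness enters, and as stated the claim is false. Two problems:

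\emph{First,} even if at some stage $k$ you stop adjoining new variables, the existing ones already produce nonzero components in arbitrarily high degree. Exterior variables in degree $1$ contribute up to degree $n_1$; but the moment you adjoin a single divided-power variable $T$ of even degree $d\ge 2$, the free $\Gamma$-algebra on $T$ has a nonzero component in every degree $0,d,2d,3d,\dots$. So ``stop adding variables'' does not give a bounded complex. \emph{Second,} even the underlying complex of the Tate construction is typically \emph{not} a minimal resolution and can be longer than $p$ before you have killed all the homology. For a concrete instance, take $Q=k[\![x,y]\!]$ and $R=Q/(x^2,xy,y^2)$; here $p=2$ by Hilbert--Burch, but the first Tate stage $A^{(1)}=K^Q(x^2,xy,y^2)$ is already nonzero in degree $3$, and since $(x^2,xy,y^2)$ is not a regular sequence you must then adjoin degree-$2$ divided-power variables, so the complex is nonzero in \emph{all} even degrees. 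No ``minimality of the Tate construction'' will prune this back to length $2$.

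The standard fix is truncation, not termination. Build $A'$ by Tate as you did (possibly of infinite length), so that $A'\xra{\simeq}R$ with each $A'_i$ finitely generated and free over $Q$. Because $\pd_Q(R)=p<\infty$, the $p$th syzygy $A'_p/\im(\partial^{A'}_{p+1})$ is finitely generated projective. Now set $A:=\tau(A')_{(\leq p)}$. The subcomplex $N:=(\cdots\to A'_{p+1}\to\im(\partial^{A'}_{p+1})\to 0)$ is a DG ideal of $A'$: for $a\in A'_j$ and $n\in N_i$ with $i\ge p$, either $i+j>p$ so $an\in A'_{i+j}=N_{i+j}$ automatically, or $i=p$, $j=0$, and then $n=\partial(m)$ gives $an=\partial(am)-\partial(a)m=\partial(am)\in\im(\partial^{A'}_{p+1})$ by Leibniz since $\partial_0=0$. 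Hence $A=A'/N$ inherits a DG $Q$-algebra structure, the natural map $A'\to A$ is a quasiisomorphism (because $p\ge\sup A'=0$), and $A\xra{\simeq}R$ is the desired resolution with $A_i$ finitely generated projective over $Q$ and $A_i=0$ for $i>p$. Everything you set up remains usable; the place where $\pd_Q(R)<\infty$ actually does its work is in guaranteeing the projectivity of the truncated top module, not in stopping the Tate process.
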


\begin{defn}
\label{defn130331}
In Fact~\ref{fact120523b}, the quasiisomorphism $A\xra\simeq R$ is 
a \emph{DG algebra resolution} of $R$ over $Q$. 
\end{defn}

\begin{disc}\label{disc130625a}
When $\y\in Q$ is a $Q$-regular sequence, the Koszul complex
$K^Q(\y)$ is a DG algebra resolution of $Q/(\y)$ over $Q$ by Lemma~\ref{lem120529a} and Example~\ref{ex130331a}.
Section~\ref{sec120522a} contains other famous examples.
\end{disc}

\begin{ex}\label{ex130625a}
Let $Q$ be a  ring, and consider an ideal $I\subsetneq Q$.
Assume that the quotient $R:=Q/I$ has $\pd_Q(R)\leq 1$.
Then every projective resolution of $R$ over $Q$ of the form $A=(0\to A_1\to Q\to 0)$
has the structure of a DG algebra resolution.
\end{ex}

We conclude this section with the beginning of the proof of Theorem~\ref{intthm120521a}.
The rest of the proof is contained in~\ref{proof120523a} and~\ref{proof120523c}.

\begin{para}[\textbf{First part of the  Proof of Theorem~\ref{intthm120521a}}]
\label{proof120523b}
There is a flat
local ring homomorphism $R\to R'$ such that
$R'$ is complete with algebraically closed residue field, as in the proof of Theorem~\ref{intthm120522b}.
Since there is a 1-1 function $\s_0(R)\into\s_0(R')$ by Fact~\ref{fact120522a}, we can
replace $R$ with $R'$ and
assume without loss of generality that $R$ is complete with algebraically
closed residue field.

Since $R$ is complete and local, Cohen's structure theorem
provides a ring epimorphism $\tau\colon (Q,\n,k)\to (R,\m,k)$
where $Q$ is a complete regular local ring
such that $\m$ and $\n$ have the same minimal number of generators.
Let $\y=y_1,\ldots,y_n\in\n$ be a minimal generating sequence for $\n$,
and set $\x=x_1,\ldots,x_n\in\m$ where $x_i:=\tau(y_i)$.
It follows that we have $K^R(\x)\cong\Otimes[Q]{K^Q(\y)}{R}$.
Since $Q$ is regular and $\y$ is a minimal generating sequence for $\n$,
the Koszul complex $K^Q(\y)$ is a minimal $Q$-free resolution of $k$
by Lemma~\ref{lem120529a}.

Fact~\ref{fact120523b} provides
a DG algebra resolution $A\xra\simeq R$ of $R$ over $Q$.
Note that $\pd_Q(R)<\infty$ since $Q$ is regular.
We consider the following diagram of morphisms of DG $Q$-algebras:
\begin{equation}\label{eq120523a}
R\to K^R(\x)\cong\Otimes[Q]{K^Q(\y)}{R}\xla\simeq\Otimes[Q]{K^Q(\y)}{A}\xra\simeq \Otimes[Q]{k}{A}=:U.
\end{equation}
The first map is from Exercise~\ref{ex120523a'''}.
The isomorphism is from the previous paragraph. 
The first quasiisomorphism comes from an application of
$\Otimes[Q]{K^Q(\y)}-$ to the quasiisomorphism $R\xla\simeq A$,
using Fact~\ref{fact120523a}.
The second quasiisomorphism comes from an application of
$\Otimes[Q]-A$ to the quasiisomorphism $K^Q(\y)\xra\simeq k$.
Note that $\Otimes[Q]{k}{A}$ is a finite dimensional DG $k$-algebra
because of the assumptions on $A$.

We show in~\ref{proof120523a} below how this provides a diagram 
\begin{equation}\label{eq120523b}
\s_0(R)\into\s(R)\xra{\equiv} \s(K^R(\x))
\xla\equiv\s(\Otimes[Q]{K^Q(\y)}{A})\xra\equiv \s(U)
\end{equation}
where $\equiv$ identifies bijections of sets.
We then show in~\ref{proof120523c} 
that $\s(U)$ is finite, and it follows that
$\s_0(R)$ is finite, as desired.
\end{para}

\section{Examples of Algebra Resolutions}
\label{sec120522a}

\newcommand{\Pf}{\operatorname{Pf}}
\newcommand{\dell}{\partial}

Remark~\ref{disc130625a} and Example~\ref{ex130625a} 
provide constructions of DG algebra resolutions, in particular, for rings of projective dimension at most 1. 
The point of this section is to extend this to rings of projective dimension 2 and  rings of projective dimension 3 determined by
Gorenstein ideals.

\begin{defn}
Let $I$ be an ideal of a local ring $(R,\fm)$.  The \emph{grade} of $I$ in $R$, denoted $\grade_R(I)$, is defined to be the length of the longest regular sequence of $R$ contained in $I$.  Equivalently, we have
\[
\grade_R(I):=\min\left\{i\mid\operatorname{Ext}_R^i(R/I,R)\neq 0\right\}
\]
and it follows that $\grade_R(I)\leq\pd_R(R/I)$.
We say that $I$ is \emph{perfect} if $\grade_R(I)=\pd_R(R/I)<\infty$.  In this case, $\operatorname{Ext}_R^i(R/I,R)$ is non-vanishing precisely when $i=\pd_R(R/I)$.  If, in addition, this single non-vanishing cohomology module is isomorphic to $R/I$, then $I$ is said to be \emph{Gorenstein}.
\end{defn}

\begin{notn}
Let $A$ be a matrix\footnote{While much of the work in this section can be done basis-free, 
the formulations are somewhat more transparent  when bases are specified and matrices are used to represent homomorphisms.}
over $R$ and $J,K\subset\bbn$.
The submatrix of $A$ obtained by deleting columns indexed by $J$ and rows indexed by $K$ is denoted $A^J_K$.  For simplicity, we abbreviate $A^{\emptyset}_{\{i\}}$  as $A_i$, and so on. Let $I_n(A)$ be the ideal of $R$ generated by the ``$n\times n$ minors'' of $A$,
that is, the determinants of the $n\times n$ matrices of the form $A^J_K$.
\end{notn}

\subsection*{Resolutions of length two}\label{sec:length2}

The following result, known as the Hilbert-Burch Theorem, provides a characterization of perfect ideals of grade two.  It was first proven by Hilbert in 1890 in the case that $R$ is a polynomial ring \cite{hilbert:taf}; the more general statement was proven by Burch in 1968 \cite[Theorem 5]{burch:ifhdlr}.

\begin{thm}[\protect{\protect{\cite{burch:ifhdlr,hilbert:taf}}}]\label{thm:codim2}
Let $I$ be an ideal of the local ring $(R,\m)$.
\begin{enumerate}[\rm(a)]
\item \label{item130625a}
If $\pd_R(R/I)=2$, then 
\begin{enumerate}[\rm(1)]
\item 
there is a non-zerodivisor $a\in R$ such that
$R/I$ has a projective resolution of the form
$0\to R^n\xra{A} R^{n+1}\xra{B} R\to 0$
where
$B$ is the $1\times (n+1)$ matrix with $i$th column given by  $(-1)^{i-1}a\det(A_i)$,
\item
one has $I=aI_n(A)$, and
\item
the ideal $I_n(A)$ is perfect of grade 2.
\end{enumerate}
\item \label{item130625b}
Conversely, if $A$ is an $(n+1)\times n$ matrix over $R$ such that $\grade(I_n(A))\geq 2$,
then 
$R/I_n(A)$ has a projective resolution of the form
$0\to R^n\xra{A} R^{n+1}\xra{B} R\to 0$ where
$B$ is the $1\times (n+1)$ matrix with $i$th column given by  $(-1)^{i-1}\det(A_i)$.
\end{enumerate}
\end{thm}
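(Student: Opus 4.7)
The plan is to treat both parts together using the Buchsbaum--Eisenbud ``what makes a complex exact'' criterion, applied to both a minimal free resolution and its dual. For part~(a), since $(R,\m)$ is local with $\pd_R(R/I)=2$, there is a minimal free resolution $0\to R^n\xra{A}R^m\xra{B}R\to R/I\to 0$; the hypothesis $\pd_R(R/I)\geq 1$ forces $\grade(I)\geq 1$ so $R/I$ is torsion, and the alternating sum of ranks then gives $m=n+1$. Define $B'_i:=(-1)^{i-1}\det(A_i)\in R$, so that $I_n(A)=(B'_1,\ldots,B'_{n+1})$. The identity $B'A=0$ is the Laplace expansion of the $(n+1)\times(n+1)$ determinant obtained by prepending the $j$th column of $A$ to $A$, which vanishes because of the repeated column.

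Applying the Buchsbaum--Eisenbud acyclicity criterion to the given resolution yields $\grade(I_n(A))\geq 2$. Applying it a second time to the dualized complex $0\to R\xra{(B')^T}R^{n+1}\xra{A^T}R^n$ shows that complex is exact at $R^{n+1}$, so $\ker(A^T)=R\cdot(B')^T$ is cyclic. Since $B^T\in\ker(A^T)$, there is a unique $a\in R$ with $B=aB'$, i.e., $B_i=(-1)^{i-1}a\det(A_i)$. The element $a$ is a non-zerodivisor: otherwise $ra=0$ for some $r\neq 0$ would force $rB=raB'=0$ and hence $rI=0$, contradicting $\grade(I)\geq 1$. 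The identity $I=aI_n(A)$ is then immediate from $B_i=aB'_i$, and the complex $0\to R^n\xra{A}R^{n+1}\xra{B'}R\to R/I_n(A)\to 0$ is exact by Buchsbaum--Eisenbud (using $\grade I_n(A)\geq 2$); combined with $\grade\leq\pd$ this shows $I_n(A)$ is perfect of grade~$2$.

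For part~(b) the argument runs in reverse: given an $(n+1)\times n$ matrix $A$ with $\grade(I_n(A))\geq 2$, define $B$ by the signed maximal minors; then $BA=0$ by the same Laplace expansion, and Buchsbaum--Eisenbud applied to $0\to R^n\xra{A}R^{n+1}\xra{B}R$ verifies exactness, so the complex is a projective resolution of $R/I_n(A)=R/\im(B)$ as desired. The main obstacle throughout is the bookkeeping required to invoke Buchsbaum--Eisenbud in both the original and dualized forms and to confirm that the rank and grade hypotheses match in each application; the remaining steps are routine determinantal identities and grade inequalities.
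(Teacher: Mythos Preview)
The paper does not prove this theorem; it states the Hilbert--Burch theorem with citations to the original sources, so there is no proof in the paper to compare against. Your argument via the Buchsbaum--Eisenbud acyclicity criterion is the standard modern proof and is essentially correct.

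One step deserves more care: you write that ``$\pd_R(R/I)\geq 1$ forces $\grade(I)\geq 1$,'' but this is not immediate. What you need is the non-trivial fact that a nonzero finitely generated module with a \emph{finite} free resolution has annihilator either zero or containing a non-zerodivisor; since $\ann(R/I)=I\neq 0$ and $\pd_R(R/I)=2<\infty$, this yields $\grade(I)\geq 1$. Alternatively---and more in keeping with your approach---you can read off both $m=n+1$ and $\grade(I)\geq 1$ directly from Buchsbaum--Eisenbud applied to the given resolution: injectivity of $A$ gives $\rank(A)=n$, the rank condition at $R^m$ forces $\rank(B)=m-n\leq 1$, and $B\neq 0$ rules out $\rank(B)=0$, whence $m=n+1$ and the grade condition there reads $\grade(I_1(B))=\grade(I)\geq 1$. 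With that adjustment the rest of your argument goes through cleanly.
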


\begin{exer}\label{exer:grade2perfect}
Using Theorem \ref{thm:codim2}, construct a grade two perfect ideal $I$ over the ring $R=k[\![x,y]\!]$.
\end{exer}

Herzog~\cite{herzog:kadla} showed that Hilbert-Burch resolutions 
can be endowed with DG algebra structures.

\begin{thm}[\protect{\cite{herzog:kadla}}]\label{thm:codim2dg}
Given an $(n+1)\times n$ matrix $A$ over $R$, let $B$ be the $1\times (n+1)$ matrix with $i$th column  given by $(-1)^{i-1}a\det(A_i)$ for some non-zerodivisor $a\in R$.  Then the  $R$-complex
\[
0\to \bigoplus_{\ell=1}^n Rf_\ell\xra{A}\bigoplus_{\ell=1}^{n+1} Re_\ell\xra{B}R1\to 0
\]
has the structure of a DG $R$-algebra with the following multiplication relations:
\begin{enumerate}[\rm(1)]
\item $e_i^2=0=f_jf_k$ and $e_if_j=0=f_je_i$ for all $i,j,k$, and
\item $\displaystyle e_ie_j=-e_je_i=a\sum_{k=1}^n(-1)^{i+j+k}\det(A^k_{i,j})f_k$ for all $1\leq i< j\leq n+1$.
\footnote{Note that the sign in this expression differs from the one found in~\cite[Example 2.1.2]{avramov:ifr}.}
\end{enumerate}
\end{thm}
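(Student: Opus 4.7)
The plan is to verify the DG algebra axioms from Definition~\ref{DGK} directly, using the multiplication rules of the statement extended $R$-bilinearly from the basis $\{1, e_1,\dots,e_{n+1}, f_1,\dots,f_n\}$. Once the product is seen to be well-defined, the unital, associative, and graded commutative axioms will come essentially for free from degree considerations, leaving the Leibniz rule as the main content. That rule will reduce to a handful of classical Laplace expansion identities.

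\textbf{Setup and easy axioms.} First, I would observe that the underlying graded algebra is concentrated in degrees $0,1,2$, generated over $R$ in degree $1$ by $e_1,\dots,e_{n+1}$ subject to $e_ie_j=-e_je_i$ and $e_i^2=0$, with $f_k$ playing the role of $a$ times the specified quadratic monomials in the $e_i$'s. Thus the multiplication is unambiguously defined on basis vectors and extends $R$-bilinearly. The unital axiom is immediate from $1\in R=A_0$ acting as the identity. Graded commutativity on basis vectors reads: $e_ie_j=-e_je_i$ and $e_i^2=0$ (built in), $f_jf_k=f_kf_j$ (both zero), and $e_if_j=f_je_i$ (both zero), all of which match $|e||f|$ parity. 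For associativity, any nontrivial triple product lands in degree $\geq 3$ and hence must equal $0$; the rules in (1) confirm this since $e_if_j=0=f_je_i$ and $f_jf_k=0$ already.

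\textbf{The Leibniz rule, main obstacle.} The nontrivial cases are $\partial(e_ie_j)$ for $i<j$ and $\partial(e_if_k)$; all other cases either vanish on both sides or collapse to these via symmetry. Writing $\partial(e_i)=(-1)^{i-1}a\det(A_i)\in R$ and $\partial(f_k)=\sum_m A_{m,k}e_m$, and using the product rule $e_ie_j=a\sum_k(-1)^{i+j+k}\det(A^k_{i,j})f_k$, the Leibniz identity $\partial(e_ie_j)=\partial(e_i)e_j-e_i\partial(e_j)$ becomes the claim that for each $m\in\{1,\dots,n+1\}$,
\[
\sum_{k=1}^{n}(-1)^{i+j+k}A_{m,k}\det(A^k_{i,j})=\begin{cases}(-1)^{i-1}\det(A_i)&\text{if }m=j,\\ (-1)^{j}\det(A_j)&\text{if }m=i,\\ 0&\text{otherwise.}\end{cases}
\]
I would verify each of these as follows. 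For $m=j$ and $i<j$, expanding $\det(A_i)$ along its $(j-1)$st row (which is row $j$ of $A$) gives $\det(A_i)=\sum_k(-1)^{(j-1)+k}A_{j,k}\det(A^k_{i,j})$, and the required identity pops out after multiplying by $(-1)^{i+j}$. The case $m=i$ is analogous using $A_j$. For $m\notin\{i,j\}$, the sum equals the cofactor expansion of the $n\times n$ matrix obtained from $A_i$ by replacing its row corresponding to original index $j$ with row $m$ of $A$; this matrix has a repeated row and hence determinant $0$. The check of $\partial(e_if_k)=\partial(e_i)f_k-e_i\partial(f_k)=0$ reduces in the same way to showing
\[
\sum_{m\neq i}A_{m,k}\,\mathrm{sgn}(m-i)(-1)^{i+m+\ell}\det(A^{\ell}_{i,m})=\delta_{k,\ell}(-1)^{i-1}\det(A_i)
\]
for each $\ell$, which is another Laplace expansion along row $i$ of a suitable submatrix (vanishing whenever $\ell\neq k$ by the repeated-row principle).

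\textbf{Summary of difficulty.} The genuinely new work is purely linear-algebraic: repackaging the claims into Laplace expansion identities and matching signs. All sign bookkeeping is concentrated in the $(-1)^{i+j+k}$ factor of the product $e_ie_j$, which is designed precisely so that the anticommutativity relation and the Leibniz rule are simultaneously consistent. Once the determinant identities above are verified, the DG algebra axioms are all in place, and the complex displayed in the theorem acquires its DG $R$-algebra structure.
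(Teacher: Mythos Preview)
Your proposal is correct and follows essentially the same approach as the paper's treatment (given in the solution~\ref{para130405b} to Exercise~\ref{exer130405a}): both reduce the Leibniz rule for $e_ie_j$ and $e_if_j$ to Laplace cofactor expansions of $\det(A_i)$ and $\det(A_j)$ together with the vanishing of determinants of matrices with repeated rows or columns, while the remaining DG algebra axioms are disposed of by degree considerations. The sign bookkeeping and the specific expansions you indicate (e.g., expanding $\det(A_i)$ along its $(j-1)$st row) match the paper's computations exactly.
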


\begin{exer}\label{exer130405a}
Verify the Leibniz rule for the product defined in Theorem \ref{thm:codim2dg}. 
\end{exer}

\begin{exer}\label{exer130405b}
Using the ideal $I$ from Exercise \ref{exer:grade2perfect}, construct a (minimal) free resolution for $R/I$, then specify the relations which give this resolution a DG $R$-algebra structure.
\end{exer}

\subsection*{Resolutions of length three}\label{sec:length3}

We now turn our attention to resolutions of length three. First, we recall the needed machinery.

\begin{defn}\label{def:altermap}
A square matrix $A$ over $R$ is \emph{alternating} if it is skew-symmetric and has all 0's on its diagonal. 
Let $A$ be an $n\times n$ alternating matrix over $R$. 
If $n$ is even, then there is an element $\Pf(A)\in R$ such that $\Pf(A)^2=\det(A)$.
If $n$ is odd then $\det(A)=0$, so we set $\Pf(A)=0$.
The element $\Pf(A)$ is called the \emph{Pfaffian} of $A$.
(See, e.g., \cite[Section 3.4]{bruns:cmr} for more details.)
We denote by $\Pf_{n-1}(A)$ the ideal of $R$ generated by the submaximal Pfaffians of $A$, that is,
\[
\Pf_{n-1}(A):=\left(\Pf(A^i_i)\mid 1\leq i\leq n\right)R.
\]
\end{defn}

\begin{ex}\label{ex130625c}
Let $x,y,z\in R$.
For the matrix $A=\left[\begin{smallmatrix*}[r]0&x\\-x&0\end{smallmatrix*}\right]$, we have
$\det(A)=x^2$, so $\Pf(A)=x$, and $\Pf_1(A)=0$.

For the matrix 
$B=\left[\begin{smallmatrix*}[r]0&x&y\\-x&0&\phantom{-}z\\-y&-z&0\end{smallmatrix*}\right]$
we have $\det(B)=0=\Pf(B)$ and
$$\Pf_2(B)=(\Pf(\left[\begin{smallmatrix*}[r]0&x\\-x&0\end{smallmatrix*}\right]),
\Pf(\left[\begin{smallmatrix*}[r]0&y\\-y&0\end{smallmatrix*}\right]),
\Pf(\left[\begin{smallmatrix*}[r]0&z\\-z&0\end{smallmatrix*}\right]))R=(x,y,z)R.$$
\end{ex}

Buchsbaum and Eisenbud~\cite{buchsbaum:asffr} study the structure of resolutions of length three. Specifically, they characterize such resolutions and exhibit their DG structure.

\begin{thm}[\protect{\cite[Theorem 2.1]{buchsbaum:asffr}}]\label{thm:codim3}
Let $I$ be an ideal of the local ring $(R,\m)$.
\begin{enumerate}[\rm(a)]
\item \label{item130625c}
If $I$ is Gorenstein and $\pd_R(R/I)=3$, then 
there is an odd integer $n\geq 3$ and an $n\times n$ matrix $A$ over $\m$
such that $I=\Pf_{n-1}(A)$ and $R/I$ has a minimal free resolution of the form
$0\to R\xra{B^T}R^n\xra{A} R^{n}\xra{B} R\to 0$
where
$B$ is the $1\times n$ matrix with $i$th column given by  $(-1)^{i}\Pf(A_i)$.
\item \label{item130625d}
Conversely, if $A$ is an $n\times n$ alternating matrix over $\m$ such that $\rank(A)=n-1$, then $n$ is odd and
$\grade(\Pf_{n-1}(A))\leq 3$; if $\grade(\Pf_{n-1}(A))= 3$
then 
$R/\Pf_{n-1}(A)$ has a minimal free resolution of the form
$$0\to R\xra{B^T}R^n\xra{A} R^{n}\xra{B} R\to 0$$
where
$B$ is the $1\times n$ matrix with $i$th column given by  $(-1)^{i-1}\Pf(A_i)$.
\end{enumerate}
\end{thm}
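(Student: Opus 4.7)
The plan is to exploit the self-duality of minimal free resolutions of Gorenstein ideals of grade three, together with the structure theory of alternating matrices and their Pfaffians.

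For part (a), I would start with a minimal free resolution $F_\bullet = (0 \to F_3 \to F_2 \xra{A} F_1 \xra{B} R \to 0)$ of $R/I$. The Gorenstein hypothesis yields $\Ext{i}{R/I}{R} = 0$ for $i \neq 3$ and $\Ext{3}{R/I}{R} \cong R/I$, so dualizing via $\Hom{-}{R}$ produces a complex whose homology is concentrated in the lowest position and equals $R/I$ there. Thus $\shift^3 F_\bullet^*$ is also a minimal free resolution of $R/I$, and uniqueness of minimal free resolutions supplies an isomorphism $F_\bullet \simeq \shift^3 F_\bullet^*$. This forces $F_3 \cong R$, $F_2 \cong F_1^*$, and an integer $n$ with $\rank_R(F_1) = \rank_R(F_2) = n$; it also identifies the middle map $A$ with $\pm A^T$ after choosing bases dual to those fixed by the isomorphism.

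The main obstacle is upgrading ``skew-symmetric up to sign'' to the stronger conclusion that $A$ is alternating (in particular, with vanishing diagonal) and that the entries of $B$ are precisely $(-1)^i \Pf(A_i)$. For the alternating property, I would follow Buchsbaum-Eisenbud: analyze the parity in the self-duality isomorphism using the trivialization of $\det F_2$ coming from $F_3 \cong R$, and use a characteristic-free comparison to rule out nonzero diagonal entries. For the Pfaffian formula for $B$, I would compare Fitting ideals: the image of $B$ is $I$, and the self-duality together with the rank condition $\rank(A) = n-1$ forces the entries of $B$ to be unit multiples of the submaximal Pfaffians $\Pf(A_i)$; sign-tracking together with minimality over $(R,\m)$ then pins down the precise formula. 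The parity statement that $n$ is odd follows because any alternating matrix has even rank, so $\rank(A) = n-1$ forces $n-1$ to be even.

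For part (b), the same rank-parity argument shows $n$ is odd. I would define $B$ by $B_i = (-1)^{i-1}\Pf(A_i)$ and verify the fundamental Pfaffian identities $BA = 0$ and $A B^T = 0$; these are classical Laplace-type expansions of Pfaffians along a row and can be proved by induction on $n$. This produces a complex $F_\bullet : 0 \to R \xra{B^T} R^n \xra{A} R^n \xra{B} R \to 0$. To show $F_\bullet$ is acyclic under the hypothesis $\grade(\Pf_{n-1}(A)) \geq 3$, I would invoke the Buchsbaum-Eisenbud acyclicity criterion: the rank conditions $\rank(A) = n-1$ and $\rank(B) = \rank(B^T) = 1$ are satisfied by construction, and the required grade conditions on the associated Fitting ideals reduce to $\grade(\Pf_{n-1}(A)) \geq 3$. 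Then $R/\Pf_{n-1}(A)$ has projective dimension at most three, yielding the opposite inequality $\grade(\Pf_{n-1}(A)) \leq 3$ automatically. The main technical hurdle throughout is the Pfaffian identity $B A = 0$ and carefully tracking the signs that keep the self-dual picture consistent.
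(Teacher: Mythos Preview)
The paper does not supply its own proof of this statement: it is quoted directly from Buchsbaum--Eisenbud \cite[Theorem 2.1]{buchsbaum:asffr} and immediately followed by the remark that the minimal number of generators of a grade-3 Gorenstein ideal must be odd, together with examples. So there is nothing in the paper to compare your proposal against.

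That said, your outline follows the standard Buchsbaum--Eisenbud strategy: use the Gorenstein condition to identify the minimal resolution with its shifted dual, read off $F_3\cong R$ and $F_2\cong F_1^*$, upgrade the resulting skew-symmetry to genuine alternation, and then recover the Pfaffian description of $B$; for the converse, verify the Pfaffian Laplace identities $BA=0$ and $AB^T=0$ and apply the Buchsbaum--Eisenbud acyclicity criterion. The one place where your sketch is vague is the step from ``$A$ is skew-symmetric up to an isomorphism'' to ``$A$ is alternating in suitably chosen bases.'' This is exactly where the original paper does real work (constructing a symmetric isomorphism $\alpha\colon F_1\to F_1^*$ with $A=\alpha^{-1}A^*\alpha$ and then splitting $\alpha$ as $\beta^*\beta$), and you would need to fill that in rather than appeal to ``a characteristic-free comparison.'' Otherwise your plan is on target.
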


It follows from Theorem \ref{thm:codim3} that the minimal number of generators of a grade-3 Gorenstein ideal must be odd.  

\begin{ex}\label{ex130625b}
Let $R=k[\![x,y,z]\!]$ and consider the
alternating matrix 
$B=\left[\begin{smallmatrix*}[r]0&x&y\\-x&0&\phantom{-}z\\-y&-z&0\end{smallmatrix*}\right]$
from Example~\ref{ex130625c} with
$$I=\Pf_2(B)=(\Pf(\left[\begin{smallmatrix*}[r]0&x\\-x&0\end{smallmatrix*}\right]),
\Pf(\left[\begin{smallmatrix*}[r]0&y\\-y&0\end{smallmatrix*}\right]),
\Pf(\left[\begin{smallmatrix*}[r]0&z\\-z&0\end{smallmatrix*}\right]))R=(x,y,z)R.$$
Theorem \ref{thm:codim3} implies that 
a minimal free resolution of $R/I$ over $R$ is of the form
\[
0\to Rg\xra{\left[\begin{smallmatrix*}[r]z\\-y\\x\end{smallmatrix*}\right]}Rf_1\oplus Rf_2\oplus Rf_3\xra{\left[\begin{smallmatrix*}[r]0&x&y\\-x&0&\phantom{-}z\\-y&-z&0\end{smallmatrix*}\right]}Re_1\oplus Re_2\oplus Re_3\xra{\left[\begin{smallmatrix*}[c]z&-y&x\end{smallmatrix*}\right]}R1\to 0.
\]
(Compare this to the Koszul complex $K^R(x,-y,z)$.)
\end{ex}

The next two examples of Buchsbaum and Eisenbud \cite{buchsbaum:asffr} illustrate that there exists, for any odd $n\geq 3$, a grade-3 Gorenstein ideal which is $n$-generated.

\begin{ex}[\protect{\cite[Proposition 6.2]{buchsbaum:asffr}}]\label{ex:grade3Gor'}
Let $R=k[\![ x,y,z ]\!]$. For $n\geq 3$ odd, define $M_n$ to be the $n\times n$ alternating matrix whose entries \emph{above} the diagonal are 
\[
(M_n)_{ij}:=
\begin{cases}
x & \text{if $i$ is odd and $j=i+1$}\\
y & \text{if $i$ is even and $j=i+1$}\\
z & \text{if $j=n-i+1$}\\
0 & \text{otherwise.}
\end{cases}
\]
For instance, we have
\begin{align*}
M_3&=\left[\begin{matrix*}[r]0&x&z\\-x&0&\phantom{-}y\\-z&-y&0\end{matrix*}\right]
&\Pf_2(M_3)&=(x,y,z)R
\\
M_5&=\left[\begin{matrix*}[r]
0&x&0&0 &z\\
-x&\phantom{-}0&\phantom{-}y & \phantom{-}z &\phantom{-} 0 \\
0 & -y & 0 & x & 0 \\
0 & -z & -x & 0 & y \\
-z & 0 & 0 & -y & 0
\end{matrix*}\right]
&\Pf_4(M_5)&=(y^2,xz,xy+z^2,yz,x^2)R.
\end{align*}
Then $\Pf_{n-1}(M_n)$ is an $n$-generated Gorenstein ideal of grade 3 in $R$.
\end{ex}

\begin{ex}[\protect{\cite[Proposition 6.1]{buchsbaum:asffr}}]\label{ex:grade3Gor}
Let $M_n$  be a generic $(2n+1)\times (2n+1)$ alternating matrix over the ring $Q_n:=R\left[\!\left[x_{i,j}\mid 1\leq i<j\leq 2n+1\right]\!\right]$. That is:
\[
M_n:=\left[\begin{matrix*}[c]
0&x_{1,2}&x_{1,3}&\cdots &x_{1,2n+1}\\[0.05in]
-x_{1,2}&0&x_{2,3}&\cdots &x_{2,2n+1}\\[0.05in]
-x_{1,3}&-x_{2,3}&0&\\[0.05in]
\vdots &\vdots& &\ddots\\[0.05in]
-x_{1,2n+1}&-x_{2,2n+1}&&&0
\end{matrix*}\right]
\]
Then, for every $n\geq 1$, the ideal $\Pf_{2n}(M_n)$ is  Gorenstein  of grade 3 in $Q_n$.
\end{ex}

The next result provides an explicit description of the DG algebra structure on the resolution from Theorem~\ref{thm:codim3}.  
Note that our result is the more elemental version from~\cite[Example 2.1.3]{avramov:ifr}.
It is worth noting that Buchsbaum and Eisenbud show that every free resolution over $R$ of the form
$0\to F_3\to F_2\to F_1\to R\to 0$ has the structure of a DG $R$-algebra.

\begin{thm}[\protect{\cite[Theorem 4.1]{buchsbaum:asffr}}]\label{thm:codim3dg}
Let $A$ be an $n\times n$ alternating matrix over $R$, and let $B$ be the $1\times n$ matrix whose $i$th column is given by $(-1)^{i-1}\Pf(A_i^i)$.  Then the graded $R$-complex
\[
0\to Rg\xra{B^{\mathsf{T}}}\bigoplus_{\ell=1}^n Rf_\ell\xra{A}\bigoplus_{\ell=1}^n Re_\ell\xra{B}R1\to 0
\]
admits the structure of a DG $R$-algebra, with the following products:
\begin{enumerate}[\rm(1)]
\item $e_i^2=0$ and $e_if_j=f_je_i=\delta_{ij}g$ for all $1\leq i,j\leq n$, and
\item $\displaystyle e_ie_j=-e_je_i=\sum_{k=1}^n(-1)^{i+j+k}\rho_{ijk}\Pf(A_{ijk}^{ijk})f_k$ for all $1\leq i\neq j\leq n$, where $\rho_{ijk}=-1$ whenever $i<k<j$, and $\rho_{ijk}=1$ otherwise.
\end{enumerate}
\end{thm}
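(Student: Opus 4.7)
The plan is to verify the DG $R$-algebra axioms from Definition~\ref{DGK} for the product given on basis vectors by~(1) and~(2), extended $R$-bilinearly. Since the complex is concentrated in degrees $0,1,2,3$, any product of total degree greater than $3$ must vanish by the graded axiom; this forces $f_if_j=e_ig=f_jg=g^2=0$, leaving only~(1) and~(2) to specify. Distributivity, the unit property (with $1$ the given basis vector in degree $0$), and the graded property are then built in. For graded commutativity, the relation $e_i^2=0$ is part of~(1), and $f_ig=gf_i$, $e_ig=-ge_i$ are forced to $0$ as noted. The identity $e_if_j=f_je_i$ is explicit in~(1), and $e_ie_j=-e_je_i$ for $i\neq j$ follows from the equality $A^{ijk}_{ijk}=A^{jik}_{jik}$ combined with the antisymmetry $\rho_{ijk}=-\rho_{jik}$ that one must check is built into the sign convention on $\rho$.

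I would next verify associativity. By degree considerations the only nontrivial triples of basis generators are those of total degree at most $3$: the cases involving the degree-$0$ generator $1$ are handled by the unit axiom, and the substantive case is $(e_ie_j)e_k$ versus $e_i(e_je_k)$ for pairwise distinct $i,j,k$. Expanding either side with the formula from~(2) and then applying $e_mf_p=\delta_{mp}g$ from~(1), each reduces to a single term $\pm\Pf(A^{ijk}_{ijk})\,g$, and matching the two signs is a combinatorial identity for $\rho$ under permutation of its arguments. Once one unpacks the case analysis in the definition of $\rho$, this identity holds in each order configuration of $i,j,k$.

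The real technical content lies in the Leibniz rule, which I would check on pairs of basis vectors, with the main cases being $\partial(e_ie_j)=\partial(e_i)e_j-e_i\partial(e_j)$ and $\partial(e_if_j)=\partial(e_i)f_j+e_i\partial(f_j)$; the remaining cases are either trivial for degree reasons or follow from graded commutativity. For $\partial(e_if_j)$, the left side is $\delta_{ij}\partial(g)=\delta_{ij}\sum_k(-1)^{k-1}\Pf(A^k_k)f_k$, while the right expands using the definitions of $A$, $B$, and~(2); matching the coefficient of each $f_k$ reduces to the Pfaffian cofactor expansion, which is equivalent to the statement that $BA=0$ (i.e., the given complex is actually a complex). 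For $\partial(e_ie_j)$, the analogous computation leads to the Pfaffian identity
\begin{equation*}
\sum_k (-1)^{i+j+k}\rho_{ijk}\,\Pf(A^{ijk}_{ijk})\,A_{k\ell} = (-1)^{i-1}\Pf(A^i_i)\,\delta_{j\ell}-(-1)^{j-1}\Pf(A^j_j)\,\delta_{i\ell},
\end{equation*}
which is a Pfaffian analog of the adjugate formula. The chief obstacle is thus assembling the correct Pfaffian identities with the right signs; once those are in place, all the verifications reduce to careful sign bookkeeping on the basis vectors.
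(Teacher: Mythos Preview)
The paper does not supply its own proof of this theorem: it is stated with a citation to Buchsbaum--Eisenbud~\cite{buchsbaum:asffr} and immediately followed by an exercise applying the result, with no verification of the DG axioms given in the text or the appendix. So there is nothing in the paper to compare your argument against; your direct verification of associativity, graded commutativity, and the Leibniz rule on basis vectors is exactly the natural approach, and is in the spirit of the analogous Exercise~\ref{exer130405a} (with solution~\ref{para130405b}) that the paper does provide for the length-two case of Theorem~\ref{thm:codim2dg}.

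One small correction to your outline: the antisymmetry $\rho_{ijk}=-\rho_{jik}$ is \emph{not} built into the sign convention as stated. For instance, if $k<i<j$ then both $\rho_{ijk}$ and $\rho_{jik}$ equal $1$ under the rule ``$\rho_{ijk}=-1$ iff $i<k<j$.'' The correct reading of the theorem is that the displayed formula in~(2) defines $e_ie_j$ for $i<j$, and the equality $e_ie_j=-e_je_i$ is then imposed as part of the definition of the product (as in Theorem~\ref{thm:codim2dg}), not derived from a symmetry of $\rho$. This does not affect the rest of your plan, since for associativity and the Leibniz rule you may always reduce to the case of ordered indices; it just means you should not claim $\rho_{ijk}=-\rho_{jik}$ as an identity.
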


\begin{exer}\label{exer130405c}
Let $R=k[x,y,z]$ and consider the graded $R$-complex given by
\[
0\to Rg\xra{\left[\begin{smallmatrix*}[r]z\\-y\\x\end{smallmatrix*}\right]}Rf_1\oplus Rf_2\oplus Rf_3\xra{\left[\begin{smallmatrix*}[r]0&x&y\\-x&0&\phantom{-}z\\-y&-z&0\end{smallmatrix*}\right]}Re_1\oplus Re_2\oplus Re_3\xra{\left[\begin{smallmatrix*}[c]z&-y&x\end{smallmatrix*}\right]}R1\to 0
\]
Using Theorem \ref{thm:codim3dg}, write the product relations that give this complex the structure of a DG $R$-algebra.
\end{exer}

\subsection*{Longer resolutions}

In general, resolutions of length greater than 3 are not guaranteed to possess a DG algebra structure,
as the next example of Avramov shows.

\begin{ex}[\protect{\cite[Theorem 2.3.1]{avramov:ifr}}]\label{ex:codim4counterex}
Consider the local ring $R=k[\![ w,x,y,z]\!]$.  Then the minimal free resolutions over $R$ of 
the quotients
$R/(w^2,wx,xy,yz,z^2)R$
and
$R/(w^2,wx,xy,yz,z^2,wy^6,x^7,x^6z,y^7)R$ 
do not admit  DG $R$-algebra structures.
\end{ex}

On the other hand, resolutions of ideals $I$ with $\pd_R(R/I)\geq 4$ that are sufficiently nice do admit DG algebra structures.
For instance,
Kustin and Miller prove the following in~\cite[Theorem]{kustin:gacfct} and~\cite[4.3 Theorem]{kustin:asmrgrecf}.

\begin{ex}[\protect{\cite{kustin:gacfct,kustin:asmrgrecf}}]\label{ex:codim4Gor}
Let $I$ be a Gorenstein ideal of a local ring $R$.  If $\pd_R(R/I)=4$, then the minimal $R$-free resolution of $R/I$ has the structure of a DG $R$-algebra.
\end{ex}

\section{DG Algebras and  DG Modules II}
\label{sec120522d'}

In this section, we describe the notions needed to define semidualizing DG modules 
and to explain some of their base-change properties.
This includes a discussion of two types of Ext for DG modules.
The section concludes with another piece of the proof of 
Theorem~\ref{intthm120521a}; see~\ref{proof120523c}.

\begin{Convention}
Throughout this section, $A$ is a DG $R$-algebra,
and $L$, $M$, and $N$ are DG $A$-modules.
\end{Convention}

\subsection*{Hom  for DG Modules}
Given the fact that the semidualizing property for $R$-modules is defined in part by a Hom condition,
it should come as no surprise that we begin this section with the following.

\begin{defn}
\label{defn110223a}
Given an integer $i$, a \emph{DG $A$-module homomorphism of degree $n$} is an element
$f\in\Hom MN_n$ such that
$f_{i+j}(am)=(-1)^{ni}af_j(m)$
for all $a\in A_i$ and $m\in M_j$.
The graded submodule of $\Hom MN$
consisting of all DG $A$-module homomorphisms $M\to N$
is denoted $\Hom[A]MN$.
\end{defn}

Part~\eqref{fact110223a2} of the next exercise gives another hint of the semidualizing property for DG modules. 

\begin{exer}\label{fact110223a}
\begin{enumerate}[(a)]
\item
\label{fact110223a1}
Prove that  $\Hom[A]MN$ is a DG $A$-module 
via the action
$$(af)_j(m):=a (f_j(m))=(-1)^{|a||f|}f_{j+|a|}(am)
$$
and using the differential from $\Hom MN$.
\item
\label{fact110223a2}
Prove that for each $a\in A$ the multiplication map $\mult Ma\colon M\to M$
given by $m\mapsto am$
is a homomorphism of degree $|a|$.
\item
\label{fact110223a3}
Prove that $f\in\Hom[A]MN_0$ is a morphism if and only if it is a cycle, that is, if and only if $\partial^{\Hom[A]MN}_0(f)=0$.
\end{enumerate}
\end{exer}

\begin{ex}\label{ex120526e}
We continue with the notation of Example~\ref{ex120526a}.
From  computations like those in Example~\ref{ex120526d}, it follows
that $\Hom[U]GR$ has the  form
$$\Hom[U]GR
=\quad
0\to R\to 0\to R\to 0\to\cdots
$$
where the copies of $R$ are in even non-positive degrees.
Multiplication by $e$ is 0 on $\Hom[U]GR$, by degree considerations,
and multiplication by 1 is the identity.
\end{ex}

Next, we give an indication of the functoriality of $\Hom[A] N-$ and $\Hom[A] -N$. 

\begin{defn}\label{defn130330a}
Given a morphism  $f\colon  L\to M$ of DG $A$-modules,
we define the map $\Hom[A] Nf\colon\Hom[A] NL\to\Hom[A] NM$
as follows: each sequence $\{g_p\}\in\Hom[A] NL_n$ is mapped to $\{f_{p+n}g_p\}\in\Hom[A] NM_n$.
Similarly, define the map 
$\Hom[A] fN\colon\Hom[A] MN\to\Hom[A] LN$
by the formula $\{g_p\}\mapsto \{g_pf_p\}$.
\end{defn}

\begin{disc}
We do not use a sign-change in this definition
because $|f|=0$.
\end{disc}

\begin{exer}\label{exer130330a}
Given a morphism  $f\colon  L\to M$ of DG $A$-modules,
prove that the maps $\Hom[A] Nf$ and $\Hom[A] fN$ are well-defined morphisms of DG $A$-modules.
\end{exer}

\subsection*{Tensor Product for DG Modules}
As with modules and complexes, we use the tensor product to base change DG modules along a morphism
of DG algebras.

\begin{defn}
\label{defn110218c}
The \emph{tensor product} $\Otimes[A]MN$ is the quotient 
$(\Otimes MN)/U$ where $U$ is  generated over $R$ by the elements of the form
$\Otimes[]{(am)}{n}-(-1)^{|a| |m|}\Otimes[]{m}{(an)}$.
Given an element $\Otimes[]mn\in\Otimes MN$, we denote the image in $\Otimes[A]MN$ as $\Otimes[]mn$.
\end{defn}

\begin{exer}\label{fact110218b}
Prove that the tensor product $\Otimes[A]MN$ is a DG $A$-module
via the scalar multiplication
$$a(\Otimes[] mn):=\Otimes[]{(am)}{n}=(-1)^{|a| |m|}\Otimes[]{m}{(an)}.$$
\end{exer}

The next exercises describe base change and some canonical isomorphisms for DG modules. 

\begin{exer}\label{disc111223a}
Let $A\to B$ be a morphism of DG $R$-algebras.
Prove that $\Otimes[A]BM$ has the structure of a DG $B$-module
by the action $b(\Otimes[] {b'}m):=\Otimes[]{(bb')}{m}$.
Prove that this structure is compatible with the DG $A$-module structure
on $\Otimes[A]BM$ via restriction of scalars.
\end{exer}

\begin{exer}\label{fact120526a}
Verify the following isomorphisms of DG $A$-modules:
\begin{align*}
\Hom[A]AL&\cong L&&\text{Hom cancellation}
\\
\Otimes[A]AL&\cong L&&\text{tensor cancellation}
\\
\Otimes[A]LM&\cong\Otimes[A]ML&&\text{tensor commutativity}
\end{align*}
In particular, there are DG $A$-module isomorphisms $\Hom[A]AA\cong A\cong\Otimes[A]AA$.
\end{exer}

\begin{fact}\label{fact130330a}
There is a natural ``Hom tensor adjointness'' DG $A$-module isomorphism
$\Hom[A]{\Otimes[A] LM}{N}\cong\Hom[A]M{\Hom[A]LN}$.
\end{fact}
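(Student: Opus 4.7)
The plan is to construct an explicit isomorphism and its inverse, mimicking the classical proof of Hom-tensor adjointness for modules while tracking the DG structure through carefully. Given $f\in\Hom[A]{\Otimes[A] L M}{N}_n$, I would define
$$\Phi(f)\in\Hom[A]{M}{\Hom[A]{L}{N}}_n$$
by $\Phi(f)(m)(l):=(-1)^{|m||l|}f(l\otimes m)$. In the reverse direction, given $g$ of degree $n$, I would define $\Psi(g)(l\otimes m):=(-1)^{|m||l|}g(m)(l)$, after verifying that this formula respects the relations $(al)\otimes m=(-1)^{|a||l|}l\otimes(am)$ from Definition~\ref{defn110218c}; this descent comes from the $A$-linearity of $g$ in the sense of Definition~\ref{defn110223a} combined with the $A$-action on $\Hom[A]{L}{N}$ from Exercise~\ref{fact110223a}.

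My steps would then be the following. First, verify that $\Phi(f)(m)\in\Hom[A]{L}{N}$ is genuinely $A$-linear of the correct degree $n+|m|$, by expanding $\Phi(f)(m)(al)$ using the $A$-linearity of $f$ together with $a(l\otimes m)=(al)\otimes m$. Second, verify that $\Phi(f)$ itself is $A$-linear as a map $M\to\Hom[A]{L}{N}$. Third, check that $\Phi$ and $\Psi$ are mutual inverses, which is formal once the signs above are consistent, since both compositions return the original element $l\otimes m$ or $m\mapsto[l\mapsto\cdots]$ up to a sign $(-1)^{2|m||l|}=1$. Fourth, show that $\Phi$ is a chain map by expanding $\partial\circ\Phi(f)$ against $\Phi(\partial f)$ using the differentials on the Hom complex from Definition~\ref{defn120522c} and on the tensor product from Definition~\ref{defn120522g}. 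Fifth, check that $\Phi$ respects the $A$-action, so that it is a morphism of DG $A$-modules rather than merely a chain isomorphism of the underlying complexes.

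The main obstacle will be the sign bookkeeping, which is notoriously delicate in the DG setting. In particular, the differential on $\Hom[A]{M}{\Hom[A]{L}{N}}$ involves two nested applications of the Hom differential, so expanding $\partial(\Phi(f))(m)(l)$ produces four terms, while expanding $\Phi(\partial f)(m)(l)$ requires distributing $\partial f$ over the formula $\partial^{\Otimes[A]L M}(l\otimes m)=\partial l\otimes m+(-1)^{|l|}l\otimes\partial m$ and then chasing the leading sign $(-1)^{|m||l|}$ through a degree shift. Each swap of homogeneous factors must pick up the sign dictated by Remark~\ref{disc130311}: a commutation of elements $u,v$ contributes $(-1)^{|u||v|}$. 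Once the sign $(-1)^{|m||l|}$ in the definition of $\Phi$ is verified to be the correct one at the graded level, matching all four terms on each side reduces to repeated applications of this single rule, and the isomorphism will be manifestly natural in $L$, $M$, and $N$.
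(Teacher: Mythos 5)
The paper states this as a Fact without proof (the introduction explains that facts and examples are presented without proof and may be treated as exercises), so there is no paper proof to compare against; I assess your argument on its own merits. Your plan is the standard DG Hom-tensor adjointness argument, and the sign $(-1)^{|m||l|}$ in $\Phi(f)(m)(l)$ is exactly what the Koszul rule dictates (one commutation of $m$ past $l$); I verified the $A$-linearity of $\Phi(f)(m)$ against Definition~\ref{defn110223a}, the $A$-linearity of $\Phi(f)$ using the module action from Exercise~\ref{fact110223a}, the well-definedness of $\Psi$ on the relations generating $U$ in Definition~\ref{defn110218c}, and the chain-map property, and all signs close up. One small slip in the exposition: expanding $\partial(\Phi(f))(m)(l)$ through the two nested Hom differentials produces three terms, not four, and the other side $\Phi(\partial f)(m)(l)$ likewise produces three after distributing over $\partial^{\Otimes[A]LM}(l\otimes m)=\partial l\otimes m+(-1)^{|l|}l\otimes\partial m$; they match term by term. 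The final naturality claim is stated without verification, but it does follow formally since $\Phi$ is defined by a uniform formula commuting with morphisms in each slot via Definitions~\ref{defn130330a} and~\ref{defn130330b}.
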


Next, we give an indication of the functoriality of $\Otimes[A] N-$ and $\Otimes[A] -N$. 

\begin{defn}\label{defn130330b}
Given a morphism $f\colon  L\to M$ of DG $A$-modules,
we define the map $\Otimes[A] Nf\colon\Otimes[A] NL\to\Otimes[A] NM$
by the formula $z\otimes y\mapsto z\otimes f(y)$.
Define the map 
$\Otimes[A] fN\colon\Otimes[A] LN\to\Otimes[A] MN$
by the formula $x\otimes z\mapsto f(x)\otimes z$.
\end{defn}

\begin{disc}
We do not use a sign-change in this definition
because $|f|=0$.
\end{disc}

\begin{exer}\label{exer130330b}
Given a morphism $f\colon  L\to M$ of DG $A$-modules,
prove that the maps $\Otimes[A] Nf$
and $\Otimes fN$
are well-defined morphisms of DG $A$-modules.
\end{exer}

\subsection*{Semifree Resolutions}
Given the fact that the semidualizing property includes an Ext-vanishing condition, it should come as
no surprise that we need a version of free resolutions in the DG setting.

\begin{defn}
A subset $E$ of $L$ is called a \emph{semibasis} if it
is a basis of the underlying $A^\natural$-module $L^ \natural $.
If $L$ is bounded below, then $L$ is called \emph{semi-free}
if it has a semibasis.\footnote{As is noted in~\cite{avramov:dgha}, when $L$ is not bounded below,
the definition of ``semi-free'' is  more technical.
However, our results do not require this level of generality, so we focus only on
this  case.}
A \emph{semi-free resolution} of a DG $A$-module $M$ is a quasiisomorphism
$F\xra{\simeq}M$ of DG $A$-modules such that $F$ is semi-free.
\end{defn}

The next exercises and example give some semi-free examples 
to keep in mind.

\begin{exer}\label{ex120524a}
Prove that a semi-free DG $R$-module is simply a 
bounded below complex of free $R$-modules.
Prove  that each free resolution $F$ of an $R$-module $M$
gives rise  to a semi-free resolution $F\xra\simeq M$; see Exercise~\ref{exer120522g}.
\end{exer}

\begin{exer}\label{ex120524b}
Prove that $M$ is exact (as an $R$-complex) if and only if
$0\xra\simeq M$ is a semi-free resolution. 
Prove that the DG $A$-module $\shift^nA$ is semi-free for each $n\in\bbz$, as is $\bigoplus_{n\geq n_0}\shift^nA^{\beta_n}$
for all $n_0\in\bbz$ and $\beta_n\in\bbn$.
\end{exer}

\begin{exer}\label{exer130330c}
Let $\x=x_1,\cdots,x_n\in R$, and set $K=K^R(\x)$. 
\begin{enumerate}[(a)]
\item\label{exer130330c1}
Given 
a bounded below complex $F$ of free $R$-modules, prove that
the complex $\Otimes{K}F$ is a semi-free DG $K$-module.
\item\label{exer130330c2}
If $F\xra\simeq M$ is a free resolution of an $R$-module $M$,
prove that $\Otimes{K}F\xra\simeq \Otimes{K}M$ is a 
semi-free resolution of the DG $K$-module $\Otimes{K}M$.
More generally, 
if $F\xra\simeq M$ is a semi-free resolution of a DG $R$-module $M$,
prove that $\Otimes{K}F\xra\simeq \Otimes{K}M$ is a 
semi-free resolution of the DG $K$-module $\Otimes{K}M$.
See Fact~\ref{fact120523a}.
\end{enumerate}
\end{exer}

\begin{ex}\label{ex120526f}
In the notation of Example~\ref{ex120526a},
the natural map $g^{1,0}\colon G\to R$ is a semi-free resolution of $R$ over $U$;
see Example~\ref{ex120526d}.
The following display indicates why $G$ is semi-free over $U$,
that is, why $\und G$ is free over $\und U$:
\begin{align*}
U=&\quad 0\to Re\xra 0 R1\to 0
\\
\und U=& \quad Re\bigoplus R1
\\
G=&\cdots \xra 1 Re_3\xra 0 R1_2\xra 1Re_1\xra 0R1_0\to 0
\\
\und G=&\cdots  (Re_3\bigoplus R1_2) \bigoplus (Re_1 \bigoplus R1_0).
\end{align*}
\end{ex}

The next item compares to Remark~\ref{disc120530a}.

\begin{disc}\label{disc120530a'}
If $L$ is  semi-free,
then the natural map $L\to\tau(L)_{(\leq n)}$ is a semi-free resolution 
for each $n\geq\sup(L)$.
\end{disc}

The next facts contain important existence results for semi-free resolutions.
Notice that the second paragraph applies when $A$ is a Koszul complex over $R$
or is finite dimensional over a field, by Exercise~\ref{exer130312b}.

\begin{fact}\label{fact110218d}
The DG $A$-module $M$ has a semi-free resolution 
if and only if $\HH_i(M)=0$ for $i\ll 0$, by~\cite[Theorem 2.7.4.2]{avramov:dgha}.

Assume that $A$ is noetherian, and let $j$ be an integer.
Assume that each module $\HH_i(M)$ is finitely generated over $\HH_0(A)$
and that $\HH_i(M)=0$ for $i<j$. Then $M$ has a semi-free resolution $F\xra\simeq M$ such that
$\und{F}\cong\bigoplus_{i=j}^\infty \shift^i(\und{A})^{\beta_i}$
for some integers $\beta_i$, and so $F_i=0$ for all $i<j$;
see~\cite[Proposition~1]{apassov:hddgr}.
In particular, homologically finite DG $A$-modules admit  ``degree-wise finite, bounded below'' semi-free resolutions.
\end{fact}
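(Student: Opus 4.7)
The forward direction of the first biconditional is routine: a semi-free DG $A$-module is bounded below by definition, so any semi-free resolution $F\xra\simeq M$ forces $\HH_i(M)\cong\HH_i(F)=0$ for $i\ll 0$. The content lies in constructing semi-free resolutions, both in the generality of the first claim and in the finer noetherian setting of the second. My plan is the standard Eilenberg-style stepwise construction: build a chain of semi-free DG $A$-modules $F^{(k)}$ together with compatible morphisms $\alpha^{(k)}\colon F^{(k)}\to M$ so that homology gets matched up degree-by-degree, then pass to the colimit.

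First I would set $n_0:=\inf\{i\mid\HH_i(M)\neq 0\}$, which is finite by hypothesis, and inductively produce semi-free DG $A$-modules $0=F^{(-1)}\subseteq F^{(0)}\subseteq F^{(1)}\subseteq\cdots$ with compatible morphisms $\alpha^{(k)}$ satisfying: $\und{F^{(k)}}$ is free over $\und{A}$ on a semibasis concentrated in degrees $n_0,\ldots,n_0+k$; and $\HH_i(\alpha^{(k)})$ is an isomorphism for $i<n_0+k$ and a surjection for $i=n_0+k$. To initialize, I would choose cycles in $M_{n_0}$ lifting an $\HH_0(A)$-generating set of $\HH_{n_0}(M)$ and take $F^{(0)}$ to be a direct sum of copies of $\shift^{n_0}A$ with the induced map to $M$. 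For the inductive step, let $Z\subseteq\HH_{n_0+k}(F^{(k)})$ be the kernel of the map induced by $\alpha^{(k)}$; lift a generating set of $Z$ to cycles $z\in F^{(k)}_{n_0+k}$, and for each such $z$ adjoin a new semibasis element $\eta$ in degree $n_0+k+1$ with $\partial\eta:=z$. Simultaneously adjoin free summands $\shift^{n_0+k+1}A$ mapping into $M$ via cycle representatives of an $\HH_0(A)$-generating set of $\HH_{n_0+k+1}(M)$. One checks that the resulting $F^{(k+1)}$ is semi-free and that $\alpha^{(k+1)}$ satisfies the next stage of the inductive hypothesis.

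Taking $F:=\bigcup_k F^{(k)}$ with the induced map $\alpha\colon F\to M$, semifreeness is preserved under this filtered colimit (the semibases glue), and since each $\HH_i(F)$ stabilizes at a finite stage of the filtration to $\HH_i(M)$, the map $\alpha$ is a quasiisomorphism. For the noetherian refinement, I would strengthen the inductive hypothesis to record additionally that each $F^{(k)}$ has \emph{finite} semibasis in each degree and that every $\HH_i(F^{(k)})$ is finitely generated over $\HH_0(A)$. The hypothesis that $\HH_{n_0+k+1}(M)$ is finitely generated over $\HH_0(A)$ yields only finitely many generators of the second kind, while the strengthened inductive hypothesis combined with the fact that $\HH_0(A)$ is noetherian yields only finitely many generators of $Z$, so only finitely many new semibasis elements are needed in degree $n_0+k+1$.

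The main obstacle is maintaining finite generation of the cycle modules $Z$ through the induction: a priori, even if each $F^{(k)}$ is finitely generated in every degree, the homology modules $\HH_*(F^{(k)})$ are only subquotients of those finitely generated pieces by the image of the preceding differential, and controlling them uses the noetherian hypothesis on $\HH_0(A)$ in an essential way. I would handle this by a parallel induction tracking the $\HH_0(A)$-finite generation of every $\HH_i(F^{(k)})$ explicitly, noting that at each stage the newly added summands are $\shift^{n_0+k+1}A$ whose homologies are (shifts of) finitely generated $\HH_0(A)$-modules by the noetherian hypothesis on $A$, so the induction carries through cleanly.
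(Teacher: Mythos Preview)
The paper does not actually prove this statement: it is presented as a \emph{Fact} with external citations to \cite[Theorem~2.7.4.2]{avramov:dgha} and \cite[Proposition~1]{apassov:hddgr}, and no argument is given in the text. So there is no ``paper's own proof'' to compare against; your sketch is a self-contained argument where the paper offers none.

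That said, your construction is the standard one and is essentially what those references carry out. Two small points you glossed over that would need to be made explicit in a full proof: first, when you adjoin a semibasis element $\eta$ with $\partial\eta=z$, you must also extend $\alpha^{(k)}$ to $\alpha^{(k+1)}$ by choosing $m\in M_{n_0+k+1}$ with $\partial^M(m)=\alpha^{(k)}(z)$ (such $m$ exists precisely because the homology class of $z$ lies in $\ker\HH_{n_0+k}(\alpha^{(k)})$) and setting $\alpha^{(k+1)}(\eta):=m$; second, one should check that adjoining $A\cdot\eta$ with $\partial(a\eta)=\partial^A(a)\eta+(-1)^{|a|}az$ really does yield a DG $A$-module, which is a routine verification using that $z$ is a cycle. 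Your handling of the noetherian refinement is correct in spirit: the key is that each $\HH_i(F^{(k)})$ is a subquotient of a finite direct sum of shifts of $\HH_*(A)$, hence finitely generated over the noetherian ring $\HH_0(A)$, so the kernel $Z$ is finitely generated and only finitely many new generators are needed at each stage.
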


\begin{fact}\label{fact110218d'}
Assume that $L$ and $M$ are semi-free.
If there is a quasiisomorphism $L\xra\simeq M$, then there is also
a quasiisomorphism $M\xra\simeq L$
by~\cite[Proposition 1.3.1]{avramov:ifr}
\end{fact}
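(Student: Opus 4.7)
The plan is to construct a quasiisomorphism $\beta\colon M\to L$ explicitly as a kind of ``quasi-inverse'' to the given $\alpha\colon L\xra\simeq M$. The key tool is a DG-analogue of the projective lifting property: if $F$ is a semi-free DG $A$-module and $\pi\colon X\xra\simeq Y$ is any quasiisomorphism of DG $A$-modules, then every morphism $\varphi\colon F\to Y$ admits a lift $\tilde\varphi\colon F\to X$ with $\pi\tilde\varphi$ homotopic to $\varphi$, and $\tilde\varphi$ is unique up to homotopy. This is the semi-free ``cofibrancy'' statement, and the rest of the argument is essentially formal once it is available.

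The first step is to prove this lifting property for the specific case we need, namely $F=M$ bounded below with semibasis $E=\bigsqcup_{i\geq i_0}E_i$. One constructs $\tilde\varphi$ inductively on the filtration of $M$ by the DG submodules generated by $\bigsqcup_{i\leq n}E_i$. At each stage, one must lift the image of a new basis element $e\in E_{n+1}$ through $\pi$ while respecting the Leibniz rule, which reduces to choosing a preimage of a cycle in $X$ whose boundary is prescribed; this preimage exists because $\pi$ is a quasiisomorphism (so cycles lift up to boundaries), and the ambiguity in the choice is controlled by the previously constructed homotopy. This bookkeeping, inductively constructing both the lift and a homotopy $\pi\tilde\varphi\sim\varphi$ simultaneously, is the one place where genuine work is required; it is the main obstacle, and it is precisely the place where the semi-free hypothesis (the existence of a semibasis on which to induct) is indispensable.

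With the lifting property in hand, the rest is a short formal argument. Apply the lifting property with $F=M$ to the quasiisomorphism $\alpha\colon L\xra\simeq M$ and the morphism $\mathrm{id}_M\colon M\to M$: this produces a morphism $\beta\colon M\to L$ of DG $A$-modules such that $\alpha\beta-\mathrm{id}_M$ is null-homotopic. By Exercise~\ref{exer120522f}, null-homotopic morphisms induce the zero map on homology, so $\HH_n(\alpha)\circ\HH_n(\beta)=\HH_n(\alpha\beta)=\HH_n(\mathrm{id}_M)=\mathrm{id}_{\HH_n(M)}$ for every $n\in\bbz$. Since $\alpha$ is a quasiisomorphism, each $\HH_n(\alpha)$ is an isomorphism, and hence $\HH_n(\beta)=\HH_n(\alpha)^{-1}$ is an isomorphism as well. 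Thus $\beta\colon M\xra\simeq L$ is the desired quasiisomorphism.

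As a side benefit, the same construction applied to $\beta$ in place of $\alpha$ produces a morphism $\gamma\colon L\to M$ with $\beta\gamma\sim\mathrm{id}_L$, which together with $\alpha\beta\sim\mathrm{id}_M$ upgrades the statement to the stronger assertion that $\alpha$ is a homotopy equivalence with two-sided homotopy inverse $\beta$; this is the form in which the result is actually useful in later arguments such as in Section~\ref{sec120522f}.
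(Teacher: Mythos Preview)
The paper does not actually prove this fact; it is stated as a ``Fact'' and simply cites \cite[Proposition 1.3.1]{avramov:ifr} for the argument. So there is no in-paper proof to compare against.

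Your outline is correct and is essentially the standard argument one finds in the cited reference. The one technical step you flag---the inductive construction of the lift $\tilde\varphi$ along the semibasis filtration, simultaneously building the homotopy---is indeed where the work lies, and your description of it is accurate. An equivalent (and slightly slicker) way to organize the same idea is to first prove that $\Hom[A]{F}{-}$ preserves quasiisomorphisms whenever $F$ is semi-free (this is the ``homotopically projective'' property, and its proof is the same induction you describe); then $\Hom[A]{M}{\alpha}$ is a quasiisomorphism, so the cycle $\mathrm{id}_M\in\Hom[A]{M}{M}_0$ lifts to a cycle $\beta\in\Hom[A]{M}{L}_0$ with $\alpha\beta\sim\mathrm{id}_M$. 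Either packaging gives the same conclusion, and your final remark that $\alpha$ is in fact a homotopy equivalence is the correct strengthened form.
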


The previous fact explains why the next relations are symmetric. 
The fact that they are reflexive and transitive are straightforward to verify.

\begin{defn}\label{defn120523b'}
Two semi-free DG $A$-modules $L$ and $M$ are \emph{quasiisomorphic} if there is a 
quasiisomorphism $L\xra\simeq M$; this equivalence relation is
denoted by the symbol $\simeq$.
Two semi-free DG $A$-modules $L$ and $M$ are \emph{shift-quasiisomorphic}
if there is an integer $m$ such that $L\simeq\shift^mM$;
this equivalence relation is
denoted by the symbol $\sim$.
\end{defn}

\subsection*{Semidualizing DG Modules}

For Theorem~\ref{intthm120521a}, we use a version of  Christensen and Sather-Wagstaff's notion of semidualizing DG modules
from~\cite{christensen:dvke},
defined next.

\begin{defn}
\label{dfn:DGsdm}
The \emph{homothety morphism} $\chi^A_M\colon A\to \HomA MM$ is given by
$(\chi^A_M)_{|a|}(a):=\mult Ma$, i.e., $(\chi^A_M)_{|a|}(a)_{|m|}(m)=am$.

Assume that $A$ is noetherian. Then  $M$ is a \emph{semidualizing}
DG $A$-module if $M$ is homologically finite
and semi-free such that 
$\chi^A_M\colon A\to\Hom[A]{M}{M}$ is a quasiisomorphism.
Let $\s(A)$ denote the set of shift-quasiisomorphism classes 
of semidualizing DG $A$-modules,
that is, the set of equivalence classes of semidualizing DG $A$-modules 
under the relation $\sim$ from Definition~\ref{defn120523b'}.
\end{defn}

\begin{exer}\label{exer130331b}
Prove that the homothety morphism $\chi^A_M\colon A\to \HomA MM$ is
a well-defined morphism of DG $A$-modules.
\end{exer}

The following fact explains part of diagram~\eqref{eq120523b}.

\begin{fact}\label{ex120524c}
Let $M$ be an $R$-module with projective resolution $P$. 
Then Fact~\ref{exer120522e} shows that $M$ is a semidualizing
$R$-module if and only if $P$ is a semidualizing DG $R$-module.
It follows that, we have an injection $\s_0(R)\into\s(R)$.
\end{fact}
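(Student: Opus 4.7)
The plan is to unpack what it means for $P$ to be a semidualizing DG $R$-module when $R$ is viewed as a DG $R$-algebra concentrated in degree $0$, match each requirement against a hypothesis on $M$, and then build the map $\s_0(R)\into\s(R)$.

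First, I would verify the biconditional ``$M$ semidualizing $\Leftrightarrow$ $P$ semidualizing DG''. Semifreeness of $P$ is automatic by Exercise~\ref{ex120524a}, since a projective resolution of a module is a bounded-below complex of projectives, which we may take to be free (or more carefully, replace $P$ by a free resolution, which serves equally well for what follows). Homological finiteness translates to: $\HH_0(P)\cong M$ finitely generated, and $\HH_i(P)=0$ for $i\neq 0$; the first is precisely the finite generation assumption built into Definition~\ref{defn120521a}, and the second is by construction. The remaining condition is that the homothety morphism $\chi^R_P\colon R\to \Hom[R]{P}{P}$ be a quasiisomorphism. Since $R$ is concentrated in degree $0$, a DG $R$-module homomorphism $P\to P$ is just an $R$-linear chain map, so $\Hom[R]{P}{P}$ coincides with the Hom complex $\Hom PP$ from Definition~\ref{defn120522c}, and $\chi^R_P$ coincides with the chain map $\chi^P$ of Exercise~\ref{exer120522d}. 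Thus Fact~\ref{exer120522e} gives the equivalence.

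Next I would define a function $\Phi\colon\s_0(R)\to\s(R)$ by $\Phi([M])=[P]$ for any projective resolution $P$ of $M$. Well-definedness has two layers. For a fixed $M$, any two projective resolutions $P,P'$ of $M$ are quasiisomorphic via a lift of $\operatorname{id}_M$, and by Fact~\ref{fact110218d'} there is then a quasiisomorphism $P\xra{\simeq}P'$, so $[P]=[P']$ in $\s(R)$ (in fact under the finer relation $\simeq$, not just $\sim$). If $M\cong M'$ as $R$-modules, then a chosen isomorphism lifts to a quasiisomorphism of projective resolutions, giving the same conclusion. Combining with the biconditional above, $\Phi$ lands in $\s(R)$.

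For injectivity, suppose $\Phi([M])=\Phi([M'])$, so that $P\sim P'$; that is, $P\simeq \shift^m P'$ for some $m\in\bbz$ and some projective resolutions $P,P'$ of $M,M'$. Taking $\HH_0$ yields $M\cong\HH_0(P)\cong\HH_0(\shift^m P')\cong\HH_{-m}(P')$. Since $M\neq 0$ (recall $\chi^R_M$ is an isomorphism, so $M$ has $R\hookrightarrow\Hom MM$, hence $M\neq 0$) and $\HH_i(P')=0$ for $i\neq 0$, we must have $m=0$. Then $P\simeq P'$, and taking $\HH_0$ gives $M\cong M'$, so $[M]=[M']$ in $\s_0(R)$.

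The only subtle step is the shift-cancellation in the injectivity argument, and the key input that forces $m=0$ is that each resolution $P$ has homology concentrated in a single degree (namely $0$); I expect this to be the main thing to state carefully, since the equivalence relation on $\s(R)$ is coarser than isomorphism of DG modules. Everything else is a dictionary translation between the module-level and DG-level formulations, mediated by Fact~\ref{exer120522e} and Fact~\ref{fact110218d'}.
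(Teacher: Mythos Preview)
Your proposal is correct and follows the same approach the paper implicitly takes: the paper states this as a Fact with no further proof, simply pointing to Fact~\ref{exer120522e} for the biconditional and asserting the injection. You have supplied the details the paper omits, in particular the shift-cancellation argument for injectivity (using that $\HH_i(P)$ is concentrated in degree $0$ and nonzero), which is exactly what is needed to pass from the equivalence relation $\sim$ on $\s(R)$ back to isomorphism classes in $\s_0(R)$.
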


The next example gives some explanation for our focus on  shift-quasiisomorphism classes 
of semidualizing DG $A$-modules.

\begin{ex}\label{ex120524d}
Let  $B$ and $C$ be semi-free DG $A$-modules
such that $B\sim C$.
Then $B$ is semidualizing over $A$ if and only if $C$ is semidualizing over $A$.
The point  is the following.
The condition $B\simeq \shift^iC$ tells us that $B$ is homologically finite if
and only if $\shift^iC$ is homologically finite, that is, if
and only if $C$ is homologically finite.
Fact~\ref{fact110218d'}
provides a quasiisomorphism $B\xra[\simeq]{f} \shift^iC$.
Thus, there is a commutative diagram of morphisms of DG $A$-modules:
$$\xymatrix@C=2cm{
\Hom[A]CC\ar[rd]_{\cong}&
A\ar[l]_-{\chi^A_C}\ar[r]^-{\chi^A_B}\ar[d]_-{\chi^A_{\shift^iC}}
&\Hom[A]BB \ar[d]^-{\Hom[A] Bf}_-{\simeq}\\
&\Hom[A]{\shift^iC}{\shift^iC}\ar[r]^-{\Hom[A]f{\shift^iC}}_-{\simeq}
&\Hom[A]B{\shift^iC}. }$$
The unspecified isomorphism follows from a bookkeeping exercise.\footnote{The interested reader may wish to
show how this isomorphism is defined and to check the commutativity of the diagram. If this applies to you, make sure to mind the signs.}
The morphisms $\Hom[A]fC$ and $\Hom[A]Bf$ are quasiisomorphisms
by~\cite[Propositions 1.3.2 and 1.3.3]{avramov:ifr}
because 
$B$ and $\shift^iC$ are semi-free and $f$ is a quasiisomorphism.
It follows that $\chi^A_B$ is a quasiisomorphism
if and only if $\chi^A_{\shift^iC}$ is a quasiisomorphism
if and only if $\chi^A_C$ is a quasiisomorphism.
\end{ex}

The following facts explain other parts of diagram~\eqref{eq120523b}.

\begin{fact}\label{lem120524a}
Assume that $(R,\m)$ is local.
Fix a list of elements $\x \in\m$ and set $K=K^R(\x)$.
Base change  $\Otimes K-$ induces an injective map $\s(R)\into\s(K)$
by~\cite[A.3. Lemma]{christensen:dvke};
if $R$ is complete, then this map is  bijective by~\cite[Corollary 3.10]{nasseh:ldgm}.
\end{fact}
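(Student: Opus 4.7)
My plan proceeds in three steps: (i) show $\Otimes KC$ yields a well-defined class in $\s(K)$ for each $C \in \s(R)$; (ii) verify injectivity; (iii) verify surjectivity when $R$ is complete. For step (i), any $C \in \s(R)$ is already semi-free over $R$ by Definition~\ref{dfn:DGsdm}, so $\Otimes KC$ is semi-free over $K$ by Exercise~\ref{exer130330c}. Homological finiteness of $\Otimes KC$ over $\HH_0(K) = R/(\x)$ follows from Lemma~\ref{exer120522t} (each $\HH_i(\Otimes KC)$ is annihilated by $(\x)R$ and so is a module over $R/(\x)$) together with the fact that $K$ is a bounded complex of finitely generated free $R$-modules and $C$ has finitely generated bounded homology. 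For the homothety condition I would build the composite
\begin{equation*}
K \cong \Otimes KR \xra{\Otimes K{\chi^R_C}} \Otimes K{\Hom[R] CC} \xra{\alpha} \Hom[K]{\Otimes KC}{\Otimes KC},
\end{equation*}
identify it with $\chi^K_{\Otimes KC}$, and verify each arrow is a quasi-isomorphism: the middle one because $\chi^R_C$ is a qism by the semidualizing hypothesis and $K$ is a bounded complex of projectives (Fact~\ref{fact120523a}), and the tensor-evaluation map $\alpha$ by Hom-tensor adjointness (Fact~\ref{fact130330a}) combined with Fact~\ref{fact130312a} applied to the bounded-below semi-free module $C$ and the bounded complex $K$ of finitely generated frees. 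Well-definedness on shift-qism classes is then immediate from Fact~\ref{fact120523a}.

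For injectivity, given $C, C' \in \s(R)$ with $\Otimes KC \simeq \shift^n(\Otimes KC')$, I would work with minimal semi-free models and reduce modulo the maximal ideal via the augmentation $K \to k$ from Exercise~\ref{ex120523a'''}. Since the reduced models inherit minimality, matching graded ranks on the resulting qism over $k$ forces the graded Betti numbers of $C$ and $C'$ to coincide up to a shift by $n$, in particular $\und C \cong \shift^n \und{C'}$ as graded $R$-modules. Lifting this graded/numerical coincidence to a qism $C \res \shift^n C'$ is the delicate step, accomplished by a standard obstruction-theoretic argument in which obstructions live in Ext groups that vanish by the semidualizing hypotheses on $C$ and $C'$.

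Surjectivity in the complete case is the main obstacle. Given a semidualizing DG $K$-module $M$ with minimal semi-free resolution $G$ over $K$, so that $\und G \cong \bigoplus_i \shift^i \und K^{\beta_i}$ for some $\beta_i$, the plan is to lift $G$ to a DG $R$-complex $C$ with underlying graded module $\und C \cong \bigoplus_i \shift^i R^{\beta_i}$. Concretely, I would lift the differential on $G$ from $K$ down to $R$ step by step along a tower of successive quotients of $K$, at each stage showing that obstructions to extending a given partial lift lie in Ext groups that vanish by the semidualizing hypothesis on $M$. Completeness of $R$ is indispensable for convergence of the resulting inverse system of partial lifts. Once $C$ is built, the diagram from step (i) shows $\chi^R_C$ is a qism and $\Otimes KC \simeq G \simeq M$, exhibiting the required preimage.
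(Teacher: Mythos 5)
The paper gives no proof of this statement; it is a ``Fact'' that simply cites \cite[A.3.~Lemma]{christensen:dvke} for injectivity and \cite[Corollary~3.10]{nasseh:ldgm} for bijectivity in the complete case. So there is no ``paper's proof'' to compare against; I will assess your proposal on its own terms.

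Your Step~(i) (well-definedness) is essentially sound: semi-freeness of $\Otimes{K}C$ follows from Exercise~\ref{exer130330c}, homological finiteness from the boundedness of $K$ together with the null-homotopy argument of Lemma~\ref{exer120522t}, and the homothety condition from the tensor-evaluation isomorphism (which is in fact an honest isomorphism here because $K$ is a bounded complex of finitely generated free $R$-modules) together with Facts~\ref{fact120523a} and~\ref{fact130330a}.

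Your Step~(ii) (injectivity) has a genuine gap. As written, the argument extracts from the hypothesis $\Otimes{K}C \simeq \shift^n\Otimes{K}{C'}$ only the equality of graded Betti numbers of $C$ and $C'$ (up to shift), and then asserts that ``a standard obstruction-theoretic argument'' lifts this numerical coincidence to a quasiisomorphism $C \simeq \shift^n C'$. This cannot work: having the same Betti numbers is far too weak to determine the quasiisomorphism class (two non-isomorphic modules can have identical minimal free resolution shapes), so any argument starting only from the numerics must fail. Moreover, the obstruction groups controlling extensions of a partial chain map $C \to \shift^n C'$ are the mixed groups $\Ext[R]{i}{C}{C'}$, whose vanishing is \emph{not} a consequence of the semidualizing hypothesis --- that hypothesis only kills $\Ext[R]{i}{C}{C}$ and $\Ext[R]{i}{C'}{C'}$ for $i\geq 1$. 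One must instead retain and use the actual morphism over $K$, e.g.\ via the adjunction $\Hom[K]{\Otimes{K}C}{\Otimes{K}{C'}} \cong \Hom[R]{C}{\Otimes{K}{C'}}$ and the finite filtration of $K$ by homological degree; this is closer to the spirit of the cited \cite[A.3.~Lemma]{christensen:dvke}. (A minor point: the morphism of DG algebras out of $K$ from Exercise~\ref{ex120523a'''} lands in $R/(\x)$, not in $k$, since $\x$ is an arbitrary list in $\m$ here.)

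Your Step~(iii) (surjectivity when $R$ is complete) captures the correct idea --- one lifts a minimal semi-free DG $K$-module degree by degree using completeness, with obstructions living in Ext groups killed by the semidualizing hypothesis --- and this matches the flavour of the cited \cite[Corollary~3.10]{nasseh:ldgm}. But it is the hardest part of the statement and your sketch leaves the lifting argument entirely unexecuted: you would need to be precise about which tower of quotients of $K$ you filter by, exactly which Ext groups the obstructions live in and why they vanish, and why the limit complex is semi-free over $R$ and represents a semidualizing DG $R$-module. As written this is a plan, not a proof.
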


\begin{fact}\label{lem111117a'}
Let $\vf\colon A\xra\simeq B$ be a quasiisomorphism of noetherian DG $R$-algebras.
Base change  $\Otimes[A]B-$ induces a bijection from $\s(A)$ to $\s(B)$ by~\cite[Lemma 2.22(c)]{nasseh:lrfsdc}.
\end{fact}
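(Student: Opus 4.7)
The plan is to establish that base change along $\vf$ defines a well-defined map $\s(A) \to \s(B)$, and then to construct an inverse via restriction of scalars followed by semi-free resolution over $A$.

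First I would show that if $M$ is a semidualizing DG $A$-module, then $N := \Otimes[A]BM$ is a semidualizing DG $B$-module. Semi-freeness transfers immediately, because a semi-basis of $M$ over $A$ yields a semi-basis of $N$ over $B$. The canonical map $\iota_M \colon M \to N$ defined by $m \mapsto 1 \otimes m$ is a morphism of DG $A$-modules that (modulo tensor cancellation $\Otimes[A]AM \cong M$) coincides with $\Otimes[A]\vf M$; since $M$ is semi-free over $A$ and $\vf$ is a quasi-isomorphism, the DG analogue of Fact~\ref{fact120523a} shows $\iota_M$ is a quasi-isomorphism. Together with $\HH_0(\vf)\colon\HH_0(A) \xra\cong \HH_0(B)$ this yields homological finiteness of $N$ over $B$. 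For the homothety condition, I plan to use Hom-tensor adjointness (Fact~\ref{fact130330a}) and Hom cancellation to identify $\Hom[B]{N}{N} \cong \Hom[A]{M}{\Hom[B]{B}{N}} \cong \Hom[A]{M}{N}$, and check that under this identification the composite $A \xra{\vf} B \xra{\chi^B_N} \Hom[B]NN$ agrees with $\Hom[A]M{\iota_M} \circ \chi^A_M$. Since $M$ is semi-free, $\Hom[A]M{\iota_M}$ is a quasi-isomorphism (DG version of Fact~\ref{fact130312a}), and $\chi^A_M$ is a quasi-isomorphism by hypothesis; combined with $\vf$ being a quasi-isomorphism, this forces $\chi^B_N$ to be a quasi-isomorphism. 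Compatibility with the relation $\sim$ is then routine because base change of semi-free modules preserves shifts and quasi-isomorphisms.

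For injectivity, if $f\colon\Otimes[A]BM \xra\simeq \shift^i \Otimes[A]B{M'}$ is a quasi-isomorphism of DG $B$-modules, then restricting scalars along $\vf$ and pre/post-composing with $\iota_M$ and $\iota_{M'}$ produces a zig-zag of quasi-isomorphisms between the semi-free DG $A$-modules $M$ and $\shift^i M'$; Fact~\ref{fact110218d'} collapses this zig-zag to a direct quasi-isomorphism $M \simeq \shift^i M'$, so $M \sim M'$ over $A$. For surjectivity, given $N$ semidualizing over $B$, I would view $N$ as a DG $A$-module by restriction. Since $\HH_0(\vf)$ is an isomorphism, $N$ is homologically finite over $A$, so Fact~\ref{fact110218d} supplies a semi-free resolution $\alpha\colon M \xra\simeq N$ of DG $A$-modules. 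The composition of $B$-linear morphisms
\[
\Otimes[A]BM \xra{\Otimes[A]B\alpha} \Otimes[A]BN \xra{\mu} N, \qquad \mu(b \otimes n) := bn,
\]
restricts along $\iota_M$ to $\alpha$; since both $\alpha$ and $\iota_M$ are quasi-isomorphisms, two-out-of-three gives that $\mu \circ (\Otimes[A]B\alpha)$ is a quasi-isomorphism, so $\Otimes[A]BM \simeq N$ over $B$. That $M$ itself is semidualizing over $A$ follows by running the chain from the first paragraph in reverse, exploiting that $N$ is semidualizing over $B$ and that $\Otimes[A]BM \simeq N$.

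The main obstacle I anticipate is the careful bookkeeping for the commutative diagram identifying $\chi^B_N \circ \vf$ with $\Hom[A]M{\iota_M} \circ \chi^A_M$ under the adjunction and cancellation isomorphisms; this identification is the heart of the argument, and minding signs when tracking where an element $a \in A$ of degree $|a|$ lands in $\Hom[A]M{\Otimes[A]BM}$ is essential for the diagram actually to commute rather than merely commute up to an irritating sign.
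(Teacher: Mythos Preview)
The paper does not prove this statement; it records it as a Fact with a citation to \cite[Lemma 2.22(c)]{nasseh:lrfsdc}, so there is no in-paper argument to compare against. Your outline is the standard route and is essentially sound: base change preserves semi-freeness and homological finiteness via the quasiisomorphism $\iota_M\colon M\to\Otimes[A]BM$, the adjunction-and-cancellation identification of $\Hom[B]{N}{N}$ with $\Hom[A]{M}{N}$ does make the homothety square commute (the check $1\otimes am=\vf(a)\otimes m$ in $\Otimes[A]BM$ goes through without sign trouble because $|1_B|=0$), and the surjectivity argument via a semi-free $A$-resolution of the restricted module is correct.

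One small point to tighten: in your injectivity step you have a genuine zig-zag $M\xra\simeq\Otimes[A]BM\xra\simeq\shift^i\Otimes[A]B{M'}\xla\simeq\shift^iM'$ of DG $A$-modules, and Fact~\ref{fact110218d'} as stated only reverses a single quasiisomorphism between two semi-free objects. To collapse the zig-zag you need the lifting property of semi-free DG modules (that $\Hom[A]{L}{-}$ preserves quasiisomorphisms when $L$ is semi-free, i.e.\ \cite[Proposition 1.3.2]{avramov:ifr}), which lets you lift $\shift^i\iota_{M'}$ along the composite $M\to\shift^i\Otimes[A]B{M'}$ up to homotopy; then two-out-of-three gives the direct quasiisomorphism. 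Similarly, in the surjectivity step, to conclude that $\chi^A_M$ is a quasiisomorphism you implicitly use that $\chi^B_{\Otimes[A]BM}$ is one, which follows from $\Otimes[A]BM\simeq N$ between semi-free DG $B$-modules together with Example~\ref{ex120524d}; it is worth making that invocation explicit.
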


\subsection*{Ext for DG Modules}
One subtlety in the proof of Fact~\ref{lem120524a} is found in the
behavior of Ext for DG modules, which we describe next.

\begin{defn}
\label{defn110223a'}
Given a semi-free resolution $F\xra\simeq M$, for each integer $i$ set 
$\Ext[A]iMN:=\HH_{-i}(\Hom[A]FN)$.\footnote{One
can also define $\Tor iMN:=\HH_i(\Otimes[A]FN)$, but we do not need
this here.}
\end{defn}

The next two items are included in our continued spirit of providing  perspective.

\begin{exer}\label{fact120524a}
Given $R$-modules $M$ and $N$, prove that the module $\Ext iMN$ 
defined in~\ref{defn110223a'} is the usual $\Ext iMN$;
see Exercise~\ref{ex120524a}.
\end{exer}

\begin{ex}\label{ex120526g}
In the notation of Example~\ref{ex120526a}, Examples~\ref{ex120526e} and~\ref{ex120526f} imply
$$\Ext[U]iRR=\HH_{-i}(\Hom[U]{G}{R})=\begin{cases}
R & \text{if $i\geq 0$ is even} \\
0 & \text{otherwise.}\end{cases}$$
Contrast this with the equality $\Ext[\und U]iRR=R$ for all $i\geq 0$.
This shows that $U$ is fundamentally different from $\und U\cong R[X]/(X^2)$,
even though $U$ is obtained using a trivial differential on $R[X]/(X^2)$
with the natural grading.
\end{ex}

The next result compares with the fact that $\Ext iMN$ is independent of the 
choice of free resolution when $M$ and $N$ are modules.

\begin{fact}\label{fact120524b}
For each index $i$, the module $\Ext[A]iMN$ is independent of the 
choice of semi-free resolution of $M$ by~\cite[Proposition 1.3.3]{avramov:ifr}.
\end{fact}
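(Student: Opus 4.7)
The plan is to reduce the assertion to the two standard facts in DG homological algebra that are already quoted elsewhere in the notes: the lifting/comparison property for semi-free DG modules, and the preservation of quasi-isomorphisms by $\Hom[A]{-}{N}$ when the source is semi-free. So throughout, fix two semi-free resolutions $\alpha\colon F\xra{\simeq}M$ and $\alpha'\colon F'\xra{\simeq}M$.

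First, I would produce a quasi-isomorphism of DG $A$-modules $\varphi\colon F\xra{\simeq}F'$ sitting in a triangle that commutes up to homotopy over $M$. Since $F$ is semi-free and $\alpha'$ is a surjective-up-to-homotopy quasi-isomorphism, the lifting property of semi-free DG modules (see \cite[\S 1.3]{avramov:ifr}, which underlies Fact~\ref{fact110218d'}) produces a morphism $\varphi\colon F\to F'$ with $\alpha'\varphi$ homotopic to $\alpha$. Passing to homology, $\HH_i(\alpha')\HH_i(\varphi)=\HH_i(\alpha)$, and since $\HH_i(\alpha)$ and $\HH_i(\alpha')$ are isomorphisms for all $i$, so is $\HH_i(\varphi)$; hence $\varphi$ is itself a quasi-isomorphism.

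Next, I would apply $\Hom[A]{-}{N}$ to $\varphi$ to obtain the morphism of DG $A$-modules
\[
\Hom[A]{\varphi}{N}\colon\Hom[A]{F'}{N}\longrightarrow\Hom[A]{F}{N}.
\]
The key input here is the analogue of Fact~\ref{fact130312a} in the DG setting (used in the diagram of Example~\ref{ex120524d} and recorded in \cite[Propositions 1.3.2 and 1.3.3]{avramov:ifr}): a quasi-isomorphism between semi-free DG $A$-modules stays a quasi-isomorphism after applying $\Hom[A]{-}{N}$. Thus $\Hom[A]{\varphi}{N}$ is a quasi-isomorphism, and taking $\HH_{-i}$ yields a well-defined isomorphism between the two candidate values of $\Ext[A]{i}{M}{N}$. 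One then checks, using that any two liftings of the identity are homotopic, that this isomorphism is canonical (independent of the choice of $\varphi$), so composing through a third semi-free resolution is compatible, which gives genuine independence of the choice of resolution.

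The one delicate step is the very first one, namely producing $\varphi$ as a bona fide morphism of DG $A$-modules (not merely as a zig-zag in the derived category) and verifying that it is a quasi-isomorphism. Everything after that is formal: the Hom preservation property is exactly what makes $F$ behave like a projective resolution for the purposes of computing right-derived functors of $\Hom[A]{-}{N}$, and the uniqueness-up-to-homotopy of $\varphi$ ensures naturality of the resulting isomorphism on Ext.
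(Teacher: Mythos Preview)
The paper does not actually prove this statement: it is recorded as a Fact with a bare citation to \cite[Proposition~1.3.3]{avramov:ifr}, so there is no in-text argument to compare against. Your sketch is the standard proof one would expect to find behind that citation, and it is correct in outline: produce a comparison morphism $\varphi\colon F\to F'$ over $M$ using the lifting property of semi-free DG modules, observe that $\varphi$ is a quasi-isomorphism by two-out-of-three, and then invoke the fact (used verbatim in Example~\ref{ex120524d}) that $\Hom[A]{-}{N}$ sends quasi-isomorphisms between semi-free DG $A$-modules to quasi-isomorphisms.

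Two small comments. First, the phrase ``surjective-up-to-homotopy quasi-isomorphism'' is nonstandard and not quite what is being used; the relevant property is that a semi-free $F$ is homotopically projective, i.e., $\Hom[A]{F}{-}$ preserves quasi-isomorphisms, which is what makes $\alpha$ lift through $\alpha'$ up to homotopy. Second, the cleanest way to justify that $\Hom[A]{\varphi}{N}$ is a quasi-isomorphism is to note that a quasi-isomorphism between semi-free DG $A$-modules is in fact a homotopy equivalence (this is the content of Fact~\ref{fact110218d'} and the lifting property applied in both directions), and homotopy equivalences are preserved by any additive functor. With those tweaks your argument is complete and matches what the cited reference does.
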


\begin{disc}\label{disc120524a}
An important fact about $\Ext 1MN$ for $R$-modules $M$ and $N$ is the following:
the elements of $\Ext 1MN$ are in bijection with the equivalence classes of 
short exact sequences (i.e., ``extensions'') of the form
$0\to N\to X\to M\to 0$. 
For DG modules over a DG $R$-algebra $A$, things are a bit more subtle.

Given DG $A$-modules $M$ and $N$, one  defines the notion of a 
short exact sequence of the form
$0\to N\to X\to M\to 0$ in the naive way: the arrows are morphisms
of DG $A$-modules such that for each $i\in\bbz$ the sequence
$0\to N_i\to X_i\to M_i\to 0$ is exact. One defines an equivalence relation
on the set of short exact sequences of this form
(i.e., ``extensions'') in the natural way:
two extensions $0\to N\xra{f} X\xra{g} M\to 0$ and $0\to N\xra{f'} X'\xra{g'} M\to 0$
are equivalent if there is a commutative diagram 
$$\xymatrix{
0\ar[r] 
& N\ar[r]^-{f} \ar[d]_-{=}
& X\ar[r]^-{g} \ar[d]_-{h}
& M\ar[r] \ar[d]_-{=}
& 0
\\
0\ar[r] 
& N\ar[r]^-{f'} 
& X'\ar[r]^-{g'} 
& M\ar[r] 
& 0
}$$
of morphisms of DG $A$-modules. 
Let $\yext[A]1MN$ denote the set of equivalence classes of such extensions. (The ``Y'' is for ``Yoneda''.)
As with $R$-modules,
one can define an abelian group structure on $\yext[A]1MN$.
However, in general one has $\yext[A]1MN\ncong\Ext[A]1MN$,
even when $A=R$, as the next example shows.
\end{disc}

\begin{ex}\label{ex111018a}
Let $R=k[\![X]\!]$, and consider the following exact sequence of DG $R$-modules,
i.e., exact sequence of $R$-complexes:
$$\xymatrix@R=5mm{
0\ar[r]
&\underline R\ar[r]
&\underline R\ar[r]
&\underline k\ar[r]
&0 \\
& 0\ar[d]
& 0\ar[d]
& 0\ar[d] \\
0\ar[r]
& R\ar[r]^-X\ar[d]_-1
& R\ar[r]\ar[d]_-1
& k\ar[r]\ar[d]_-1
&0 \\
0\ar[r]
& R\ar[r]^-X\ar[d]
& R\ar[r]\ar[d]
& k\ar[r]\ar[d]
&0 \\
&0&0&0.
}$$
This sequence does not split over $R$ (it is not even degree-wise split) so 
it gives a non-trivial class in $\yext 1{\underline k}{\underline R}$,
and we conclude that $\yext 1{\underline k}{\underline R}\neq 0$.
On the other hand, $\underline k$ is homologically trivial,
so we have $\Ext 1{\underline k}{\underline R}=0$ since
$0\xra\simeq\underline k$ is a semi-free resolution.
\end{ex}

For our proof of Theorem~\ref{intthm120521a}, the following connection between
Ext and YExt is quite important; see~\cite[Corollary 3.8 and Proposition 3.12]{nasseh:lrfsdc}. 

\begin{fact}
\label{prop111110a}
If $L$ is semi-free,
then $\yext[A]1LM\cong\Ext[A]1LM$;
if furthermore $\Ext 1LL=0$, then for each $n\geq\sup(L)$, one has
$$\yext[A]1LL=0=\yext[A]1{\tau(L)_{(\leq n)}}{\tau(L)_{(\leq n)}}.$$
\end{fact}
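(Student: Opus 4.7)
My plan is to establish the isomorphism $\yext[A]1LM\cong\Ext[A]1LM$ via the standard extension--cocycle correspondence (available precisely because $L$ is semi-free), then to derive both vanishing statements from it: the first immediately, the second by combining a lifting argument with a rigid degree observation specific to the soft left truncation. For the first assertion, I would set up mutually inverse maps between equivalence classes of extensions and $\HH_{-1}(\Hom[A]LM)$. Given an extension $0\to M\xra{\iota}X\xra{\pi}L\to 0$, semi-freeness of $L$ provides an $A^{\natural}$-linear graded splitting $\sigma\colon L^{\natural}\to X^{\natural}$ of $\pi^{\natural}$; the defect $\partial^{X}\sigma-\sigma\partial^{L}$ takes values in $\iota(M)$ and, via $\iota$, defines a graded $A^{\natural}$-linear map $f\colon L\to M$ of degree $-1$. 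The relation $(\partial^{X})^{2}=0$ forces $f$ to be a cycle in $\Hom[A]LM_{-1}$; altering $\sigma$ by an $A$-linear graded map $L\to M$ of degree $0$ alters $f$ by a boundary, and equivalent extensions yield the same class, producing a well-defined $[f]\in\HH_{-1}(\Hom[A]LM)$. Conversely, each cycle $f\in\Hom[A]LM_{-1}$ equips $M\oplus L$ with the twisted differential $(m,\ell)\mapsto(\partial^{M}m+f(\ell),\partial^{L}\ell)$ and the diagonal $A$-action, giving a DG $A$-module fitting into an extension of $L$ by $M$. Routine (if sign-heavy) checks show the constructions are mutually inverse, so $\yext[A]1LM\cong\Ext[A]1LM$, and specializing to $M=L$ yields $\yext[A]1LL=0$.

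For the vanishing on $T:=\tau(L)_{(\leq n)}$, let $\phi\colon L\to T$ be the canonical surjective quasiisomorphism from Exercise~\ref{disc110302a}. Since $L$ is semi-free, the DG analog of Fact~\ref{fact130312a} gives $\Hom[A]L\phi\colon\Hom[A]LL\xra{\simeq}\Hom[A]LT$, so $\Ext[A]1LT\cong\Ext[A]1LL=0$. Given any extension $0\to T\xra{\iota}Y\xra{\pi}T\to 0$, semi-freeness of $L$ ensures $0\to\Hom[A]LT\to\Hom[A]LY\to\Hom[A]LT\to 0$ is short exact, and its long exact homology sequence together with $\Ext[A]1LT=0$ shows $[\phi]\in\HH_{0}(\Hom[A]LT)$ lies in the image of $\pi_{*}$. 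Choosing a preimage, and then lifting the resulting homotopy to $Y$ (using that each $L_i$ is $A^{\natural}$-free together with surjectivity of $\pi$ in each degree), produces a morphism $\psi\colon L\to Y$ with $\pi\psi=\phi$ on the nose.

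The decisive step is to show $\psi$ vanishes on $K:=\ker\phi$, which I verify by rigidity. From the construction of the truncation, $K_{i}=0$ for $i<n$, $K_{n}=\im\partial^{L}_{n+1}$, and $K_{i}=L_{i}$ for $i>n$, while $T_{i}=0$ for $i>n$ and $T_{n}=L_{n}/\im\partial^{L}_{n+1}$. Since $\pi\psi|_{K}=\phi|_{K}=0$, the map $\psi|_{K}$ factors as $\iota g$ for a morphism $g\colon K\to T$ of DG $A$-modules; degree considerations force $g_{i}=0$ whenever $i\neq n$ (one of $K_{i},T_{i}$ vanishes), while the differential compatibility $g_{n}\partial^{K}_{n+1}=\partial^{T}_{n+1}g_{n+1}=0$ combined with the surjection $\partial^{K}_{n+1}\colon L_{n+1}\twoheadrightarrow\im\partial^{L}_{n+1}=K_{n}$ forces $g_{n}=0$ as well. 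Hence $\psi$ descends through $\phi\colon L\to L/K=T$ to a morphism $\tilde{\psi}\colon T\to Y$, and $\pi\tilde{\psi}\phi=\pi\psi=\phi$ together with surjectivity of $\phi$ gives $\pi\tilde{\psi}=\id_{T}$, splitting the extension. The conceptual crux is this final rigidity observation, which exploits the very specific way $K$ and $T$ sit on opposite sides of degree $n$; the main technical nuisance will be the sign bookkeeping needed to verify the bijection in the first paragraph.
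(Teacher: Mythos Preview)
The paper does not itself prove this Fact; it is recorded with a citation to \cite[Corollary~3.8 and Proposition~3.12]{nasseh:lrfsdc}. Your proposal is a correct and essentially self-contained argument, so there is nothing to compare against except to say that what you have written would serve as a proof.

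A few remarks on the details. For the first assertion, your cocycle--extension correspondence is the standard one, available precisely because $L^{\natural}$ is free (hence projective) over $A^{\natural}$; the sign-heavy verifications you flag are indeed routine. For the second assertion, when you lift the homotopy $h\in\Hom[A](L,T)_1$ to $\tilde h\in\Hom[A](L,Y)_1$, note that you need only an $A^{\natural}$-linear lift of degree~$1$ (no compatibility with differentials is required), which exists by projectivity of $L^{\natural}$ and degreewise surjectivity of $\pi$; then $\psi:=\psi'-\partial^{\Hom[A]LY}_1(\tilde h)$ is automatically a morphism with $\pi\psi=\phi$. Your rigidity step is the heart of the argument and is correct: $K:=\ker\phi$ is a DG $A$-submodule of $L$ (this is exactly what is checked in the solution to Exercise~\ref{disc110302a}), so the factorization $\psi|_K=\iota g$ with $g\colon K\to T$ a morphism is legitimate; the observation that $K$ lives in degrees $\geq n$, that $T$ lives in degrees $\leq n$, and that $\partial^K_{n+1}\colon L_{n+1}\twoheadrightarrow K_n$ is surjective then forces $g=0$. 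The descent of $\psi$ to a splitting $\tilde\psi\colon T\to Y$ follows.
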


We conclude this section with the second part of the proof of Theorem~\ref{intthm120521a}.
The rest of the proof is contained in~\ref{proof120523c}.

\begin{para}[\textbf{Second part of the proof of Theorem~\ref{intthm120521a}}]
\label{proof120523a}
We continue with the notation established in~\ref{proof120523b}.
The properties of diagram~\eqref{eq120523b} follow from
diagram~\eqref{eq120523a}
because of Facts~\ref{lem120524a} and~\ref{lem111117a'}.
Thus, it remains to show that $\s(\Otimes[Q]{k}{A})$ is finite.
This is shown in~\ref{proof120523c}.
\end{para}

\section{A Version of Happel's Result for DG Modules}
\label{sec120522f}

This section contains the final steps of the proof of Theorem~\ref{intthm120521a}; see~\ref{proof120523c}.
The idea, from~\cite{SGA3, gabriel:frto,happel:sm,voigt:idteag}
is  bold one: use algebraic geometry to study all possible module structures on a fixed set.
A simple case of this is in Example~\ref{ex120524e}.
We begin with some notation for use throughout this section.

\begin{notation}\label{notn110516a}
Let $F$ be an algebraically closed field, and let
$$
U:=(0\to U_q\xra{\partial_q^U} U_{q-1}\xra{\partial_{q-1}^U} \cdots \xra{\partial_{1}^U} U_{0}\to 0)
$$
be a finite-dimensional  DG $F$-algebra.
Let $\dim_F(U_i)=n_i$ for  $i=0,\ldots, q$. Let
$$
W:=\bigoplus_{i=0}^s W_i
$$
be a graded $F$-vector space with $r_i:=\dim_F(W_i)$ for  $i=0,\ldots, s$.

A DG $U$-module structure on $W$ consists of two pieces of data.
First, we need a differential $\partial$.
Second, once the differential $\partial$ has been chosen, we need a scalar multiplication $\mu$.
Let $\od^U(W)$ denote the set of all ordered pairs $(\partial,\mu)$
making $W$ into a DG $U$-module.
Let $\en_F(W)_0$ denote the set of  $F$-linear endomorphisms of $W$ that are homogeneous of degree 0.
Let $\ggl(W)_0$ denote the set of  $F$-linear automorphisms of $W$ that are homogeneous of degree 0, that is, the invertible elements of $\en_F(W)_0$.
\end{notation}

We next describe a geometric structure on  $\od^U(W)$,
as in Example~\ref{ex120524e}.

\begin{anal}\label{disc110516a}
We work in the setting of Notation~\ref{notn110516a}.

A differential $\partial$ on $W$ is an element of the graded vector space
$\Hom[F]WW_{-1}=\bigoplus_{i=0}^s\Hom[F]{W_i}{W_{i-1}}$ such that $\partial\partial=0$.
The vector space $\Hom[F]{W_i}{W_{i-1}}$ has dimension $r_ir_{i-1}$,
so the map $\partial$ corresponds to an element of the affine space $\mathbb{A}^{d}_F$ where
$d:=\sum_i r_ir_{i-1}$. The vanishing condition $\partial\partial=0$ is equivalent to the entries of the matrices
representing $\partial$ satisfying certain fixed homogeneous quadratic polynomial equations over $F$. Hence, the
set of all  differentials on $W$ is a Zariski-closed subset of $\mathbb{A}^{d}_F$.

Once the differential $\partial$ has been chosen, a scalar multiplication $\mu$ is in particular a cycle
in $\Hom[F]{U\otimes_F W}{W}_0=\bigoplus_{i,j} \Hom[F]{U_i\otimes_F W_j}{W_{i+j}}$.
For all $i,j$, the vector space $\Hom[F]{U_i\otimes_F W_j}{W_{i+j}}$ has dimension $n_ir_jr_{i+j}$, so the map $\mu$
corresponds to an element of the affine space $\mathbb{A}^{d'}_F$ where
$d':= \sum_{i,j} n_ir_{j}r_{i+j}$. The condition that $\mu$ be an associative, unital cycle
is equivalent to the entries of the matrices
representing $\partial$ and $\mu$ satisfying certain fixed polynomials over $F$. Thus, the
set $\od^U(W)$  is a Zariski-closed subset of $\mathbb{A}^{d}_F\times \mathbb{A}^{d'}_F\cong \mathbb{A}^{d+d'}_F$.
\end{anal}

\begin{ex}\label{ex120524z}
We continue with the notation of Example~\ref{ex120524e}.
In this example, we have 
$\od^U(W)=\{1\}$ (representing the non-trivial scalar multiplication by 1)
and
$\od^U(W')=\{(x_1,x_0)\in F^2\mid x_1x_0=0\}$.
Re-writing $F^n$ as $\bba^n_F$, we see that
$\od^U(W)$ is a single point  in $\bba^1_F$
and $\od^U(W')$ is the union of the two coordinate axes
$V(x_1x_0)=V(x_0)\cup V(x_1)$.
\end{ex}

\begin{exer}\label{ex120524e'}
Continue with the notation of Example~\ref{ex120524e}.
Write out the coordinates and equations describing 
$\od^U(W'')$ and $\od^U(W''')$ where
\begin{align*}
W''&=\quad 0\bigoplus Fw_2\bigoplus Fw_1\bigoplus Fw_0\bigoplus 0\\
W'''&=\quad 0\bigoplus Fz_2\bigoplus (Fz_{1,1}\bigoplus Fz_{1,2})\bigoplus Fz_0\bigoplus 0.
\end{align*}
For scalar multiplication,
note that since multiplication by 1 is already determined by the $F$-vector
space structure, we only need to worry about multiplication by $e$
which maps
$W''_i\to W''_{i+1}$
and $W'''_i\to W'''_{i+1}$
for $i=0,1,2$.
\end{exer}

We next describe a geometric structure on the set $\ggl(W)_0$.

\begin{anal}\label{disc110519a}
We work in the setting of Notation~\ref{notn110516a}.

A map $\alpha\in\ggl(W)_0$  is an element of the graded vector space
$\Hom[F]WW_{0}=\bigoplus_{i=0}^s\Hom[F]{W_i}{W_{i}}$ with a multiplicative inverse. The vector space $\Hom[F]{W_i}{W_{i}}$ has dimension $r_i^2$,
so the map $\alpha$ corresponds to an element of the affine space $\mathbb{A}^{e}_F$ where
$e:=\sum_i r_i^2$. The invertibility of $\alpha$ is equivalent to the invertibility of  each ``block''
$\alpha_i\in\Hom[F]{W_i}{W_{i}}$, which is an open condition defined by the non-vanishing of the determinant polynomial.
Thus, the set $\ggl(W)_0$  is a Zariski-open subset of $\mathbb{A}^{e}_F$,
so it is smooth over $F$.

Alternately, one can view $\ggl(W)_0$ as the product $\ggl(W_0)\times\cdots\times\ggl(W_s)$.
Since each $\ggl(W_i)$ is an  algebraic group smooth over $F$, it follows that $\ggl(W)_0$ is also  
an  algebraic group that is smooth over $F$.
\end{anal}

\begin{ex}\label{ex120524y}
We continue with the notation of Example~\ref{ex120524e}.
It is straightforward to show that
\begin{align*}
\en_F(W)_0&=\Hom[F]{F\nu_0}{F\nu_0}\cong F=\bba^1_F\\
\ggl_F(W)_0&=\au_F(F\nu_0)\cong F^\times =U_x\subset\bba^1_F\\
\en_F(W')_0&=\Hom[F]{F\eta_1}{F\eta_1}\bigoplus\Hom[F]{F\eta_0}{F\eta_0}\cong F\times F=\bba^2_F\\
\ggl_F(W')_0&=\au_F(F\eta_1)\bigoplus\au_F(F\eta_0)\cong F^\times  \times F^\times
=U_{x_1x_0}\subset\bba^2_F.
\end{align*}
Here $U_x$ is the subset $\bba^1_F\ssm V(x)$,
and $U_{x_1x_0}=\bba^2_F\ssm V(x_1x_0)$.
\end{ex}

\begin{exer}\label{ex120524e''}
Continue with the notation of Example~\ref{ex120524e}.
Write out the coordinates and equations describing 
$\ggl^U(W'')_0$ and $\ggl^U(W''')_0$ where
$W''$ and $W'''$ are from Exercise~\ref{ex120524e'}
\end{exer}

Next, we describe an action of  $\ggl(W)_0$ on $\od^U(W)$.

\begin{anal}\label{action}
We work in the setting of Notation~\ref{notn110516a}.

Let $\alpha\in\ggl(W)_0$. For every $(\partial,\mu)\in\od^U(W)$, we define
$\alpha\cdot(\partial,\mu):=(\widetilde{\partial},\widetilde{\mu})$, where
$\widetilde{\partial}_i:=\alpha_{i-1}\circ \partial_i\circ \alpha_i^{-1}$ and
$\widetilde{\mu}_{i+j}:=\alpha_{i+j}\circ \mu_{i+j}\circ (U\otimes_F \alpha_j^{-1})$.
For the multiplication, this defines a new multiplication
$$u_i\cdot_{\alpha} w_j:=\alpha_{i+j}(u_i\cdot\alpha_j^{-1}(w_j))$$
where $\cdot$ is the multiplication given by $\mu$, as in Discussion~\ref{disc110516a}: 
$u_i\cdot w_j:=\mu_{i+j}(u_i\otimes w_j)$.
Note that this leaves multiplication by $1_A$ unaffected:
$$1_A\cdot_{\alpha} w_j=\alpha_{j}(1_A\cdot\alpha_j^{-1}(w_j))
=\alpha_{j}(\alpha_j^{-1}(w_j))=w_j.$$
It is routine to show that the ordered pair $(\widetilde{\partial},\widetilde{\mu})$ 
describes a
DG $U$-module structure for $W$, that is, we have
$\alpha\cdot(\partial,\mu):=(\widetilde{\partial},\widetilde{\mu})\in\od^U(W)$.
From the definition of $\alpha\cdot(\partial,\mu)$, it follows readily that this describes a
$\ggl(W)_0$-action on $\od^U(W)$.
\end{anal}

\begin{ex}\label{ex120524e'''}
Continue with the notation of Example~\ref{ex120524e}.

In this case, the only DG $U$-module structure on $W$
is the trivial one $(\partial,\mu)=(0,0)$, so we have
$\alpha\cdot(\partial,\mu)=(\partial,\mu)$ for all
$\alpha\in \ggl(W)_0$.

The action of $\ggl(W')_0$ on $\od^U(W')$ is a bit more interesting. 
Let $x_0,x_1\in F$ such that $x_0x_1=0$, as in Example~\ref{ex120524e}.
Identify $\ggl_F(W')_0$ with $F^\times  \times F^\times$, as in
Example~\ref{ex120524z}, and let $\alpha\in\ggl_F(W')_0$ be given by
the ordered pair $(y_1,y_0)\in F^\times  \times F^\times$.
The differential $\wti\partial$ is defined so that the following diagram commutes.
$$\xymatrix{
\partial:&
0\ar[r]
&F\eta_1\ar[r]^-{x_1}\ar[d]_-{y_1}
&F\eta_0\ar[r]\ar[d]_-{y_0}
&0
\\
\wti\partial:&
0\ar[r]
&F\wti\eta_1\ar[r]^-{\wti x_1}
&F\wti\eta_0\ar[r]
&0}$$
so we have $\wti\partial_1(\wti\eta_1)=y_0x_1y_1^{-1}\wti\eta_0$, i.e., $\wti x_1=y_0x_1y_1^{-1}$.

Since multiplication by 1 is already determined,
and we have $e\cdot_{\alpha}\wti\eta_1=0$ because of degree considerations, 
we only need to understand
$e\cdot_{\alpha}\wti\eta_0$. From Discussion~\ref{action}, this is given by
$$e\cdot_{\alpha}\wti\eta_0=
\alpha_1(e\cdot\alpha_0^{-1}(\wti\eta_0))
=\alpha_1(e\cdot y_0^{-1}\eta_0)
=y_0^{-1}\alpha_1(e\cdot \eta_0)
=y_0^{-1}\alpha_1(x_0 \eta_1)
=y_0^{-1}y_1x_0 \wti\eta_1.
$$
\end{ex}

\begin{exer}\label{exer120526a}
Continue with the notation of Example~\ref{ex120524e}.
Using the solutions to Exercises~\ref{ex120524e'} and~\ref{ex120524e''}
describe the actions of $\ggl(W'')_0$ and $\ggl(W''')_0$ on $\od^U(W'')$ and $\od^U(W''')$, respectively, as in the previous example.
\end{exer}

Next, we describe some properties of the action from Discussion~\ref{action}
that indicate a deeper connection between the algebra and geometry.

\begin{anal}\label{action'}
We work in the setting of Notation~\ref{notn110516a}.

Let $\alpha\in\ggl(W)_0$. For every $(\partial,\mu)\in\od^U(W)$, let
$\alpha\cdot(\partial,\mu):=(\widetilde{\partial},\widetilde{\mu})$
be as in Discussion~\ref{action}.
It is straightforward to show that  a map $\alpha$ gives a
DG $U$-module isomorphism $(W,\partial,\mu)\xra\cong(W,\wti\partial,\wti\mu)$.
Conversely, given another element $(\partial',\mu')\in\od^U(W)$,
if there is a DG $U$-module isomorphism $\beta\colon(W,\partial,\mu)\xra\cong(W,\partial',\mu')$, then
$\beta\in\ggl(W)_0$ and $(\partial',\mu')=\beta\cdot(\partial,\mu)$.
In other words, the orbits in $\od^U(W)$ under the action of $\ggl(W)_0$
are the isomorphism classes
of DG $U$-module structures on $W$.
Given an element $M=(\partial,\mu)\in\od^U(W)$,
the orbit $\ggl(W)_0\cdot M$
is locally closed in $\Mod^U(W)$;
see~\cite[Chapter II, \S 5.3]{demazure:iagag}.

Note that the maps defining the action of $\ggl(W)_0$ on $\od^U(W)$ are regular, that is, 
determined by polynomial functions. This is because the inversion map $\alpha\mapsto\alpha^{-1}$ on $\ggl(W)_0$
is regular, as is the multiplication of matrices corresponding to the compositions defining $\wti\partial$ and
$\wti\mu$.
\end{anal}

Next, we consider even more geometry by identifying tangent spaces to two of our objects of study. 

\begin{notn}\label{notn110519a}
We work in the setting of Notation~\ref{notn110516a}.
Let $F[\epsilon]:=F\epsilon \bigoplus  F$ be the algebra of dual numbers, where  $\epsilon^2=0$ and $|\epsilon|=0$.
For our convenience, we write  elements of $F[\epsilon]$ as column vectors: $a\epsilon+b=\left[\begin{smallmatrix}a \\ b\end{smallmatrix}\right]$.
We identify $U[\epsilon]:=\Otimes[F]{F[\epsilon]}{U}$ with
$U\epsilon \bigoplus  U\cong U \bigoplus  U$, 
and $W[\epsilon]:=\Otimes[F]{F[\epsilon]}{W}$ with $W\epsilon \bigoplus W\cong 
W \bigoplus W$.
Using this protocol, we have $\partial^{U[\epsilon]}_i=\left[\begin{smallmatrix}\partial^U_i & 0\\ 0 & \partial^U_i\end{smallmatrix}\right]$.

Let $\od^{U[\epsilon]}(W[\epsilon])$ denote the set of all ordered pairs $(\partial,\mu)$
making $W[\epsilon]$ into a DG $U[\epsilon]$-module.
Let $\en_{F[\epsilon]}(W[\epsilon])_0$ denote the set of  $F[\epsilon]$-linear endomorphisms of $W[\epsilon]$ that are homogeneous of degree 0.
Let $\ggl(W[\epsilon])_0$ denote the set of  $F[\epsilon]$-linear automorphisms of $W[\epsilon]$ that are homogeneous of degree 0, that is, the invertible elements of $\en_{F[\epsilon]}(W[\epsilon])_0$.

Given an element $M=(\partial,\mu)\in\od^U(W)$, the tangent space
$\tamod_M$ is the set of all ordered pairs $(\ol\partial,\ol\mu)\in\Mod^{U[\epsilon]}(W[\epsilon])$ 
that give rise to $M$ modulo
$\epsilon$.
The tangent space
$\ta^{\ggl(W)_0}_{\id_{W}}$ 
is the set of all elements of $\ggl(W[\epsilon])_0$
that give rise to $\id_W$ modulo
$\epsilon$.
\end{notn}

\begin{disc}
Alternate descriptions of the tangent spaces from Notation~\ref{notn110519a} are contained
in~\cite[Lemmas 4.8 and 4.10]{nasseh:lrfsdc}.
Because of smoothness conditions, the map 
$\ggl(W)_0\xra{\cdot M}\od^U(W)$
induces a linear transformation
$\ta^{\ggl(W)_0}_{\id_{W}}\to\tamod_M$ whose image is
$\taglm_M$; see~\cite[4.11. Proof of Theorem B]{nasseh:lrfsdc}.
\end{disc}

The next two results show some profound connections between the algebra and the geometry of the
objects under consideration.
The ideas behind these results are due to Voigt~\cite{voigt:idteag} and
Gabriel~\cite[1.2 Corollary]{gabriel:frto}.

\begin{thm}[\protect{\cite[4.11. Proof of Theorem B]{nasseh:lrfsdc}}]
\label{tangent space and Ext}
We work in the setting of Notation~\ref{notn110516a}.
Given an element $M=(\partial,\mu)\in\od^U(W)$, there is an
isomorphism of abelian groups
$$
\tamod_M/\taglm_M\cong \yext[U]{1}MM.
$$
\end{thm}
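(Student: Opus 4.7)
The plan is to construct an explicit group homomorphism $\Phi\colon \tamod_M \to \yext[U]{1}MM$ whose kernel is exactly $\taglm_M$ and whose image is all of $\yext[U]{1}MM$. The guiding idea, which is the classical Voigt--Gabriel philosophy, is that a first-order deformation of $M$ over the dual numbers $F[\epsilon]$ is the same data as a short exact sequence $0 \to M \to X \to M \to 0$ in which $X \cong W[\epsilon]$ as a graded vector space and in which $\epsilon$ acts as the composition $X \onto M \into X$.

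Concretely, given $(\ol\partial,\ol\mu)\in \tamod_M$, we view $W[\epsilon]=W\oplus W\epsilon$ as a DG $U[\epsilon]$-module and restrict scalars along the natural inclusion $U\into U[\epsilon]$ to obtain a DG $U$-module $X=(W[\epsilon],\ol\partial,\ol\mu|_U)$. Multiplication by $\epsilon$ and reduction mod $\epsilon$ give morphisms of DG $U$-modules fitting into
\[
0\to M\xra{\epsilon\cdot}X\xra{\pi}M\to 0,
\]
which is degree-wise exact because each $W_i[\epsilon]\cong W_i\oplus W_i$. Define $\Phi(\ol\partial,\ol\mu)$ to be the class of this extension in $\yext[U]{1}MM$. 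One checks that the trivial deformation (the direct-sum DG $U[\epsilon]$-structure on $W\oplus W$) yields the split extension, and that Baer sum matches the additive structure on $\tamod_M$, so $\Phi$ is a group homomorphism.

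For surjectivity, given any extension $0\to M\to X\to M\to 0$ of DG $U$-modules, exactness of underlying graded $F$-vector spaces is split because $F$ is a field, so we may pick a homogeneous $F$-linear splitting and identify $X\cong W\oplus W$ as a graded vector space. Letting $\epsilon$ act on $X$ via the composition $X\onto M\into X$ (which is nilpotent of square zero and $U$-linear), $X$ becomes a DG $U[\epsilon]$-module whose reduction mod $\epsilon$ is $M$; this defines a preimage in $\tamod_M$ under $\Phi$.

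The real content is showing $\ker(\Phi)=\taglm_M$. If $(\ol\partial,\ol\mu)\in\ker(\Phi)$, a DG $U$-module splitting $s\colon M\to X$ must have the form $s(m)=m+\epsilon\beta(m)$ for a unique $F$-linear $\beta\colon W\to W$ of degree $0$; demanding that $s$ commute with $\partial$ and with the $U$-action translates directly into saying that $\mathrm{id}_W+\epsilon\beta\in\ggl(W[\epsilon])_0$ lies in $\ta^{\ggl(W)_0}_{\id_W}$ and that its action on the trivial deformation produces $(\ol\partial,\ol\mu)$; equivalently, $(\ol\partial,\ol\mu)\in\taglm_M$. Conversely, every $\beta$ in $\ta^{\ggl(W)_0}_{\id_W}$ produces, via $\id_W+\epsilon\beta$, an $F[\epsilon]$-linear isomorphism from the trivial deformation to $X$, which trivializes the extension. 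I expect this final kernel computation to be the main obstacle: one must track signs carefully in the Leibniz rule and graded-commutativity conditions and verify that the linear constraints imposed on $\beta$ by ``$s$ is a DG $U$-morphism'' match bijectively with those defining $\ta^{\ggl(W)_0}_{\id_W}\to \tamod_M$ in the alternate description referenced after Notation~\ref{notn110519a}. Once this is done, the First Isomorphism Theorem gives the claimed isomorphism.
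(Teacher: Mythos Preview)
Your proposal is correct and follows essentially the same approach as the paper's sketch: both construct the map $\tamod_M\to\yext[U]1MM$ by restricting a first-order deformation along $U\hookrightarrow U[\epsilon]$ and reading off the extension $0\to M\to W[\epsilon]\to M\to 0$ via multiplication by $\epsilon$ and reduction mod $\epsilon$, prove surjectivity by splitting an arbitrary extension over $F$ and lifting to a $U[\epsilon]$-structure, and identify the kernel with $\taglm_M$. Your outline of the kernel computation (writing a splitting as $\id_W+\epsilon\beta$ and matching the DG-morphism conditions on $\beta$ with membership in the image of $\ta^{\ggl(W)_0}_{\id_W}$) is exactly the content the paper defers to \cite{nasseh:lrfsdc}.
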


\begin{proof}[Sketch of proof]
Using Notation~\ref{notn110519a}, let $N=(\ol\partial,\ol\mu)$
be an element of $\tamod_M$. 
Since $N$ is a DG $U[\epsilon]$-module, restriction of scalars along the natural inclusion $U\to U[\epsilon]$
makes $N$ a DG $U$-module.

Define $\rho\colon M\to N$ and $\pi\colon N\to M$ by the formulas
$\rho(w):=\left[\begin{smallmatrix}w \\ 0\end{smallmatrix}\right]$ and
$\pi\left(\left[\begin{smallmatrix}w' \\ w\end{smallmatrix}\right]\right):=w$.
With~\cite[Lemmas 4.8 and 4.10]{nasseh:lrfsdc},
one shows that $\rho$ and $\pi$ are chain maps
and that $\rho$ and $\pi$ are $U$-linear.
In other words, we have an exact sequence
$$
0\to M\overset{\rho}\to N\overset{\pi}\to M\to 0
$$
of DG $U$-module morphisms.
So, we obtain a map $\tau\colon\tamod_M\to \yext[U]{1}MM$ where $\tau(N)$ is the equivalence class of the displayed
sequence in $\yext[U]{1}MM$.
One  shows that $\tau$ is a surjective  abelian group homomorphism with $\Ker(\tau)=\taglm_M$, and the result follows from the First Isomorphism 
Theorem.\footnote{It is  amazing and ridiculous that this proof boils down to the First Isomorphism 
Theorem.}

To show that $\tau$ is surjective,
fix an arbitrary element $\zeta\in\yext[U]1MM$, represented by the sequence $0\to M\xra{f} Z\xra{g} M\to 0$. 
In particular, this is an exact sequence of $F$-complexes, so it is degree-wise split. 
This implies that 
we have a commutative diagram of graded vector spaces:
$$
\xymatrix{
0\ar[r]&M\ar[r]^-f\ar[d]_-{=}& Z \ar[d]^-{\vartheta}\ar[r]^-g& M\ar[r]\ar[d]^-{=}& 0\\
0\ar[r]& M\ar[r]^-{\rho}& W[\epsilon]\ar[r]^-{\pi}& M\ar[r]& 0
}
$$
where $\rho(w)=\left[\begin{smallmatrix} w\\ 0\end{smallmatrix}\right]$, 
$\pi\left(\left[\begin{smallmatrix} w'\\ w\end{smallmatrix}\right]\right)=w$, and $\vartheta$ is an isomorphism of graded $F$-vector spaces.
The map $\vartheta$ allows us to 
endow $W[\epsilon]$ with a DG $U[\epsilon]$-module structure 
$(\ol\partial,\ol\mu)$
that gives rise to $M$ modulo $\epsilon$. So we have $N:=(\ol\partial,\ol\mu)\in\tamod_M$.
Furthermore, we have $\tau(N)=\zeta$,
so $\tau$ is surjective.

See~\cite[4.11. Proof of Theorem B]{nasseh:lrfsdc}
for more details.
\end{proof}

\begin{cor}[\protect{\cite[Corollary 4.12]{nasseh:lrfsdc}}]
\label{prop111103a}
We work in the setting of Notation~\ref{notn110516a}.
Let $C$ be a semidualizing
DG $U$-module, and let $s\geq\sup(C)$.
Set $M=\tau(C)_{(\leq s)}$ and $W=\und M$.
Then the orbit $\ggl(W)_0\cdot M$ is open in $\od^U(W)$.
\end{cor}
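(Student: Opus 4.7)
The plan is to combine Theorem~\ref{tangent space and Ext} with the Ext-vanishing provided by the semidualizing property, then invoke a standard smoothness/dimension argument for group actions on algebraic varieties.

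First I would verify the Ext-vanishing statement $\yext[U]{1}{M}{M}=0$. Because $C$ is semidualizing, the homothety morphism $\chi^U_C\colon U\to\Hom[U]CC$ is a quasiisomorphism, so in particular $\Ext[U]{1}{C}{C}\cong\HH_{-1}(\Hom[U]CC)\cong\HH_{-1}(U)=0$. Since $C$ is semi-free and $s\geq\sup(C)$, Fact~\ref{prop111110a} applies and yields
\[
\yext[U]{1}{M}{M}=\yext[U]{1}{\tau(C)_{(\leq s)}}{\tau(C)_{(\leq s)}}=0.
\]

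Next I would feed this into Theorem~\ref{tangent space and Ext}, which gives the isomorphism $\tamod_M/\taglm_M\cong\yext[U]{1}{M}{M}=0$. Consequently, the inclusion $\taglm_M\subseteq\tamod_M$ is an equality. In other words, the Zariski tangent space to the orbit at $M$ fills out the entire tangent space to $\od^U(W)$ at $M$.

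The concluding step is a standard fact from the theory of group actions on varieties: since $\ggl(W)_0$ is a smooth algebraic group (Discussion~\ref{disc110519a}) acting regularly on $\od^U(W)$ via regular maps (Discussion~\ref{action'}), the orbit $\ggl(W)_0\cdot M$ is smooth and locally closed in $\od^U(W)$, with tangent space $\taglm_M$ at $M$. Using the general inequality $\dim_M\od^U(W)\leq\dim_F\tamod_M$ for the ambient (possibly singular) scheme together with the equality of tangent spaces established above, we obtain
\[
\dim_M(\ggl(W)_0\cdot M)=\dim_F\taglm_M=\dim_F\tamod_M\geq\dim_M\od^U(W).
\]
Thus the orbit, being a locally closed subvariety of $\od^U(W)$ whose dimension at $M$ agrees with that of $\od^U(W)$, contains an open neighborhood of $M$ in $\od^U(W)$. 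Translating along the transitive action of $\ggl(W)_0$ on the orbit then shows that the orbit is open at each of its points, hence open in $\od^U(W)$.

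The main obstacle is a technical rather than conceptual one: one must argue carefully with dimensions of possibly non-reduced or singular schemes, ensuring that the equality of tangent spaces genuinely forces openness of the orbit. This is where invoking the cited structural results (e.g.~\cite[Chapter II, \S 5.3]{demazure:iagag}) about orbits of smooth algebraic groups, which already guarantee local closedness and smoothness of $\ggl(W)_0\cdot M$, carries the weight of the argument.
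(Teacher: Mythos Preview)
Your proposal is correct and follows essentially the same approach as the paper: invoke Fact~\ref{prop111110a} (your first paragraph unpacks the hypotheses needed for it), apply Theorem~\ref{tangent space and Ext} to get equality of tangent spaces, and then use smoothness and local closedness of the orbit to conclude openness. The paper's proof is terser but structurally identical, deferring the geometric details in your final two paragraphs to \cite[Corollary 4.12]{nasseh:lrfsdc}.
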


\begin{proof}
\newcommand{\mco}{\mathcal{O}}
Fact~\ref{prop111110a} implies that
$\yext[U]1MM=0$, so 
$\tamod_M=\taglm_M$ by 
Theorem~\ref{tangent space and Ext}. 
Since the orbit 
$\gl(W)_0\cdot M$ is smooth and locally closed,
this implies that $\gl(W)_0\cdot M$ is open in $\Mod^U(W)$.
See~\cite[Corollary 4.12]{nasseh:lrfsdc}
for more details.
\end{proof}

We are now in a position to state and prove our version
of Happel's result~\cite[proof of first proposition in section 3]{happel:sm}
that was used in the proof of Theorem~\ref{intthm120522b}.

\begin{lem}[\protect{\cite[Lemma 5.1]{nasseh:lrfsdc}}]
\label{prop111115a}
We work in the setting of Notation~\ref{notn110516a}.
The set $\s_W(U)$ of quasiisomorphism classes of
semi-free semidualizing DG $U$-modules $C$ such that
$s\geq\sup(C)$, $C_i=0$ for all $i<0$, and $\und{(\tau(C)_{(\leq s)})}\cong W$ is finite.
\end{lem}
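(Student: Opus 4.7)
The plan is to exhibit an injection from $\s_W(U)$ into the set of $\ggl(W)_0$-orbits on $\Mod^U(W)$ that land in the \emph{open} orbits, then show that a Noetherian affine variety admits only finitely many pairwise disjoint open subsets.

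First, I would define a map
$$\Phi\colon\s_W(U)\longrightarrow\Mod^U(W)/\ggl(W)_0,\qquad [C]\longmapsto [\tau(C)_{(\leq s)}].$$
By Exercise \ref{disc110302a}, the natural morphism $C\to\tau(C)_{(\leq s)}$ is a quasiisomorphism when $s\geq\sup(C)$, and the hypothesis $\und{(\tau(C)_{(\leq s)})}\cong W$ lets us regard $\tau(C)_{(\leq s)}$ as an element of $\Mod^U(W)$ after choosing an identification. To see that $\Phi$ is well-defined and injective, suppose $C_1,C_2$ satisfy $\tau(C_1)_{(\leq s)}\cong\tau(C_2)_{(\leq s)}$ as DG $U$-modules, i.e.\ they lie in the same orbit (recall \ref{action'}). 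Composing with the two truncation quasiisomorphisms produces a quasiisomorphism from the semi-free module $C_1$ to the semi-free module $C_2$ after lifting through a semi-free resolution and applying Fact \ref{fact110218d'}; hence $C_1\simeq C_2$, so $[C_1]=[C_2]$ in $\s_W(U)$.

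Next, for each $[C]\in\s_W(U)$, Corollary \ref{prop111103a} tells us that $\Phi([C])$ is an \emph{open} orbit of the $\ggl(W)_0$-action on $\Mod^U(W)$. Thus the image of $\Phi$ lies in the set $\mathcal{O}$ of open orbits, and it suffices to prove $|\mathcal{O}|<\infty$.

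For the finiteness of $\mathcal{O}$, I would use that $\Mod^U(W)$ is a Zariski-closed subset of the affine space $\mathbb{A}^{d+d'}_F$ from Discussion \ref{disc110516a}, hence a Noetherian topological space with finitely many irreducible components $Z_1,\ldots,Z_N$. The orbits of the $\ggl(W)_0$-action partition $\Mod^U(W)$, so distinct orbits are disjoint. If $O\in\mathcal{O}$ is open and $O\cap Z_j\neq\emptyset$, then $O\cap Z_j$ is a nonempty open subset of the irreducible space $Z_j$ and so is dense in $Z_j$. Two distinct open orbits $O\neq O'$ are disjoint, so they cannot both meet the same $Z_j$ (otherwise $O\cap Z_j$ and $O'\cap Z_j$ would be disjoint dense opens in an irreducible space). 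Therefore the assignment $O\mapsto\{j:O\cap Z_j\neq\emptyset\}$ is an injection from $\mathcal{O}$ into nonempty subsets of $\{1,\ldots,N\}$, giving $|\mathcal{O}|\leq 2^N-1<\infty$.

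The main obstacle, in my view, is verifying the injectivity of $\Phi$ cleanly: one must be careful to produce an actual quasiisomorphism $C_1\simeq C_2$ of DG $U$-modules rather than merely a zig-zag, which is where the semi-freeness of the $C_i$ and Fact \ref{fact110218d'} become essential. Everything else is a packaging of the geometric input already assembled in Section \ref{sec120522f}.
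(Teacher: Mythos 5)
Your argument is correct and follows the same route as the paper's: map each class to the $\ggl(W)_0$-orbit of its truncation $\tau(C)_{(\leq s)}$, verify injectivity by lifting through the truncation quasiisomorphisms (using semi-freeness of the $C_i$ and Fact~\ref{fact110218d'}), invoke Corollary~\ref{prop111103a} for openness of the orbits, and bound the number of open orbits. Your irreducible-components finish is in fact a bit more careful than the paper's bare appeal to quasi-compactness, which on its own does not rule out infinitely many pairwise disjoint nonempty opens; the relevant property is that $\od^U(W)$ is a Noetherian space, and then either your argument or the ascending chain condition on opens applied to $O_1\subsetneq O_1\cup O_2\subsetneq\cdots$ settles the step.
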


\begin{proof}
Fix a representative $C$ for each quasiisomorphism class in $\s_W(U)$,
and write $[C]\in\s_W(U)$ and $M_C=\tau(C)_{(\leq s)}$. 

Let $[C],[C']\in\s_W(U)$.
If $\ggl(W)_0\cdot M_C=\ggl(W)_0\cdot M_{C'}$, then $[C]=[C']$:
indeed, Discussion~\ref{action} explains the second step in the next display
$$C\simeq M_C\cong M_{C'}\simeq C'$$
and the remaining steps follow from the assumptions 
$s\geq\sup(C)$ and $s\geq\sup(C')$, by Exercise~\ref{disc110302a}.

Now, each orbit $\ggl(W)_0\cdot M_C$ is open in $\od^U(W)$ by 
Corollary~\ref{prop111103a}.
Since $\od^U(W)$ is a subset of an affine space over $F$,
it is quasi-compact, so it can only have finitely many open orbits.
By the previous paragraph, this implies that there are only finitely many
distinct elements $[C]\in\s_W(U)$.
\end{proof}

We conclude this section with the third and final part of the proof of Theorem~\ref{intthm120521a}.

\begin{para}[\textbf{Final part of the proof of Theorem~\ref{intthm120521a}}]
\label{proof120523c}
We need to prove that $\s(U)$ is finite where $U=\Otimes[Q]{k}{A}$.
Set $s=\dim(R)-\depth(R)+n$.
\newcommand{\ua}{\underline{a}}
One uses various accounting principles to 
prove that every semidualizing DG $U$-module
is equivalent to a semidualizing DG $U$-module $C'$ such that
$\HH_i(C')=0$ for all $i<0$ and for all $i>s$.
Let $L\xra\simeq C'$ be a minimal semi-free  resolution of $C'$ over $U$.
The conditions $\sup(L)=\sup(C')\leq s$ imply that $L$ (and hence $C'$)
is quasiisomorphic to the truncation $\wti L:=\tau(L)_{\leq s}$.
We set $W:=\und{\wti{L}}$
and work in the setting of Notation~\ref{notn110516a}.

One then uses further accounting principles
to prove  that there is an integer $\lambda\geq 0$, depending only on $R$
and $U$, such that 
$\sum_{i=0}^sr_i\leq \lambda$.
Compare this with Lemma~\ref{lem120522a}.
(Recall that $r_i$ and other quantities are fixed in Notation~\ref{notn110516a}.)
Then, because there are only finitely many $(r_0,\ldots,r_s)\in \mathbb{N}^{s+1}$ with 
$\sum_{i=0}^sr_i\leq \lambda$,
there are only finitely many $W$ that occur from this construction,
say $W^{(1)},\ldots,W^{(b)}$.
Lemma~\ref{prop111115a}
implies that $\s(U)=\s_{W^{(1)}}(U)\cup\cdots\cup \s_{W^{(b)}}(U)\cup\{ [U]\}$
is finite.
\qed
\end{para}

\appendix
\section{Applications of Semidualizing Modules}
\label{sec120522c}

This section contains three applications of semidualizing modules, to 
indicate why Theorem~\ref{intthm120521a}
might be interesting.

\begin{assumption}
Throughout this section, $(R,\m,k)$ is local. 
\end{assumption}

\subsection*{Application I. Asymptotic Behavior of Bass Numbers}
Our first application shows that the existence of non-trivial semidualizing modules
forces the sequence of Bass numbers of a local ring to be unbounded. This partially answers a question of Huneke.

\begin{defn}\label{defn120521b}
The
\emph{$i$th Bass number} of $R$ is
$\mu^i_R:=\rank_k(\Ext ikR)$.
The \emph{Bass series} of $R$ is the formal power series
$I_R(t)=\sum_{i=1}^\infty\mu_R^it^i$.
\end{defn}

\begin{disc}\label{disc120521c}
The Bass numbers of $R$ contain 
important structural information about the
minimal injective resolution $J$ of $R$.
They also keep track of the depth and injective dimension of $R$:
\begin{align*}
\depth(R)&=\min\{i\geq 0\mid\mu^i_R\neq 0\}\\
\id_R(R)&=\sup\{i\geq 0\mid\mu^i_R\neq 0\}.
\end{align*}
In particular, $R$ is Gorenstein if and only if the
sequence $\{\mu^i_R\}$ is eventually 0.
If $R$ has a dualizing module $D$, then
the Bass numbers of $R$ are related to the Betti numbers of $D$ by the formula
$$\mu^{i+\depth(R)}_R=\beta_i^R(D):=\rank_k(\Ext iDk).$$
\end{disc}

Viewed in the context of the characterization of Gorenstein rings in 
Remark~\ref{disc120521c}, the next question is natural, even if it is a bit bold.\footnote{As best we know,
Huneke has not posed this question in print.}

\begin{question}[Huneke]
If the
sequence $\{\mu^i_R\}$ is bounded, must $R$ be Gorenstein?
Equivalently, if $R$ is not Gorenstein, must the
sequence $\{\mu^i_R\}$ be unbounded?
\end{question}

The connection between semidualizing modules and Huneke's question 
is found in the following result. It shows that
Huneke's question reduces to the case where $R$ has only trivial
semidualizing modules. 
It is worth noting that the same conclusion holds
when $R$ is not assumed to be Cohen-Macaulay and $C$ is a semidualizing \emph{DG} module that is neither free nor dualizing.
However, since we have not talked about dualizing DG modules, we only state 
the module case.

\begin{thm}[\protect{\cite[Theorem B]{sather:bnsc}}]
\label{thm130330a}
If $R$ is Cohen-Macaulay and has a semidualizing module $C$ that is neither free nor dualizing,
then the
sequence $\{\mu^i_R\}$ is unbounded.
\end{thm}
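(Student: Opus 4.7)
The plan is to reduce to the case where $R$ is complete with a dualizing module $D$, and then exploit the dagger dual $C^{\dagger} := \Hom{C}{D}$, which is itself semidualizing, together with a derived isomorphism $\Lotimes{C}{C^{\dagger}} \simeq D$ to produce a product formula for Betti numbers from which unboundedness of the Bass numbers falls out.

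First I would pass to the completion. The flat local homomorphism $R \to \comp R$ preserves Bass numbers (by flat base change applied to $\Ext{i}{k}{R}$) and preserves depth. By Fact~\ref{fact120522a}, $\Otimes{\comp R}{C}$ is semidualizing over $\comp R$; it is not free because $\s_0(R) \into \s_0(\comp R)$ is injective, and it is not dualizing because injective dimension is preserved under faithfully flat base change. Since $\comp R$ is Cohen-Macaulay and a homomorphic image of a complete regular local ring, Fact~\ref{disc120521a} supplies a dualizing module $D$ for $\comp R$. Thus I may assume $R$ is complete with dualizing module $D$, and $C$ is semidualizing but neither free nor dualizing.

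Next I would bring in $C^{\dagger} := \Hom{C}{D}$. Standard semidualizing theory in the Cohen-Macaulay case (via Foxby equivalence and tensor-evaluation, using that $C$ lies in the Bass class of $D$) gives that $C^{\dagger}$ is semidualizing, $(C^{\dagger})^{\dagger} \cong C$, and the natural map $\Lotimes{C}{C^{\dagger}} \to D$ is an isomorphism in the derived category. Because $C \not\cong D = R^{\dagger}$, applying $\dagger$ gives $C^{\dagger} \not\cong R$, so $C^{\dagger}$ is not free either. By Fact~\ref{fact120521a} both $C$ and $C^{\dagger}$ have infinite projective dimension, and since the vanishing of a single Betti number in a minimal free resolution forces all subsequent Betti numbers to vanish (by Nakayama applied to the truncated tail), this forces $\betti[R]{i}{C} \geq 1$ and $\betti[R]{i}{C^{\dagger}} \geq 1$ for every $i \geq 0$.

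Now choose minimal $R$-free resolutions $P \xra\simeq C$ and $Q \xra\simeq C^{\dagger}$. The differential on the tensor product $\Otimes{P}{Q}$ lands in $\fm (\Otimes{P}{Q})$, so it is again a minimal complex of free $R$-modules, and the derived isomorphism $\Lotimes{C}{C^{\dagger}} \simeq D$ says precisely that $\Otimes{P}{Q}$ is quasi-isomorphic to $D$ and is therefore the minimal free resolution of $D$. K\"unneth over the residue field $k$ then gives
$$\betti[R]{n}{D} \;=\; \sum_{i+j=n}\betti[R]{i}{C}\,\betti[R]{j}{C^{\dagger}} \;\geq\; n+1$$
for every $n \geq 0$. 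Combining this with the equality $\mu_R^{i+\depth(R)} = \betti[R]{i}{D}$ recorded in Remark~\ref{disc120521c}, we obtain $\mu_R^{n+\depth(R)} \geq n+1$, so $\{\mu_R^i\}$ is unbounded.

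The main obstacle I expect is the derived isomorphism $\Lotimes{C}{C^{\dagger}} \simeq D$. While this is a standard piece of Cohen-Macaulay semidualizing machinery, pinning it down cleanly requires invoking a noticeable slice of the Auslander/Bass class formalism together with tensor-evaluation, which is perhaps heavier than one might wish for the proof of what reads on the surface as a short corollary.
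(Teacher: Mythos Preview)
Your proposal is correct and follows essentially the same route as the paper's sketch: reduce to the complete case, form the dagger dual $C^{\dagger}=\Hom{C}{D}$, observe that both $C$ and $C^{\dagger}$ have infinite projective dimension so all their Betti numbers are positive, and then use the product formula $\betti[R]{n}{D}=\sum_{i+j=n}\betti[R]{i}{C}\,\betti[R]{j}{C^{\dagger}}$ together with $\mu_R^{n+\depth(R)}=\betti[R]{n}{D}$. The only difference is that you spell out the mechanism behind the product formula (the derived isomorphism $\Lotimes{C}{C^{\dagger}}\simeq D$ and minimality of the tensored resolution), whereas the paper simply records the resulting equality.
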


\begin{proof}[Sketch of proof]
Pass to the completion of $R$ to assume that $R$ is complete.
This does not change the sequence $\{\mu^i_R\}$ nor the assumptions on $C$.
As $R$ is complete, it has a dualizing module $D$, and one has
$\mu^i_R=\rank_k(\Ext{i-d}Dk)$ for all $i$, where $d=\depth(R)$.
Thus, it suffices to show that the sequence $\{\rank_k(\Ext{i}Dk)\}$ is unbounded.
With $C'=\Hom CD$, one has $\rank_k(\Ext{i}Ck)\geq 1$ and $\rank_k(\Ext{i}{C'}k)\geq 1$ for all $i\geq 0$.
Hence, the computation
\begin{align*}
\rank_k(\Ext{i}Dk)
&=\sum_{p=0}^i\rank_k(\Ext{i}Ck)\rank_k(\Ext{i}{C'}k)
\geq\sum_{p=0}^ip=p+1
\end{align*}
gives the desired unboundedness.
\end{proof}

\subsection*{Application II. Structure of Quasi-deformations}
Our second application shows how semidualizing modules can be used to 
improve given structures.
Specifically, one can use a particular semidualizing module to improve the
closed fibre of a given quasi-deformation.

\begin{defn}[\protect{\cite[(1.1) and (1.2)]{avramov:cid}}]
\label{defn120522a}
A \emph{quasi-deformation} of $R$ is a diagram 
of local ring homomorphisms
$R\xra\vf R'\xla\tau Q$
such that $\vf$ is flat and $\tau$ is surjective with kernel generated
by a $Q$-regular sequence.
A finitely generated $R$-module $M$ has 
\emph{finite CI-dimension} if there is a quasideformation
$R\to R'\from Q$ such that $\pd_Q(\Otimes{R'}{M})<\infty$.
\end{defn}

\begin{disc}
\label{disc120522a}
A straightforward localization and completion argument shows that
if $M$ is an $R$-module of finite CI-dimension, then there is a 
quasideformation
$R\to R'\from Q$ such that $\pd_Q(\Otimes{R'}{M})$ is finite, $Q$ is complete,
and $R'/\m R'$ is artinian, hence Cohen-Macaulay.
\end{disc}

The next result is a souped-up version of the previous remark.
In contrast to the  application of semidualizing modules in Theorem~\ref{thm130330a},
this one does not refer to any semidualizing modules in the statement.
Instead, in its proof, one uses a semidualizing module to improve the 
quasideformation given by definition to one satisfying the desired conclusions.

\begin{thm}[\protect{\cite[Theorem F]{sather:cidfc}}]
\label{thm120522a}
If $M$ is an $R$-module of finite CI-dimension, then there is a 
quasideformation
$R\to R'\from Q$ such that $\pd_Q(\Otimes{R'}{M})<\infty$
and such that $R'/\m R'$ is artinian and Gorenstein.
\end{thm}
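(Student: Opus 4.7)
By Discussion~\ref{disc120522a}, I can start from a quasi-deformation $R \xra{\vf_1} R_1 \xla{\tau_1} Q_1$ witnessing finite CI-dimension of $M$, with $Q_1$ complete and $\bar R_1 := R_1/\m R_1$ artinian (hence Cohen-Macaulay). The plan is to manufacture a \emph{larger} quasi-deformation $R \to R' \from Q'$ whose closed fibre is an idealization $\bar R_1 \ltimes D$, where $D$ is a dualizing $\bar R_1$-module. By Reiten's theorem (trivial extensions of Cohen-Macaulay local rings by their dualizing modules are Gorenstein), this new closed fibre will be Gorenstein artinian as required. If $\bar R_1$ is already Gorenstein there is nothing to do, so assume it is not.

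First I would produce the dualizing module: since $Q_1$ is complete local, it is a homomorphic image of a complete regular local ring, hence so is the quotient $\bar R_1$; combined with the Cohen-Macaulay property, Fact~\ref{disc120521a} furnishes a dualizing $\bar R_1$-module $D$. The next and central step is to lift $D$ to an $R_1$-module $C$ that is simultaneously (i) semidualizing over $R_1$, (ii) satisfies $C\otimes_{R_1}\bar R_1 \cong D$, and (iii) is flat as an $R$-module along $\vf_1$. The lift is engineered by first passing to a Cohen factorization of $\vf_1$ through an intermediate complete local ring whose closed fibre is regular, transporting $D$ up along this flat map via the base-change properties in Fact~\ref{fact120522a}, and then descending to $R_1$. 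This is the main obstacle of the argument: arranging all three properties at once is where the semidualizing machinery really earns its keep, since flatness over $R$ is incompatible with being semidualizing over $R_1$ unless one tracks carefully through such a factorization.

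Once $C$ is in hand, I would form the trivial extension $R' := R_1 \ltimes C$. The composition $R \to R_1 \Into R'$ is a local flat map, because $R' = R_1 \oplus C$ as an $R$-module and both summands are $R$-flat. Its closed fibre is
\[
R'/\m R' \;\cong\; \bar R_1 \ltimes (C \otimes_{R_1} \bar R_1) \;\cong\; \bar R_1 \ltimes D,
\]
which is Gorenstein artinian by Reiten's theorem. To obtain the surjection $Q' \onto R'$ with kernel generated by a $Q'$-regular sequence, I would take $Q'$ to be an enlargement of $Q_1$ built from a free presentation of $C$ lifted through $\tau_1$: concretely, adjoin to $Q_1$ an appropriate polynomial (or power series) algebra and mod out by the sequence encoding the $R_1$-module structure of $C$ together with the original regular sequence in $Q_1$. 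A standard check — using that $C$ has finite projective dimension over some intermediate Gorenstein ring appearing in the Cohen factorization — shows that the resulting generators form a $Q'$-regular sequence.

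Finally I would verify that $\pd_{Q'}(R' \otimes_R M) < \infty$. Since $R' \otimes_R M \cong (R_1 \otimes_R M) \oplus (C \otimes_R M)$ as an $R_1$-module, and $R_1 \otimes_R M$ already has finite projective dimension over $Q_1$, a standard spectral-sequence or change-of-rings argument (using the controlled way in which $Q'$ extends $Q_1$ and the fact that $C$ has finite flat dimension over the intermediate Gorenstein ring) transfers the finite projective dimension from $Q_1$ to $Q'$. In summary, essentially everything is bookkeeping around the semidualizing/Reiten construction; the real work is Step~2, the existence of the $R$-flat, $R_1$-semidualizing lift $C$ of the canonical module $D$ of $\bar R_1$.
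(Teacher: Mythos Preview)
Your overall architecture---use a trivial extension to make the closed fibre Gorenstein via Reiten's theorem---matches the paper. But two of your key technical steps go in the wrong direction and are not justified as written.

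First, the semidualizing module on $R_1$. You propose to \emph{lift} the dualizing module $D$ of the closed fibre $\bar R_1$ up to $R_1$, asking for an $R_1$-semidualizing, $R$-flat $C$ with $C\otimes_{R_1}\bar R_1\cong D$. You acknowledge this is the ``main obstacle'' and gesture at a Cohen factorization, but you never say how to produce $C$; base change as in Fact~\ref{fact120522a} goes the wrong way (it pushes semidualizing modules forward along flat maps, it does not lift them back). The paper does not lift from $\bar R_1$ at all. Instead it takes a Cohen factorization $R\to R''\to R_1$ with $R''/\m R''$ regular, observes that $R_1$ is perfect over $R''$, and \emph{defines} $D^{\vf}:=\Ext^c_{R''}(R_1,R'')$ directly---this is Avramov--Foxby's relative dualizing module, and it is automatically a semidualizing $R_1$-module with the required fibre behaviour. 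No lifting from $\bar R_1$ is needed.

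Second, the construction of $Q'$. Your plan to ``adjoin variables to $Q_1$ and mod out by a sequence encoding the module structure of $C$'' is not a construction one can check is a quasi-deformation; you would need to show the resulting sequence is regular and that the quotient is $R_1\ltimes C$, and you give no mechanism for this. The paper instead lifts $D^{\vf}$ along the surjection $Q_1\to R_1$ (not along $R\to R_1$): the semidualizing condition gives $\Ext^2_{R_1}(D^{\vf},D^{\vf})=0$, so Auslander--Ding--Solberg lifting produces a semidualizing $Q_1$-module $B$ with $R_1\otimes_{Q_1}B\cong D^{\vf}$. Then $Q':=Q_1\ltimes B$ maps onto $R_1\ltimes D^{\vf}$ with kernel generated by the same regular sequence that cut out $R_1$ from $Q_1$, and the quasi-deformation is $R\to R_1\ltimes D^{\vf}\leftarrow Q_1\ltimes B$. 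Both issues are repaired by working with the relative dualizing module from the start and lifting along $Q_1\to R_1$ rather than along the closed-fibre map.
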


\begin{proof}[Sketch of proof]
By Remark~\ref{disc120522a}, there is a 
quasideformation
$R\xra{\vf} R'\from Q$ such that $\pd_Q(\Otimes{R'}{M})$ is finite, $Q$ is complete,
and $R'/\m R'$ is artinian. We work to improve this quasi-deformation.

A relative version of Cohen's Structure Theorem due to Avramov, Foxby, and Herzog~\cite[(1.1) Theorem]{avramov:solh}
provides ``Cohen factorization'' of $\vf$, that is a commutative diagram 
of local ring homomorphisms
$$
\xymatrix@R=7mm{&R''\ar[rd]^{\vf'}\\
R\ar[ru]^{\dot\vf}\ar[rr]^-\vf && R'}
$$ 
such that $\dot\vf$ is flat, $R''/\m R''$ is regular, and $\vf'$ is surjective.
Since $\vf$ is flat and $R'/\m R'$ is Cohen-Macaulay, 
it follows that $R'$ is perfect over $R''$.
From this, we conclude that $\Ext[R'']i{R'}{R''}=0$ for all $i\neq c$ where $c=\depth(R'')-\depth(R')$,
and that $D^{\vf}:=\Ext[R'']c{R'}{R''}$ is a semidualizing $R'$-module.
(This is the ``relative dualizing module'' of Avramov and Foxby~\cite{avramov:rhafgd}.)
This implies that $\Ext[R']2{D^{\vf}}{D^{\vf}}=0$, so there is a semidualizing $Q$-module $B$ such that
$D^\vf\cong\Otimes[Q]{R'}B$.
(This essentially follows from a lifting theorem of Auslander, Ding, and Solberg~\cite[Proposition 1.7]{auslander:lawlom},
since $Q$ is complete.)

The desired quasi-deformation is in the bottom row of the following diagram
$$\xymatrix{
&R'\ar[d]&Q\ar[l]\ar[d] \\
R\ar[r]\ar[ru]^\vf&R'\ltimes D^\vf&Q\ltimes B\ar[l]}$$
where $Q\ltimes B$ and $R'\ltimes D^\vf$ are  ``trivial extensions'', i.e., Nagata's ``idealizations''.
\end{proof}

\subsection*{Application III. Bass Series of Local Ring Homomorphisms}
Our third application of semidualizing modules is a version of Theorem~\ref{thm120521b} from~\cite{avramov:rhafgd}
where $\vf$ is only assumed to have finite G-dimension, defined in the next few items.  

\begin{defn}
\label{defn130401a}
A finitely generated $R$-module $G$ is \emph{totally reflexive} if one has $G\cong\Hom{\Hom GR}R$
and $\Ext iGR=0=\Ext i{\Hom GR}R$ for all $i\geq 1$. A finitely generated $R$-module $M$ has \emph{finite
G-dimension} if there is an exact sequence
$0\to G_n\to\cdots\to G_0\to M\to 0$
such that each $G_i$ is totally reflexive.
\end{defn}

\begin{disc}
If $M$ is an $R$-module of finite G-dimension, then $\Ext iMR=0$ for $i\gg 0$ and
$\Ext {\depth(R)-\depth_R(M)}MR\neq 0$.
\end{disc}

\begin{defn}
\label{defn130401b}
Let $R\xra\vf S$ be a local ring homomorphism, and let $S\xra\psi\comp S$ be the natural map where $\comp S$ is the completion of $S$.
Fix a Cohen factorization $R\xra{\dot\vf}R'\xra{\vf'}\comp S$ of the ``semi-completion of $\vf$'', i.e., the composition $R\xra{\psi\vf}\comp S$ 
(see the proof of Theorem~\ref{thm120522a}).
We say that $\vf$ has \emph{finite G-dimension} if $\comp S$ has finite G-dimension over $R'$.
Moreover, the map $\vf$ is \emph{quasi-Gorenstein} if it has finite G-dimension and $\Ext i{R'}{\comp S}=0$ for all $i\neq\depth(R')-\depth(\comp S)$.
\end{defn}

The next result is the aforementioned improvement of Theorem~\ref{thm120521b}.
As with Theorem~\ref{thm120522a}, note that the statement does not involve semidualizing modules.

\begin{thm}[\protect{\cite[(7.1) Theorem]{avramov:rhafgd}}]
\label{thm130401}
Let $(R,\m)\to (S,\n)$ be a  local ring homomorphism of finite G-dimension.
Then there is a formal Laurent series $I_{\vf}(t)$ with non-negative integer
coefficients such that
$I_S(t)=I_R(t)I_{\vf}(t)$.
In particular, if $S$ is Gorenstein, then so is $R$.
\end{thm}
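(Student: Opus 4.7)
The plan is to reduce to the complete case, take a Cohen factorization of $\vf$, handle the flat piece via Theorem~\ref{thm120521a}, and handle the surjective piece using the relative dualizing module of the surjection, which the finite G-dimension hypothesis forces to be a semidualizing $S$-module.

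First I would replace $S$ by $\comp S$: the completion map $S \to \comp S$ is faithfully flat local with closed fibre $k$ (Gorenstein with Bass series $1$), so Theorem~\ref{thm120521a} gives $I_{\comp S}(t) = I_S(t)$, and finite G-dimension of $\vf$ is defined through the semi-completion (Definition~\ref{defn130401b}), so this reduction is legitimate. Next take a Cohen factorization $R \xra{\dot\vf} R' \xra{\vf'} S$ exactly as in the proof of Theorem~\ref{thm120522a}: $\dot\vf$ is flat local with regular closed fibre $R'/\m R'$, and $\vf'$ is surjective. The flat piece is immediate from Theorem~\ref{thm120521a}, yielding $I_{R'}(t) = I_R(t)\cdot I_{R'/\m R'}(t)$, and $I_{R'/\m R'}(t)$ is the monomial $t^{\edim(R'/\m R')}$ since $R'/\m R'$ is regular.

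For the surjective piece, the finite G-dimension hypothesis gives (by standard G-dimension grade sensitivity, as already used in the proof of Theorem~\ref{thm120522a}) that $\Ext[R']iS{R'} = 0$ for all $i \neq c$, where $c := \depth R' - \depth S$, and that $D := \Ext[R']cS{R'}$ is a semidualizing $S$-module. Equivalently, $\Rhom[R']S{R'} \simeq \shift^{-c}D$ in $\mathcal{D}(S)$. Combining with Hom-tensor adjointness $\Rhom[R']k{R'} \simeq \Rhom[S]k{\Rhom[R']S{R'}}$ (via $k \simeq \Lotimes[S]Sk$) yields $\mu^n_{R'} = \mu^{n-c}_S(D)$ for every $n$, or equivalently
$$I^D_S(t) := \sum_p \rank_k \Ext[S]pk D \cdot t^p = t^{-c}\, I_{R'}(t).$$
On the other hand, the semidualizing property $\chi^S_D\colon S \xra{\simeq} \Rhom[S]DD$ combined with adjointness gives
$$\Rhom[S]kS \simeq \Rhom[S]k{\Rhom[S]DD} \simeq \Rhom[S]{\Lotimes[S]kD}{D},$$
and a minimal free resolution of $D$ shows $\Lotimes[S]kD$ has zero differential, hence splits in $\mathcal{D}(S)$ into $\bigoplus_j \shift^j k^{\beta^S_j(D)}$. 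Comparing ranks in each degree then yields the standard semidualizing identity $I_S(t) = P^S_D(t)\cdot I^D_S(t)$, where $P^S_D(t) := \sum_j \beta^S_j(D)\, t^j$.

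Putting everything together, $I_S(t) = I_R(t)\cdot I_\vf(t)$ with
$$I_\vf(t) := t^{\edim(R'/\m R')-c}\cdot P^S_D(t),$$
and this is a Laurent series with non-negative integer coefficients because each of its factors is. For the moreover statement, if $S$ is Gorenstein then $I_S(t)$ is a monomial; since $I_R(t)$ and $I_\vf(t)$ are non-zero Laurent series over $\bbn$ whose product is a monomial, both must themselves be monomials, so $R$ is Gorenstein. The main obstacle is establishing the semidualizing property of $D$ and the concomitant vanishing $\Ext[R']iS{R'} = 0$ for $i \neq c$ from the finite G-dimension hypothesis: this amounts to lifting the homothety quasi-isomorphism from a totally reflexive $R'$-resolution of $S$ up to the statement $S \xra{\simeq} \Rhom[S]DD$ via a grade-theoretic analysis, and this is the technical heart of the argument.
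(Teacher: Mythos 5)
Your argument has a genuine gap at exactly the point you flag as "the technical heart," and in fact the paper itself acknowledges this by only sketching the proof in the \emph{quasi-Gorenstein} case. You assert that finite G-dimension of $\vf$ gives $\Ext[R']i{\comp S}{R'}=0$ for all $i\neq c=\depth R'-\depth\comp S$. That vanishing does not follow from finite G-dimension alone. What finite G-dimension gives you (via the Auslander--Bridger formula) is that $\Ext[R']i{\comp S}{R'}=0$ for $i>\gdim_{R'}(\comp S)=c$, and of course $\Ext[R']i{\comp S}{R'}=0$ for $i<\grade_{R'}(\comp S)$; but in general $\grade_{R'}(\comp S)<c$ and there may be several nonvanishing Ext modules in between. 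The additional hypothesis that $\Ext[R']i{\comp S}{R'}$ is concentrated in degree $c$ is precisely what Definition~\ref{defn130401b} calls \emph{quasi-Gorenstein}. In the proof of Theorem~\ref{thm120522a} the analogous concentration was obtained from a different extra hypothesis: there $\vf$ was \emph{flat} with Cohen--Macaulay closed fibre, which forces $R'$ to be perfect over $R''$ (grade equals projective dimension). You have no such perfection or G-perfection assumption here.

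The consequence is that in the general finite-G-dimension case, $\Rhom[R']{\comp S}{R'}$ is not (a shift of) a semidualizing \emph{module}; it is only a semidualizing \emph{complex}, with homology potentially spread over several degrees. Your derived-category manipulation of Bass series (adjointness to pass from $R'$ to $\comp S$, then splitting $\Lotimes[S]kD$ via a minimal resolution) still makes sense at the level of complexes, and the identity $I_{\comp S}(t)=P^{\comp S}_D(t)\cdot I^D_{\comp S}(t)$ for a semidualizing complex $D$ is what one actually needs; but carrying this out correctly requires the full machinery of semidualizing complexes (or DG modules), which is exactly what Avramov--Foxby develop in \cite{avramov:rhafgd} and why the paper only sketches the quasi-Gorenstein special case and refers to that source for the general argument. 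If you add the quasi-Gorenstein hypothesis, your proposal matches the paper's sketch (with somewhat more detail on the Bass-series bookkeeping); as written, claiming the full theorem, the Ext-concentration step fails.
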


\begin{proof}[Sketch of proof when $\vf$ is quasi-Gorenstein]
Fix a Cohen factorization $R\xra{\dot\vf}R'\xra{\vf'}\comp S$ of the semi-completion of $\vf$, and set
$d=\depth(R')-\depth(\comp S)$.
Since $\vf$ is quasi-Gorenstein, the $\comp S$-module $D'=\Ext {d}{R'}{\comp S}$ is semidualizing.
(Again, this is Avramov and Foxby's relative dualizing module~\cite{avramov:rhafgd}.)
If $l$ denotes the residue field of $S$, then the series $I_{\vf}:=\sum_i\rank_l(\Ext[\comp S]{i+d'}{D'}{l})t^i$
satisfies the desired conditions where $d'=\depth(R)-\depth(S)$.
\end{proof}

\section{Sketches of Solutions to Exercises}
\label{sec130212a}

\begin{para}[Sketch of Solution to Exercise~\ref{exer130218a}]
\label{para130218a}
As $S$ is $R$-flat, there is an isomorphism $$\Ext[S]i{\Otimes{S}{C}}{\Otimes{S}{C}}\cong\Otimes{S}{\Ext iCC}$$
for each $i$. It follows that (1) if $\Ext iCC=0$, then $\Ext[S]i{\Otimes{S}{C}}{\Otimes{S}{C}}=0$, and (2)
if $\vf$ is faithfully flat and $\Ext[S]i{\Otimes{S}{C}}{\Otimes{S}{C}}=0$, then $\Ext iCC=0$.
Similarly, there is a commutative diagram
$$\xymatrix{
S\ar[r]^-{\Otimes{S}{\chi^R_C}}\ar[d]_{\chi^S_{\Otimes SC}}
&\Otimes{S}{\Hom CC} \ar[ld]^-{\cong}\\
\Hom[S]{\Otimes{S}{C}}{\Otimes{S}{C}}
}$$
so (1) if $\chi^R_C$ is an isomorphism, then so is $\chi^S_{\Otimes SC}$, and (2)
if $\vf$ is faithfully flat and $\chi^S_{\Otimes SC}$  is an isomorphism, then so is $\chi^R_C$.
\qed
\end{para}

\begin{para}[Sketch of Solution to Exercise~\ref{exer120522b}]
\label{para130218b}
\

\eqref{exer120522b1}
It is routine to show that $\partial^{\Hom XY}_n$ is $R$-linear and maps  $\Hom XY_n$ to $\Hom XY_{n-1}$.
To show that $\Hom XY$ is an $R$-complex, we compute:
\begin{align*}
\partial^{\Hom XY}_{n-1}(\partial^{\Hom XY}_n(\{f_p\}))\hspace{-4.5cm} \\
&=\partial^{\Hom XY}_{n-1}(\{\partial^Y_{p+n}f_p-(-1)^nf_{p-1}\partial^X_p\})\\
&=\{\partial^Y_{p+n}[\partial^Y_{p+n}f_p-(-1)^nf_{p-1}\partial^X_p]-(-1)^{n-1}[\partial^Y_{p+n-1}f_{p-1}-(-1)^nf_{p-2}\partial^X_{p-1}]\partial^X_p\} \\
&=\{\underbrace{\partial^Y_{p+n}\partial^Y_{p+n}}_{=0}f_p\underbrace{-(-1)^n\partial^Y_{p+n}f_{p-1}\partial^X_p-(-1)^{n-1}\partial^Y_{p+n-1}f_{p-1}\partial^X_p}_{=0}
+f_{p-2}\underbrace{\partial^X_{p-1}\partial^X_p}_{=0}\}\\
&=0.
\end{align*}

\eqref{exer120522b2}
For $f=\{f_p\}\in\Hom XY_0$, we have
$$\partial^{\Hom XY}_0(\{f_p\})
=\{\partial^Y_{p}f_p-f_{p-1}\partial^X_p\}.$$
Hence, $f$ is a chain map if and only if $\partial^Y_{p}f_p-f_{p-1}\partial^X_p=0$ for all $p$,
which is equivalent to the commutativity of the given diagram.

\eqref{exer120522b3}
This follows by the fact  $\partial^{\Hom XY}_{0}\circ\partial^{\Hom XY}_1=0$, from part~\eqref{exer120522b1}.

\eqref{exer120522b4}
Since $\partial^{\Hom XY}_1(\{s_p\})=\{\partial^Y_{p+1}s_p+s_{p-1}\partial^X_p\}$, this follows by definition.
\qed
\end{para}

\begin{para}[Sketch of Solution to Exercise~\ref{exer120522i}]
\label{para130218c}
Let $\tau\colon\Hom RX\to X$ be given by
$\tau_n(\{f_p\})=f_n(1)$.
We consider $R$ as a complex concentrated in degree 0, so for all $p\neq 0$ we have $R_p=0$,
and for all $n$ we have $\partial^R_n=0$.
It follows that, for all $n$ and all $f=\{f_p\}\in\Hom RX_n$ and all $p\neq 0$, we have $f_p=0$.
Thus, for each $n$ the natural maps $\Hom R{X}_n\xra{\cong}\Hom R{X_n}\xra{\cong} X_n$ are $R$-module isomorphisms,
the composition of which is $\tau_n$. To show that $\tau$ is a chain map,
we compute:
\begin{align*}
\tau_{n-1}(\partial^{\Hom RX}_n(\{f_p\})) 
&=\tau_{n-1}(\partial^{\Hom RX}_n(\ldots,0,f_0,0,\ldots))\\
&=\tau_{n-1}(\ldots,0,\partial^X_{n}f_0,-(-1)^nf_{0}\partial^R_1,0,\ldots)\\
&=\tau_{n-1}(\ldots,0,\partial^X_{n}f_0,0,0,\ldots)\\
&=\partial^X_{n}(f_0(1))\\
&=\partial^{X}_n(\tau_n(\{f_p\})). 
\end{align*}
(Note that these steps are optimal for presentation, in some sense, but they do not exactly represent the thought process
we used to find the solution. Instead, we computed and simplified $\tau_{n-1}(\partial^{\Hom RX}_n(\{f_p\}))$ and $\partial^{X}_n(\tau_n(\{f_p\}))$
separately and checked that the resulting expression was the same for both.
Similar comments apply to many solutions.)
\qed
\end{para}

\begin{para}[Sketch of Solution to Exercise~\ref{exer130314b}]
\label{para130314b}
Let $X$ be an $R$-complex, and let $M$ be an $R$-module.

\eqref{exer130314b1}
Write $X_*$ for the complex
$$X_*=\cdots\xra{(\partial^X_{n+1})_*}(X_{n})_*\xra{(\partial^X_{n})_*}(X_{n-1})_*\xra{(\partial^X_{n-1})_*}\cdots.$$
We consider $M$ as a complex concentrated in degree 0, so for all $p\neq 0$ we have $M_p=0$,
and for all $n$ we have $\partial^M_n=0$.
It follows that, for all $n$ and all $f=\{f_p\}\in\Hom MX_n$ and all $p\neq 0$, we have $f_p=0$.
Thus, for each $n$ the natural map $\tau_n\colon\Hom M{X}_n\xra{\cong}\Hom M{X_n}=(X_*)_n$ is an $R$-module isomorphism.
Thus, it remains to show that the  map $\tau\colon\Hom MX\to X_*$ given by $f=\{f_p\}\mapsto f_0$ is a chain map.
We compute:
\begin{align*}
\tau_{n-1}(\partial^{\Hom MX}_n(\{f_p\})) 
&=\tau_{n-1}(\partial^{\Hom MX}_n(\ldots,0,f_0,0,\ldots))\\
&=\tau_{n-1}(\ldots,0,\partial^X_{n}f_0,-(-1)^nf_{0}\partial^M_1,0,\ldots)\\
&=\tau_{n-1}(\ldots,0,\partial^X_{n}f_0,0,0,\ldots)\\
&=\partial^X_{n}f_0\\
&=(\partial^X_{n})_*(f_0)\\
&=\partial^{X_*}_{n}(f_0)\\
&=\partial^{X_*}_n(\tau_n(\{f_p\})). 
\end{align*}

\eqref{exer130314b2}
Write $X^*$ and $X^{\dagger}$ for the complexes
\begin{gather*}
X^*=\cdots\xra{(-1)^{n}(\partial^X_{n})^*}X_{n}^*\xra{(-1)^{n+1}(\partial^X_{n+1})^*}X_{n+1}^*\xra{(-1)^{n+2}(\partial^X_{n+2})^*}\cdots
\\
X^\dagger=\cdots\xra{(\partial^X_{n})^*}X_{n}^*\xra{(\partial^X_{n+1})^*}X_{n+1}^*\xra{(\partial^X_{n+2})^*}\cdots.
\end{gather*}
Note that the displayed pieces for $X^*$ are in degree $-n$, $1-n$, and similarly for $X^{\dagger}$.
We prove that $\Hom XM\cong X^*\cong X^\dagger$.

As in part~\eqref{exer130314b1}, for all $p\neq 0$ we have $M_p=0$,
and for all $n$ we have $\partial^M_n=0$.
It follows that, for all $n$ and all $f=\{f_p\}\in\Hom XM_n$ and all $p\neq -n$, we have $f_p=0$.
Thus, for each $n$ the  map $\tau_{n}\colon\Hom {X}M_{n}\xra{\cong}\Hom {X_{-n}}M=(X^*)_n$ 
given by $\{f_p\}\mapsto f_{-n}$ is an $R$-module isomorphism.
Thus, for the  isomorphism $\Hom MX\cong X^*$, it remains to show that  $\tau\colon\Hom XM\to X^*$  is a chain map.
We compute:
\begin{align*}
\tau_{n-1}(\partial^{\Hom XM}_n(\{f_p\})) 
&=\tau_{n-1}(\partial^{\Hom XM}_n(\ldots,0,f_{-n},0,\ldots))\\
&=\tau_{n-1}(\ldots,0,\partial^M_{0}f_{-n},-(-1)^{n}f_{-n}\partial^X_{1-n},0,\ldots)\\
&=\tau_{n-1}(\ldots,0,(-1)^{n-1}f_{-n}\partial^X_{1-n},0,0,\ldots)\\
&=(-1)^{n-1}f_{-n}\partial^X_{1-n}\\
&=(-1)^{1-n}(\partial^X_{1-n})^*(f_{-n})\\
&=\partial^{X^*}_{n}(f_{-n})\\
&=\partial^{X^*}_n(\tau_n(\{f_p\})). 
\end{align*}
For the isomorphism $X^*\cong X^\dagger$, we first observe that $X^\dagger$ is an $R$-complex.
Next, note the following: given an $R$-complex $Y$,
the following diagram describes an isomorphism of $R$-complexes.
$$\xymatrix@C=18.5mm{
Y_{4n+2}\ar[r]^-{(-1)^{-4n-1}\partial^Y_{4n+2}}\ar[d]^{(-1)^{4n+2}}
&Y_{4n+1}\ar[r]^-{(-1)^{-4n}\partial^Y_{4n+1}}\ar[d]_{(-1)^{4n+1}}
&Y_{4n}\ar[r]^-{(-1)^{-4n+1}\partial^Y_{4n}}\ar[d]_{(-1)^{4n+1}}
&Y_{4n-1}\ar[r]^-{(-1)^{-4n+2}\partial^Y_{4n-1}}\ar[d]_{(-1)^{4n}}
&Y_{4n-2}\ar[d]_{(-1)^{4n-2}}
\\
Y_{4n+2}\ar[r]_-{\partial^Y_{4n+2}}
&Y_{4n+1}\ar[r]_-{\partial^Y_{4n+1}}
&Y_{4n}\ar[r]_-{\partial^Y_{4n}}
&Y_{4n-1}\ar[r]_-{\partial^Y_{4n-1}}
&Y_{4n-2}
}$$
Now, apply this observation to $X^{\dagger}$.
\qed
\end{para}

\begin{para}[Sketch of Solution to Exercise~\ref{exer120522f}]
\label{para130218d}
\eqref{exer120522f1}
Let $f\colon X\to Y$ be a  chain map.
By definition, we have $f_{i}\partial^X_{i+1}=\partial^Y_{i+1}f_{i+1}$.
It follows readily that
$f_i(\im(\partial^X_{i+1}))\subseteq\im(\partial^Y_{i+1})$
and $f_i(\Ker(\partial^X_{i}))\subseteq\Ker(\partial^Y_{i})$.
From this, it is straightforward to show that the map
$\Ker(\partial^X_{i})/\im(\partial^X_{i+1})\to\Ker(\partial^Y_{i})/\im(\partial^Y_{i+1})$
given by $\ol x\mapsto\ol{f_i(x)}$ is a  well-defined
$R$-module homomorphism, as desired.

\eqref{exer120522f2}
Assume now that $f$ is null-homotopic.
By definition, there is an element $s=\{s_p\}\in\Hom XY_1$ such that
$\{f_p\}=\partial^{\Hom XY}_1(\{s_p\})=\{\partial^Y_{p+1}s_p+s_{p-1}\partial^X_p\}$.
It follows that for each $i$ and each $\ol x\in\HH_i(X)$, one has
\begin{align*}
\HH_i(f)(\ol x)
&=\ol{f_i(x)}
=\ol{\underbrace{\partial^Y_{i+1}(s_i(x))}_{\in\im(\partial^Y_{i+1})}+s_{i-1}(\underbrace{\partial^X_i(x)}_{=0})}
=0
\end{align*}
in $\HH_i(Y)$.
\qed
\end{para}

\begin{para}[Sketch of Solution to Exercise~\ref{exer120522h}]
\label{para130311a}
Let $f\colon X\xra\cong Y$ be an isomorphism between the $R$-complexes
$X$ and $Y$.
Then for each $i$ the map $f_i$ induces  isomorphisms $\Ker(\partial^X_i)\xra\cong\Ker(\partial^Y_i)$
and $\im(\partial^X_{i+1})\xra\cong\im(\partial^Y_{i+1})$.
It follows that $f_i$ induces an isomorphism 
$\Ker(\partial^X_i)/\im(\partial^X_{i+1})\xra\cong\Ker(\partial^Y_i)/\im(\partial^Y_{i+1})$,
as desired.
\qed
\end{para}

\begin{para}[Sketch of Solution to Exercise~\ref{exer120522g}]
\label{para130311b}
Let $M$ be an $R$-module with augmented projective resolution $P^+$.
$$P^+=\cdots\xra{\partial^P_2}P_1\xra{\partial^P_1}P_0\xra\tau M\to 0.$$
It is straightforward to check that the following diagram commutes
$$
\xymatrix{
P\ar[d]_t
&\cdots\ar[r]^-{\partial^P_2}&P_1\ar[r]^-{\partial^P_1}\ar[d]&P_0\ar[r]\ar[d]_\tau &0\ar[d]\\
M&&0\ar[r]&M\ar[r]&0.
}$$
The exactness of $P^+$ and the definition of $P$ implies that $\HH_i(P)=0=\HH_i(M)$ for $i\neq 0$.
Thus, to show that $t$ is a quasiisomorphism, it suffices to show that $\HH_0(t)\colon\HH_0(P)\to \HH_0(M)$ is an isomorphism.
Notice that this can be identified with the map $\tau'\colon\coker(\partial^P_1)\to M$ induced by $\tau$.
Since $\tau$ is surjective, it is straightforward to show that $\tau'$ is surjective.
Using the fact that $\Ker(\tau)=\im(\partial^P_1)$, one shows readily that $\tau'$
is injective, as desired.

The case of an injective resolution result is handled similarly.
\qed
\end{para}

\begin{para}[Sketch of Solution to Exercise~\ref{exer130312a}]\label{para130312a}
Fix a chain map $f\colon  X\to Y$ and an $R$-complex $Z$.
Consider a sequence $\{g_p\}\in\Hom YZ_n$.
Note that the sequence $\Hom fZ_n(\{g_p\})=\{g_pf_p\}$ is in $\Hom XZ_n$, so the map $\Hom fZ$ is well-defined
and of degree 0. Also, it is straightforward to show that $\Hom fZ$ is $R$-linear.
To verify that $\Hom fZ$ is a chain map, we  compute:
\begin{align*}
\partial^{\Hom XZ}_{n}(\Hom fZ_n(\{g_p\}))
&=\partial^{\Hom XZ}_{n}(\{g_pf_p\})\\
&=\{\partial^Z_{p+n}g_pf_p-(-1)^ng_{p-1}f_{p-1}\partial^X_p\} \\
&=\{\partial^Z_{p+n}g_pf_p-(-1)^ng_{p-1}\partial^Y_pf_p\} \\
&=\Hom fZ_n(\{\partial^Z_{p+n}g_p-(-1)^ng_{p-1}\partial^Y_p\})\\
&=\Hom fZ_n(\partial^{\Hom YZ}_{n}(\{g_p\})). 
\end{align*}
Note that the third equality follows from the fact that $f$ is a chain map. 

The computation for $\Hom Zf\colon\Hom ZX\to\Hom ZY$ is similar.
\qed
\end{para}

\begin{para}[Sketch of Solution to Exercise~\ref{exer120522c}]
\label{para130311c}
It is straightforward to show that $\mu^{X,r}$ is $R$-linear.
(Note that this uses the fact that $R$ is commutative.)
To show that $\mu^{X,r}$ is a chain map, we compute:
$$\partial^X_{i}(\mu^{X,r}_i(x))=\partial^X_{i}(rx)=r\partial^X_{i}(x)=\mu^{X,r}_{i-1}(\partial^X_{i}(x)).$$
For the induced map 
$\HH_i(\mu^{X,r})$, we have
$\HH_i(\mu^{X,r})(\ol x)=\ol{rx}=r\ol x$,
as desired.
\qed
\end{para}

\begin{para}[Sketch of Solution to Exercise~\ref{exer120522d}]
\label{para130311d}
Exercise~\ref{exer120522c} shows that, for all $r\in R$, the sequence
$\chi^X_0(r)=\mu^{X,r}$ is a chain map. That is, $\chi^X_0(r)$ is a cycle in $\Hom XX_0$.
It follows that the next diagram commutes
$$\xymatrix@C=15mm{
&0\ar[r]\ar[d]
&R\ar[r]\ar[d]_{\chi^X_0}
&0 \ar[d]\\
&\Hom XX_{1}\ar[r]^{\partial^{\Hom XX}_{1}}
&\Hom XX_{0}\ar[r]^{\partial^{\Hom XX}_{0}}
&\Hom XX_{-1} 
}$$
so $\chi^X$ is a chain map.
\qed
\end{para}

\begin{para}[Sketch of Solution to Exercise~\ref{exer120522k}]
\label{para130311e}
Let $X$, $Y$, and $Z$ be $R$-complexes.

\eqref{exer120522k1}
It is routine to show that $\partial^{\Otimes XY}_n$ is $R$-linear and maps from $(\Otimes XY)_n$ to $(\Otimes XY)_{n-1}$.
To show  that $\Otimes XY$ is an $R$-complex, we compute:
\begin{align*}
\partial^{\Otimes XY}_{n-1}(\partial^{\Otimes XY}_n(\ldots,0,x_p\otimes y_{n-p},0,\ldots))\hspace{-2.25in}\\
&=
\partial^{\Otimes XY}_{n-1}(\ldots,0,\partial^X_p(x_p)\otimes y_{n-p},(-1)^px_p\otimes \partial^Y_{n-p}(y_{n-p}),0,\ldots) \\
&=
\partial^{\Otimes XY}_{n-1}(\ldots,0,\partial^X_p(x_p)\otimes y_{n-p},0,0,\ldots) \\
&\quad+\partial^{\Otimes XY}_{n-1}(\ldots,0,0,(-1)^px_p\otimes \partial^Y_{n-p}(y_{n-p}),0,\ldots) \\
&=
(\ldots,0,\underbrace{\partial^X_{p-1}(\partial^X_p(x_p))}_{=0}\otimes y_{n-p},(-1)^{p-1}\partial^X_p(x_p)\otimes \partial^Y_{n-p}(y_{n-p}),0,\ldots) \\
&\quad+(\ldots,0,(-1)^p\partial^X_p(x_p)\otimes \partial^Y_{n-p}(y_{n-p}),(-1)^px_p\otimes \underbrace{\partial^Y_{n-p-1}(\partial^Y_{n-p}(y_{n-p}))}_{=0},0,\ldots).
\end{align*}
The only possibly non-trivial entry in this sum is
\begin{multline*}
(-1)^{p-1}\partial^X_p(x_p)\otimes \partial^Y_{n-p}(y_{n-p})+(-1)^{p}\partial^X_p(x_p)\otimes \partial^Y_{n-p}(y_{n-p})
\\
=(-1)^{p-1}[\partial^X_p(x_p)\otimes \partial^Y_{n-p}(y_{n-p})-\partial^X_p(x_p)\otimes \partial^Y_{n-p}(y_{n-p})]=0
\end{multline*}
so we have $\partial^{\Otimes XY}_{n-1}\partial^{\Otimes XY}_{n}=0$.

\eqref{exer120522k2}
The isomorphism
$X\xra\cong\Otimes RX$ 
is given by $x\mapsto (\ldots,0,1\otimes x,0,\ldots)$.
Check that this is an isomorphism as in~\ref{para130218c}, using the fact that
$R$ is concentrated in degree 0.

\eqref{exer120522k3}
As the hint suggests, the isomorphism $g\colon\Otimes XY\xra\cong\Otimes YX$
requires a sign-change: we define 
$$g_n(\ldots,0,x_p\otimes y_{n-p},0,\ldots):=(\ldots,0,(-1)^{p(n-p)}y_{n-p}\otimes x_p,0,\ldots).$$
It is straightforward to show that $g=\{g_n\}$ is $R$-linear and maps $(\Otimes XY)_n$ to $(\Otimes YX)_n$.
The following computation shows that $g$ is a chain map:
\begin{align*}
g_{n-1}(\partial^{\Otimes XY}_{n}(\ldots,0,x_p\otimes y_{n-p},0,\ldots))\hspace{-2in}\\
&=g_{n-1}(\ldots,0,\partial^X_p(x_p)\otimes y_{n-p},(-1)^px_p\otimes \partial^Y_{n-p}(y_{n-p}),0,\ldots)\\
&=g_{n-1}(\ldots,0,\partial^X_p(x_p)\otimes y_{n-p},0,\ldots)\\
&\quad+g_{n-1}(\ldots,0,(-1)^px_p\otimes \partial^Y_{n-p}(y_{n-p}),0,\ldots)\\
&=(\ldots,0,(-1)^{(p-1)(n-p)}y_{n-p}\otimes \partial^X_p(x_p),0,\ldots)\\
&\quad+(\ldots,0,(-1)^{p+p(n-p-1)}\partial^Y_{n-p}(y_{n-p})\otimes x_p,0,\ldots)\\
&=(\ldots,0,(-1)^{(p+1)(n-p)}y_{n-p}\otimes \partial^X_p(x_p),0,\ldots)\\
&\quad+(\ldots,0,(-1)^{p(n-p)}\partial^Y_{n-p}(y_{n-p})\otimes x_p,0,\ldots)\\
&=(\ldots,0,(-1)^{p(n-p)}\partial^Y_{n-p}(y_{n-p})\otimes x_p,(-1)^{(p+1)(n-p)}y_{n-p}\otimes \partial^X_{p}(x_p),0,\ldots)\\
&=(\ldots,0,(-1)^{p(n-p)}\partial^Y_{n-p}(y_{n-p})\otimes x_p,(-1)^{p(n-p)+(n-p)}y_{n-p}\otimes \partial^X_{p}(x_p),0,\ldots)\\
&=\partial^{\Otimes YX}_{n}(\ldots,0,(-1)^{p(n-p)}y_{n-p}\otimes x_p,0,\ldots)\\
&=\partial^{\Otimes YX}_{n}(g_n(\ldots,0,x_p\otimes y_{n-p},0,\ldots))
\end{align*}
Similarly, one shows that the map
$h\colon\Otimes YX\xra\cong\Otimes XY$
defined as 
$$h_n(\ldots,0,y_p\otimes x_{n-p},0,\ldots):=(\ldots,0,(-1)^{p(n-p)}x_{n-p}\otimes y_p,0,\ldots)$$
is a chain map. Moreover, one has
\begin{align*}
g_n(h_n(\ldots,0,y_p\otimes x_{n-p},0,\ldots))
&=g_n(\ldots,0,(-1)^{p(n-p)}x_{n-p}\otimes y_p,0,\ldots)\\
&=(\ldots,0,(-1)^{p(n-p)}(-1)^{(n-p)p}y_p\otimes x_{n-p},0,\ldots)\\
&=(\ldots,0,((-1)^{p(n-p)})^2y_p\otimes x_{n-p},0,\ldots) \\
&=(\ldots,0,y_p\otimes x_{n-p},0,\ldots)
\end{align*}
so $hg$ is the identity on $\Otimes YX$. 
Similarly, $gh$ is the identity on $\Otimes XY$.
It follows that $h$ is a two-sided inverse for $g$, so $g$ is an isomorphism.

\eqref{exer120522k4}
For the  isomorphism $\Otimes{X}{(\Otimes YZ)}\to\Otimes{(\Otimes XY)}{Z}$,
we change notation. The point is that elements of $\Otimes{X}{(\Otimes YZ)}$
are sequences where each entry is itself a sequence. For instance, a generator in $\Otimes{X}{(\Otimes YZ)}_n$
is of the form
$$\bigl(\ldots,(\ldots,0,0,\ldots),(\ldots,0,x_p\otimes(y_q\otimes z_{n-p-q}),0,\ldots),(\ldots,0,0,\ldots),\ldots\bigr)$$
which we simply write as $x_p\otimes(y_q\otimes z_{n-p-q})$.
One has to be careful here not to combine elements illegally:
the elements $x_a\otimes(y_b\otimes z_{n-a-b})$ and $x_p\otimes(y_q\otimes z_{n-p-q})$
are only in the same summand of $\Otimes{X}{(\Otimes YZ)}_n$ if $a=p$ and $b=q$.

Using this protocol,  
define $f\colon\Otimes{X}{(\Otimes YZ)}\to\Otimes{(\Otimes XY)}{Z}$
as
$$f_{p+q+r}(x_p\otimes(y_q\otimes z_r))
=(x_p\otimes y_q)\otimes z_r.$$
(This has no sign-change since no factors are commuted.)
As in the previous case,  showing that $f$ is an isomorphism reduces to showing that
it is a chain map:
\begin{align*}
\partial^{\Otimes{(\Otimes XY)}{Z}}_{p+q+r}(f_{p+q+r}(x_p\otimes(y_q\otimes z_r)))
\hspace{-2in}\\
&=\partial^{\Otimes{(\Otimes XY)}{Z}}_{p+q+r}((x_p\otimes y_q)\otimes z_r) \\
&=\partial^{\Otimes XY}_{p+q}(x_p\otimes y_q)\otimes z_r+(-1)^{p+q}(x_p\otimes y_q)\otimes \partial^Z_r(z_r)\\
&=(\partial^{X}_{p}(x_p)\otimes y_q)\otimes z_r+(-1)^{p}(x_p\otimes \partial^{Y}_{q}(y_q))\otimes z_r+(-1)^{p+q}(x_p\otimes y_q)\otimes \partial^Z_r(z_r)\\
&=f_{p+q+r-1}(\partial^X_p(x_p)\otimes(y_q\otimes z_r))+(-1)^px_p\otimes(\partial^Y_q(y_q)\otimes z_r)\\
&\qquad\qquad+(-1)^{p+q}x_p\otimes(y_q\otimes\partial^Z_r(z_r))\\
&=f_{p+q+r-1}(\partial^{\Otimes{X}{(\Otimes YZ)}}_{p+q+r}(x_p\otimes(y_q\otimes z_r))).
\end{align*}
The last equality in the sequence follows like the first two.
\qed
\end{para}

\begin{para}[Sketch of Solution to Exercise~\ref{exer120523a}]
\label{para130311f}
Fix a chain map $f\colon  X\to Y$ and an $R$-complex $Z$.
For each element $z_p\otimes x_q\in(\Otimes ZX)_n$, the output $(\Otimes Zf)_n(z_p\otimes x_q)=z_p\otimes f_q(x_q)$ is in $(\Otimes XZ)_n$, so the map 
$\Otimes Zf\colon\Otimes ZX\to\Otimes ZY$ is well-defined
and of degree 0. Also, it is straightforward to show that $\Otimes Zf$ is $R$-linear.
To show that $\Otimes Zf$
is a chain map, we compute: 
\begin{align*}
\partial^{\Otimes ZY}_{p+q}((\Otimes Zf)_{p+q}(z_p\otimes x_q))
&=\partial^{\Otimes ZY}_{p+q}(z_p\otimes f_q(x_q)) \\
&=\partial^Z_p(z_p)\otimes f_q(x_q)+(-1)^pz_p\otimes \partial^Y_q(f_q(x_q)) \\
&=\partial^Z_p(z_p)\otimes f_q(x_q)+(-1)^pz_p\otimes f_{q-1}(\partial^X_q(x_q)) \\
&=(\Otimes Zf)_{p+q-1}(\partial^Z_p(z_p)\otimes x_q+(-1)^pz_p\otimes \partial^X_q(x_q)) \\
&=(\Otimes Zf)_{p+q-1}(\partial^{\Otimes ZY}_{p+q}(z_p\otimes x_q)).
\end{align*}
The proof for $\Otimes fZ$ is similar.
\qed
\end{para}

\begin{para}[Sketch of Solution to Exercise~\ref{exer120522j}]
\label{para130312b}
We briefly describe our linear algebra protocols before sketching this solution. 
For each $n\in\bbn$, the module $R^n$ consists of the column vectors of size $n$ with entries from $R$.
Under this convention, the $R$-module homomorphisms $R^m\xra f R^n$ are in bijection with the $n\times m$ matrices with entries
from $R$. Moreover, the matrix representing $f$ has columns $f(e_1),\ldots,f(e_m)$ where $e_1,\ldots,e_m$ is the standard basis
for $R^m$. 

More generally, given free $R$-modules $V$ and $W$ with ordered bases $v_1,\ldots,v_m$ and $w_1,\ldots,w_n$
respectively, the
$R$-module homomorphisms $V\xra f W$ are in 
bijection with the $n\times m$ matrices with entries
from $R$. Moreover, the $j$th column of the matrix representing $f$ with these bases is 
$\left(\begin{smallmatrix}a_{1,j}\\ a_{2,j}\\\vdots\\a_{n,j}\end{smallmatrix}\right)$
where $f(v_j)=\sum_{i=1}^na_{i,j}w_i$.

Note that these protocols allow for function composition to be represented by matrix multiplication in the same order:
if $f$ and $g$ are represented by matrices $A$ and $B$, respectively, then $gf$ is represented by $BA$, using the same ordered bases.

Now for the sketch of the solution.
By definition, we have
\begin{align*}
K^R(x)=\quad&0\to Re\xra x R1\to 0 \\
K^R(y)=\quad&0\to Re\xra y R1\to 0 \\
K^R(z)=\quad&0\to Re\xra z R1\to 0.
\end{align*}
The notation indicates that, in each case, the basis vector in degree 0 is 1 and the basis in degree 1 is e.

We have $K^R(x,y)=\Otimes{K^R(x)}{K^R(y)}$, by definition.
It follows that $K^R(x,y)_n=\bigoplus_{i+j=n}K^R(x)_i\otimes K^R(y)_j$.
Since $K^R(x)_i=0=K^R(y)_i$ for all $i\neq 0,1$,
it follows readily that $K^R(x,y)_n=0$ for all $n\neq 0,1,2$.
In degree 0, we have 
$$K^R(x,y)_0=\Otimes{K^R(x)_0}{K^R(y)_0}=\Otimes{R1}{R1}\cong R$$
with basis vector $1\otimes 1$.
Similarly, we have
$$K^R(x,y)_1=(\Otimes{Re}{R1})\bigoplus(\Otimes{R1}{Re})\cong R^2$$
with ordered basis $e\otimes 1$, $1\otimes e$.
In degree 2, we have
$$K^R(x,y)_2=\Otimes{Re}{Re}\cong R$$
with basis vector $e\otimes e$.
In particular, $K^R(x,y)$ has the following shape:
$$K^R(x,y)=\quad 0\to R\xra{d_2} R^2\xra{d_1} R\to 0.$$
Using the linear algebra protocols described above, we identify an element
$re\otimes 1+s1\otimes e\in K^R(x,y)_1\cong R^2$ with the column vector
$\left(\begin{smallmatrix}r\\s\end{smallmatrix}\right)$. 
It follows that $d_2$ is a $2\times 1$ matrix, and $d_1$ is a $1\times 2$ matrix,
which we identify from the images of the corresponding basis vectors.
First, for $d_2$:
\begin{align*}
d_2(e\otimes e)
&=\partial^{K^R(x)}_1(e)\otimes e+(-1)^{|e|}e\otimes \partial^{K^R(y)}_1(e) 
=x1\otimes e-ye\otimes 1. 
\end{align*}
This corresponds to the column vector $\left(\begin{smallmatrix}-y\\x\end{smallmatrix}\right)$,
and it follows that $d_2$ is represented by the matrix $\left(\begin{smallmatrix}-y\\x\end{smallmatrix}\right)$.
For $d_1$, we have two basis vectors to consider, in order:
\begin{align*}
d_1(e\otimes 1)
&=\partial^{K^R(x)}_1(e)\otimes 1+(-1)^{|e|}e\otimes \partial^{K^R(y)}_1(1) 
=x1\otimes 1
\\
d_1(1\otimes e)
&=\partial^{K^R(x)}_1(1)\otimes e+(-1)^{|1|}1\otimes \partial^{K^R(y)}_1(e) 
=y1\otimes 1.
\end{align*}
It follows that the map $d_1$ is represented by the row matrix $(x\,\,\, y)$, so
in summary:
$$K^R(x,y)=\quad 0\to R\xra{\left(\begin{smallmatrix}-y\\x\end{smallmatrix}\right)} R^2\xra{(x\,\,\, y)} R\to 0.$$
The condition $d_1d_2=0$ follows from the fact that $xy=yx$, that is, from the commutativity of $R$.
Sometimes,  relations of this form are called ``Koszul relations''.

We repeat the process for 
$K^R(x,y,z)=\Otimes{K^R(x,y)}{K^R(z)}$.
For all $n$, we have $K^R(x,y,z)_n=\bigoplus_{i+j=n}K^R(x,y)_i\otimes K^R(z)_j$.
As $K^R(x,y)_i=0=K^R(z)_j$ for all $i\neq 0,1,2$ and all $j\neq 0,1$,
we have $K^R(x,y,z)_n=0$ for  $n\neq 0,1,2,3$.
In degree~0,
$$K^R(x,y,z)_0=\Otimes{K^R(x,y)_0}{K^R(z)_0}=\Otimes{R(1\otimes 1)}{R1}\cong R$$
with basis vector $1\otimes 1\otimes 1$.
The other modules and bases are computed similarly.
We summarize in the following table:
\begin{center}
\begin{tabular}{ccc}
$i$ & $K^R(x,y,z)_i$ & ordered basis
\\
\hline
$3$ & $R^1$ & $e\otimes e\otimes e$
\\
$2$ & $R^3$ & $e\otimes e\otimes 1$, $e\otimes 1\otimes e$, $1\otimes e\otimes e$
\\
$1$ & $R^3$ & $e\otimes 1\otimes 1$, $1\otimes e\otimes 1$, $1\otimes 1\otimes e$
\\
$0$ & $R^1$ & $1\otimes 1\otimes 1$
\end{tabular}
\end{center}
In particular, $K^R(x,y,z)$ has the following shape:
$$K^R(x,y,z)=\quad 0\to R\xra{d_3} R^3\xra{d_2} R^3\xra{d_1} R\to 0.$$
It follows that $d_3$ is a $3\times 1$ matrix, $d_2$ is a $3\times 3$ matrix, and $d_1$ is a $1\times 3$ matrix,
which we identify from the images of the corresponding basis vectors.
First, for $d_3$:
\begin{align*}
d_3(e\otimes e\otimes e)
&=\partial^{K^R(x,y)}_1(e\otimes e)\otimes e+(-1)^{|e\otimes e|}e\otimes e\otimes \partial^{K^R(z)}_1(e) \\
&=x1\otimes e\otimes e-ye\otimes 1\otimes e+ze\otimes e\otimes1. 
\end{align*}
This corresponds to the column vector $\left(\begin{smallmatrix}z\\-y\\x\end{smallmatrix}\right)$,
and it follows that $d_2$ is represented by the matrix $\left(\begin{smallmatrix}z\\-y\\x\end{smallmatrix}\right)$.
Next, for $d_2$:
\begin{align*}
d_2(e\otimes e\otimes1)
&=\partial^{K^R(x,y)}_1(e\otimes e)\otimes 1+(-1)^{|e\otimes e|}e\otimes e\otimes \partial^{K^R(z)}_1(1)\\ 
&=x1\otimes e\otimes 1-ye\otimes 1\otimes 1
\\
d_2(e\otimes 1\otimes e)
&=\partial^{K^R(x,y)}_1(e\otimes 1)\otimes e+(-1)^{|e\otimes 1|}e\otimes 1\otimes \partial^{K^R(z)}_1(e)\\ 
&=x1\otimes 1\otimes e-ze\otimes 1\otimes 1 
\\
d_2(1\otimes e\otimes e)
&=\partial^{K^R(x,y)}_1(1\otimes e)\otimes e+(-1)^{|1\otimes e|}1\otimes e\otimes \partial^{K^R(z)}_1(e)\\ 
&=y1\otimes 1\otimes e-z1\otimes e\otimes 1. 
\end{align*}
It follows that $d_2$ is represented by the following matrix. 
$$d_2=\begin{pmatrix}
-y & -z & 0 \\
x & 0 & -z \\
0 & x & y
\end{pmatrix}$$
For $d_1$, we have three basis vectors to consider, in order:
\begin{align*}
d_1(e\otimes 1\otimes 1)
&=\partial^{K^R(x,y)}_1(e\otimes 1)\otimes 1+(-1)^{|e\otimes 1|}e\otimes 1\otimes \partial^{K^R(z)}_1(1)\\ 
&=x1\otimes 1\otimes 1
\\
d_1(1\otimes e\otimes 1)
&=\partial^{K^R(x,y)}_1(1\otimes e)\otimes 1+(-1)^{|1\otimes e|}1\otimes e\otimes \partial^{K^R(z)}_1(1)\\ 
&=y1\otimes 1\otimes 1
\\
d_1(1\otimes 1\otimes e)
&=\partial^{K^R(x,y)}_1(1\otimes 1)\otimes e+(-1)^{|1\otimes 1|}1\otimes 1\otimes \partial^{K^R(z)}_1(e)\\ 
&=z1\otimes 1\otimes 1
\\
\end{align*}
It follows that the map $d_1$ is represented by the row matrix $(x\,\,\, y\,\,\, z)$.
In summary, we have the following:
$$K^R(x,y,z)=\quad 0\to R\xra{\left(\begin{smallmatrix}z\\-y\\x\end{smallmatrix}\right)} R^3
\xra{\left(\begin{smallmatrix}-y & -z & 0 \\
x & 0 & -z \\
0 & x & y
\end{smallmatrix}\right)} R^3\xra{(x\,\,\, y\,\,\, z)} R\to 0.$$
\qed
\end{para}

\begin{para}[Sketch of Solution to Exercise~\ref{exer120522l}]
\label{para130312c}
This is essentially a manifestation of Pascal's Triangle.
We proceed by induction on $n$, 
where
$\x=x_1,\ldots,x_n$.
The base case $n=1$ follows from directly from Definition~\ref{defn120522h}.
Assume now that the result holds for sequences of length $t\geq 1$.
We prove the it holds for sequences $x_1,\ldots,x_t,x_{t+1}$.
By definition, we have 
$$K^R(x_1,\ldots,x_{t+1})= \Otimes{K^R(x_1,\ldots,x_{t})}{K^R(x_{t+1})}$$
Since $K^R(x_{t+1})_i=0$ for all $i\neq 0,1$, it follows that
\begin{align*}
K^R(x_1,\ldots,x_{t+1})_m\hspace{-.8in}\\
&= \Otimes{K^R(x_1,\ldots,x_{t})}{K^R(x_{t+1})} \\
&= (\Otimes{K^R(x_1,\ldots,x_{t})_{m}}{K^R(x_{t+1})_{0}})
\bigoplus(\Otimes{K^R(x_1,\ldots,x_{t})_{m-1}}{K^R(x_{t+1})_{1}})\\
&\cong (\Otimes{R^{\binom{t}{m}}}{R})\bigoplus(\Otimes{R^{\binom{t}{m-1}}}{R})\\
&\cong R^{\binom{t}{m}+\binom{t}{m-1}}\\
&=R^{\binom{t+1}{m}}
\end{align*}
which is the desired conclusion.
\qed
\end{para}

\begin{para}[Sketch of Solution to Exercise~\ref{exer120522u}]
\label{para130312d}
This follows from the commutativity of tensor product
\begin{align*}
K^R(\x)
&=K^R(x_1)\otimes_R\cdots\otimes_RK^R(x_n)
\cong K^R(x_{\sigma(1)})\otimes_R\cdots\otimes_RK^R(x_{\sigma(n)})
=K^R(\x')
\end{align*}
which is the desired conclusion.
\qed
\end{para}

\begin{para}[Sketch of Solution to Exercise~\ref{exer130314a}]
Set $(-)^*=\Hom -R$.
We use the following linear algebra facts freely.
First, there is an isomorphism $(R^n)^*\cong R^n$.
Second, given an $R$-module homomorphism $R^m\to R^n$ represented by a matrix $A$,
the dual map $(R^n)^*\to(R^m)^*$ is represented by the transpose $A^{\text{T}}$.

\label{para130314a}
Let $x,y,z\in R$.
First, we verify that
$\Hom{K^R(x)}{R}\cong\shift K^R(x)$,
by the above linear algebra facts with Exercise~\ref{exer130314b}.
The complex $K^R(x)$ has the form
$$K^R(x)=\quad 0\to R\xra x R\to 0$$
concentrated in degrees 0 and 1.
Thus, the shifted complex $\shift K^R(x)$ is of the form
$$\shift K^R(x)=\quad 0\to R\xra{-x} R\to 0$$
concentrated in degrees 0 and $-1$.
By Exercise~\ref{exer130314b}, the dual 
$\Hom{K^R(x)}{R}$ 
is concentrated in degrees 0 and $-1$, and has the form
$$\Hom{K^R(x)}{R}=\quad 0\to R\xra{x} R\to 0.$$
The following diagram shows that these complexes are isomorphic.
$$\xymatrix{
\shift K^R(x)\ar[d]_\cong
&0\ar[r] 
&R\ar[r]^-{-x} \ar[d]_{-1}
&R\ar[r] \ar[d]^1
&0
\\
\Hom{K^R(x)}{R}
&0\ar[r] 
&R\ar[r]^-{x} 
&R\ar[r] 
&0
}$$

Next, we verify the isomorphism
$\Hom{K^R(x,y)}{R}\cong\shift^2K^R(x,y)$.
By Exercise~\ref{exer120522j}, the complex $K^R(x,y)$ is of the form
$$K^R(x,y)=\quad 0\to R\xra{\left(\begin{smallmatrix}-y\\x\end{smallmatrix}\right)} R^2\xra{(x\,\,\, y)} R\to 0$$
concentrated in degrees $0,1,2$.
Thus, the shifted complex $\shift K^R(x)$ is of the form
$$\shift K^R(x,y)=\quad 0\to R\xra{\left(\begin{smallmatrix}-y\\x\end{smallmatrix}\right)} R^2\xra{(x\,\,\, y)} R\to 0$$
concentrated in degrees $0,-1,-2$.
By Exercise~\ref{exer130314b}, the dual 
$\Hom{K^R(x,y)}{R}$ 
has the form and is concentrated in degrees 0 and $-1$
$$\Hom{K^R(x,y)}{R}=\quad 0\to R\xra{\left(\begin{smallmatrix}x\\ y\end{smallmatrix}\right)} R^2\xra{(-y\,\,\, x)} R\to 0.$$
The following diagram shows that these complexes are isomorphic.
$$\xymatrix@C=13mm{
\shift K^R(x,y)\ar[d]_\cong
&0\ar[r] 
&R\ar[r]^-{\left(\begin{smallmatrix}-y\\x\end{smallmatrix}\right)} \ar[d]_{1}
&R^2\ar[r]^-{(x\,\,\, y)} \ar[d]_{\left(\begin{smallmatrix}0 & 1 \\ -1 & 0\end{smallmatrix}\right)}
&R\ar[r] \ar[d]^{1}
&0
\\
\Hom{K^R(x,y)}{R}
&0\ar[r] 
&R\ar[r]_-{\left(\begin{smallmatrix}x\\ y\end{smallmatrix}\right)} 
&R^2\ar[r]^-{(-y\,\,\, x)} 
&R\ar[r] 
&0
}$$
Note that we found this isomorphism, as follows. Use the identity in degree 1. 
Consider the matrix $\left(\begin{smallmatrix}a&b\\c&d\end{smallmatrix}\right)$
in degree 1, and solve the linear equations needed to make the right-most square commute.
Then check that the identity  in degree 2 makes the left-most square commute.

Lastly, we verify the isomorphism
$\Hom{K^R(x,y,z)}{R}\cong\shift^3K^R(x,y,z)$.
By Exercise~\ref{exer120522j}, the complex $K^R(x,y,x)$ is of the form
$$K^R(x,y,z)=\quad 0\to R\xra{\left(\begin{smallmatrix}z\\-y\\x\end{smallmatrix}\right)} R^3
\xra{\left(\begin{smallmatrix}-y & -z & 0 \\
x & 0 & -z \\
0 & x & y
\end{smallmatrix}\right)} R^3\xra{(x\,\,\, y\,\,\, z)} R\to 0$$
concentrated in degrees $0,1,2,3$.
The shifted complex $\shift^3 K^R(x,y,z)$ is of the form
$$\shift^3 K^R(x,y,z)=\quad 0\to R\xra{\left(\begin{smallmatrix}-z\\y\\-x\end{smallmatrix}\right)} R^3
\xra{\left(\begin{smallmatrix}y & z & 0 \\
-x & 0 & z \\
0 & -x & -y
\end{smallmatrix}\right)} R^3\xra{(-x\,\,\, -y\,\,\, -z)} R\to 0$$
concentrated in degrees  $0,-1,-2,-3$.
By Exercise~\ref{exer130314b}, the dual 
$\Hom{K^R(x)}{R}$ 
has the form and is concentrated in degrees $0,-1,-2,-3$
$$\Hom{K^R(x)}{R}=\quad 0\to R\xra{\left(\begin{smallmatrix}x\\ y\\ z\end{smallmatrix}\right)} R^3
\xra{\left(\begin{smallmatrix}-y & x & 0 \\
-z & 0 & x \\
0 & -z & y
\end{smallmatrix}\right)} R^3\xra{(z\,\,\,-y\,\,\,x)} R\to 0.$$
The following diagram 
$$\xymatrix@C=4mm{
\shift^3 K^R(x,y,z)\ar[d]_\cong
&0\ar[r] 
&R\ar[rrr]^-{\left(\begin{smallmatrix}-z\\y\\-x\end{smallmatrix}\right)} \ar[d]_{1}
&&&R^3\ar[rrr]^-{\left(\begin{smallmatrix}y & z & 0 \\
-x & 0 & z \\
0 & -x & -y
\end{smallmatrix}\right)} \ar[d]_{\left(\begin{smallmatrix}0 & 0 & -1 \\
0 & 1 & 0 \\
-1 & 0 & 0
\end{smallmatrix}\right)}
&&&R^3\ar[rrr]^-{(-x\,\,\, -y\,\,\, -z)} \ar[d]_{\left(\begin{smallmatrix}0 & 0 & -1 \\
0 & 1 & 0 \\
-1 & 0 & 0
\end{smallmatrix}\right)}
&&&R\ar[r] \ar[d]^1
&0
\\
\Hom{K^R(x,y,z)}{R}
&0\ar[r] 
&R\ar[rrr]_-{\left(\begin{smallmatrix}x\\ y\\ z\end{smallmatrix}\right)} 
&&&R^3\ar[rrr]_-{\left(\begin{smallmatrix}-y & x & 0 \\
-z & 0 & x \\
0 & -z & y
\end{smallmatrix}\right)} 
&&&R^3\ar[rrr]_-{(z\,\,\,-y\,\,\,x)} 
&&&R\ar[r] 
&0
}$$
shows that these complexes are isomorphic.
\qed
\end{para}

\begin{para}[Sketch of Solution to Exercise~\ref{exer120522o}]
\label{para130312e}
Let $\x=x_1,\ldots,x_n\in R$.

By definition of $\wti K^R(\x)$, the first differential is  given by $d_1(e_i)=x_i$ for $i=1,\ldots,n$,
represented in matrix form by $R^n\xra{(x_1\,\,\, \cdots \,\,\, x_n)} R$.
In particular, we have
$$\wti K^R(x_1)=\quad0\to R\xra{x_1} R\to 0.$$
For $\wti K^R(x_1,x_2)$ this says that we only have to compute $d_2$:
$$d_2(e_1\wedge e_2)=x_1e_2-x_2e_1$$
so we have
$$\wti K^R(x_1,x_2)=\quad
0\to R\xra{\left(\begin{smallmatrix}-x_2\\x_1\end{smallmatrix}\right)}R^2\xra{(x_1\,\,\, x_2)}R\to 0.$$
In comparison with Exercise~\ref{exer120522j}, this says that
$$\wti K^R(x,y)=\quad
0\to R\xra{\left(\begin{smallmatrix}-y\\x\end{smallmatrix}\right)}R^2\xra{(x\,\,\, y)}R\to 0.$$
For $\wti K^R(x,y,z)$, we specify an ordering on the basis for $\wti K^R(x,y,z)_2$
$$e_1\wedge e_2, e_1\wedge e_3, e_2\wedge e_3$$
and we compute:
\begin{align*}
d_2(e_1\wedge e_2)&=x_1e_2-x_2e_1\\
d_2(e_1\wedge e_3)&=x_1e_3-x_3e_1\\
d_2(e_2\wedge e_3)&=x_2e_3-x_3e_2\\
d_3(e_1\wedge e_2\wedge e_3)&=x_1e_2\wedge e_3-x_2e_1\wedge e_3+x_3e_1\wedge e_2.
\end{align*}
In summary, we have the following:
$$K^R(x_1,x_2,x_3)=\quad 0\to R\xra{\left(\begin{smallmatrix}x_3\\-x_2\\x_1\end{smallmatrix}\right)} R^3
\xra{\left(\begin{smallmatrix}-x_2 & -x_3 & 0 \\
x_1 & 0 & -x_3 \\
0 & x_1 & x_2
\end{smallmatrix}\right)} R^3\xra{(x_1\,\,\, x_2\,\,\, x_3)} R\to 0.$$
Again, this compares directly with Exercise~\ref{exer120522j}.
\qed
\end{para}

\begin{para}[Sketch of Solution to Exercise~\ref{exer120522p}]
\label{para130312f}
In the following multiplication tables, given an element $x$ from the left column and an element $y$ from the top row, the corresponding element
in the table is the product $xy$.

\begin{center}
\begin{tabular}{r|cc}
$\bigwedge R^1$ & 1 & $e$ \\
\hline
1 & 1 & $e$ \\
$e$ & $e$ & 0
\end{tabular}
\qquad
\begin{tabular}{r|cccc}
$\bigwedge R^2$ & 1 & $e_1$ & $e_2$ & $e_1\wedge e_2$ \\
\hline
1 & 1 & $e_1$ & $e_2$ & $e_1\wedge e_2$ \\
$e_1$ & $e_1$ & $0$ & $e_1\wedge e_2$ & 0 \\
$e_2$ & $e_2$  & $-e_1\wedge e_2$ & 0 & 0 \\
$e_1\wedge e_2$ & $e_1\wedge e_2$ & 0 & 0 & $0$
\end{tabular}
\end{center}
\
\begin{center}
\begin{tabular}{r|cccc}
$\bigwedge R^3$ & 1 & $e_1$ & $e_2$ & $e_3$  \\
\hline
1 & 1 & $e_1$ & $e_2$ & $e_3$  \\
$e_1$ & $e_1$ & $0$ & $e_1\wedge e_2$ & $e_1\wedge e_3$  \\
$e_2$ & $e_2$  & $-e_1\wedge e_2$ & 0 & $e_2\wedge e_3$ \\
$e_3$ & $e_3$ & $-e_1\wedge e_3$ & $-e_2\wedge e_3$ & 0  \\
$e_1\wedge e_2$ & $e_1\wedge e_2$ & 0 & 0 & $e_1\wedge e_2\wedge e_3$ \\
$e_1\wedge e_3$ & $e_1\wedge e_3$ & 0 & $-e_1\wedge e_2\wedge e_3$ & 0  \\
$e_2\wedge e_3$ & $e_2\wedge e_3$ & $e_1\wedge e_2\wedge e_3$ & 0 & 0  \\
$e_1\wedge e_2\wedge e_3$ & $e_1\wedge e_2\wedge e_3$ & 0 & 0 & 0  \\
\end{tabular}
\end{center}
\
\begin{center}
\begin{tabular}{r|cccc}
$\bigwedge R^3$ & $e_1\wedge e_2$ & $e_1\wedge e_3$ & $e_2\wedge e_3$ & $e_1\wedge e_2\wedge e_3$ \\
\hline
1  & $e_1\wedge e_2$ & $e_1\wedge e_3$ & $e_2\wedge e_3$ & $e_1\wedge e_2\wedge e_3$ \\
$e_1$ & 0 & 0 & $e_1\wedge e_2\wedge e_3$ & 0 \\
$e_2$ &  0 & $-e_1\wedge e_2\wedge e_3$ & 0 & 0\\
$e_3$ & $e_1\wedge e_2\wedge e_3$ & 0 & $0$ & 0 \\
$e_1\wedge e_2$  & 0 & 0 & 0 & 0\\
$e_1\wedge e_3$  & 0 & 0 & 0 & 0 \\
$e_2\wedge e_3$  & 0 & 0 & 0 & 0 \\
$e_1\wedge e_2\wedge e_3$  & 0 & 0 & 0 & 0 \\
\end{tabular}
\end{center}
\end{para}

\begin{para}[Sketch of Solution to Exercise~\ref{exer120522q}]
\label{para130312g}
We proceed by induction on $t$.

Base case:  $t=2$.
The result is trivial if $\iota$ is the identity, so assume for the rest of this paragraph that $\iota$ is not the identity permutation.
Because of our assumptions on $\iota$, in this case it has the cycle-notation form $\iota=(j_1\,\,\, j_2)$.
If $j_1<j_2$, then by definition we have
$$e_{\iota(j_1)}\wedge e_{\iota(j_2)}
=e_{j_2}\wedge e_{j_1}
=-e_{j_1}\wedge e_{j_2}
=\operatorname{sgn}(\iota)e_{j_1}\wedge e_{j_2}.
$$
The same logic applies when $j_i>j_2$.

Induction step: Assume that $t\geq 3$ and 
that the result holds for elements of the form $e_{i_1}\wedge\cdots\wedge e_{i_s}$
such that $2\leq s<t$.
We proceed by cases.

Case 1: $\iota(j_1)=j_1$.
In this case, $\iota$ describes a permutation of $\{j_2,\ldots,j_t\}$ with the same signum as $\iota$;
thus, our induction hypothesis explains the third equality in the following display
\begin{align*}
e_{\iota(j_1)}\wedge e_{\iota(j_2)}\wedge\cdots\wedge e_{\iota(j_t)}
&=e_{\iota(j_1)}\wedge (e_{\iota(j_2)}\wedge\cdots\wedge e_{\iota(j_t)})\\
&=e_{j_1}\wedge (e_{\iota(j_2)}\wedge\cdots\wedge e_{\iota(j_t)})\\
&=e_{j_1}\wedge (\operatorname{sgn}(\iota)e_{j_2}\wedge\cdots\wedge e_{j_t}) \\
&=\operatorname{sgn}(\iota)e_{j_1}\wedge(e_{j_2}\wedge\cdots\wedge e_{j_t})\\
&=\operatorname{sgn}(\iota)e_{j_1}\wedge e_{j_2}\wedge\cdots\wedge e_{j_t}
\end{align*}
and the other equalities are by definition and the condition $\iota(j_1)=j_1$.

Case 2: $\iota$ has the cycle-notation form $\iota=(j_1\,\,\, j_2)$ and $j_2=\min\{j_2,j_3,\ldots,j_t\}$.
In this case, we are trying to show that 
\begin{equation}
\label{eq130316a}
e_{j_2}\wedge e_{j_1}\wedge e_{j_3}\wedge\cdots\wedge e_{j_t}
=-e_{j_1}\wedge e_{j_2}\wedge e_{j_3}\wedge\cdots\wedge e_{j_t}.
\end{equation}
Let $\alpha$ be the permutation of $\{j_3,\ldots,j_t\}$ such that $\alpha(j_3)<\cdots<\alpha(j_t)$.
Our assumption on $j_2$ implies that $j_2<\alpha(j_3)<\cdots<\alpha(j_t)$.

Case 2a: $j_1<j_2<\alpha(j_3)<\cdots<\alpha(j_t)$.
Our induction hypothesis explains the second equality in the following sequence
\begin{align*}
e_{j_2}\wedge e_{j_1}\wedge e_{j_3}\wedge\cdots\wedge e_{j_t}
&=e_{j_2}\wedge (e_{j_1}\wedge(e_{j_3}\wedge\cdots\wedge e_{j_t})) \\
&=e_{j_2}\wedge (e_{j_1}\wedge(\operatorname{sgn}(\alpha)e_{\alpha(j_3)}\wedge\cdots\wedge e_{\alpha(j_t)})) \\
&=\operatorname{sgn}(\alpha)e_{j_2}\wedge (e_{j_1}\wedge(e_{\alpha(j_3)}\wedge\cdots\wedge e_{\alpha(j_t)})) \\
&=\operatorname{sgn}(\alpha)e_{j_2}\wedge (e_{j_1}\wedge e_{\alpha(j_3)}\wedge\cdots\wedge e_{\alpha(j_t)}) \\
&=-\operatorname{sgn}(\alpha)e_{j_1}\wedge e_{j_2}\wedge e_{\alpha(j_3)}\wedge\cdots\wedge e_{\alpha(j_t)}
\end{align*}
and the remaining equalities are by definition, where the fourth and fifth ones use our Case 2a assumption.
Similar logic explains the next sequence:
\begin{align*}
-e_{j_1}\wedge e_{j_2}\wedge e_{j_3}\wedge\cdots\wedge e_{j_t}
&=-e_{j_1}\wedge (e_{j_2}\wedge (e_{j_3}\wedge\cdots\wedge e_{j_t}))\\
&=-e_{j_1}\wedge (e_{j_2}\wedge (\operatorname{sgn}(\alpha)e_{\alpha(j_3)}\wedge\cdots\wedge e_{\alpha(j_t)}))\\
&=-\operatorname{sgn}(\alpha)e_{j_1}\wedge (e_{j_2}\wedge (e_{\alpha(j_3)}\wedge\cdots\wedge e_{\alpha(j_t)}))\\
&=-\operatorname{sgn}(\alpha)e_{j_1}\wedge e_{j_2}\wedge e_{\alpha(j_3)}\wedge\cdots\wedge e_{\alpha(j_t)}.
\end{align*}
This explains equation~\eqref{eq130316a} in Case 2a.

Case 2b: $j_2<j_1<\alpha(j_3)<\cdots<\alpha(j_t)$.
Our induction hypothesis explains the second equality in the following sequence
\begin{align*}
e_{j_2}\wedge e_{j_1}\wedge e_{j_3}\wedge\cdots\wedge e_{j_t}
&=e_{j_2}\wedge (e_{j_1}\wedge(e_{j_3}\wedge\cdots\wedge e_{j_t})) \\
&=e_{j_2}\wedge (e_{j_1}\wedge(\operatorname{sgn}(\alpha)e_{\alpha(j_3)}\wedge\cdots\wedge e_{\alpha(j_t)})) \\
&=\operatorname{sgn}(\alpha)e_{j_2}\wedge (e_{j_1}\wedge(e_{\alpha(j_3)}\wedge\cdots\wedge e_{\alpha(j_t)})) \\
&=\operatorname{sgn}(\alpha)e_{j_2}\wedge e_{j_1}\wedge e_{\alpha(j_3)}\wedge\cdots\wedge e_{\alpha(j_t)}
\end{align*}
and the remaining equalities are by definition, where the fourth  one uses our Case 2b assumption.
Similar logic explains the next sequence:
\begin{align*}
-e_{j_1}\wedge e_{j_2}\wedge e_{j_3}\wedge\cdots\wedge e_{j_t}
&=-e_{j_1}\wedge (e_{j_2}\wedge (e_{j_3}\wedge\cdots\wedge e_{j_t}))\\
&=-e_{j_1}\wedge (e_{j_2}\wedge (\operatorname{sgn}(\alpha)e_{\alpha(j_3)}\wedge\cdots\wedge e_{\alpha(j_t)}))\\
&=-\operatorname{sgn}(\alpha)e_{j_1}\wedge (e_{j_2}\wedge (e_{\alpha(j_3)}\wedge\cdots\wedge e_{\alpha(j_t)}))\\
&=-\operatorname{sgn}(\alpha)e_{j_1}\wedge (e_{j_2}\wedge e_{\alpha(j_3)}\wedge\cdots\wedge e_{\alpha(j_t)})\\
&=\operatorname{sgn}(\alpha)e_{j_2}\wedge e_{j_1}\wedge e_{\alpha(j_3)}\wedge\cdots\wedge e_{\alpha(j_t)}.
\end{align*}
This explains equation~\eqref{eq130316a} in Case 2b.

Case 2c: $j_2<\alpha(j_3)<\cdots<\alpha(j_p)<j_1<\alpha(j_{p+1})<\cdots<\alpha(j_t)$.
Similar logic as in the previous cases explain the following sequences
\begin{align*}
e_{j_2}\wedge e_{j_1}\wedge e_{j_3}\wedge\cdots\wedge e_{j_t}\hspace{-20mm} \\
&=e_{j_2}\wedge (e_{j_1}\wedge(e_{j_3}\wedge\cdots\wedge e_{j_t})) \\
&=e_{j_2}\wedge (e_{j_1}\wedge(\operatorname{sgn}(\alpha)e_{\alpha(j_3)}\wedge\cdots\wedge e_{\alpha(j_t)})) \\
&=\operatorname{sgn}(\alpha)e_{j_2}\wedge (e_{j_1}\wedge(e_{\alpha(j_3)}\wedge\cdots\wedge e_{\alpha(j_t)})) \\
&=\operatorname{sgn}(\alpha)e_{j_2}\wedge ((-1)^{p-2}e_{\alpha(j_3)}\wedge\cdots\wedge e_{\alpha(j_p)}\wedge e_{j_1}\wedge e_{\alpha(j_{p+1})}\wedge \cdots e_{\alpha(j_t)}) \\  
&=(-1)^{p-2}\operatorname{sgn}(\alpha)e_{j_2}\wedge e_{\alpha(j_3)}\wedge\cdots\wedge e_{\alpha(j_p)}\wedge e_{j_1}\wedge e_{\alpha(j_{p+1})}\wedge \cdots e_{\alpha(j_t)}\\
-e_{j_1}\wedge e_{j_2}\wedge e_{j_3}\wedge\cdots\wedge e_{j_t}\hspace{-21mm} \\
&=-e_{j_1}\wedge (e_{j_2}\wedge (e_{j_3}\wedge\cdots\wedge e_{j_t}))\\
&=-e_{j_1}\wedge (e_{j_2}\wedge (\operatorname{sgn}(\alpha)e_{\alpha(j_3)}\wedge\cdots\wedge e_{\alpha(j_t)}))\\
&=-\operatorname{sgn}(\alpha)e_{j_1}\wedge (e_{j_2}\wedge (e_{\alpha(j_3)}\wedge\cdots\wedge e_{\alpha(j_t)}))\\
&=-\operatorname{sgn}(\alpha)e_{j_1}\wedge (e_{j_2}\wedge e_{\alpha(j_3)}\wedge\cdots\wedge e_{\alpha(j_t)})\\
&=-(-1)^{p-1}\operatorname{sgn}(\alpha)e_{j_2}\wedge e_{\alpha(j_3)}\wedge\cdots\wedge e_{\alpha(j_p)}\wedge e_{j_1}\wedge e_{\alpha(j_{p+1})}\wedge \cdots e_{\alpha(j_t)} \\  
&=(-1)^{p-2}\operatorname{sgn}(\alpha)e_{j_2}\wedge e_{\alpha(j_3)}\wedge\cdots\wedge e_{\alpha(j_p)}\wedge e_{j_1}\wedge e_{\alpha(j_{p+1})}\wedge \cdots e_{\alpha(j_t)}
\end{align*}
This explains equation~\eqref{eq130316a} in Case 2c.

Case 2d: $j_2<\alpha(j_3)<\cdots<\alpha(j_t)<j_1$.
This case is handled as in Case 2c.

Case 3: $\iota$ has the cycle-notation form $\iota=(j_1\,\,\, j_z)$ where $j_z=\min\{j_2,j_3,\ldots,j_t\}$.
In this case, we are trying to prove the following:
$$
e_{j_z}\wedge e_{j_2}\wedge\cdots\wedge e_{j_{z-1}}\wedge e_{j_1}\wedge e_{j_{z+1}}\wedge \cdots\wedge e_{j_t}
=-e_{j_1}\wedge e_{j_2}\wedge \cdots\wedge e_{j_{z-1}}\wedge e_{j_z}\wedge e_{j_{z+1}}\wedge \cdots\wedge e_{j_t}.
$$
This follows from the next sequence, where the first and third equalities are by our induction hypothesis
\begin{align*}
e_{j_z}\wedge e_{j_2}\wedge\cdots\wedge e_{j_{z-1}}\wedge e_{j_1}\wedge e_{j_{z+1}}\wedge \cdots\wedge e_{j_t}\hspace{-30mm}\\
&=(-1)^{z-2}e_{j_z}\wedge e_{j_1}\wedge e_{j_2}\wedge\cdots\wedge e_{j_{z-1}}\wedge e_{j_{z+1}}\wedge \cdots\wedge e_{j_t} \\
&=-(-1)^{z-2}e_{j_1}\wedge e_{j_z}\wedge e_{j_2}\wedge\cdots\wedge e_{j_{z-1}}\wedge e_{j_{z+1}}\wedge \cdots\wedge e_{j_t} \\
&=-e_{j_1}\wedge e_{j_2}\wedge\cdots\wedge e_{j_{z-1}}\wedge e_{j_z}\wedge e_{j_{z+1}}\wedge \cdots\wedge e_{j_t} 
\end{align*}
and the second equality is from Case 2.

Case 4: $\iota$ has the cycle-notation form $\iota=(j_1\,\,\, j_q)$ for some $q\geq 2$.
Set $j_z=\min\{j_2,j_3,\ldots,j_t\}$.

Case 4a: $j_1<j_z$. In this case, we have $j_1=\min\{j_2,\ldots,j_{q-1},j_1,j_{q+1},\ldots,j_t\}$,
so Case 3 implies that
$$
e_{j_q}\wedge e_{j_2}\wedge\cdots\wedge e_{j_{q-1}}\wedge e_{j_1}\wedge e_{j_{q+1}}\wedge \cdots\wedge e_{j_t}
=-e_{j_1}\wedge e_{j_2}\wedge \cdots\wedge e_{j_{q-1}}\wedge e_{j_q}\wedge e_{j_{q+1}}\wedge \cdots\wedge e_{j_t}.
$$
which is the desired equality in this case.

Case 4b: $j_z<j_1$.
In this case, we have
$$\min\{j_2,j_3,\ldots,j_t\}=j_z=\min\{j_2,\ldots,j_{q-1},j_1,j_{q+1},\ldots,j_t\}$$
so Case 3 explains the first and third equalities in the next sequence
\begin{align*}
e_{j_q}\wedge e_{j_2}\wedge\cdots\wedge e_{j_{z-1}}\wedge e_{j_z}\wedge e_{j_{z+1}}\wedge \cdots\wedge e_{j_{q-1}}\wedge e_{j_1}\wedge e_{j_{q+1}}\wedge \cdots\wedge e_{j_t}
\hspace{-95mm} \\
&=-e_{j_z}\wedge e_{j_2}\wedge\cdots\wedge e_{j_{z-1}}\wedge e_{j_q}\wedge e_{j_{z+1}}\wedge \cdots\wedge e_{j_{q-1}}\wedge e_{j_1}\wedge e_{j_{q+1}}\wedge \cdots\wedge e_{j_t}\\
&=e_{j_z}\wedge e_{j_2}\wedge\cdots\wedge e_{j_{z-1}}\wedge e_{j_1}\wedge e_{j_{z+1}}\wedge \cdots\wedge e_{j_{q-1}}\wedge e_{j_q}\wedge e_{j_{q+1}}\wedge \cdots\wedge e_{j_t}\\
&=-e_{j_1}\wedge e_{j_2}\wedge\cdots\wedge e_{j_{z-1}}\wedge e_{j_z}\wedge e_{j_{z+1}}\wedge \cdots\wedge e_{j_{q-1}}\wedge e_{j_q}\wedge e_{j_{q+1}}\wedge \cdots\wedge e_{j_t}
\end{align*}
and Case 1 explains the second equality.\footnote{This visual argument assumes that $z<q$. The case $q<z$ is handled similarly, and the case
$z=q$ follows from Case 3.}
This is the desired conclusion here.

Case 5: the general case.
Cases 1--4 show that the desired result holds for any transposition that fixes $C:=\{1,\ldots,n\}\ssm\{j_1,\ldots,j_t\}$.
In general, since $\iota$ fixes $C$, it is a product $\iota=\tau_1\cdots\tau_m$ of transpositions that fix $C$.
(For instance, this can be seen by decomposing $\iota$, considered as an element of $S_t$.)
Since the result holds for each $\tau_i$,  induction on $m$ shows that
the desired result holds for $\iota$.
The main point is the following: if the result holds for permutations $\delta$ and $\sigma$ that fix $C$, then
\begin{align*}
e_{\delta\sigma(j_1)}\wedge\cdots\wedge e_{\delta\sigma(j_t)}
&=\operatorname{sgn}(\delta)e_{\sigma(j_1)}\wedge\cdots\wedge e_{\sigma(j_t)}\\
&=\operatorname{sgn}(\delta)\operatorname{sgn}(\sigma)e_{j_1}\wedge\cdots\wedge e_{j_t}\\
&=\operatorname{sgn}(\delta\sigma)e_{j_1}\wedge\cdots\wedge e_{j_t}
\end{align*}
so the result holds for $\delta\sigma$.
\qed
\end{para}

\begin{para}[Sketch of Solution to Exercise~\ref{exer120522q'}]
\label{para130312g'}
Eventually, we will argue by induction on $s$. To remove some technical issues from the induction argument, we deal with some degenerate cases first.
If $i_p=i_q$ for some $p< q$, then 
by definition, we have
\begin{align*}
(e_{i_1}\wedge \cdots\wedge e_{i_p}\wedge \cdots \wedge e_{i_q}\wedge  \cdots \wedge e_{i_s})\wedge (e_{j_1}\wedge \cdots\wedge e_{j_t})\hspace{-38mm}\\
&=0\wedge (e_{j_1}\wedge \cdots\wedge e_{j_t}) \\
&=0 \\
&=e_{i_1}\wedge \cdots\wedge e_{i_p}\wedge \cdots \wedge e_{i_q}\wedge  \cdots \wedge e_{i_s}\wedge e_{j_1}\wedge \cdots\wedge e_{j_t}.
\end{align*}
If $j_p=j_q$ for some $p< q$, then the same logic applies.

For the rest of the proof, we assume that $i_p\neq i_q$ and $j_p\neq j_q$ for all $p<q$.
We argue by induction on $s$.

Base case: $s=1$. 
There are two cases to consider.
If $i=j_q$ for some $q$, then Definitions~\ref{defn120522l} and~\ref{disc130316a} imply that
\begin{align*}
e_{i_1}\wedge (e_{j_1}\wedge \cdots\wedge e_{j_t})
&=0
=e_{i_1}\wedge e_{i_s}\wedge e_{j_1}\wedge \cdots\wedge e_{j_t}.
\end{align*}
If $i\neq j_q$ for all $q$, then
Definition~\ref{disc130316a} gives the desired equality directly
\begin{align*}
e_{i_1}\wedge (e_{j_1}\wedge \cdots\wedge e_{j_t})
=e_{i_1}\wedge e_{i_s}\wedge e_{j_1}\wedge \cdots\wedge e_{j_t}.
\end{align*}

Induction step: assume that $s\geq 2$ and that
\begin{align*}
(e_{k_1}\wedge e_{k_2}\wedge \cdots\wedge e_{k_{s-1}})\wedge (e_{j_1}\wedge \cdots\wedge e_{j_t})
&=e_{k_1}\wedge e_{k_2}\wedge \cdots\wedge e_{k_{s-1}}\wedge e_{j_1}\wedge \cdots\wedge e_{j_t}
\end{align*}
for all sequences $k_1,\ldots,k_{s-1}$ of distinct elements in $\{1,\ldots,n\}$.
Let $\alpha$ be the permutation of $\{i_1,\ldots,i_s\}$ such that $\alpha(i_1)<\cdots<\alpha(i_s)$, and
let $\beta$ be the permutation of $\{j_1,\ldots,j_t\}$ such that $\beta(j_1)<\cdots<\beta(j_t)$.
Exercise~\ref{exer120522q} explains the first step in the next sequence,
and the second equality is from Definition~\ref{defn120522l}:
\begin{align*}
(e_{i_1}\wedge \cdots\wedge e_{i_s})\wedge (e_{j_1}\wedge \cdots\wedge e_{j_t})\hspace{-30mm}\\
&=\signum\alpha\signum\beta(e_{\alpha(i_1)}\wedge  \cdots\wedge e_{\alpha(i_s)})\wedge (e_{\beta(j_1)}\wedge \cdots\wedge e_{\beta(j_t)})\\
&=\signum\alpha\signum\beta e_{\alpha(i_1)}\wedge [(e_{\alpha(i_2)}\wedge \cdots\wedge e_{\alpha(i_s)})\wedge (e_{\beta(j_1)}\wedge \cdots\wedge e_{\beta(j_t)})]\\
&=\signum\alpha\signum\beta e_{\alpha(i_1)}\wedge (e_{\alpha(i_2)}\wedge \cdots\wedge e_{\alpha(i_s)}\wedge e_{\beta(j_1)}\wedge \cdots\wedge e_{\beta(j_t)}).
\end{align*}
The third equality follows from our induction hypothesis.
If $i_p=j_q$ for some $p$ and $q$, then $\alpha(i_{p'})=\beta(j_{q'})$ for some $p'$ and $q'$,
so the first paragraph of this proof (or the base case, depending on the situation) implies that
\begin{align*}
(e_{i_1}\wedge \cdots\wedge e_{i_s})\wedge (e_{j_1}\wedge \cdots\wedge e_{j_t})\hspace{-30mm}\\
&=\signum\alpha\signum\beta e_{\alpha(i_1)}\wedge (e_{\alpha(i_2)}\wedge \cdots\wedge e_{\alpha(i_s)}\wedge e_{\beta(j_1)}\wedge \cdots\wedge e_{\beta(j_t)})\\
&=0 \\
&=e_{i_1}\wedge \cdots\wedge e_{i_s}\wedge e_{j_1}\wedge \cdots\wedge e_{j_t}.
\end{align*}
Thus, we assume that $i_p\neq j_q$ for all $p$ and $q$. In particular, the permutations $\alpha$ and $\beta$ combine to give
a permutation $\gamma$ of $\{i_1,\ldots,i_s,j_1,\ldots,j_t\}$ given by the rules
$\gamma(i_p)=\alpha(i_p)$ and $\gamma(j_q)=\beta(j_q)$ for all $p$ and $q$.
Furthermore, one has $\signum\gamma=\signum\alpha\signum\beta$.
(To see this, write $\alpha$  as a product $\alpha=\tau_1\cdots\tau_u$ of transpositions on $\{i_1,\ldots,i_s\}$,
write $\beta$  as a product $\beta=\pi_1\cdots\pi_u$ of transpositions on $\{j_1,\ldots,j_t\}$,
and observe that $\gamma=\tau_1\cdots\tau_u\pi_1\cdots\pi_u$ is a product of transpositions on $\{i_1,\ldots,i_s,j_1,\ldots,j_t\}$.)
This explains the second equality in the next sequence:
\begin{align*}
(e_{i_1}\wedge \cdots\wedge e_{i_s})\wedge (e_{j_1}\wedge \cdots\wedge e_{j_t})\hspace{-30mm}\\
&=\signum\alpha\signum\beta e_{\alpha(i_1)}\wedge (e_{\alpha(i_2)}\wedge \cdots\wedge e_{\alpha(i_s)}\wedge e_{\beta(j_1)}\wedge \cdots\wedge e_{\beta(j_t)})\\
&=\signum\gamma e_{\gamma(i_1)}\wedge (e_{\gamma(i_2)}\wedge \cdots\wedge e_{\gamma(i_s)}\wedge e_{\gamma(j_1)}\wedge \cdots\gamma e_{\beta(j_t)})\\
&=\signum\gamma e_{\gamma(i_1)}\wedge e_{\gamma(i_2)}\wedge \cdots\wedge e_{\gamma(i_s)}\wedge e_{\gamma(j_1)}\wedge \cdots\gamma e_{\beta(j_t)}\\
&=e_{i_1}\wedge \cdots\wedge e_{i_s}\wedge e_{j_1}\wedge \cdots\wedge e_{j_t}.
\end{align*}
The third equality is by our base case. The fourth equality is from Exercise~\ref{exer120522q}.
\qed
\end{para}

\begin{para}[Sketch of Solution to Exercise~\ref{exer120522r}]
\label{para130312h}
\eqref{exer120522r1}
Multiplication in $\bigwedge R^n$ is distributive and unital, by definition.
We check associativity first on basis elements, using Exercise~\ref{exer120522q'}:
\begin{align*}
[(e_{i_1}\wedge \cdots\wedge e_{i_s})\wedge (e_{j_1}\wedge \cdots\wedge e_{j_t})]\wedge (e_{k_1}\wedge \cdots\wedge e_{k_u})\hspace{-40mm}\\
&=(e_{i_1}\wedge \cdots\wedge e_{i_s}\wedge e_{j_1}\wedge \cdots\wedge e_{j_t})\wedge (e_{k_1}\wedge \cdots\wedge e_{k_u})\\
&=e_{i_1}\wedge \cdots\wedge e_{i_s}\wedge e_{j_1}\wedge \cdots\wedge e_{j_t}\wedge e_{k_1}\wedge \cdots\wedge e_{k_u}\\
&=e_{i_1}\wedge \cdots\wedge e_{i_s}\wedge (e_{j_1}\wedge \cdots\wedge e_{j_t}\wedge e_{k_1}\wedge \cdots\wedge e_{k_u})\\
&=e_{i_1}\wedge \cdots\wedge e_{i_s}\wedge [(e_{j_1}\wedge \cdots\wedge e_{j_t})\wedge (e_{k_1}\wedge \cdots\wedge e_{k_u})].
\end{align*}
The general associativity holds by distributivity: if $\alpha_1,\ldots,\alpha_a,\beta_1,\ldots,\beta_b,\gamma_1,\ldots,\gamma_c$ are basis elements in
$\bigwedge R^n$, then we have
\begin{align*}
\textstyle\left[\left(\sum_iz_i\alpha_i\right)\wedge\left(\sum_jy_j\beta_j\right)\right]\wedge\left(\sum_kz_k\gamma_k\right)
&=\textstyle\sum_{i,j,k}z_iy_jz_k[(\alpha_i\wedge\beta_j)\wedge\gamma_k] \\
&=\textstyle\sum_{i,j,k}z_iy_jz_k[\alpha_i\wedge(\beta_j\wedge\gamma_k)] \\
&=\textstyle\left(\sum_iz_i\alpha_i\right)\wedge\left[\left(\sum_jy_j\beta_j\right)\wedge\left(\sum_kz_k\gamma_k\right)\right].
\end{align*}

\eqref{exer120522r2}
As in part~\eqref{exer120522r1}, it suffices to consider basis vectors
$\alpha=e_{i_1}\wedge \cdots\wedge e_{i_s}\in \bigwedge^s R^n$ and $\beta=e_{j_1}\wedge \cdots\wedge e_{j_t}\in \bigwedge^t R^n$,
and prove that  $\alpha\wedge\beta=(-1)^{st}\beta\wedge\alpha$.
Note that we are assuming that $i_1<\cdots<i_s$ and $j_1<\cdots<j_t$.
If $i_p=j_q$ for some $p$ and $q$, then Exercise~\ref{exer120522q'} implies that 
$$\alpha\wedge\beta=0=(-1)^{st}\beta\wedge\alpha$$
as desired. Thus, we assume that $i_p\neq j_q$ for all $p$ and $q$, that is, 
there are no repetitions in the list $i_1,\ldots,i_s,j_1,\ldots,j_t$.

We proceed by induction on $s$.

Base case: $s=1$. In this case, the first and third equalities in the next sequence are from Exercise~\ref{exer120522q'}, using the condition $s=1$:
\begin{align*}
\alpha\wedge\beta
&=e_{i_1}\wedge  e_{j_1}\wedge \cdots\wedge e_{j_t}\\
&= (-1)^te_{j_1}\wedge \cdots\wedge e_{j_t}\wedge e_{i_1}\\
&=(-1)^{st}\beta\wedge\alpha.
\end{align*}
The second equality is from Exercise~\ref{exer120522q}.

Induction step: assume that $s\geq 2$ and that 
\begin{align*}
(e_{i_2}\wedge \cdots\wedge e_{i_s}) \wedge (e_{j_1}\wedge \cdots\wedge e_{j_t})
&=(-1)^{(s-1)t} (e_{j_1}\wedge \cdots\wedge e_{j_t})\wedge(e_{i_2}\wedge \cdots\wedge e_{i_s}). 
\end{align*}
The first, third, and fifth equalities in the next sequence are by associativity:
\begin{align*}
\alpha\wedge\beta
&=e_{i_1}\wedge [(e_{i_2}\wedge \cdots\wedge e_{i_s}) \wedge (e_{j_1}\wedge \cdots\wedge e_{j_t})] \\
&=e_{i_1}\wedge [(-1)^{(s-1)t} (e_{j_1}\wedge \cdots\wedge e_{j_t})\wedge(e_{i_2}\wedge \cdots\wedge e_{i_s})] \\
&=(-1)^{(s-1)t} [e_{i_1}\wedge (e_{j_1}\wedge \cdots\wedge e_{j_t})]\wedge (e_{i_2}\wedge \cdots\wedge e_{i_s}) \\
&=(-1)^{(s-1)t} (-1)^t[(e_{j_1}\wedge \cdots\wedge e_{j_t})\wedge e_{i_1}]\wedge (e_{i_2}\wedge \cdots\wedge e_{i_s}) \\
&=(-1)^{st}(e_{j_1}\wedge \cdots\wedge e_{j_t})\wedge [e_{i_1}\wedge (e_{i_2}\wedge \cdots\wedge e_{i_s})] \\
&=(-1)^{st}\beta\wedge\alpha.
\end{align*}
The second equality is by our induction hypothesis,  the fourth equality is from our base case,
and the fifth equality is by Exercise~\ref{exer120522q'}.

\eqref{exer120522r3}
Let $\alpha\in \bigwedge^s R^n$ such that $s$ is odd.
If $\alpha$ is a basis vector, then $\alpha\wedge\alpha=0$ by Exercise~\ref{exer120522q'}.
In general, we have $\alpha=\sum_iz_iv_i$ where the $v_i$ are basis vectors in $\bigwedge^s R^n$, and we compute:
\begin{align*}
\alpha\wedge\alpha
&\textstyle=\left(\sum_iz_iv_i\right)\wedge\left(\sum_iz_iv_i\right)\\
&\textstyle=\sum_{i,j}z_iz_jv_i\wedge v_j\\
&\textstyle=\sum_{i=j}z_iz_jv_i\wedge v_j+\sum_{i\neq j}z_iz_jv_i\wedge v_j\\
&\textstyle=\sum_{i}z_i^2\underbrace{v_i\wedge v_i}_{=0}+\sum_{i< j}z_iz_j(v_i\wedge v_j+v_j\wedge v_i)\\
&\textstyle=\sum_{i< j}z_iz_j(v_i\wedge v_j+(-1)^{s^2}v_i\wedge v_j)\\
&\textstyle=\sum_{i< j}z_iz_j(v_i\wedge v_j-v_i\wedge v_j)\\
&=0.
\end{align*}
The vanishing $v_i\wedge v_i=0$ follows from the fact that $v_i$ is a basis vector,
so the fifth equality follows from part~\eqref{exer120522r2}.
The sixth equality follows from the fact that $s$ is odd, and the other equalities are routine.

\eqref{exer120522r4}
The map $R\to\bigwedge R^n$ is a ring homomorphism 
since  multiplication
$\bigwedge^0R^n\times\bigwedge^0R^n\to\bigwedge^{0}R^n$
is defined as the usual scalar multiplication $R\times\bigwedge^0R^n\to\bigwedge^{0}R^n$.
To see that the image is in the center, let $r\in R=\bigwedge^0R^n$ and let 
$z_i\in\bigwedge^iR^n$ for $i=0,\ldots,n$. 
Using part~\eqref{exer120522r2}, we have
\begin{align*}
\textstyle r\wedge \left(\sum_iz_i\right)
&=\textstyle \sum_i(r\wedge z_i)
=\textstyle \sum_i (z_i\wedge r)
=\textstyle  \left(\sum_iz_i\right)\wedge r
\end{align*}
so $r$ in the center of $\bigwedge R^n$.
\qed
\end{para}

\begin{para}[Sketch of Solution to Exercise~\ref{exer130318a}]
\label{para130318a}
We argue by cases.

Case 1: $j_p=j_q$ for some $p< q$.
In this case, we have 
$$\partial^{\wti K^R(\x)}_i(e_{j_1}\wedge\cdots\wedge e_{j_t})=\partial^{\wti K^R(\x)}_i(0)=0$$
so we need to prove that 
$\sum_{s=1}^t(-1)^{s+1}x_{j_s}e_{j_1}\wedge\cdots\wedge \widehat{e_{j_s}}\wedge\cdots\wedge e_{j_t}=0$.
In this sum, if $s\notin\{p,q\}$, then we have $e_{j_1}\wedge\cdots\wedge \widehat{e_{j_s}}\wedge\cdots\wedge e_{j_t}=0$
since the repetition $j_p=j_q$ occurs in this element. This explains the first equality in the next sequence:
\begin{align*}
\textstyle\sum_{s=1}^t(-1)^{s+1}x_{j_s}e_{j_1}\wedge\cdots\wedge \widehat{e_{j_s}}\wedge\cdots\wedge e_{j_t}\hspace{-60mm}\\
&=(-1)^{p+1}x_{j_p}e_{j_1}\wedge\cdots\wedge \widehat{e_{j_p}}\wedge\cdots\wedge e_{j_{t}}
+(-1)^{q+1}x_{j_p}e_{j_1}\wedge\cdots\wedge \widehat{e_{j_{q}}}\wedge\cdots\wedge e_{j_t}\\
&=(-1)^{p+1}x_{j_p}e_{j_1}\wedge\cdots\wedge e_{j_{p-1}}\wedge  e_{j_{p+1}}\wedge\cdots\wedge e_{j_{q-1}}\wedge e_{j_q}\wedge e_{j_{q+1}}\wedge\cdots\wedge e_{j_t}\\
&\quad+(-1)^{q+1}x_{j_p}e_{j_1}\wedge\cdots\wedge e_{j_{p-1}}\wedge e_{j_p}\wedge 
\underbrace{e_{j_{p+1}}\wedge\cdots\wedge e_{j_{q-1}}}_{\text{$q-p-1$ factors}}\wedge e_{j_{q+1}}\wedge\cdots\wedge e_{j_t}\\
&=(-1)^{p+1}x_{j_p}e_{j_1}\wedge\cdots\wedge e_{j_{p-1}}\wedge  e_{j_{p+1}}\wedge\cdots\wedge e_{j_{q-1}}\wedge e_{j_q}\wedge e_{j_{q+1}}\wedge\cdots\wedge e_{j_t}\\
&\,\,\,+(-1)^{q+1+q-p-1}x_{j_p}e_{j_1}\wedge\cdots\wedge e_{j_{p-1}}\wedge 
e_{j_{p+1}}\wedge\cdots\wedge e_{j_{q-1}}\wedge e_{j_p}\wedge e_{j_{q+1}}\wedge\cdots\wedge e_{j_t}\\
&=(-1)^{p+1}x_{j_p}e_{j_1}\wedge\cdots\wedge e_{j_{p-1}}\wedge  e_{j_{p+1}}\wedge\cdots\wedge e_{j_{q-1}}\wedge e_{j_q}\wedge e_{j_{q+1}}\wedge\cdots\wedge e_{j_t}\\
&\quad+(-1)^{p}x_{j_p}e_{j_1}\wedge\cdots\wedge e_{j_{p-1}}\wedge 
e_{j_{p+1}}\wedge\cdots\wedge e_{j_{q-1}}\wedge e_{j_p}\wedge e_{j_{q+1}}\wedge\cdots\wedge e_{j_t}\\
&=0.
\end{align*}
The second equality is just a rewriting of the terms,
and the third equality is from Exercise~\ref{exer120522q}.
The fourth and fifth equalities are routine simplification.

Case 2: $j_p\neq j_q$ for all $p\neq q$.

Claim. Given a permutation $\iota$ of $j_1,\ldots,j_t$,
setting
$$\widehat\iota(j_1,\ldots,j_t):=\textstyle\sum_{s=1}^t(-1)^{s+1}x_{\iota(j_s)}e_{\iota(j_1)}\wedge\cdots\wedge \widehat{e_{\iota(j_s)}}\wedge\cdots\wedge e_{\iota(j_t)}$$
we have
\begin{align}
\widehat\iota(j_1,\ldots,j_t)&=\signum\iota\widehat\id(j_1,\ldots,j_t) 
\label{eq130318a}
\end{align}
where $\id$ is the identity permutation.
To prove this, write $\iota$ as a product $\iota=\tau_1\cdots\tau_u$ of adjacent transpositions, that is, $\tau_i=(j_{p_i}\,\,\, j_{p_i+1})$
for some $p_i<t$.
We argue by induction on $u$.

Base case: $u=1$. In this case, $\iota=\tau_1$ is a transposition $\iota=(j_p\,\,\, j_{p+1})$ for some $p<t$.
In this case, we are proving the equality $\widehat\iota(j_1,\ldots,j_t)=-\widehat\id(j_1,\ldots,j_t)$.
We write out the terms of $\widehat\iota(j_1,\ldots,j_t)$, starting with the case $s<p$:
\begin{align*}
(-1)^{s+1}x_{\iota(j_s)}e_{\iota(j_1)}\!\wedge\cdots\wedge \widehat{e_{\iota(j_s)}}\wedge\cdots\wedge e_{\iota(j_{p-1})}\!\wedge e_{\iota(j_{p})}\!\wedge e_{\iota(j_{p+1})}\!\wedge e_{\iota(j_{p+2})}\!\wedge\cdots\wedge e_{\iota(j_t)}
\hspace{-113mm}\\
&=(-1)^{s+1}x_{j_s}e_{j_1}\wedge\cdots\wedge \widehat{e_{j_s}}\wedge\cdots\wedge e_{j_{p-1}}\wedge e_{j_{p+1}}\wedge e_{j_{p}}\wedge e_{j_{p+2}}\wedge\cdots\wedge e_{j_t}\\
&=-(-1)^{s+1}x_{j_s}e_{j_1}\wedge\cdots\wedge \widehat{e_{j_s}}\wedge\cdots\wedge e_{j_{p-1}}\wedge e_{j_{p}}\wedge e_{j_{p+1}}\wedge e_{j_{p+2}}\wedge\cdots\wedge e_{j_t}.
\end{align*}
This is  the $s$th term of $-\widehat\id(j_1,\ldots,j_t)$ in this case.
Similar reasoning yields the same conclusion when $s>p+1$.
In the case $s=p$, the $p$th term of $\widehat\iota(j_1,\ldots,j_t)$ is
\begin{align*}
(-1)^{p+1}x_{\iota(j_p)}e_{\iota(j_1)}\wedge\cdots\wedge e_{\iota(j_{p-1})}\wedge \widehat{e_{\iota(j_p)}}\wedge e_{\iota(j_{p+1})}\wedge e_{\iota(j_{p+2})}\wedge\cdots\wedge e_{\iota(j_t)}\hspace{-85mm}\\
&=(-1)^{p+1}x_{j_{p+1}}e_{j_1}\wedge\cdots\wedge e_{j_{p-1}}\wedge \widehat{e_{j_{p+1}}}\wedge e_{j_{p}}\wedge  e_{j_{p+2}}\wedge\cdots\wedge e_{j_t}\\
&=(-1)^{p+1}x_{j_{p+1}}e_{j_1}\wedge\cdots\wedge e_{j_{p-1}}\wedge e_{j_{p}}\wedge \widehat{e_{j_{p+1}}}\wedge  e_{j_{p+2}}\wedge\cdots\wedge e_{j_t}\\
&=-(-1)^{p+2}x_{j_{p+1}}e_{j_1}\wedge\cdots\wedge e_{j_{p-1}}\wedge e_{j_{p}}\wedge \widehat{e_{j_{p+1}}}\wedge  e_{j_{p+2}}\wedge\cdots\wedge e_{j_t}.
\end{align*}
This is exactly the $p+1$st term of $-\widehat\id(j_1,\ldots,j_t)$.
Similar reasoning shows that the $p+1$st term of $\widehat\iota(j_1,\ldots,j_t)$ is exactly the $p$th term of $-\widehat\id(j_1,\ldots,j_t)$.
It follows that equation~\eqref{eq130318a} holds in the base case.

Induction step: Assume that $u\geq 2$ and that the result holds for the permutation $\iota':=\tau_2\cdots\tau_u$.
Note that this implies that $\iota=\tau_1\iota'$, so we have
$\signum{\iota}=-\signum{\iota'}$.
This explains the first and last equalities in the next sequence:
\begin{align*}
\widehat\iota(j_1,\ldots,j_t)
&=\widehat{\tau_1\iota'}(j_1,\ldots,j_t)\\
&=\widehat{\tau_1}(\iota'(j_1),\ldots,\iota'(j_t))\\
&=-\widehat{\id}(\iota'(j_1),\ldots,\iota'(j_t))\\
&=-\widehat{\iota'}(j_1,\ldots,j_t)\\
&=-\signum{\iota'}\widehat\id(j_1,\ldots,j_t)\\
&=\signum{\iota}\widehat\id(j_1,\ldots,j_t).
\end{align*}
The second and fourth equalities follow easily from the definition of $\widehat{*}$.
The third equality is from our base case, and the fifth equality is from our induction hypothesis. 
This concludes the proof of the claim.

Now we complete the proof of the exercise.
Let $\iota$ be a permutation of $j_1,\ldots,j_t$ such that
$\iota(j_1)<\cdots<\iota(j_t)$.
Exercise~\ref{exer120522q} explains the first equality in the next sequence, and the second, third, and fifth equalities are by definition:
\begin{align*}
\partial^{\wti K^R(\x)}_i(e_{j_1}\wedge\cdots\wedge e_{j_t}) 
&=\signum\iota\partial^{\wti K^R(\x)}_i(e_{\iota(j_1)}\wedge\cdots\wedge e_{\iota(j_t)})\\
&=\signum\iota\textstyle\sum_{s=1}^t(-1)^{s+1}x_{\iota(j_s)}e_{\iota(j_1)}\wedge\cdots\wedge 
\widehat{e_{\iota(j_s)}}\wedge\cdots\wedge e_{\iota(j_t)}\\
&= \signum\iota\widehat{\iota}(j_1,\ldots,j_t)\\
&= \widehat{\id}(j_1,\ldots,j_t)\\
&=\textstyle\sum_{s=1}^t(-1)^{s+1}x_{j_s}e_{j_1}\wedge\cdots\wedge \widehat{e_{j_s}}\wedge\cdots\wedge e_{j_t}
\end{align*}
The fourth equality is by the claim we proved above.
\qed
\end{para}

\begin{para}[Sketch of Solution to Exercise~\ref{exer120522s}]
\label{para130312i}
It suffices to verify the formula for
basis vectors $\alpha=e_{j_1}\wedge\cdots\wedge e_{j_s}\in \bigwedge^s R^n$ and $\beta=e_{j_{s+1}}\wedge\cdots\wedge e_{j_{s+t}}\in \bigwedge^t R^n$.
\begin{align*}
\partial^{\wti K^R(\x)}_{s+t}(\alpha\wedge\beta)\hspace{-19mm} \\
&=\partial^{\wti K^R(\x)}_{s+t}(e_{j_1}\wedge\cdots\wedge e_{j_{s+t}})\\
&=\textstyle\sum_{u=1}^{s+t}(-1)^{u+1}x_{i_u}e_{j_1}\wedge\cdots\wedge \widehat{e_{j_u}}\wedge\cdots\wedge e_{j_{s+t}}\\
&=\textstyle\sum_{u=1}^{s}(-1)^{u+1}x_{i_u}e_{j_1}\wedge\cdots\wedge \widehat{e_{j_u}}\wedge\cdots\wedge e_{j_{s}}
\wedge e_{j_{s+1}}\wedge\cdots\wedge e_{j_{s+t}}\\
&\quad+\textstyle\sum_{u=s+1}^{s+t}(-1)^{u+1}x_{i_u} e_{j_1}\wedge\cdots\wedge e_{j_s}\wedge
e_{j_{s+1}}\wedge\cdots\wedge \widehat{e_{j_u}}\wedge\cdots\wedge e_{j_{s+t}}\\
&=\textstyle\sum_{u=1}^{s}(-1)^{u+1}x_{i_u}e_{j_1}\wedge\cdots\wedge \widehat{e_{j_u}}\wedge\cdots\wedge e_{j_{s}}
\wedge e_{j_{s+1}}\wedge\cdots\wedge e_{j_{s+t}}\\
&\quad+\textstyle\sum_{u=1}^{t}(-1)^{u-s+1}x_{i_{s+u}} e_{j_1}\wedge\cdots\wedge e_{j_s}\wedge
e_{j_{s+1}}\wedge\cdots\wedge \widehat{e_{j_{s+u}}}\wedge\cdots\wedge e_{j_{s+t}}\\
&=\textstyle\left[\sum_{u=1}^{s}(-1)^{u+1}x_{i_u}e_{j_1}\wedge\cdots\wedge \widehat{e_{j_u}}\wedge\cdots\wedge e_{j_{s}}\right]
\wedge (e_{j_{s+1}}\wedge\cdots\wedge e_{j_{s+t}})\\
&\quad+\textstyle(-1)^s(e_{j_1}\!\wedge\cdots\wedge e_{j_s})\wedge\left[
\sum_{u=1}^{t}(-1)^{u+1}x_{i_{s+u}}e_{j_{s+1}}\!\wedge\cdots\wedge \widehat{e_{j_{s+u}}}\wedge\cdots\wedge e_{j_{s+t}}\right]\\
&= \partial^{\wti K^R(\x)}_{s}(e_{j_1}\wedge\cdots\wedge e_{j_s})\wedge (e_{j_{s+1}}\wedge\cdots\wedge e_{j_{s+t}})\\
&\quad+(-1)^s(e_{j_1}\wedge\cdots\wedge e_{j_s})\wedge\partial^{\wti K^R(\x)}_{t}(e_{j_{s+1}}\wedge\cdots\wedge e_{j_{s+t}})\\
&=\partial^{\wti K^R(\x)}_{s}(\alpha)\wedge\beta+(-1)^s\alpha\wedge\partial^{\wti K^R(\x)}_{t}(\beta).
\end{align*}
The first, second, and last equalities are by definition.
The third equality is obtained by splitting the sum, and the fourth equality is by reindexing.
The fifth equality is by distributivity,
and the sixth equality is by Exercise~\ref{exer130318a}.
\qed
\end{para}

\begin{para}[Sketch of Solution to Exercise~\ref{fact110216a}]
\label{para130312j}
The fact that multiplication in $A$ is distributive implies that the map
$\mu^A\colon \Otimes AA\to A$ given by $\mu^A(a\otimes b)=ab$
is well-defined and $R$-linear. To see that it is a chain map, we compute:
\begin{align*}
\partial^A_{|a|+|b|}(\mu^A_{|a|+|b|}(a\otimes b))
&=\partial^A_{|a|+|b|}(ab) \\
&= \partial^A_{|a|}(a)b+(-1)^{|a|}a\partial^A_{|b|}(b)\\
&=\mu^A_{|a|+|b|-1}(\partial^A_{|a|}(a)\otimes b+(-1)^{|a|}a\otimes \partial^A_{|b|}(b))\\
&=\mu^A_{|a|+|b|-1}(\partial^{\Otimes AA}_{|a|+|b|}(a\otimes b)).
\end{align*}

Since the multiplication on $A$ maps $A_0\times A_0\to A_{0+0}=A_0$, we conclude that $A_0$ is closed under multiplication.
Also, the fact that $A_0$ is an $R$-module implies that it is closed under addition and subtraction.
Thus, the fact that $A_0$ is a commutative ring can be shown by restricting the axioms of $A$ to $A_0$.
To show that $A_0$ is an $R$-algebra, we define a map $R\to A_0$ by 
$r\mapsto r1_A$. Since $A_0$ is a ring and an $R$-module, it is straightforward to show that this is a ring homomorphism,
so $A_0$ is an $R$-algebra.
\qed
\end{para}

\begin{para}[Sketch of Solution to Exercise~\ref{ex120523a'''}]
\label{para130312k}
\eqref{ex120523a'''1}
It is straightforward to show  that the map $R\to A$ given by $r\mapsto r\cdot 1_A$ respects multiplication.
The fact that it is a morphism of DG $R$-algebras
follows from the  commutativity of the next diagram
$$\xymatrix{
R=
&\cdots\ar[r]
&0\ar[r]\ar[d]
&0\ar[r]\ar[d]
&R\ar[r]\ar[d]
&0\ar[d]
\\
A=
&\cdots\ar[r]
&A_2\ar[r]
&A_1\ar[r]
&A_0\ar[r]
&0}$$
which is easily checked.

\eqref{ex120523a'''2}
Argue as in part~\eqref{ex120523a'''1}.

\eqref{ex120523a'''3}
With Exercise~\ref{exer120522g} and Lemma~\ref{lem120529a},
the essential point is the commutativity of the following diagram
$$\xymatrix@C=13mm{
K=
&\cdots\ar[r]
&R^{\binom n2}\ar[r]\ar[d]
&R^n\ar[r]^-{(x_1\,\,\, \ldots\,\,\, x_n)}\ar[d]
&R\ar[r]\ar[d]
&0\ar[d]
\\
R/(\x)=
&\cdots\ar[r]
&0\ar[r]
&0\ar[r]
&R/(\x)\ar[r]
&0}$$
which is easily checked.
\qed
\end{para}

\begin{para}[Sketch of Solution to Exercise~\ref{ex120523a''}]
\label{para130312l}
\eqref{ex120523a''1}
The condition $A_{-1}=0$ implies that 
$\HH_0(A)\cong A_0/\im(\partial^A_1)$, which is a homomorphic image of
$A_0$.
To show  that $\HH_0(A)$ is an $A_0$-algebra, it suffices to show that $\im(\partial^A_1)$ is an ideal of $A_0$.
To this end, the fact that $\partial^A_1$ is $R$-linear implies that $\im(\partial^A_1)$ is non-empty and closed under subtraction.
To show that it is closed under multiplication by elements of $A_0$, let $a_0\in A_0$ and $\partial^A_1(a_1)\in\im(\partial^A_1)$,
and use the Leibniz Rule
\begin{align*}
\partial^A_1(a_0a_1)
&=\partial^A_0(a_0)a_1+(-1)^0a_0\partial^A_1(a_1)=a_0\partial^A_1(a_1)
\end{align*}
to see that $a_0\partial^A_1(a_1)\in\im(\partial^A_1)$.

\eqref{ex120523a''2}
To see that $A_i$ is an $A_0$-module, first observe that multiplication in $A$ maps $A_0\times A_i$ to $A_i$.
Thus, $A_i$ is closed under scalar multiplication by $A_0$. Since $A_i$ is an $R$-module, it is
non-empty and closed under addition and subtraction. The remaining $A_0$-module axioms follow from
the DG algebra axioms on $A$.

To show that $\HH_i(A)$ is an $\HH_0(A)$-module, the essential point is to show that the scalar multiplication $\ol{a_0}\ \ol{a_i}:=\ol{a_0a_i}$
is well-defined. (Then the axioms follow directly from the fact that $A_i$ is an $A_0$-module.)
The well-definedness boils down to showing that the products
$\im(\partial^A_{1})\Ker(\partial^A_{i})$
and
$\Ker(\partial^A_{0})\im(\partial^A_{i+1})$ are contained in $\im(\partial^A_{i+1})$.
For the first of these, let $\partial^A_{1}(a_1)\in \im(\partial^A_{1})$ and $z_{i}\in \Ker(\partial^A_{i})$:
\begin{align*}
\partial^A_{i+1}(a_1z_i)
&=\partial^A_1(a_1)z_i+(-1)^1a_1\underbrace{\partial^A_i(z_i)}_{=0}=\partial^A_1(a_1)z_i.
\end{align*}
It follows that $\partial^A_1(a_1)z_i=\partial^A_{i+1}(a_1z_i)\in\im(\partial^A_{i+1})$, as desired.
The  containment $\Ker(\partial^A_{0})\im(\partial^A_{i+1})\subseteq\im(\partial^A_{i+1})$ is verified similarly.
\qed
\end{para}

\begin{para}[Sketch of Solution to Exercise~\ref{exer130312b}]
\label{para130312m}
Assume that $A$ is a DG $R$-algebra such that each $A_i$ is finitely generated.
Since $R$ is noetherian, each $A_i$ is noetherian over $R$.
Hence, each submodule $\Ker(\partial^A_i)\subseteq A_i$ is 
finitely generated over $R$, so each quotient $\HH_i(A)=\Ker(\partial^A_i)/\im(\partial^A_{i+1})$
is finitely generated over $R$. In particular, $\HH_0(A)$ is finitely generated over $R$,
so it is noetherian by the Hilbert Basis Theorem. Since $\HH_i(A)$ is finitely generated over $R$
and it is a module over the $R$-algebra $\HH_0(A)$, it is straightforward to show that 
$\HH_i(A)$ is finitely generated over  $\HH_0(A)$.
\qed
\end{para}

\begin{para}[Sketch of Solution to Exercise~\ref{ex110218a'}]
\label{para130313a}
\eqref{ex110218a'1}
By definition, every DG $R$-module is, in particular, an $R$-complex.
Conversely, let $X$ be an $R$-complex. We verify the DG $R$-module axioms.
The associative, distributive, and unital axioms are automatic since $X$ is an $R$-complex.

For the graded axiom, let $r\in R_i$ and $x\in X_j$.
If $i\neq 0$, then $r\in R_i=0$ implies that $rx=0\in X_{i+j}$.
If $i=0$, then $r\in R_i=R$ implies that $rx\in X_j=X_{0+j}$. 

For the Leibniz Rule, let $r\in R_i$ and $x\in X_j$.
If $i\neq 0$, then $r\in R_i=0$ implies  
$$\partial^X_{i+j}(rx)=\partial^X_{i+j}(0)=0=\partial^X_{i+j}(0)x+(-1)^i0\partial^X_{i+j}(x)=\partial^X_{i}(r)x+(-1)^ir\partial^X_{j}(x)$$
as desired.
If $i=0$, then $r\in R_i=R$, so the $R$-linearity of $\partial^X_j=\partial^X_{0+j}$ implies that 
$$\partial^X_{i+j}(rx)=r\partial^X_{0+j}(x)=0x+(-1)^0r\partial^X_{j}(x)=\partial^X_{i}(r)x+(-1)^ir\partial^X_{j}(x)$$
as desired.

\eqref{ex110218a'2}
Directly compare the axioms in Definitions~\ref{DGK} and~\ref{defn130313a}.

\eqref{ex110218a'3}
Let $f\colon A\to B$ a morphism  of DG $R$-algebras, and let $Y$ be a DG $B$-module. 
Define the DG $A$-module structure on $Y$ by the formula $a_iy_j:=f_i(a_i)y_j$.

We verify associativity: 
$$(a_ia_j)y_k=f_{i+j}(a_ia_j)y_k=[f_i(a_i)f_j(a_j)]y_k=f_i(a_i)[f_j(a_j)y_k]=a_i(a_jy_k).$$
Distributivity, gradedness, and the Leibniz Rule follow similarly, as does unitality (using the condition $f_0(1_A)=1_B$).
\qed
\end{para}

\begin{para}[Sketch of Solution to Exercise~\ref{ex120526b}]
\label{para130313b}
Fix an $R$-complex $X$ and a DG $R$-algebra $A$.
First, we show that the scalar multiplication $a(b\otimes x):=(ab)\otimes x$ is well-defined.
Let $a_i\in A_i$ be a fixed element.
Since the map $A_j\to A_{i+j}$ given by $a_j\mapsto a_ia_j$ is well-defined and $R$-linear,
so is the induced map $\Otimes{A_j}{X_k}\to\Otimes{A_{i+j}}{X_k}$ which is given on generators by
the formula $a_j\otimes x_k\mapsto(a_ia_j)\otimes x_k$.
Assembling these maps together for all $j,k$ provides a well-defined $R$-linear map
$$(\Otimes{A}X)_n=\bigoplus_{j+k=n}\Otimes{A_j}{X_k}\to\bigoplus_{j+k=n}\Otimes{A_{i+j}}{X_k}=(\Otimes{A}X)_{i+n}$$
given on generators by
the formula $a_j\otimes x_k\mapsto(a_ia_j)\otimes x_k$.
In other words, the given multiplication is well-defined and satisfies the graded axiom.
Verification of associativity and distributivity is routine.
We verify the Leibniz Rule on generators:
\begin{align*}
\partial^{\Otimes{A}X}_{i+j+k}(a_i(a_j\otimes x_k))\hspace{-10mm}\\
&=\partial^{\Otimes{A}X}_{i+j+k}((a_ia_j)\otimes x_k)\\
&=\partial^{A}_{i+j}(a_ia_j)\otimes x_k+(-1)^{i+j}(a_ia_j)\otimes \partial^{X}_{k}(x_k)\\
&=[\partial^{A}_{i}(a_i)a_j]\otimes x_k+(-1)^{i}[a_i\partial^{A}_{j}(a_j)]\otimes x_k+(-1)^{i+j}(a_ia_j)\otimes \partial^{X}_{k}(x_k)\\
&=\partial^A_i(a_i)(a_j\otimes x_k)+(-1)^{i}a_i[\partial^{A}_{j}(a_j)\otimes x_k+(-1)^{j}a_j\otimes \partial^{X}_{k}(x_k)]\\
&=\partial^A_i(a_i)(a_j\otimes x_k)+(-1)^ia_i\partial^{\Otimes{A}X}_{j+k}(a_j\otimes x_k).\hspace{28mm}\qed
\end{align*}
\end{para}

\begin{para}[Sketch of Solution to Exercise~\ref{exer130331a}]
\label{para130331a}
Since scalar multiplication of $A$ on $M$ is well-defined, so is the scalar multiplication of $A$ on $\shift^iM$.
To prove that $\shift^iM$ is a DG $A$-module, we check the axioms, in order, beginning with associativity:
\begin{align*}
a\ast(b\ast m)
&=(-1)^{i|a|}a((-1)^{i|b|}bm)=(-1)^{i(|a|+|b|)}a(bm)\\
&=(-1)^{i|ab|}(ab)m=(ab)*m.
\end{align*}
Distributivity is verified similarly.
For the unital axiom, recall that $|1_A|=0$, so we have
$1_A\ast m=(-1)^{i\cdot 0}1_Am=m$.

The graded axiom requires a bit of bookkeeping.
Let $a\in A_p$ and $m\in(\shift^iM)_q=M_{q-i}$.
Then we have $a\ast m=(-1)^{ip}a m\in M_{p+q-i}=(\shift^iM)_{p+q}$, as desired.

for the Leibniz Rule, we let $a\in A_p$ and $m\in(\shift^iM)_q=M_{q-i}$, and we
compute:
\begin{align*}
\hspace{12mm}&&\partial^{\shift^iM}_{p+q}(a\ast m)
&=(-1)^i\partial^M_{p+q-i}((-1)^{ip}am)\\
&&&=(-1)^{ip+i}[\partial^A_p(a)m+(-1)^pa\partial^M_{q-i}(m)]\\
&&&=(-1)^{ip-i}[\partial^A_p(a)m+(-1)^{p+i}a\partial^{\shift^iM}_{q}(m)]\\
&&&=(-1)^{i(p-1)}\partial^A_p(a) m+(-1)^{p+ip}a\partial^{\shift^iM}_{q}(m)\\
&&&=\partial^A_p(a)\ast m+(-1)^pa\ast\partial^{\shift^iM}_{q}(m).&&&&\hspace{12mm}\qed
\end{align*}
\end{para}

\begin{para}[Sketch of Solution to Exercise~\ref{fact110216a'}]
\label{para130313c}
Argue as for Exercise~\ref{fact110216a}; cf.~\ref{para130312j}.~\qed
\end{para}

\begin{para}[Sketch of Solution to Exercise~\ref{exer130313a}]
\label{para130313e}
Fix a DG $R$-algebra $A$ and a chain map of $R$-complexes 
$g\colon X\to Y$. 

\eqref{exer130313a1}
To prove that
the chain map $\Otimes{A}g\colon \Otimes{A}X\to \Otimes{A}Y$ 
is a morphism of
DG $A$-modules, we check the desired formula on generators:
\begin{align*}
(\Otimes{A}g)_{i+j+k}(a_i(a_j\otimes x_k))
&=(\Otimes{A}g)_{i+j+k}((a_ia_j)\otimes x_k))\\
&=(a_ia_j)\otimes g_k(x_k)\\
&=a_i(a_j\otimes g_k(x_k))\\
&=a_i(\Otimes{A}g)_{j+k}(a_j\otimes x_k).
\end{align*}

\eqref{exer130313a2}
Let $k$ be a field, and consider the power series ring $R=k[\![X]\!]$.
The residue field $k$ is an $R$-algebra, hence it is a DG $R$-algebra.
The Koszul complex $K=K^R(X)$ is  a free resolution of $k$ over $R$,
so the natural map $K\to k$ is a quasiisomorphism.
However, the induced map $\Otimes kK\to\Otimes kk$
is not a quasiisomorphism since $\HH_1(\Otimes kK)\cong k\neq 0=\HH_i(\Otimes kk)$.

\eqref{exer130313a3}
Fix a morphism $f\colon A\to B$ of DG $R$-algebras. This explains the first equality in the next display
$$f(ab)=f(a)f(b)=af(b).$$
The second equality is from the definition of the DG $A$-module structure on $B$.
\qed
\end{para}

\begin{para}[Sketch of Solution to Exercise~\ref{disc110302a}]
\label{para130330k}
We use the following items.

\begin{defn}
Let $X$ be an $R$-complex. An \emph{$R$-subcomplex} of $X$ is an $R$-complex $Y$ such that each $Y_i$ is a submodule of $X_i$
and for all $y\in Y_i$ we have $\partial^Y_i(y)=\partial^X_i(y)$.
A \emph{DG submodule} of the DG $A$-module $M$ is an $R$-subcomplex $N$ such that
the scalar multiplication of $A$ on $N$ uses the same rule as the scalar multiplication on $M$.
\end{defn}

\begin{lem}\label{lem130401a}
Let $X$ be an $R$-complex, and let $Y$ be an $R$-subcomplex of $X$. 
\begin{enumerate}[\rm(a)]
\item\label{lem130401a1}
The quotient complex
$X/Y$ with differential induced by $\partial^X$ is an $R$-complex. 
\item\label{lem130401a2}
The natural sequence
$0\to Y\xra\epsilon X\xra\pi X/Y\to 0$ 
of chain maps is exact.
\item\label{lem130401a3}
The natural map $X\to X/Y$ is a quasiisomorphism if and only if $Y$ is exact.
\end{enumerate}
\end{lem}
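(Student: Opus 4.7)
\begin{para}[Proof Proposal for Lemma~\ref{lem130401a}]
The plan is to verify each part in turn, relying mostly on degree-wise reductions to classical facts about quotient modules and the long exact sequence in homology.

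For part~\eqref{lem130401a1}, I would first check that $\partial^X$ descends to a well-defined $R$-linear map $\bar\partial_i \colon X_i/Y_i \to X_{i-1}/Y_{i-1}$. The key point is that $Y$ being an $R$-subcomplex means $\partial^X_i(Y_i) \subseteq Y_{i-1}$, so the composition $X_i \xra{\partial^X_i} X_{i-1} \onto X_{i-1}/Y_{i-1}$ kills $Y_i$ and factors through $X_i/Y_i$ by the universal property of quotient modules. Then $\bar\partial_{i-1}\bar\partial_i = 0$ follows from $\partial^X_{i-1}\partial^X_i = 0$ by a straightforward diagram chase on representatives.

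For part~\eqref{lem130401a2}, I would first check that $\epsilon$ and $\pi$ are chain maps; these are immediate from the constructions in part~\eqref{lem130401a1} and from the fact that $\partial^Y$ is the restriction of $\partial^X$. Exactness of the sequence then reduces, in each degree $i$, to exactness of the usual short exact sequence of $R$-modules $0\to Y_i \to X_i \to X_i/Y_i\to 0$, which is standard.

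For part~\eqref{lem130401a3}, the approach is to apply the long exact sequence in homology associated to the short exact sequence of $R$-complexes from part~\eqref{lem130401a2}. This yields, for each $i\in\bbz$, an exact sequence
\[
\cdots\to \HH_{i+1}(X/Y)\to\HH_i(Y)\to\HH_i(X)\xra{\HH_i(\pi)}\HH_i(X/Y)\to\HH_{i-1}(Y)\to\cdots.
\]
If $Y$ is exact, then $\HH_i(Y)=0$ for all $i$, so each $\HH_i(\pi)$ is sandwiched between zeros and is therefore an isomorphism, making $\pi$ a quasiisomorphism. Conversely, if $\pi$ is a quasiisomorphism, then each $\HH_i(\pi)$ is an isomorphism, and a short diagram chase in the long exact sequence forces $\HH_i(Y)=0$ for every $i$, so $Y$ is exact.

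The only mildly subtle point is verifying the existence of the long exact sequence in homology for $R$-complexes, but this is entirely analogous to the classical setting: one invokes the snake lemma applied degree-wise to the short exact sequence of part~\eqref{lem130401a2}, which is exactly the machinery already implicit in our use of homology throughout the notes. No step requires a substantive new idea, so there is no real obstacle; the proof is essentially a careful unpacking of definitions combined with standard homological algebra.
\end{para}
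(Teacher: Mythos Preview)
Your proposal is correct and follows essentially the same approach as the paper: for part~\eqref{lem130401a1} you check $\partial^X_i(Y_i)\subseteq Y_{i-1}$ so the differential descends, for part~\eqref{lem130401a2} you verify the chain-map conditions and reduce exactness to each degree, and for part~\eqref{lem130401a3} you invoke the long exact sequence in homology arising from part~\eqref{lem130401a2}. The paper's proof is terser in part~\eqref{lem130401a3}, simply writing ``use the long exact sequence,'' but your explicit treatment of both implications is exactly what is intended.
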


\begin{proof}
\eqref{lem130401a1}
To show that the differential $\partial^{X/Y}$ given as
$\partial^{X/Y}_i(\ol x):=\ol{\partial^X_i(x)}$ is well-defined,
it suffices to show that $\partial^X_i(Y_i)\subseteq Y_{i-1}$.
But this condition is automatic by definition: for all $y\in Y_i$ we have $\partial^X_i(y)=\partial^Y_i(y)\in Y_{i-1}$. 
The remaining $R$-complex axioms are straightforward consequences of the axioms for $X$.

\eqref{lem130401a2}
It suffices to show that the following diagram commutes
$$\xymatrix{
0\ar[r]
&Y_i\ar[r]^-{\epsilon_i}\ar[d]_{\partial^Y_i}
&X_i\ar[r]^-{\pi_i}\ar[d]_{\partial^X_i}
&X_i/Y_i\ar[r]\ar[d]_{\partial^{X/Y}_i}
&0 \\
0\ar[r]
&Y_{i-1}\ar[r]^-{\epsilon_{i-1}}
&X_{i-1}\ar[r]^-{\pi_{i-1}}
&X_{i-1}/Y_{i-1}\ar[r]
&0}$$
where $\epsilon_i$ is the inclusion and $\pi_i$ is the natural surjection.
The commutativity of this diagram is routine, using the assumption on $\partial^Y$ and the
definition of $\partial^{X/Y}$.

\eqref{lem130401a3}
Use the long exact sequence coming from part~\eqref{lem130401a2}.
\end{proof}

\begin{lem}\label{lem130401b}
Let $N$ be a DG submodule of the DG $A$-module $M$.
\begin{enumerate}[\rm(a)]
\item\label{lem130401b1}
The quotient complex
$M/N$ with scalar multiplication induced by the scalar multiplication on $M$ is a DG $A$-module.
\item\label{lem130401b2}
The natural maps
$N\xra\epsilon M\xra\pi M/N$ are morphisms of DG $A$-modules. 
\end{enumerate}
\end{lem}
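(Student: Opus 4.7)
The plan is to leverage Lemma~\ref{lem130401a} applied to the underlying $R$-complex structure, which already hands us the quotient complex $M/N$ and tells us that $\epsilon$ and $\pi$ are chain maps with the appropriate induced differential. The remaining work is entirely at the level of the $A$-scalar multiplication: define it on the quotient, check well-definedness, verify the DG module axioms, and check that $\epsilon$ and $\pi$ respect scalars.

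For part~\eqref{lem130401b1}, I would define the scalar multiplication by $a\cdot\ol{m}:=\ol{am}$ for $a\in A$ and $\ol m\in M/N$. The crucial well-definedness check reduces to showing that $aN_j\subseteq N_{j+|a|}$ for every homogeneous $a\in A$ and every $j$. This is where I would invoke the hypothesis that $N$ is a \emph{DG} submodule of $M$ (not merely an $R$-subcomplex): by definition, the scalar multiplication of $A$ on $N$ is the restriction of the scalar multiplication on $M$, which forces $aN\subseteq N$. Once well-definedness is established, the associative, distributive, unital, and graded axioms for $M/N$ follow by reading off the corresponding axioms of $M$ on representatives, since the quotient map is $R$-linear and respects degrees. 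The Leibniz Rule is the only one that mixes the differential with scalar multiplication, but since $\partial^{M/N}$ is induced from $\partial^M$ (by Lemma~\ref{lem130401a}\eqref{lem130401a1}), one computes on representatives:
\begin{align*}
\partial^{M/N}_{|a|+|m|}(a\cdot\ol m)
&=\ol{\partial^M_{|a|+|m|}(am)}
=\ol{\partial^A_{|a|}(a)m+(-1)^{|a|}a\partial^M_{|m|}(m)}\\
&=\partial^A_{|a|}(a)\cdot\ol m+(-1)^{|a|}a\cdot\partial^{M/N}_{|m|}(\ol m),
\end{align*}
using the Leibniz Rule for $M$ and the $R$-linearity of the quotient map.

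For part~\eqref{lem130401b2}, Lemma~\ref{lem130401a}\eqref{lem130401a2} already gives that $\epsilon$ and $\pi$ are chain maps, so it remains to check that they preserve scalar multiplication. For $\epsilon$ this is immediate from the definition of DG submodule: $\epsilon(an)=an=a\epsilon(n)$, where the middle equality records that the $A$-action on $N$ is exactly the restriction of the $A$-action on $M$. For $\pi$, it is immediate from the definition of the quotient scalar action: $\pi(am)=\ol{am}=a\cdot\ol m=a\cdot\pi(m)$.

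The main (and really only) obstacle is the well-definedness of scalar multiplication on $M/N$, but this is handled the moment one unpacks the definition of a DG submodule; everything else is formal transport of the DG $A$-module axioms across the quotient map.
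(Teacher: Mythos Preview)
Your proposal is correct and follows essentially the same approach as the paper: invoke Lemma~\ref{lem130401a} for the underlying complex structure, define the induced scalar multiplication on representatives, check well-definedness via the closure $aN\subseteq N$ guaranteed by the DG submodule hypothesis, and verify that $\epsilon$ and $\pi$ respect scalars directly from the definitions. You actually spell out the Leibniz Rule verification in more detail than the paper does, but the argument is the same.
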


\begin{proof}
\eqref{lem130401b1}
Lemma~\ref{lem130401a}\eqref{lem130401a1}
implies that $M/N$ is an $R$-complex. 
Next, we show that the scalar multiplication $a\ol m:=\ol{am}$ is well-defined.
For this, it suffices to show that $an\in N$ for all $a\in A$ and all $n\in N$.
But this condition is automatic by definition, since $N$ is closed under scalar multiplication. 
Now that the scalar multiplication on $M/N$ is well-defined, the DG $A$-module axioms on $M/N$ 
follow readily from the axioms on $M$.

\eqref{lem130401b2}
As $N$ is a DG submodule of $M$, the inclusion $\epsilon$ respects scalar multiplication;
and $\pi$ respects scalar multiplication as follows:
$\pi(am)=\ol{am}=a\ol m=a\pi(m)$.
\end{proof}

Now, we continue with the proof of the exercise.
Consider  the complex
$$N=\cdots\xra{\partial^M_{n+2}}M_{n+1}\xra{\partial^M_{n+1}}\im(\partial^M_{n+1})\to 0.$$

\eqref{disc110302a1}
Using Lemmas~\ref{lem130401a} and~\ref{lem130401b}, it suffices to show that $N$
is a DG submodule of $M$. By inspection, the differential on $N$  maps $N_i\to N_{i-1}$ for all $i$.
Since the differential and scalar multiplication on $N$
are induced from those on $M$, it suffices to show that $N$ is 
closed under scalar multiplication. 
(The other axioms are inherited from $M$.)
For this, let $a\in A_p$ and $x\in N_q$. If $p<0$, then $a=0$ and we have $ax=0\in N_{p+q}$.
Similar reasoning applies if $q<n$.
Assume now that $p\geq 0$ and $q\geq n$. If $p\geq 1$ or $q>n$, then $ax\in M_{p+q}=N_{p+q}$, by definition.
So, we are reduced to the case where $p=0$ and $q=n$.
In this case, there is an element $m\in M_{n+1}$ such that $x=\partial^M_{n+1}(m)$.
Thus, the Leibniz Rule on $M$ implies that
$$ax=0+ax=\partial^A_0(a)m+a\partial^M_{n+1}(m)=
\partial^M_{n+1}(am)\in\im(\partial^M_{n+1})=N_n$$
as desired.

\eqref{disc110302a2}
Using Lemmas~\ref{lem130401a} and~\ref{lem130401b}, it suffices to show that $N$
is exact if and only if $n\geq\sup M$.
For this, note that
$$\HH_i(N)\cong
\begin{cases}
\HH_i(M)&\text{for $i>n$} \\
0&\text{for $i\leq n$.}\end{cases}$$ 
It follows that the complex $N$ is exact if and only if $\HH_i(M)=0$ for all $i>n$, that is, if and only if $n\geq \sup(M)$.
\qed
\end{para}

\begin{para}[Sketch of Solution to Exercise~\ref{exer:grade2perfect}]
\label{para130405a}
Note that $I=(x^2,xy,y^2)$ has grade two as an ideal of $R$ since $x^2,y^2$ is an $R$-regular sequence.  Also, $I=I_2(A)$, where
\[
A=\left[\begin{matrix}x&0\\y&x\\0&y\end{matrix}\right]
\]
Thus, $I$ is perfect by Theorem \ref{thm:codim2}.
\qed
\end{para}

\begin{para}[Sketch of Solution to Exercise~\ref{exer130405a}]
\label{para130405b}
Let $A=(a_{i,j})$, which is $(n+1)\times n$.
Let $F$ denote the $R$-complex in question, and denote the differential of $F$ by $\dell$.
First, we check the easy products.
The fact that $F_3=0$ explains the first two equalities in the next sequence; the others are by definition and cancellation.
\begin{align*}
\dell_2(f_j)f_k+(-1)^2f_j\dell_2(f_k)
&=0\\
&=\dell_4(f_jf_k)\\
\dell_1(e_i)e_i+(-1)^1e_i\dell_1(e_i)
&=(-1)^{i-1}a\det(A_i)e_i-e_i[(-1)^{i-1}a\det(A_i)]\\
&=0\\
&=\dell_2(e_ie_i)
\end{align*}
The remaining equalities require some work.

Let $1\leq i<j\leq n+1$, and consider the $n\times n$ matrices $A_j$ and $A_i$. 
Expanding $\det(A_j)$ along the $i$th row, we have
\begin{equation}\label{eq130627a}
\det(A_j)=\sum_{k=1}^n(-1)^{i+k}a_{i,k}\det(A^k_{i,j}).
\end{equation}
This uses the equality $(A_j)^k_i=A^k_{i,j}$ which is a consequence of the assumption $i<j$.
On the other hand, we have $(A_i)^k_{j-1}=A^k_{i,j}$ since the $(j-1)$st row of $A_i$ is equal to the $j$th row of $A$; 
so when we expand $\det(A_i)$ along its $(j-1)$st row, we have
\begin{align}
\det(A_i)
&=\sum_{k=1}^n(-1)^{j-1+k}a_{j,k}\det((A_i)^k_{j-1})
=\sum_{k=1}^n(-1)^{j-1+k}a_{j,k}\det(A^k_{i,j}).\label{eq130627b}
\end{align}
Next, for $\ell\neq i,j$ we let $A(\ell)$ denote the matrix obtained by  replacing the $i$th row of $A$ with the $\ell$th row.
It follows that we have $(A(\ell)_j)^k_i=A(\ell)^k_{i,j}=A^k_{i,j}$.
Notice that the matrix $A(\ell)_j$ has two equal rows, so we have $\det(A(\ell)_j)=0$.
Expanding $\det(A(\ell)_j)$ along the $i$th row, we obtain the next equalities:
\begin{align}
0
&=\sum_{k=1}^n(-1)^{i+k}a_{\ell,k}\det((A(\ell)_j)^k_{i})
=\sum_{k=1}^n(-1)^{i+k}a_{\ell,k}\det(A^k_{i,j}).\label{eq130627c}
\end{align}
Now we verify the Leibniz rule for the product $e_ie_j$, still assuming $1\leq i<j\leq n+1$.
\begin{align*}
\dell_2(e_ie_j)&=a\sum_{k=1}^n(-1)^{i+j+k}\det(A^k_{i,j})\dell_2(f_k)\\
&=a\sum_{k=1}^n(-1)^{i+j+k}\det(A^k_{i,j})\sum_{\ell=1}^{n+1}a_{\ell, k}e_{\ell}\\
&=a\sum_{\ell=1}^{n+1}\left[\sum_{k=1}^n(-1)^{i+j+k}a_{\ell, k}\det(A^k_{i,j})\right]e_{\ell}\\
&=a\left[\sum_{k=1}^n(-1)^{i+j+k}a_{i, k}\det(A^k_{i,j})\right]e_{i}+a\left[\sum_{k=1}^n(-1)^{i+j+k}a_{j, k}\det(A^k_{i,j})\right]e_{j}\\
&\quad+a\sum_{\ell\neq i,j}\left[\sum_{k=1}^n(-1)^{i+j+k}a_{\ell, k}\det(A^k_{i,j})\right]e_{\ell}\\
&=(-1)^ja\left[\sum_{k=1}^n(-1)^{i+k}a_{i, k}\det(A^k_{i,j})\right]e_{i}\\
&\quad+(-1)^{i+1}a\left[\sum_{k=1}^n(-1)^{j-1+k}a_{j, k}\det(A^k_{i,j})\right]\!e_{j}\\
&\quad+a\sum_{\ell\neq i,j}\left[\sum_{k=1}^n(-1)^{i+j+k}a_{\ell, k}\det(A^k_{i,j})\right]e_{\ell}\\
&=(-1)^ja\det(A_j)e_{i}+(-1)^{i+1}a\det(A_i)e_{j}+(-1)^ja\sum_{\ell\neq i,j}0e_{\ell}\\
&=-\dell_1(e_j)e_i+\dell_1(e_i)e_j+0\\
&=\dell_1(e_i)e_j+(-1)^{|e_i|}e_i\dell_1(e_j)
\end{align*}
The sixth equality follows from~\eqref{eq130627a}--\eqref{eq130627c},
and the others are by definition and simplification.

Next, we show how the Leibniz rule for $e_je_i$ follows from that of $e_ie_j$:
\begin{align*}
\dell_2(e_je_i)
&=\dell_2(-e_ie_j)\\
&=-\dell_2(e_ie_j)\\
&=-[\dell_1(e_i)e_j-e_i\dell_1(e_j)]\\
&=e_i\dell_1(e_j)-\dell_1(e_i)e_j\\
&=\dell_1(e_j)e_i-e_j\dell_1(e_i).
\end{align*}

Next, we verify the Leibniz rule for products of the form $e_if_j$ for any $i$ and $j$.
To begin,  note that we have
$$(A_i)^j_k=\begin{cases}A^j_{k,i}&\text{if $k<i$} \\ A^j_{i,k+1}&\text{if $k\geq i$} \end{cases}$$
and the $k,j$-entry of $A_i$ is
$$a_{k,j}':=\begin{cases}a_{k,j}&\text{if $k<i$} \\ a_{k+1,j}&\text{if $k\geq i$.} \end{cases}$$
Using this, we expand $\det(A_i)$ along the $j$th column:
\begin{align}
\det(A_i)
&=\sum_{k=1}^n(-1)^{j+k}\det((A_i)^j_k)a_{k,j}' \notag \\
&=\sum_{k=1}^{i-1}(-1)^{j+k}\det(A_{k,i}^j)a_{k,j}\notag 
+\sum_{k=i}^n(-1)^{j+k}\det(A^j_{i,k+1})a_{k+1,j} \\
&=\sum_{k=1}^{i-1}(-1)^{j+k}\det(A_{k,i}^j)a_{k,j} \notag
+\sum_{k=i+1}^{n+1}(-1)^{j+k-1}\det(A^j_{i,k})a_{k,j} \\
&=\sum_{k=1}^{i-1}(-1)^{j+k}\det(A_{k,i}^j)a_{k,j} 
-\sum_{k=i+1}^{n+1}(-1)^{j+k}\det(A^j_{i,k})a_{k,j}. \label{eq130627d}
\end{align}
Similarly, for any $\ell\neq j$, let $A[\ell]$ denote the matrix obtained by replacing the $\ell$th column of $A$ with its $j$th column.
Thus, we have $\det(A[\ell]_i)=0$. Expanding $\det(A[\ell]_i)$ along the $\ell$th column as above implies that
\begin{equation}\label{eq130627e}
\sum_{k=1}^{i-1}(-1)^{\ell+k}\det(A_{k,i}^\ell)a_{k,j} 
-\sum_{k=i+1}^{n+1}(-1)^{\ell+k}\det(A^\ell_{i,k})a_{k,j}
=0.
\end{equation}
By definition, we have $e_if_j=0$. Thus,  the Leibniz rule in this case follows from the next computation
where the final equality is a consequence of~\eqref{eq130627d}--\eqref{eq130627e}:
\begin{align*}
\dell_1(e_i)f_j-e_i\dell_2(f_j)
&=(-1)^{i+1}a\det(A_i)f_j-e_i\sum_{k=1}^{n+1}a_{k, j}e_{k} \\
&=(-1)^{i+1}a\det(A_i)f_j-\sum_{k=1}^{n+1}a_{k, j}e_ie_{k} \\
&=(-1)^{i+1}a\det(A_i)f_j+\sum_{k=1}^{i-1}a_{k, j}e_{k}e_i-\sum_{k=i+1}^{n+1}a_{k, j}e_ie_{k} \\
&=(-1)^{i+1}a\det(A_i)f_j+\sum_{k=1}^{i-1}a_{k, j}a\sum_{\ell=1}^n(-1)^{k+i+\ell}\det(A^\ell_{k,i})f_\ell\\
&\hspace{30mm}-\sum_{k=i+1}^{n+1}a_{k, j}a\sum_{\ell=1}^n(-1)^{i+k+\ell}\det(A^\ell_{i,k})f_\ell \\
&=(-1)^{i+1}a\det(A_i)f_j+a(-1)^i\sum_{\ell=1}^n\left[\sum_{k=1}^{i-1}a_{k, j}(-1)^{k+\ell}\det(A^\ell_{k,i})\right.\\
&\hspace{45mm}-\left.\sum_{k=i+1}^{n+1}a_{k, j}(-1)^{k+\ell}\det(A^\ell_{i,k})\right]f_\ell \\
&=0.
\end{align*}
The final case now follows from the previous one:
\begin{align*}
\dell_2(f_j)e_i+f_j\dell_1(e_i)
&=-e_i\dell_2(f_j)+\dell_1(e_i)f_j
=\dell_1(e_i)f_j-e_i\dell_2(f_j)
=0.
\end{align*}
This completes the proof.
\qed
\end{para}

\begin{para}[Sketch of Solution to Exercise~\ref{exer130405b}]
\label{para130405c}
The deleted minimal $R$-free resolution of $R/(x^2,xy,y^2)$ is  the following:
\[
0\to Rf_1\oplus Rf_2\xrightarrow{\left[\begin{smallmatrix}x&0\\y&x\\0&y\end{smallmatrix}\right]} Re_1\oplus Re_2\oplus Re_3\xrightarrow{\left[\begin{smallmatrix}-y^2&xy&-x^2\end{smallmatrix}\right]}R1\to 0.
\]
According to Theorem \ref{thm:codim2dg}, the above complex has a DG $R$-algebra structure with 
\begin{align*}
& e_1e_2=-e_2e_1=yf_1\\
& e_1e_3=-e_3e_1=-xf_1+yf_2\\
& e_2e_3=-e_3e_2=-xf_2
\end{align*}
and $e_i^2=0$ for all $1\leq i\leq 3$.
\qed
\end{para}

\begin{para}[Sketch of Solution to Exercise~\ref{exer130405c}]
\label{para130405d}
First, note that since $A$ is a $3\times 3$ matrix, $\Pf(A^{ijk}_{ijk})=1$ for all choices of $i,j,k$.  Following the conditions specified by Theorem \ref{thm:codim3dg}, one has the relations $e_i^2=0$ for all $1\leq i\leq 3$ and
\begin{align*}
& e_1e_2=-e_2e_1=f_1-f_2+f_3\\
& e_1e_3=-e_3e_1=-f_1-f_2-f_3\\
& e_2e_3=-e_3e_2=f_1-f_2+f_3
\end{align*}
and $e_if_j=f_je_i=\delta_{ij}g$ for all $1\leq i,j\leq 3$.
\qed
\end{para}

\begin{para}[Sketch of Solution to Exercise~\ref{fact110223a}]
\label{para130330a}
\

\eqref{fact110223a1}
To start, we let $f=\{f_j\}\in\Hom[A]MN_q$ and prove that
$\partial^{\Hom MN}_q(f)$ is in $\Hom[A]MN_{q-1}$, that is, that $\partial^{\Hom MN}_q(f)$ is $A$-linear.
For this, let $a\in A_p$ and $m\in M_t$, and compute:
\begin{align*}
\partial^{\Hom MN}_q(f)_{p+t}(am)
&=\partial^N_{p+t+q}(f_{p+t}(am))-(-1)^qf_{p+t-1}(\partial^M_{p+t}(am)) \\
&=(-1)^{pq}\partial^N_{p+t+q}(af_{t}(m))\\
&\quad-(-1)^qf_{p+t-1}\left(\partial^A_{p}(a)m+(-1)^{p}a\partial^M_{t}(m)\right) \\
&=(-1)^{pq}\left(\partial^A_{p}(a)f_{t}(m)+(-1)^{p}a\partial^N_{t+q}(f_{t}(m))\right)\\
&\quad-(-1)^qf_{p+t-1}\left(\partial^A_{p}(a)m\right)-(-1)^{q+p}f_{p+t-1}\left(a\partial^M_{t}(m)\right) \\
&=(-1)^{pq}\partial^A_{p}(a)f_{t}(m)+(-1)^{pq+p}a\partial^N_{t+q}(f_{t}(m))\\
&\quad-(-1)^{q+(p-1)q}\partial^A_{p}(a)f_{t}\left(m\right)-(-1)^{q+p+pq}af_{t-1}\left(\partial^M_{t}(m)\right) \\
&= (-1)^{pq+p}a\partial^N_{t+q}(f_{t}(m))-(-1)^{q+p+pq}af_{t-1}\left(\partial^M_{t}(m)\right)\\
&=(-1)^{p(q-1)}a\left(\partial^N_{t+q}(f_{t}(m))-(-1)^qf_{t-1}(\partial^M_{t}(m))\right) \\
&=(-1)^{p(q-1)}a\partial^{\Hom MN}_q(f)_{p+t}(m)
\end{align*}
The first and last equalities are from the definition of $\partial^{\Hom MN}_q(f)$.
The second equality is from the $A$-linearity of $f$ and the Leibniz Rule on $M$.
The third equality is from   the Leibniz Rule on $N$ and the $A$-linearity of $f$.
The fourth equality is from the $A$-linearity of $f$.
The fifth equality is by cancellation since $q+(p-1)q=pq$.
And the sixth equality is distributivity.

This shows that $\partial^{\Hom[A]MN}$ is well-defined.
Since $\Hom MN$ is an $R$-complex, it follows readily that $\Hom[A]MN$ is also an $R$-complex.

With the same $a$ and $f$ as above, we next show that the sequence $af:=\{(af)_j\}$
defined by the formula
$(af)_t(m):=a (f_t(m))$
is in $\Hom[A]MN_{p+q}$.
First, this rule maps $M_t\to N_{t+p+q}$ since $m\in M_t$ implies that
$f_t(m)\in N_{t+q}$, which implies that $a (f_t(m))\in N_{t+q+p}$.
Next, we show that $af\in\Hom MN_{p+q}$:
\begin{align*}
(af)_t(rm)&=a(f_t(rm))=a(rf_t(m))=(ar)f_t(m)\\
&=(ra)f_t(m)=r(af_t(m))=r(af)_t(m).
\end{align*}
Next, we show that $af$ is $A$-linear. For this, let $b\in A_s$:
\begin{align*}
(af)_{s+t}(bm)
&=a(f_{s+t}(bm))
=(-1)^{qs}a(bf_{s+t}(m))\\
&=(-1)^{qs+ps}b(af_{s+t}(m))
=(-1)^{(q+p)s}b((af)_{s+t}(m)).
\end{align*}
The first and fourth equalities are by definition of $af$.
The second equality follows because $f$ is $A$-linear.
The third equality is from the graded commutativity and associativity of $A$.
Since $|b|=s$ and $|af|=q+p$, this shows that $af$ is $A$-linear.

Next, we verify the DG $A$-module axioms for $\Hom[A]MN$.
The graded axiom has already been verified.
For associativity, continue with the notation from above.
We need to show that $a(bf)=(ab)f$, so we compute:
\begin{align*}
(a(bf))_t(m)
&=a((bf)_t(m))
=a(b(f_t(m)))
=(ab)f_t(m)
=((ab)f)_t(m).
\end{align*}
The third equality is by associativity, and the other equalities are by definition.
Distributivity and unitality are verified similarly.
Thus, it remains to verify the Leibniz Rule.
For this, we need to show that
\begin{equation*}
\partial^{\Hom[A]MN}_{p+q}(af)=\partial^A_p(a)f+(-1)^pa\partial^{\Hom[A]MN}_q(f).
\end{equation*}
For this, we evaluate at $m$:
\begin{align*}
\partial^{\Hom[A]MN}_{p+q}(af)_t(m)\hspace{-10mm}\\
&=\partial^N_{p+q}((af)_t(m))-(-1)^{p+q}(af)_{t-1}(\partial^M_t(m))\\
&=\partial^N_{t+p+q}(a(f_t(m)))-(-1)^{p+q}a(f_{t-1}(\partial^M_t(m)))\\
&=\partial^A_{p}(a)f_t(m)+(-1)^{p}a\partial^N_{t+q}(f_t(m)))-(-1)^{p+q}a(f_{t-1}(\partial^M_t(m)))\\
&=\partial^A_{p}(a)f_t(m)+(-1)^{p}a[\partial^N_{t+q}(f_t(m)))-(-1)^{q}(f_{t-1}(\partial^M_t(m)))]\\
&=\partial^A_{p}(a)f_t(m)+(-1)^{p}a[\partial^{\Hom[A]MN}_q(f)_t(m))]\\
&=(\partial^A_{p}(a)f)_t(m)+(-1)^{p}(a\partial^{\Hom[A]MN}_q(f))_t(m))]\\
&=\left(\partial^A_{p}(a)f+(-1)^{p}a\partial^{\Hom[A]MN}_q(f)\right)_t(m)
\end{align*}
The third equality is by the Leibniz Rule on $N$.
The fourth step is by distributivity.
The remaining equalities are by definition.

\eqref{fact110223a2}
Let $a\in A_p$. From the graded axiom for $M$, we know that the operation $m\mapsto am$
maps $M_t\to M_{p+t}$. The fact that this is $R$-linear follows from associativity:
$$a(rm)=(ar)m=(ra)m=r(am).$$
To show that it is $A$-linear, let $b\in A_s$, and compute:
\begin{align*}
\mu^{M,a}_{s+t}(bm)=a(bm)=(-1)^{ps}b(am)=(-1)^{ps}b\mu^{M,a}_{t}(m).
\end{align*}

\eqref{fact110223a3}
Argue as in the proof of Exercise~\ref{exer120522b} in~\ref{para130218b}.
\qed
\end{para}

\begin{para}[Sketch of Solution to Exercise~\ref{exer130330a}]
\label{para130330b}
We prove that $\Hom[A] Nf$ is a morphism of DG $A$-modules; the argument for $\Hom[A] fN$ is similar.
To this end, first note that since $f$ is a chain map of $R$-complexes, Exercise~\ref{exer130312a} shows that
the induced map $\Hom Nf\colon\Hom NL\to\Hom NM$ is a chain map of $R$-complexes.
Also, note that $\Hom Nf$ and $\Hom[A]Nf$ are given by the same composition-with-$f$ rule.

We need to show that $\Hom[A] fN$ is well-defined.
For this, let $g=\{g_j\}\in\Hom[A] NL_q$. 
We need to show that $\Hom[A] fN(g)\in\Hom[A]NM_q$,
that is, that $\Hom[A] fN(g)$ is $A$-linear.
For this, let  $a\in A_p$ and $n\in N_t$.
We need to show that $\Hom[A] fN(g)_{p+t}(an)=a\Hom[A] fN(g)_{t}(n)$.
We compute:
\begin{align*}
\Hom[A] fN(g)_{p+t}(an)
&=f_{p+q+t}(g_{p+t}(an))\\
&=(-1)^{pq}f_{p+q+t}(ag_{t}(n))\\
&=(-1)^{pq}a[f_{q+t}(g_{t}(n))]\\
&=(-1)^{pq}a[\Hom[A] fN(g)_{t}(n)]\\
&=(-1)^{pq}[a\Hom[A] fN(g)]_{t}(n).
\end{align*}
The second equality is by the $A$-linearity of $g$.
The third equality is by the $A$-linearity of $f$.
The remaining steps are by definition.
since $|\Hom[A] fN(g)|=|g|=q$ and $|a|=p$, this is the desired equality.

Next, we need to show that $\Hom[A] Nf$ respects multiplication on $A$.
For this, we use the same letters as in the previous paragraph.
We need to show that
$$\Hom[A]Nf(ag)=a[\Hom[A]Nf(g)]
$$
so we compute:
\begin{align*}
\Hom[A]Nf(ag)_t(n)
&=f_{t+p+q}((ag)_{t}(n))\\
&=f_{t+p+q}(a(g_{t}(n)))\\
&=a(f_{t+q}(g_{t}(n)))\\
&=a(\Hom[A]Nf(g)_{t}(n))\\
&=a[\Hom[A]Nf(g)]_{t}(n).
\end{align*}
The third equality is by the $A$-linearity of $f$,
and the others are by definition.
\qed
\end{para}

\begin{para}[Sketch of Solution to Exercise~\ref{fact110218b}]
\label{para130330c}
We begin by showing that  $\Otimes[A]MN$ is an $R$-complex, using
Lemma~\ref{lem130401a}\eqref{lem130401a1}.
Thus, we need to show that $U$ is a subcomplex of $\Otimes MN$. 
For this, it suffices to show that the differential $\partial^{\Otimes MN}$ maps each generator of $U$  into $U$.
To this end, let $a\in A_p$, $m\in M_q$ and $n\in N_s$, and compute:
\begin{align*}
\partial^{\Otimes MN}_{p+q+s}(\Otimes[]{(am)}{n}-(-1)^{pq}\Otimes[]{m}{(an)})\hspace{-20mm}\\
&=\partial^{M}_{p+q}(am)\otimes n+(-1)^{p+q}(am)\otimes\partial^N_s(n)\\
&\quad-(-1)^{pq}\partial^M_q(m)\otimes(an)-(-1)^{pq+q}m\otimes\partial^N_{p+s}(an)\\
&=[\partial^{A}_{p}(a)m]\otimes n+(-1)^{p}[a\partial^{M}_{q}(m)]\otimes n\\
&\quad+(-1)^{p+q}(am)\otimes\partial^N_s(n)-(-1)^{pq}\partial^M_q(m)\otimes(an)\\
&\quad-(-1)^{pq+q}m\otimes[\partial^A_p(a)n]-(-1)^{pq+q+p}m\otimes[a\partial^N_s(n)]\\
&=[\partial^{A}_{p}(a)m]\otimes n-(-1)^{pq+q}m\otimes[\partial^A_p(a)n]\\
&\quad+(-1)^{p}[a\partial^{M}_{q}(m)]\otimes n-(-1)^{pq}\partial^M_q(m)\otimes(an)\\
&\quad+(-1)^{p+q}(am)\otimes\partial^N_s(n)-(-1)^{pq+q+p}m\otimes[a\partial^N_s(n)]\\
&=[\partial^{A}_{p}(a)m]\otimes n-(-1)^{(p-1)q}m\otimes[\partial^A_p(a)n]\\
&\quad+(-1)^{p}\left[[a\partial^{M}_{q}(m)]\otimes n-(-1)^{p(q-1)}\partial^M_q(m)\otimes(an)\right]\\
&\quad+(-1)^{p+q}\left[(am)\otimes\partial^N_s(n)-(-1)^{pq}m\otimes[a\partial^N_s(n)]\right].
\end{align*}
Since each of these terms is a multiple of a generator of $U$, we conclude that 
$\partial^{\Otimes MN}_{p+q+s}(\Otimes[]{(am)}{n}-(-1)^{pq}\Otimes[]{m}{(an)})\in U$, as desired.

As in the solution to Exercise~\ref{ex120526b}
contained in~\ref{para130313b}, the $R$-complex $\Otimes MN$ has a well-defined DG $A$-module structure
defined on generators by the formula
$b(m\otimes n):=(bm)\otimes n$.
To show that the same formula is well-defined on $\Otimes[A]MN$, we need to show that
multiplication by $b\in A_t$ maps each generator of $U$ into $U$:
\begin{align*}
b(\Otimes[]{(am)}{n}-(-1)^{pq}\Otimes[]{m}{(an)})
&=(b(am))\otimes n-(-1)^{pq}(bm)\otimes(an)\\
&=(-1)^{pt}(a(bm))\otimes n-(-1)^{pq}(bm)\otimes(an)\\
&=(-1)^{pt}\left[(a(bm))\otimes n-(-1)^{p(q+t)}(bm)\otimes(an)\right].
\end{align*}
Since $|bm|=q+t$ and $|a|=p$, this is in $U$, as desired.

Now that we know that the differential and the scalar multiplication of $A$ on $\Otimes[A]MN$ are
well-defined, the other DG $A$-module axioms are inherited from $\Otimes MN$.
Finally, the formula
$\Otimes[]{(am)}{n}=(-1)^{|a| |m|}\Otimes[]{m}{(an)}$
in $\Otimes[A]MN$ follows from the condition $\Otimes[]{(am)}{n}-(-1)^{|a| |m|}\Otimes[]{m}{(an)}\in U$.
\qed
\end{para}

\begin{para}[Sketch of Solution to Exercise~\ref{disc111223a}]
\label{para130330d}
Let $f\colon A\to B$ be a morphism of DG $R$-algebras.
The DG $R$-algebra $B$ is a DG $A$-module via the scalar multiplication $a_ib:=f_i(a_i)b$,
by Exercise~\ref{ex110218a'}\eqref{ex110218a'3}.
So, we know from Exercise~\ref{fact110218b} that $\Otimes[A]BM$ has a well-defined  DG $A$-module structure.
It remains to show that it has a well-defined  DG $A$-module structure by the action $b(\Otimes[] {b'}m):=\Otimes[]{(bb')}{m}$.
Notice that, once this is shown, the compatibility with the DG $A$-module structure is automatic:
$$a_i(b\otimes m)=(a_ib)\otimes m=(f_i(a_i)b)\otimes m=f_i(a_i)(b\otimes m).$$
Let $U$ be the $R$-submodule of $\Otimes BM$ generated by the elements of the form
$\Otimes[]{(ab)}{m}-(-1)^{|a| |b|}\Otimes[]{b}{(am)}$.

We show that the DG $B$-module structure on $\Otimes[A]BM$ is well-defined. 
For this, note that $\Otimes BM$ has a well-defined DG $B$-module structure via the composition $R\to A\to B$, by Exercise~\ref{ex120526b}.
Thus, it suffices to let $c\in B_p$ and show that multiplication by $c$ maps generators of $U$ into $U$:
\begin{align*}
c(\Otimes[]{(ab)}{m}-(-1)^{|a| |b|}\Otimes[]{b}{(am)})\hspace{-10mm}\\
&=(c(ab))\otimes m-(-1)^{|a| |b|}(cb)\otimes(am)\\
&=(-1)^{|c||a|}(a(cb))\otimes m-(-1)^{|a| |b|}(cb)\otimes(am)\\
&=(-1)^{|c||a|}\left[(a(cb))\otimes m-(-1)^{|a|(|b|+|c|)}(cb)\otimes(am)\right]\\
&=(-1)^{|c||a|}\left[(a(cb))\otimes m-(-1)^{|a|(|cb|)}(cb)\otimes(am)\right]\in U.
\end{align*}
For the DG $B$-module axioms, the only one with any substance is the Leibniz Rule:
\begin{align*}
\partial^{\Otimes[A]BM}_{|c|+|b|+|m|}(c(b\otimes m))\hspace{-10mm}\\
&=\partial^{\Otimes[A]BM}_{|c|+|b|+|m|}((cb)\otimes m) \\
&=\partial^{B}_{|c|+|b|}(cb)\otimes m +(-1)^{|c|+|b|}(cb)\otimes \partial^{M}_{|m|}m) \\
&=(\partial^{B}_{|c|}(c)b)\otimes m+(-1)^{|c|}(c\partial^{B}_{|b|}(b))\otimes m +(-1)^{|c|+|b|}(cb)\otimes \partial^{M}_{|m|}m) \\
&=\partial^{B}_{|c|}(c)(b\otimes m)+(-1)^{|c|}c\left[(\partial^{B}_{|b|}(b))\otimes m +(-1)^{|b|}(b)\otimes \partial^{M}_{|m|}m)\right] \\
&=\partial^{B}_{|c|}(c)(b\otimes m)+(-1)^{|c|}c\partial^{\Otimes[A]BM}_{|c|+|b|+|m|}(b\otimes m).
\end{align*}
The third equality follows from the Leibniz Rule for $B$, and the fourth equality is by distributivity.
The remaining equalities are by definition.
\qed
\end{para}

\begin{para}[Sketch of Solution to Exercise~\ref{fact120526a}]
\label{para130330e}
\

Hom cancellation. 
For each $f=\{f_p\}\in\Hom[A]AL_i$ we have $f_0\colon A_0\to L_i$,
hence $f_0(1_A)\in L_i$. Define $\alpha_i\colon\Hom[A]AL_i\to L_i$ by the
formula $\alpha_i(f):=f_0(1_A)$. We  show that $\alpha\colon\Hom[A]AL\to L$ is 
a morphism of DG $A$-modules and that it is bijective, by Remark~\ref{ex110218a}.
To show that it is a chain map over $R$, we compute:
\begin{align*}
\alpha_{i-1}(\partial^{\Hom[A]AL}_i(f))
&=\alpha_{i-1}(\{\partial^Y_{p+i}f_p-(-1)^if_{p-1}\partial^X_p\})\\
&=\partial^L_{i}(f_0(1_A))-(-1)^if_{-1}\partial^A_0(1_A)\\
&=\partial^L_{i}(f_0(1_A))\\
&=\partial^L_{i}(\alpha_i(f)).
\end{align*}
To show that $\alpha$ is $A$-linear, let $a\in A_j$ and
compute:
\begin{align*}
\alpha_{i+j}(af)
&=(af)_0(1_A)
=af_0(1_A)
=a\alpha_i(f).
\end{align*}
To see that $\alpha$ is injective, suppose that $0=\alpha_i(f)=f_0(1_A)$.
It follows that for all $a\in A_j$ we have
$$f_j(a)=f_j(a1_A)=af_0(1_A)=a\cdot 0=0.
$$
We conclude that $\alpha$ is injective. 
To show that $\alpha$ is surjective, let $x\in L_i$. 
As in the proof of Exercise~\ref{fact110223a}\eqref{fact110223a2}
in~\ref{para130330a}, the map $\nu^x\colon A\to L$ given by $a\mapsto ax$
is a homomorphism of degree $i$. Moreover, we have
$$\alpha_i(\nu^x)=\nu^x_0(1_A)=1_Ax=x$$
so $\alpha$ is surjective, as desired.

Notice that the special case $L=A$ explains the isomorphism 
$\Hom[A]AA\cong A$.

Tensor cancellation.
Define $\eta\colon \Otimes AL\to L$ by the formula $\eta_{i+j}(a_i\otimes x_j):=a_ix_j$.
This is a well-defined chain map by Exercise~\ref{fact110216a'}.
Let $U$ be the $R$-submodule of $\Otimes AL$ generated by the elements of the form
$\Otimes[]{(a_ib_j)}{x_k}-(-1)^{ij}\Otimes[]{b_j}{(a_ix_k)}$.
For each such generator, we have
\begin{align*}
\eta_{i+j+k}(\Otimes[]{(a_ib_j)}{x_k}-(-1)^{ij}\Otimes[]{b_j}{(a_ix_k)})
&=(a_ib_j)x_k-(-1)^{ij}b_j(a_ix_k)=0
\end{align*}
since $b_ja_i=(-1)^{ij}a_ib_j$.
It follows that $\eta$ induces a well-defined map $\nu\colon\Otimes[A]AL\to L$ given by
$\nu_{i+j}(a_i\otimes x_j):=a_ix_j$. Since $\eta$ is a chain map, it follows readily that $\nu$ is
also a chain map. Moreover, it is $A$-linear because
\begin{align*}
\nu_{i+j+k}(a_i(b_j\otimes x_k))
\!=\!\nu_{i+j+k}((a_ib_j)\otimes x_k)
=(a_ib_j)x_k
=a_i(b_jx_k)
=a_i\nu_{j+k}(b_j\otimes x_k).
\end{align*}
To show that $\nu$ is an isomorphism, we construct a two-sided inverse.
Let $\beta\colon L\to \Otimes[A]AL$ be given by $\beta_i(x_i)=1\otimes x_i$.
As in previous exercises, this is a well-defined morphism of DG $A$-modules.
To see that it is a two-sided inverse for $\nu$, we compute:
\begin{align*}
\beta_{i+j}(\nu_{i+j}(a_i\otimes x_j))
&=1\otimes(a_ix_j)=a_i\otimes x_j.
\end{align*}
This shows that $\beta\nu$ is the identity on $\Otimes[A]AL$.
The fact that $\nu\beta$ is the identity on $L$ is even easier.

Again, the special case $L=A$ explains the isomorphism 
$\Otimes[A]AA\cong A$.

Tensor commutativity.
By Exercise~\ref{exer120522k}\eqref{exer120522k3}, the map
$\Otimes LM\xra\gamma\Otimes ML$
given by $x_i\otimes y_j\mapsto(-1)^{ij}y_j\otimes x_i$ is a well-defined isomorphism of $R$-complexes.
Let $V$ be the submodule of $\Otimes LM$ generated over $R$ by the elements of the form
$\Otimes[]{(a_ix_j)}{y_k}-(-1)^{ij}\Otimes[]{x_j}{(a_iy_k)}$.
Let $W$ be the $R$-submodule of $\Otimes ML$ generated by the elements of the form
$\Otimes[]{(a_iy_j)}{x_k}-(-1)^{ij}\Otimes[]{y_j}{(a_ix_k)}$.

For each element $(a_ix_j)\otimes y_k-(-1)^{ij}x_j\otimes (a_iy_k)\in U$, we have
\begin{align*}
\gamma_{i+j+k}((a_ix_j)\otimes y_k-(-1)^{ij}x_j\otimes (a_iy_k))\hspace{-20mm}\\
&=(-1)^{(i+j)k}y_k\otimes(a_ix_j)-(-1)^{ij+(i+k)j}(a_iy_k)\otimes x_j\\
&=(-1)^{(i+j)k}y_k\otimes(a_ix_j)-(-1)^{jk}(a_iy_k)\otimes x_j\\
&=-(-1)^{jk}[(a_iy_k)\otimes x_j-(-1)^{ik}y_k\otimes(a_ix_j)]\in W.
\end{align*}
It follows that $\gamma$ factors through the natural epimorphisms
$\Otimes LM\to\Otimes[A] LM$ and $\Otimes ML\to\Otimes[A] ML$,
that is, the map $\ol\gamma\colon\Otimes[A] LM\to\Otimes[A] ML$
given by $x_i\otimes y_j\mapsto(-1)^{ij}y_j\otimes x_i$ is well-defined.
To show that $\ol\gamma$ is $A$-linear, we compute:
\begin{align*}
\ol\gamma_{i+j+k}(a_i(x_j\otimes y_k))
&=\ol\gamma_{i+j+k}((a_ix_j)\otimes y_k)\\
&=(-1)^{(i+j)k}y_k\otimes (a_ix_j)\\
&=(-1)^{(i+j)k+ik}(a_iy_k)\otimes x_j\\
&=(-1)^{jk}a_i(y_k\otimes x_j)\\
&=a_i\ol\gamma_{j+k}(x_j\otimes y_k).
\end{align*}
Similarly, the map $\ol\delta\colon\Otimes[A] ML\to\Otimes[A] LM$
given by $y_j\otimes x_i\mapsto(-1)^{ij}x_i\otimes y_j$ is well-defined
and $A$-linear. It is straightforward to show that the compositions $\ol\gamma\ol\delta$
and $\ol\delta\ol \gamma$ are the respective identities, so that $\ol\gamma$ is the desired
isomorphism.
\qed
\end{para}

\begin{para}[Sketch of Solution to Exercise~\ref{exer130330b}]
\label{para130330f}
For the map $\Otimes[A] Nf$,
let $U$ be the $R$-submodule of $\Otimes NL$ generated by the elements of the form
$\Otimes[]{(a_ix_j)}{y_k}-(-1)^{ij}\Otimes[]{x_j}{(a_iy_k)}$,
and let $V$ be the $R$-submodule of $\Otimes NM$ generated by the elements of the form
$\Otimes[]{(a_ix_j)}{y_k}-(-1)^{ij}\Otimes[]{x_j}{(a_iy_k)}$.
To show that $\Otimes[A]Nf$ is well-defined, it suffices to show that
the chain map $\Otimes Nf\colon\Otimes NL\to\Otimes NM$ sends 
every generator of $U$ into $V$:
\begin{align*}
(\Otimes Nf)_{i+j+k}(\Otimes[]{(a_ix_j)}{y_k}-(-1)^{ij}\Otimes[]{x_j}{(a_iy_k)})\hspace{-20mm}\\
&=(a_ix_j)\otimes f_k(y_k)-(-1)^{ij}x_j\otimes f_{i+k}(a_iy_k)\\
&=(a_ix_j)\otimes f_k(y_k)-(-1)^{ij}x_j\otimes (a_if_k(y_k))\in V.
\end{align*}
To show that $\Otimes[A] Nf$ is $A$-linear, we compute similarly:
\begin{align*}
(\Otimes[A] Nf)_{i+j+k}(a_i(x_j\otimes y_k))
&=(\Otimes[A] Nf)_{i+j+k}((a_ix_j)\otimes y_k))\\
&=(a_ix_j)\otimes f_k(y_k)\\
&=a_i(x_j\otimes f_k(y_k))\\
&=a_i(\Otimes[A] Nf)_{j+k}(x_j\otimes y_k)
\end{align*}
The map $\Otimes fN$ is treated similarly.
\qed
\end{para}

\begin{para}[Sketch of Solution to Exercise~\ref{ex120524a}]
\label{para130330g}
We are working over $R$ as a DG $R$-algebra, which has $R^\natural=R$.
Since $R$ is local, we know that a direct sum $\bigoplus_iM_i$ of $R$-modules is free
if and only if each $M_i$ is free. (In general, the $M_i$ are projective; since $R$ is local,
we know that projective implies free.)
If $L$ is a semi-free DG $R$-module, then it is bounded below by definition, 
and the module $\bigoplus_iL_i$ is free over $R$, so each $L_i$ is free, as desired.
The converse is handled similarly.

If $F$ is a free resolution of $M$, then the previous paragraph implies that $F$ is semi-free.
Exercise~\ref{exer120522g} implies that there is a
quasiisomorphism $F\xra\simeq M$ over $R$, so this is a semi-free resolution by definition.
\qed
\end{para}

\begin{para}[Sketch of Solution to Exercise~\ref{ex120524b}]
\label{para130330h}
It is straightforward to show that $M$ is exact (as an $R$-complex) if and only if
the natural map $0\to M$ is a quasiisomorphism, since the induced map on homology
is the natural map $0\to\HH_i(M)$. Exercise~\ref{ex120524a} implies that
$0$ is semi-free, so the map $0\to M$ is a quasiisomorphism if and only if it
is a semi-free resolution.

Since $A$ is bounded below, so are $\shift^nA$ and $\bigoplus_{n\geq n_0}\shift^nA^{\beta_n}$.
To show that $\shift^nA$ is semi-free, we need to show that $1_A\in(\shift^nA)_n$ is a semibasis.
The only subtlety here is in the signs. 
If $n$ is odd, then we have
\begin{align*}
\textstyle(\sum_ia_i)\ast 1_A
&\textstyle=\sum_i(-1)^ia_i \\
\textstyle(\sum_i(-1)^ia_i)\ast 1_A
&\textstyle=\sum_ia_i.
\end{align*}
The first of these shows that $1_A$ is linearly independent: if $(\sum_ia_i)\ast 1_A=0$,
then $\sum_i(-1)^ia_i=0$ so $a_i=0$ for all $i$, which implies that $\sum_ia_i=0$.
The second of these shows that $1_A$ spans $\und A$ over $\und A$:
for all $\sum_ia_i\in\und A$, we have $\sum_ia_i=(\sum_i(-1)^ia_i)\ast 1_A\in\und A\cdot1_A$.
If $n$ is even, then the relevant formula is
$(\sum_ia_i)\ast 1_A=\sum_ia_i$.

To show that $\shift^nA^{\beta_n}$ is semi-free, use the previous paragraph to show that the sequence
of standard basis vectors
$(1_A,0,\ldots,0),(0,1_A,\ldots),\ldots,(0,0,\ldots,1_A)$ form a semibasis.
To show that $\bigoplus_{n\geq n_0}\shift^nA^{\beta_n}$ is semi-free, let $E_n$ be a semibasis for each 
$\shift^nA^{\beta_n}$ and show that $\cup_nE_n$ is a semibasis for $\bigoplus_{n\geq n_0}\shift^nA^{\beta_n}$.
\qed
\end{para}

\begin{para}[Sketch of Solution to Exercise~\ref{exer130330c}]
\label{para130330i}
\

\eqref{exer130330c1}
Exercise~\ref{disc111223a} shows that $\Otimes{K}F$ is a  DG $K$-module,
so we only need to show that it is semi-free.
For each $i\in\bbz$, let $E_i$ be a basis of the free $R$-module $F_i$,
and set $E_i'=\{1_K\otimes e\in\Otimes KF\mid e\in E_i\}$.
We claim that $E':=\cup_iE_i'$ is a semibasis for $\Otimes KF$.
To show that $E'$ spans $\und{(\Otimes KF)}$, it suffices to show that for
each $x\in K_i$ and each $y\in F_j$ the
generator $x\otimes y\in(\Otimes KF)_{i+j}$ is in the $K$-span of $E'$.
For this, write $y=\sum_{e\in E_j}r_ee$, and compute:
$$\textstyle
x\otimes y=x\otimes (\sum_{e\in E_j}r_ee)=\sum_{e\in E_j}r_ex(1_K\otimes e)\in K\cdot E'. 
$$
To show that $E'$ is linearly independent takes a bit of bookkeeping.
Suppose that 
\begin{equation}\label{eq130404a}
0=\sum_{i=1}^mx_i(1_K\otimes e_i)
\end{equation}
in $\Otimes[A]KF$ for some $x_i\in \und K$ and distinct elements $e_1,\ldots,e_m\in E$.
Since $\und{(\Otimes KF)}$ is a graded $\und K$-module, we may assume without loss of generality
that each $x_i$ is homogeneous and that the degree $|x_i\otimes e_i|=|x_i|+|e_i|=n$ is the same for all $i$. 
Moreover, $\und{(\Otimes KF)}$ is a bi-graded $\und K$-module (with gradings coming from $K$ and $F$)
so we may assume without loss of generality that the degree $|x_i|=p$ is the same for all $i$
and that the degree $|e_i|=q$ is the same for all $i$.
Thus, equation~\eqref{eq130404a} becomes
$0=\sum_{i=1}^mx_i\otimes e_i$. This sum occurs in the following submodule of $\Otimes{K_p}{F_q}$:
$$\bigoplus_{i=1}^m\Otimes{K_p}{Re_i}\cong \bigoplus_{i=1}^mK_p.$$
Under this isomorphism, the element $0=\sum_{i=1}^mx_i\otimes e_i$ corresponds to the 
vector
$0=(x_1,\ldots,x_m)$
which implies that $x_i=0$ for $i=1,\ldots,m$. It follows that $E'$ is linearly independent, as desired.

\eqref{exer130330c2}
Let $F\xra\simeq M$ be a semi-free resolution of a DG $R$-module $M$.
Part~\eqref{exer130330c1} implies that $\Otimes{K}F$ is semi-free over $K$.
Since $K$ is a bounded below complex of projective $R$-modules,
Fact~\ref{fact120523a}
implies that the induced map $\Otimes{K}F\xra\simeq \Otimes{K}M$ is a quasiisomoprhism, so it is a
semi-free resolution by definition.
\qed
\end{para}

\begin{para}[Sketch of Solution to Exercise~\ref{exer130331b}]
\label{para130331b}
To show that  $\chi^A_M\colon A\to \HomA MM$ has the desired properties,
we first note that Exercise~\ref{fact110223a}\eqref{fact110223a2} shows that it maps $A_i$ to $\HomA MM_i$ for all $i$. 
Next, we check that $\chi^A_M$ is a chain map:
\begin{align*}
\partial^{\Hom MM}_i((\chi^A_M)_i(a_i))
&=\{\partial^M_{i+p}\mult M{a_i}_p-(-1)^i\mult M{a_i}_{p-1}\partial^M_p\}\\
(\chi^A_M)_{i-1}(\partial^A_i(a_i))
&=\{\mult M{\partial^A_i(a_i)}_p\}.
\end{align*}
To see that these are equal, we evaluate at $m_p\in M_p$:
\begin{align*}
\partial^M_{i+p}(\mult M{a_i}_p(m_p))-(-1)^i\mult M{a_i}_{p-1}(\partial^M_p(m_p))\hspace{-20mm}
\\
&=\partial^M_{i+p}(a_im_p)-(-1)^ia_i\partial^M_p(m_p)
\\
&=\partial^A_{i}(a_i)m_p+(-1)^{i}a_i\partial^M_{p}(m_p)-(-1)^ia_i\partial^M_p(m_p)
\\
&=\partial^A_{i}(a_i)m_p\\
&=\mult M{\partial^A_i(a_i)}_p(m_p)
\end{align*}
To complete the proof, we check that $\chi^A_M$ is a $A$-linear.
For this, we need to show that
$(\chi^A_M)_{i+j}(a_ib_j)=a_i(\chi^A_M)_{j}(b_j)$. To show this, we evaluate at $m_p\in M_p$:
\begin{align*}
(\chi^A_M)_{i+j}(a_ib_j)_p(m_p)
&=a_ib_jm_p
=a_i(\chi^A_M)_{j}(b_j)_p(m_p)
\end{align*}
as desired.
\qed
\end{para}

\begin{para}[Sketch of Solution to Exercise~\ref{fact120524a}]
\label{para130330j}
Let $M$ and $N$ be $R$-modules.
Exercise~\ref{ex120524a} implies that
each free resolution $F$ of  $M$
gives rise  to a semi-free resolution $F\xra\simeq M$.
Thus,  the module $\Ext iMN$ 
defined in~\ref{defn110223a'} is $\HH_{-i}(\Hom FM)$, which is the usual $\Ext iMN$.
\qed
\end{para}

\begin{para}[Sketch of Solution to Exercise~\ref{ex120524e'}]
\label{para130330m}
We begin with the graded vector space $W''=0\bigoplus Fw_2\bigoplus Fw_1\bigoplus Fw_0\bigoplus 0$.
The differential $\partial''$ consists of two matrices of size $1\times 1$:
\begin{equation*}
0\to Fw_2\xra{x_2} Fw_1\xra{x_1} Fw_0\to 0.
\end{equation*}
The condition $\partial''_{i-1}\partial''_{i}=0$ is only non-trivial for $i=2$, in which case
it boils down to the following:
$$0=\partial''_1(\partial''_2(w_2))=\partial''_1(x_2w_1)=x_1x_2w_0.$$
We conclude that $(W'',\partial'')$ is an $R$-complex if and only if
\begin{equation}
\label{eq130405a}
x_1x_2=0.
\end{equation}
The scalar multiplication of $U$ on $W''$ is completely described
by specifying $ew_0$ and $ew_1$, and this requires two more elements $y_0,y_1\in F$
so that we have $ew_0=y_0w_1$ and $ew_1=y_1w_2$.
The associative law (which was not a concern for $W$ and $W'$) says that we must have
$$0=0w_0=e^2w_0=e(ew_0)=e(y_0w_1)=y_0y_1w_2$$
so we conclude that 
\begin{equation}
\label{eq130405b}
y_0y_1=0.
\end{equation}
Note that once this is satisfied, the general associative law follows.
This leaves the Leibniz Rule for the products $ew_0$, $ew_1$, and $ew_2$.
We begin with $ew_0$:
\begin{align*}
\partial''_1(ew_0)
&=\partial''^U_1(e)w_0+(-1)^{|e|}e\partial''_0(w_0) \\
\partial''_1(y_0w_1)
&=0w_0+(-1)^{|e|}e0 \\
x_1y_0w_0
&=0
\end{align*}
which implies that
\begin{equation}
\label{eq130405c}
x_1y_0=0.
\end{equation}
A similar computation for  $ew_2$
shows that
\begin{align}
\label{eq130405d}
x_2y_1=0.
\end{align}
A last computation for $ew_1$ 
yields $x_2y_1+x_1y_0=0$, which is redundant because of
equations~\eqref{eq130405c}--\eqref{eq130405d}.
Thus, $\od^U(W'')$ consists of all ordered quadruplets $(x_1,x_2,y_0,y_1)\in\bba^4_F$ satisfying 
the equations~\eqref{eq130405a}--\eqref{eq130405d}.
It is possibly worth noting that the ideal
defined by equations~\eqref{eq130405a}--\eqref{eq130405d}
has a simple primary decomposition:
$$(x_1x_2,y_0y_1,x_1y_0,x_2y_1)=(x_1y_1)\cap(x_2,y_0).$$

Next, we repeat this process for
$W'''= 0\bigoplus Fz_2\bigoplus (Fz_{1,1}\bigoplus Fz_{1,2})\bigoplus Fz_0\bigoplus 0$.
The differential $\partial'''$ in this case has the following form:
$$0\to Fz_2\xra{\protect{\left(\begin{smallmatrix}a_{2,1}\\a_{2,2}\end{smallmatrix}\right)}} Fz_{1,1}\bigoplus Fz_{1,2}
\xra{(a_{1,1}\, \, \, a_{1,2})} Fz_0\to 0$$
meaning that $\partial'''_2(z_2)=a_{2,1}z_{1,1}+a_{2,2}z_{1,2}$
and $\partial'''_1(z_{1,i})=a_{1,i}z_0$ for $i=1,2$.
Scalar multiplication also requires more letters:
\begin{align*}
ez_0&=b_{0,1}z_{1,1}+b_{0,2}z_{1,2}
\\
ez_{1,1}
&=b_{1,1}z_2
\\
ez_{1,2}
&=b_{1,2}z_2.
\end{align*}
The condition $\partial'''_{i-1}\partial'''_{i}=0$
is equivalent to the following  equation:
\begin{align}
\label{eq130405e}
a_{1,1}a_{2,1}+a_{1,2}a_{2,2}=0.
\end{align}
The associative law is equivalent to the next equation:
\begin{align}
\label{eq130405f}
b_{0,1}b_{1,1}+b_{0,2}b_{1,2}=0.
\end{align}
For the Leibniz Rule, we need to consider the products $ez_0$, $ez_{1,j}$ and $ez_2$,
so this axiom is equivalent to the following equations:
\begin{align}
\label{eq130405g}
a_{1,1}b_{0,1}+a_{1,2}b_{0,2}
&=0\\
\label{eq130405h}
a_{2,i}b_{1,j}+a_{1,j}b_{0,i}
&=0
&\text{for all $i=1,2$ and $j=1,2$}
\\
\label{eq130405i}
a_{2,1}b_{1,1}+a_{2,2}b_{1,2}
&=0.
\end{align}
So, $\od^U(W'')$ consists of all  $(a_{1,1}, a_{1,2}, a_{2,1}, a_{2,2}, b_{0,1}, b_{0,2}, b_{1,1}, b_{1,2})\in\bba^8_F$ satisfying 
the equations~\eqref{eq130405e}--\eqref{eq130405i}.
\qed
\end{para}

\begin{para}[Sketch of Solution to Exercise~\ref{ex120524e''}]
\label{para130330n}
We recall that
\begin{align*}
W''&=\quad 0\bigoplus Fw_2\bigoplus Fw_1\bigoplus Fw_0\bigoplus 0\\
W'''&=\quad 0\bigoplus Fz_2\bigoplus (Fz_{1,1}\bigoplus Fz_{1,2})\bigoplus Fz_0\bigoplus 0.
\end{align*}
It follows that
\begin{align*}
\hspace{8mm}&&&&\en_F(W'')_0&=\bigoplus_{i=0}^2\Hom[F]{Fw_i}{Fw_i}\cong F^3=\bba^3_F\\
&&&&\ggl_F(W'')_0&=\bigoplus_{i=0}^2\au_F(Fw_i) \cong (F^\times)^3
=U_{u_2u_1u_0}\subset\bba^3_F\\
&&&&\en_F(W''')_0&\cong\Hom[F]{F}{F}\bigoplus\Hom[F]{F^{2}}{F^{2}}\bigoplus\Hom[F]{F}{F}\\
&&&&&\cong F\times F^4\times F=\bba^6_F\\
&&&&\ggl_F(W''')_0&=\au_F(F)\bigoplus\au_F(F^2)\bigoplus\au_F(F)\\
&&&&&\cong F^\times\times \operatorname{gl}_2(F) \times F^\times
=U_{c_2(c_{11}c_{22}-c_{12}c_{21})c_0}\subset\bba^6_F.
&&&&\qed
\end{align*}
\end{para}

\begin{para}[Sketch of Solution to Exercise~\ref{exer120526a}]
\label{para130330o}
We continue with the notation of Example~\ref{ex120524e}
and the solutions to Exercises~\ref{ex120524e'} and~\ref{ex120524e''}.
Under the isomorphism $\ggl_F(W'')_0\cong U_{u_2u_1u_0}\subseteq\bba^3_F$,
an ordered triple $(u_0,u_1,u_2)\in U_{u_2u_1u_0}$ corresponds to the isomorphism
$$\xymatrix{
\partial:\ar[d]_-\alpha^-\cong&
0\ar[r]
&Fw_2\ar[r]^-{x_2}\ar[d]_-{u_2}
&Fw_1\ar[r]^-{x_1}\ar[d]_-{u_1}
&Fw_0\ar[r]\ar[d]_-{u_0}
&0
\\
\wti\partial:&
0\ar[r]
&F\wti w_2\ar[r]^-{\wti x_2}
&F\wti w_1\ar[r]^-{\wti x_1}
&F\wti w_0\ar[r]
&0.}$$
Let $e\cdot_{\alpha}\wti w_j=\wti y_j\wti w_{j+1}$ for $j=0,1$. Then direct computations as in
Example~\ref{ex120524e'''} show that
$\wti x_i=u_{i-1}x_iu_i^{-1}$ for $i=1,2$ and 
$\wti y_j=u_{j+1}y_ju_j^{-1}$ for
$j=0,1$.

Under the isomorphism $\ggl_F(W''')_0\cong U_{c_2(c_{11}c_{22}-c_{12}c_{21})c_0}\subset\bba^6_F$,
an ordered sextuple $(c_2,c_{11},c_{22},c_{12},c_{21},c_0)\in U_{c_2(c_{11}c_{22}-c_{12}c_{21})c_0}$ corresponds to the isomorphism
$$\xymatrix@C=12mm{
\partial:\ar[d]_-\alpha^-\cong&
0\ar[r]
&Fz_2\ar[r]^-{\protect{\left(\begin{smallmatrix}a_{2,1}\\a_{2,2}\end{smallmatrix}\right)}}\ar[d]_-{c_2}
&Fz_{1,1}\oplus Fz_{1,2}\ar[r]^-{(a_{1,1}\, \, \, a_{1,2})}\ar[d]_-{\left(\begin{smallmatrix}c_{11}&c_{12}\\c_{21}&c_{22}\end{smallmatrix}\right)}
&Fz_0\ar[r]\ar[d]_-{c_0}
&0
\\
\wti\partial:&
0\ar[r]
&F\wti z_2\ar[r]_-{\protect{\left(\begin{smallmatrix}\wti a_{2,1}\\ \wti a_{2,2}\end{smallmatrix}\right)}}
&F\wti z_{1,1}\oplus F\wti z_{1,2}\ar[r]_-{(\wti a_{1,1}\, \, \, \wti a_{1,2})}
&F\wti z_0\ar[r]
&0.}$$
Set $\Delta=\det\left(\begin{smallmatrix}c_{11}&c_{12}\\c_{21}&c_{22}\end{smallmatrix}\right)=c_{11}c_{22}-c_{12}c_{21}$.
Thus, we have the following:
\begin{align*}
\left(\begin{matrix}\wti a_{2,1}\\ \wti a_{2,2}\end{matrix}\right)
&=\left(\begin{matrix}c_{11}&c_{12}\\c_{21}&c_{22}\end{matrix}\right)\left(\begin{matrix}\wti a_{2,1}\\ \wti a_{2,2}\end{matrix}\right)(c_2^{-1})
=\left(\begin{matrix}(c_{11}a_{2,1}+c_{12}a_{2,2})c_2^{-1}\\(c_{21}a_{2,1}+c_{22}a_{2,2})c_2^{-1}\end{matrix}\right)
\\
\left(\begin{matrix}\wti a_{1,1}& \wti a_{1,2}\end{matrix}\right)
&=(c_0)\left(\begin{matrix}a_{2,1}\\a_{2,2}\end{matrix}\right)\left(\begin{matrix}c_{11}&c_{12}\\c_{21}&c_{22}\end{matrix}\right)^{-1}
\\
&=\left(\begin{matrix}\Delta^{-1}c_0(a_{1,1}c_{22}-a_{1,2}c_{21})& \Delta^{-1}c_0(-a_{1,1}c_{12}+a_{1,2}c_{11})\end{matrix}\right).
\end{align*}
In other words, we have
\begin{align*}
\wti a_{2,i} 
&=(c_{i1}a_{2,1}+c_{i2}a_{2,2})c_2^{-1}
\\
\wti a_{1,1}
&=\Delta^{-1}c_0(a_{1,1}c_{22}-a_{1,2}c_{21})
\\
\wti a_{1,2}
&=\Delta^{-1}c_0(-a_{1,1}c_{12}+a_{1,2}c_{11})
\end{align*}
for $i=1,2$.
For the scalar multiplication, we have
\begin{align*}
e\cdot_{\alpha}\wti z_2
&=0 
\end{align*}
and using the rule $e\cdot_{\alpha}\wti z_*=\alpha_2(e\alpha_1^{-1}(z_*))$, we find that
\begin{align*}
e\cdot_{\alpha}\wti z_{1,1}
&=c_2\Delta^{-1}(c_{22}b_{1,1}-c_{21}b_{1,2})\wti z_2
\\
e\cdot_{\alpha}\wti z_{1,2}
&=c_2\Delta^{-1}(-c_{12}b_{1,1}+c_{11}b_{1,2})\wti z_2
\\
e\cdot_{\alpha}\wti z_{0}
&=c_0^{-1}(c_{11}b_{0,1}+c_{12}b_{0,2})\wti z_{1,1}+c_0^{-1}(c_{21}b_{0,1}+c_{22}b_{0,2})\wti z_{1,2}.
\end{align*}
In other words, we have
\begin{align*}
\wti b_{1,1}
&=c_2\Delta^{-1}(c_{22}b_{1,1}-c_{21}b_{1,2})
\\
\wti b_{1,2}
&=c_2\Delta^{-1}(-c_{12}b_{1,1}+c_{11}b_{1,2})\wti z_2
\\
\wti b_{0,i}
&=c_0^{-1}(c_{i1}b_{0,1}+c_{i2}b_{0,2})
\end{align*}
for $i=1,2$.
\qed
\end{para}

\providecommand{\bysame}{\leavevmode\hbox to3em{\hrulefill}\thinspace}
\providecommand{\MR}{\relax\ifhmode\unskip\space\fi MR }
\providecommand{\MRhref}[2]{%
  \href{http://www.ams.org/mathscinet-getitem?mr=#1}{#2}
}
\providecommand{\href}[2]{#2}

\end{document}